\documentclass[oneside, a4paper, 11pt]{amsart}

\usepackage[utf8]{inputenc}
\usepackage[T1]{fontenc}

\usepackage{amsfonts, amsmath,amsthm,amssymb,bbm,comment,mathrsfs}

\usepackage[ttscale=.85]{libertine}
\usepackage{libertinust1math}
\DeclareSymbolFont{cmarrows}{OMS}{cmsy}{m}{n}
\DeclareMathSymbol{\mapstochar}{\mathrel}{cmarrows}{"37}

\usepackage[tracking=true]{microtype}
\SetTracking
  {encoding=T1, shape=sc}
  {10}
\SetProtrusion
  {encoding=T1,size={7,8}}
  {1={ ,750},2={ ,500},3={ ,500},4={ ,500},5={ ,500},
    6={ ,500},7={ ,600},8={ ,500},9={ ,500},0={ ,500}}

\newcommand*{\cat}[1]{\ensuremath{\mathcal{#1}}} 
\newcommand*{\functor}[1]{#1}
\newcommand*{\projectives}[1]{\proj(#1)}

\renewcommand{\colon}{\!\nobreak\mskip2mu\mathpunct{}\nonscript%
\mkern-\thinmuskip{:}\mskip6muplus1mu\relax%
}
\newcommand{\from}{\colon}
\newcommand*{\defeq}{\mathrel{\vcenter{\baselineskip0.5ex \lineskiplimit0pt
    \hbox{\scriptsize.}\hbox{\scriptsize.}}}%
=}

\newcommand*{\modc}[2][\cat{C}]{\ensuremath{\mathrm{mod}_{#1}(#2)}}

\newcommand*{\Alg}[1]{\ensuremath{\mathrm{Alg}(#1)}}
\newcommand*{\blank}{{-}}

\newcommand*{\leftdual}[1]{\leftidx{^\vee}{\!#1}{}}     
\newcommand*{\rightdual}[1]{{#1}^\vee}                   
\newcommand*{\ev}{\ensuremath{\mathrm{ev}}}
\newcommand*{\coev}{\ensuremath{\mathrm{coev}}}

\renewcommand*{\to}{\longrightarrow}
\renewcommand*{\mapsto}{\longmapsto}

\newcommand{\Fib}{\mathsf{Fib}} 

\renewcommand*{\hom}[3][\cat{M}]{\ensuremath{\underline{\mathrm{Hom}}_{#1}(#2,#3)}}
\newcommand*{\iend}[2][\cat{M}]{\ensuremath{\underline{\mathrm{End}}_{#1}(#2)}}
\newcommand*{\ienda}[2][A]{\ensuremath{\underline{\mathrm{End}}_{#1}(#2)}}
\usepackage{scalerel}   
\makeatletter
\let\@tl\triangleleft
\let\@tr\triangleright
\newcommand*{\@smallTriangle}[2]{\vcenter{\hbox{\scalebox{0.75}{\ensuremath{#1#2}}}}}
\newcommand*{\@medTriangle}[2]{\vcenter{\hbox{\scalebox{0.90}{\ensuremath{#1#2}}}}}
\newcommand*{\lact}{\mathbin{\mathpalette\@smallTriangle\@tr}}
\newcommand*{\ract}{\mathbin{\mathpalette\@smallTriangle\@tl}}
\let\@btl\blacktriangleleft
\let\@btr\blacktriangleright
\newcommand*{\blact}{\mathbin{\mathpalette\@medTriangle\@btr}}
\newcommand*{\bract}{\mathbin{\mathpalette\@medTriangle\@btl}}
\renewcommand*{\triangleright}{\lact}
\renewcommand*{\triangleleft}{\ract}
\renewcommand*{\blacktriangleright}{\blact}
\renewcommand*{\blacktriangleleft}{\bract}

\DeclareRobustCommand{\SkipTocEntry}[5]{}

\numberwithin{equation}{section}

\numberwithin{figure}{section}

\usepackage{thmtools,mathtools}
\usepackage{graphicx,enumitem,stmaryrd}
\usepackage{multicol}
\usepackage{xparse}
\usepackage{leftidx}    

\usepackage{anyfontsize}
\allowdisplaybreaks[4]
\usepackage[table]{xcolor}
\usepackage[normalem]{ulem}
\usepackage{bm, tensor}
\usepackage{tikz,tikz-cd}
\usetikzlibrary{matrix}
\usetikzlibrary{fit}
\usetikzlibrary{calc}
\tikzcdset{
  arrow style=tikz,
  diagrams={>={Straight Barb}}
}

\definecolor{blue}{rgb}{0.38, 0.51, 0.71}
\definecolor{red}{RGB}{175, 49, 39}
\definecolor{green}{RGB}{146, 227, 95}
\usepackage[english]{babel}
\usepackage[colorlinks, linktoc=page, linkcolor={red}, citecolor={green!65!black}, urlcolor={blue}]{hyperref}
\addto\extrasenglish{%
}

\makeatletter
\def\slashedarrowfill@#1#2#3#4#5{%
$\m@th\thickmuskip0mu\medmuskip\thickmuskip\thinmuskip\thickmuskip
  \relax#5#1\mkern-7mu%
  \cleaders\hbox{$#5\mkern-2mu#2\mkern-2mu$}\hfill
  \mathclap{#3}\mathclap{#2}%
  \cleaders\hbox{$#5\mkern-2mu#2\mkern-2mu$}\hfill
  \mkern-7mu#4$%
}
\def\rightslashedarrowfill@{%
\slashedarrowfill@\relbar\relbar\mapstochar\rightarrow}
\newcommand\xslashedrightarrow[2][]{%
\ext@arrow 0055{\rightslashedarrowfill@}{#1}{#2}}
\makeatother

\makeatletter
\newcommand{\ostar}{\mathbin{\mathpalette\make@circled\star}}
\newcommand{\make@circled}[2]{%
\ooalign{$\m@th#1\smallbigcirc{#1}$\cr\hidewidth$\m@th#1#2$\hidewidth\cr}%
}
\newcommand{\smallbigcirc}[1]{%
\vcenter{\hbox{\scalebox{0.77778}{$\m@th#1\bigcirc$}}}%
}
\makeatother

\usepackage{adjustbox}

\usepackage{CJKutf8}

\usetikzlibrary{backgrounds}
\usetikzlibrary{arrows}
\usetikzlibrary{shapes,shapes.geometric,shapes.misc}
\tikzstyle{tikzfig}=[baseline=-0.25em,scale=0.5]
\pgfkeys{/tikz/tikzit fill/.initial=0}
\pgfkeys{/tikz/tikzit draw/.initial=0}
\pgfkeys{/tikz/tikzit shape/.initial=0}
\pgfkeys{/tikz/tikzit category/.initial=0}
\pgfdeclarelayer{edgelayer}
\pgfdeclarelayer{nodelayer}
\pgfsetlayers{background,edgelayer,nodelayer,main}
\tikzstyle{none}=[inner sep=0mm]
\tikzstyle{every loop}=[]
\tikzstyle{whitedot}=[fill=white, draw, shape=circle, scale=0.3, tikzit draw=black, tikzit shape=circle, tikzit fill=white]
\tikzstyle{blackdot}=[fill=black, draw, shape=circle, scale=0.3, tikzit draw=black, tikzit shape=circle, tikzit fill=black]
\tikzstyle{box}=[fill=white, draw=black, shape=rectangle, tikzit fill=white]
\tikzstyle{BL}=[draw=black, shape=circle, fill=black, scale=0.3]
\tikzstyle{PP}=[draw={rgb,255:red,102;green,41;blue,163}, shape=circle, fill={rgb,255:red,102;green,41;blue,163}, scale=0.3]
\tikzstyle{morphism-edge}=[-, draw=black, thick]
\tikzstyle{cotensor}=[-, draw=gray]
\tikzstyle{braid-over}=[-, draw=white, thick, double=black, double distance=0.8pt, tikzit draw={rgb,255: red,128; green,0; blue,128}]
\tikzstyle{purple-over}=[-, draw=white, thick, double={rgb,255:red,102;green,41;blue,163}, double distance=0.8pt, tikzit draw={rgb,255:red,102;green,41;blue,163}]
\tikzstyle{purple}=[-, draw={rgb,255:red,102;green,41;blue,163}, thick]
\tikzstyle{blue-under}=[-, draw={rgb,255:red,0;green,128;blue,128}, thick]
\tikzstyle{ddd}=[-, draw=black, dash dot dot]
\tikzstyle{unit}=[-, draw=black, densely dotted]
\tikzstyle{Front}=[-, draw=black, fill={rgb,255; red,255; green,255; blue,255}, opacity=0.8]
\tikzstyle{Hidden}=[-, draw=black, fill={rgb,255; red,255; green,255; blue,255}, opacity=0.2]
\tikzstyle{directed}=[-, thick, black, decoration={markings, mark=at position 0.5 with {\arrow{>}}}, postaction=decorate]
\newcommand{\tikzfig}[1]{{%
    \tikzstyle{every picture}=[tikzfig]
    \IfFileExists{#1.tikz}
    {\input{#1.tikz}}
    {%
      \IfFileExists{figures/#1.tikz}
      {\input{figures/#1.tikz}}
      {\tikz[baseline=-0.5em]{\node[draw=red,font=\color{red},fill=red!10!white] {\textit{#1}};}}%
    }%
}}

\tikzcdset{scale cd/.style={every label/.append style={scale=#1},
  cells={nodes={scale=#1}}}}

\calclayout

\usetikzlibrary{calc}
\usetikzlibrary{decorations.pathmorphing}
\tikzset{curve/.style={settings={#1},to path={(\tikztostart)
  .. controls ($(\tikztostart)!\pv{pos}!(\tikztotarget)!\pv{height}!270:(\tikztotarget)$)
  and ($(\tikztostart)!1-\pv{pos}!(\tikztotarget)!\pv{height}!270:(\tikztotarget)$)
  .. (\tikztotarget)\tikztonodes}},
  settings/.code={\tikzset{quiver/.cd,#1}
      \def\pv##1{\pgfkeysvalueof{/tikz/quiver/##1}}},
  quiver/.cd,pos/.initial=0.35,height/.initial=0}

\tikzset{tail reversed/.code={\pgfsetarrowsstart{tikzcd to}}}
\tikzset{2tail/.code={\pgfsetarrowsstart{Implies[reversed]}}}
\tikzset{2tail reversed/.code={\pgfsetarrowsstart{Implies}}}
\tikzset{no body/.style={/tikz/dash pattern=on 0 off 1mm}}

\ProvideDocumentCommand{\hypersetup}{m}{}

\newtheorem{theoremm}{Theorem}[section]

\newtheorem{theoremmmm}{Theorem}

\declaretheorem[style=plain,name=Theorem,numberlike=theoremmmm]{theorematic}

\declaretheorem[style=plain,name=Theorem,numberlike=theoremm]{theorem}
\declaretheorem[style=plain,name=Theorem,numbered=no]{theorem*}

\declaretheorem[style=plain,name=Lemma,numberlike=theoremm]{lemma}
\declaretheorem[style=plain,name=Proposition,numberlike=theoremm]{proposition}
\declaretheorem[style=plain,name=Corollary,numberlike=theoremm]{corollary}

\declaretheorem[style=definition,name=Definition,numberlike=theorem]{definition}

\declaretheorem[style=remark,name=Example,numberlike=theorem]{example}
\declaretheorem[style=remark,name=Remark,numberlike=theorem]{remark}
\declaretheorem[style=remark,name=Notation,numberlike=theorem]{notation}

\usepackage[cal=boondoxo, calscaled=0.96, bb=dsserif]{mathalfa}

\newcommand{\on}[1]{\operatorname{#1}}
\newcommand{\setj}[1]{\left\{ #1 \right\}}

\newcommand*{\xiso}{%
\overset{\smash{\raisebox{-0.65ex}{\ensuremath{\scriptstyle\sim}}}%
                \,}%
        {\to}%
}

\DeclareFontFamily{U}{DSSerif}{\skewchar \font =45}
\DeclareFontShape{U}{DSSerif}{m}{n}{<-> s*[1]  DSSerif}{}
\DeclareFontSubstitution{U}{DSSerif}{m}{n}
\DeclareMathAlphabet{\mathbbbb}{U}{DSSerif}{m}{n}

\DeclareMathAlphabet\EuRoman{U}{eur}{m}{n}
\SetMathAlphabet\EuRoman{bold}{U}{eur}{b}{n}
\newcommand{\euler}{\EuRoman}

\newcommand{\projca}{\on{proj}_{\mathcal{C}}(A)}
\newcommand{\projda}{\on{proj}_{\mathcal{D}}(A)}
\newcommand{\modca}{\on{mod}_{\cat{C}}(A)}
\newcommand{\modda}{\on{mod}_{\cat{D}}(A)}
\newcommand{\bimod}{\on{bimod}_{\cat{C}}}
\newcommand{\bimodd}{\on{bimod}_{\cat{D}}}
\newcommand{\modw}[1]{\on{mod}_{\mathcal{C}}(#1)}
\newcommand{\modv}[1]{\on{mod}_{\mathcal{D}}(#1)}

\newcommand{\radca}{\on{Rad}_{A}^{\mathcal{C}}}

\newcommand{\cp}{\mathcal{C}_{p}}

\newcommand{\homa}{\on{Hom}_{\blank A}}

\newcommand{\kotimes}{\otimes_{\mathbb{k}}}

\newcommand{\gr}[1]{\left\llbracket #1 \right\rrbracket}

\usepackage[a4paper, margin=1in]{geometry}
\usepackage{lscape,mathdots}
\renewcommand{\phi}{\varphi}

\DeclareMathOperator{\rep}{rep}

\makeatletter
\def\Ddots{\mathinner{\mkern1mu\raise\p@
\vbox{\kern7\p@\hbox{.}}\mkern2mu
\raise4\p@\hbox{.}\mkern2mu\raise7\p@\hbox{.}\mkern1mu}}
\makeatother

\DeclareMathOperator{\vect}{\mathsf{vect}}
\DeclareMathOperator{\proj}{\on{proj}}

\DeclareMathOperator{\Irr}{Irr} 
\DeclareMathOperator{\FPdim}{FPdim}
\DeclareMathOperator{\Ext}{Ext}
\DeclareMathOperator{\iExt}{\underline{\Ext}}

\usepackage{quiver}
\usepackage{dsfont} 

\usepackage{enumitem}
\setenumerate[1]{label=(\roman*)} 

\title[Species and hereditary algebras]{Algebras in fusion categories: species and hereditary algebras}
\author{Edmund Heng}
\author{Mateusz Stroi{\' n}ski}

\begin{document}

\begin{abstract}
    We initiate the study of non-semisimple algebras in fusion categories by establishing the framework of $\cat{C}$-species -- analogous to the framework of species and quivers used in the study of Artin algebras.
    Under the (necessary) assumption that the fusion category is separable, we show that any algebra is Morita equivalent to an admissible quotient of the path algebra of a $\cat{C}$-species. Moreover, we show that an algebra is hereditary if and only if no further quotient is required. These results generalise that of Gabriel's for finite-dimensional algebras.
\end{abstract}

\maketitle

\tableofcontents

\addtocontents{toc}{\protect\setcounter{tocdepth}{1}}
\section{Introduction}
Throughout this paper, $\mathbb{k}$ is an algebraically closed field.
\subsection*{Motivation and results}
Algebras in fusion categories are essential in the study of finite module categories over fusion categories, as every finite module category over $\cat{C}$ is equivalent to the category of modules over some algebra $A \in \cat{C}$ \cite{Os, EGNO}.
Over the past decades, the field has mainly focused on semisimple algebras, i.e.\ algebras whose modules are all semisimple.
Beyond the semisimple algebras, our knowledge is surprisingly limited.
In fact, it is not only until the recent work by the second-named author and collaborators \cite{CSZ} (as a corollary in the fusion category setting) that we could discern semisimple algebras from non-semisimple ones through the following criterion: an algebra is semisimple if and only if it is semisimple as a module over itself (see \autoref{cor:reconcilesemisimplealg})  -- a fact which is standard for finite-dimensional algebras.

Nonetheless, there is a wealth of interesting actions of fusion categories on finite abelian categories which are not necessarily semisimple.
For example, a finite abelian category $\cat{M}$ equipped with a (strong) categorical action of a finite group $G$ is the same as $\cat{M}$ being a module category over the category $\vect_G$ of finite-dimensional $G$-graded vector spaces.
As such, $\cat{M}$ is equivalent to the category of $G$-graded modules over some $G$-graded algebra, which is semisimple if and only if $\cat{M}$ is.
On the other hand, the papers \cite{HengCox,EH} show that representation categories of quivers carry interesting actions of fusion categories, which include examples such as the Fibonacci fusion category $\Fib$ acting on $\rep(A_4)$ (bipartite) \cite[Example 3.2]{HengCox} -- in contrast, the only indecomposable semisimple category that $\Fib$ acts on is itself.

The goal of this paper is to initiate the study of all algebras in fusion categories by developing foundational results akin to those of finite-dimensional algebras (in fact, our results encompass the classical results by viewing finite-dimensional algebras as algebras in $\vect$).
We prove the following generalisation of Gabriel's theorem \cite{Ga1,Ga2}; for the definition of $\cat{C}$-species and its path algebra, refer to \autoref{defn:speciesandquiver} and \ref{pathalgebra}.
Recall that a fusion category is said to be \emph{separable} if its Drinfeld centre is semisimple (e.g.\ when $\on{char}(\mathbb{k}) = 0$).
\begin{theorematic}[= \autoref{extspecies} and \autoref{noidealpaths}] \label{introthm}
    Let $A \in \cat{C}$ be an algebra in a separable fusion category $\cat{C}$.
    There exists a $\cat{C}$-species Q and a two-sided ideal $I$ in its path algebra $T_Q$ such that $A$ is Morita equivalent to the algebra $T_Q/I$.
    Moreover, $I$ is trivial if and only if $A$ is hereditary, i.e.\ $\modca$ has global projective dimension at most 1.
\end{theorematic}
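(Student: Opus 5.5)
Our plan follows Gabriel's argument for finite-dimensional algebras, carried out internally in $\cat{C}$. First we reduce to the \emph{basic} case. Since $\modca$ is a finite abelian $\cat{C}$-module category, we take a projective generator $P$ whose indecomposable summands are pairwise non-isomorphic, chosen so that $\cat{C}$ acting on $P$ still generates all projectives; concretely we can use $P = \bigoplus_i P_i$ over representatives of the $\cat{C}$-orbits. We then replace $A$ by the internal endomorphism algebra $B = \iend[A]{P}$, which is Morita equivalent to $A$ by the standard reconstruction theorem ($\modca \simeq \modc{B}$).

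Next we work with the radical. Let $J = \radca(B)$ be the internal radical. By the semisimplicity criterion \autoref{cor:reconcilesemisimplealg} and separability of $\cat{C}$, the quotient $S = B/J$ is a semisimple algebra. Separability should make $S$ \emph{separable} in $\cat{C}$, so its bimodule category is semisimple. We now define the $\cat{C}$-species $Q$: its vertices are the simple summands $S_i$ of $S$, i.e.\ the simple objects of the semisimple module category $\modc{S}$, and its arrows are the $S$-bimodule $M = J/J^2$, decomposed into $S_i$-$S_j$ bimodules. The main technical step is a Wedderburn--Malcev type splitting. Because $S$ is separable, the Hochschild-type cohomology $\iExt^2_{S\text{-bimod}}$ vanishes, so the projection $B \to S$ admits an algebra section. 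By the same semisimplicity of $S$-bimodules, the projection $J \to J/J^2$ splits as a map of $S$-bimodules. Together these give an algebra map $T_Q = T_S(M) \to B$ extending $S \to B$ and $M \to J$. We expect this splitting to be the main obstacle, because it is exactly where separability of $\cat{C}$ is needed (through semisimplicity of the centre and of bimodule categories). We would prove it via the internal lifting argument, using the relative projectivity of $S$ as an $S$-bimodule.

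Surjectivity and admissibility then follow by a Nakayama argument. The image contains $S$ and generates $J$ modulo $J^2$, so it contains everything modulo $J^n$ for all $n$. Since $J$ is nilpotent (finite length), the map is surjective. Its kernel $I$ therefore satisfies $R^N \subseteq I \subseteq R^2$, where $R$ is the arrow ideal, so $I$ is admissible. This proves \autoref{extspecies}.

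For the hereditary statement (\autoref{noidealpaths}), we argue in two directions.
\begin{itemize}
\item[($\Leftarrow$)] The path algebra $T_S(M)$ of a $\cat{C}$-species is hereditary. It has the standard exact sequence of bimodules
\[
0 \to T\otimes_S M\otimes_S T \to T\otimes_S T \to T \to 0 ,
\]
whose first two terms are projective bimodules because $S$ is separable. Tensoring with any module gives a projective resolution of length at most $1$.
\item[($\Rightarrow$)] Suppose $B = T/I$ is hereditary with $I \subseteq R^2$. Then $\iExt^1$ between simples is identified with $M$, both for $B$ and for $T$. Hereditarity of $B$ makes $J$ projective, so $J \cong B\otimes_S M$ as left modules via the natural map. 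Comparing with $T$, this forces $I\cap R^n \subseteq R^{n+1}$ inductively, and hence $I = 0$ by nilpotency. An alternative route for this step is to compare radical layers: show that $J^n/J^{n+1} \cong M^{\otimes_S n}$ using projectivity of $J^n$.
\end{itemize}
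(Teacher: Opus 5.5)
Your existence argument follows the paper's route. The basic reduction is \autoref{prop:Mtobasicalg}. The algebra section of $B\to B/J$ comes from separability of $B/J$; the paper makes your ``lifting argument'' precise by applying \cite[Theorem~3.8]{AMS} along the tower $B/J^{n}$. The bimodule section of $J\to J/J^2$ comes from semisimplicity of $\bimod(B/J,B/J)$ (\autoref{lem:dualfusion}). Surjectivity and admissibility then follow from the radical layers and nilpotency of $J$.

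One step you should state explicitly. To get a species rather than a quasi-species, the factors of $S=B/J$ must be \emph{division} algebras. This is exactly what the basic choice of $P$ provides, via $B/J\simeq\iend[B]{\on{top}(B)}\simeq\prod_i\iend[B]{L_i}$ with the $L_i$ in distinct $\sim$-classes (\autoref{prop:basicalgebra}, \autoref{endproduct}). Correspondingly, the vertices are the $\sim$-classes of simples, not the simple objects of $\modc{S}$. Also, semisimplicity of $B/J$ is \autoref{CSZEO}; separability of $\cat{C}$ is only needed to upgrade it to separability.

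For the hereditary equivalence your route genuinely differs from the paper's.

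For ($\Leftarrow$), the paper uses the unfolding of \cite{EH} to identify modules over an acyclic species path algebra with representations of an ordinary acyclic quiver (\autoref{hereditarypath}, with details left to the reader). You instead use the standard two-term bimodule resolution of $T_S(M)$. Its terms are projective because $S$ is separable and $\cat{C}$ is semisimple, and it stays exact after $N\otimes_{T}-$ because it splits as left $T$-modules. This is self-contained.

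For ($\Rightarrow$), the paper first shows that the directed graph of a hereditary module category is acyclic (\autoref{prop:hereditaryacyclic}). It does this by a minimal-$\FPdim$ argument on chains of strict monomorphisms of projective covers, so the species is acyclic. It then pushes $I$ into $\mathfrak{J}^2$ of $\on{End}(P)$ for an ordinary projective generator $P$ and quotes \cite[Lemma~4.2.1]{Ben}.

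Your internal argument works directly:
\begin{enumerate}
\item $J$ is a projective right $B$-module, so its projective cover $M\otimes_S B\to J$ is an isomorphism.
\item Since $M\otimes_S-$ is exact, the kernel of $T_{\geq 1}\cong M\otimes_S T\to M\otimes_S B\cong J$ is $M\otimes_S I$.
\item This kernel is also $I\cap T_{\geq1}=I$, so $I=T_1I$, whence $I\subseteq T_{\geq n}$ for all $n$ and $I=0$.
\end{enumerate}
Phrase this with right modules, as the paper's notion of hereditary refers to $\modca$, or note that duality identifies left and right module categories.

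Your approach yields acyclicity of the species as a by-product: once $I=0$, $T_S(M)\cong B$ is compact. It also avoids $\FPdim$ and the reduction to classical algebras. The paper's route, in exchange, establishes the independent fact that the internal-Ext graph of any hereditary finite $\cat{D}$-module category is acyclic. It also ties the result to the unfolding picture used for the finite-type classification.
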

Note that the assumption of separability of $\cat{C}$ is necessary; see \autoref{rmk:SemisimpleCentreNecessary}.
Moreover, Gabriel's result is recovered by choosing $\cat{C} = \vect$, where its Drinfeld centre $\cat{Z(\vect)} \simeq \vect$ is always semisimple (regardless of the characteristic of $\mathbb{k}$).

\begin{remark}
Readers familiar with finite-dimensional algebras might wonder why we still require the notion of species when the underlying field is algebraically closed. The point is that even so, there may be more than one Morita class of semisimple algebras in $\cat{C}$. For example, Morita classes of semisimple algebras in $\vect_G$ are classified by subgroups $H \subseteq G$ and a class of $2$-cocycle of $H$; see \autoref{eg:Z2divisionalg}.
\end{remark}

When the semisimple part of $T_Q$ is a product of trivial algebras, the $\cat{C}$-species $Q$ is, by definition, a $\cat{C}$-quiver (see \autoref{defn:speciesandquiver}).
We also provide a criterion that determines when $A$ is Morita equivalent to the quotient of a path algebra of a $\cat{C}$-quiver.
For the following, recall that the Grothendieck group $\gr{\modca}$ is a $\mathbb{Z}_+$-module over the fusion ring $\gr{\cat{C}}$.
\begin{theorematic}[\autoref{quiversviaGr} and \ref{cor:fusionquivercondition}]
    Let $A$ be an algebra in a separable fusion category $\cat{C}$.
    Then an algebra $A$ is Morita equivalent to $T_Q/I$ with $Q$ a $\cat{C}$-quiver if and only if $\gr{\modca}$ is equivalent to $\gr{\cat{C}}^{\oplus n}$ as a $\mathbb{Z}_+$-module.
    In particular, if $A$ is also hereditary, then $A$ is Morita equivalent to $T_Q$ for $Q$ a $\cat{C}$-quiver.
\end{theorematic}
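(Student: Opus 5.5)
Granting Theorem \ref{introthm}, I will reduce both directions to the semisimple top of $A$. For an algebra $A$ in $\cat{C}$, let $R$ be its radical (as in \autoref{cor:reconcilesemisimplealg}) and $S \defeq A/R$, a semisimple algebra. First I will show that the simple objects of $\modca$ are exactly the simple $S$-modules, so that $\gr{\modca} \cong \gr{\modc{S}}$ as $\mathbb{Z}_+$-modules over $\gr{\cat{C}}$; the basis in both is given by the simples. Since $\gr{\blank}$ only sees composition factors and the $\cat{C}$-action is exact, this identification is $\gr{\cat{C}}$-linear. Next I will record that $\gr{\blank}$ is a Morita invariant, and that for a path algebra $T_Q$ of a $\cat{C}$-species, or any admissible quotient $T_Q/I$, the semisimple part is precisely the vertex algebra $S_Q = \prod_i S_i$. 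Hence $\gr{\modc{T_Q/I}} \cong \bigoplus_i \gr{\modc{S_i}}$.

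For the forward direction, suppose $Q$ is a $\cat{C}$-quiver, so each $S_i = \mathbb{1}$. Then $\modc{\mathbb{1}} \simeq \cat{C}$, and we obtain $\gr{\cat{C}}^{\oplus n}$. For the converse, use Theorem \ref{introthm} to write $A \sim T_Q/I$ for some $\cat{C}$-species $Q$, and consider each vertex algebra $S_i$. By construction $S_i$ is indecomposable semisimple, so $\modc{S_i}$ is an indecomposable semisimple module category and $\gr{\modc{S_i}}$ is an indecomposable (transitive) $\mathbb{Z}_+$-summand. Since a decomposition of a $\mathbb{Z}_+$-module into indecomposables is unique, with summands determined by the partition of the basis, the hypothesis $\gr{\modca} \cong \gr{\cat{C}}^{\oplus n}$ forces $\gr{\modc{S_i}} \cong \gr{\cat{C}}$ as based $\mathbb{Z}_+$-modules for every $i$.

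The main obstacle is lifting this to a categorical statement: that $\gr{\modc{S_i}} \cong \gr{\cat{C}}$ implies $S_i$ is Morita equivalent to $\mathbb{1}$. My plan is as follows. The basis element of $\gr{\modc{S_i}}$ corresponding to $[\mathbb{1}]$ gives a simple $S_i$-module $M$ such that $X \mapsto X \lact M$ sends simples to simples bijectively. In particular $\iend{M}$ has class $[\mathbb{1}]$, because $\hom{M}{M}$ is computed by the multiplicities $[X\lact M : M]$, which vanish except at $X = \mathbb{1}$. Since $\iend{M}$ is an algebra with unit $\mathbb{1} \to \iend{M}$, it follows that $\iend{M} \cong \mathbb{1}$. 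By Ostrik's theorem $\modc{S_i} \simeq \modc{\iend{M}}$, so $S_i$ can be replaced by $\mathbb{1}$ without changing the Morita class of the species. This replacement must be done compatibly with the arrow bimodules, via Morita transport of the bimodules; this should be formal. The result is a $\cat{C}$-quiver $Q'$ with $A \sim T_{Q'}/I'$.

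The final clause then follows immediately. If $A$ is hereditary, Theorem \ref{introthm} gives $I' = 0$.
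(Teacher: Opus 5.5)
Your proposal is correct, and its key ingredient is exactly \autoref{lem:rigidityofregularmodule}. That ingredient is the observation that the Grothendieck group only sees the semisimple quotient, together with rigidity of the regular $\mathbb{Z}_+$-module: a simple $M$ whose translates $X\lact M$ are simple and pairwise non-isomorphic has $\iend{M}\simeq\mathbb{1}$.

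The difference is in the last step of the converse. You take an arbitrary species presentation from the main theorem as a black box, show that each vertex algebra is Morita equivalent to $\mathbb{1}$, and then transport the species and the admissible ideal along these equivalences. That transport is genuinely needed in your route: a black-box vertex algebra can be, e.g., $\underline{\on{End}}_{\cat{C}}(\Phi)$ in $\Fib$, which is Morita equivalent but not isomorphic to $\mathbb{1}$ (see \autoref{eg:pentagonCoxquiver}). It is supplied by \autoref{quasirepresentative} together with the ideal-transport argument in the proof of \autoref{cor:radicalsplitspecies}.

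The paper sidesteps the transport. Since \autoref{extspecies} works for any choice of representatives of $\on{Irr}(\cat{M})/\sim$, it uses condition (iv) of \autoref{quiversviaGr} to choose representatives $L_i$ with $\iend{L_i}\simeq\mathbb{1}$ from the start. The species produced is then already a quiver, with arrows explicitly given by $\leftdual{\iExt(L_i,L_j)}$.

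Your version is more modular. It also spells out the decomposition of the based $\mathbb{Z}_+$-module into transitive pieces matching the vertices, which the paper compresses into ``a direct extension'' of the lemma. The paper's version is shorter and yields an explicit quiver.

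For the hereditary clause, note that your $I'$ is the transport of the original $I$, and that $I$ vanishes by \autoref{noidealpaths}.
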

It then follows from the main theorem in \cite{EH} that a hereditary algebra that is Morita equivalent to $T_Q$ for $Q$ a $\cat{C}$-quiver has finitely many indecomposable modules (up to isomorphism) if and only if $Q$ is associated with Coxeter graphs that are disjoint unions of Coxeter--Dynkin diagrams (the diagrams which classify finite Coxeter groups).
In fact, one of the main motivations of this work is to expand the rich connection between Weyl groups and finite-dimensional algebras to all Coxeter groups via algebras in fusion categories; this will be explored in future works.
It is also an interesting problem to provide a finite-representation-type condition for all hereditary algebras (i.e.\ $\cat{C}$-species instead of $\cat{C}$-quivers), which we expect to be similar to Dlab--Ringel's generalisation of Gabriel's classification to species \cite{DR}.

\subsection*{Algebras in finite tensor categories}
While the theory of Jacobson radicals for algebra objects introduced in \cite{CSZ} applies to the general setting of finite tensor categories, the generalisation of Gabriel's theorem above (\autoref{introthm}) cannot hold in this level of generality; \autoref{eg:semisimplecentre} shows that it already fails in the setting of fusion categories that are not separable. 
Nevertheless, we develop the theory of $\cat{C}$-species in the setting of finite tensor categories as much as possible -- this culminates in a weak variant of Gabriel's theorem for algebra objects in finite tensor categories, which we prove in \autoref{weakGabriel}. 
In particular, in this generalisation, we need to require that the semisimple quotient $A/J$ of the given algebra $A$ is Morita equivalent to a separable algebra -- this is a condition which always holds in a separable fusion category, and which fails in \autoref{eg:semisimplecentre}. Additionally, separability in finite tensor categories is not preserved under Morita equivalence, as noted in \autoref{rmk:FTCseparable}. Beyond that, we further need a splitting of the projection $J \twoheadrightarrow J/J^{2}$, analogous to the classical case (\cite[Proposition~4.1.10]{Ben}) -- see \autoref{def:radicalsplit}.

We emphasise that even though this means that the structural result of algebras in the setting of non-semisimple finite tensor categories may only take on a weaker form, $\cat{C}$-quivers and $\cat{C}$-species still provide a way of systematically producing examples of well-behaved non-exact module categories.

\subsection*{Similar works}
Realising a module category via a $\cat{C}$-species can be thought of as describing it as a collection of exact module categories ``glued together'' by extensions. This idea bears some resemblance to the Jordan--H{\" o}lder theory of \cite{MM}. 
However, it is not the same -- while simple transitive $2$-representations over (the category of projectives of) a finite tensor category are (the categories of projectives of) the exact module categories, the action preorder (in the sense of \cite{MM}) of a finite module category over a finite tensor category is symmetric, and hence the associated Jordan--H{\" o}lder filtration is split. The difference is perhaps best explained by observing that the Jordan--H{\" o}lder theory of \cite{MM} is defined on the level of Grothendieck groups, while the arrows in the path algebra in our sense cannot be detected by the Grothendieck group at all: we find $\gr{\modw{A}} \simeq \gr{\modw{A/J}}$.

Despite having similar names, our notion of $\cat{C}$-quivers is different from the notion of fusion quivers in \cite{Sch}, which describes monoidal -- rather than module -- categories from quivers. On the other hand, the actions studied in \cite{Bet} could be encoded in terms of the $\cat{C}$-quivers and $\cat{C}$-species we consider, and it would be interesting to determine the quivers obtained in this manner.

\subsection*{Outline}
A brief outline of this paper is as follows.
In \autoref{prelims}, we collect preliminaries on finite tensor categories $\cat{C}$, including their module categories, and the notions of algebras, tensor algebras and division algebras in $\cat{C}$.
In \autoref{species}, we introduce the notion of $\cat{C}$-species and prove the weak variant of Gabriel's theorem as \autoref{weakGabriel}.
We focus on fusion categories $\cat{D}$ from \autoref{sec:fusioncat} onwards. Here we prove stronger results about radicals and semisimple algebras in this setting, and also define the notions of internal Ext and the directed graph associated to a finite $\cat{D}$-module category.
Our main theorems are proven in \autoref{mainresult}, where we work under the necessary assumption of $\cat{D}$ being a separable fusion category.

\subsection*{Acknowledgement}
The authors are deeply grateful to P.\ Etingof for putting us in contact, which led to the birth of this work.
We are also indebted to K.\ Coulembier for important discussions and for \autoref{eg:semisimplecentre} which shows the necessity of having a semisimple Drinfeld centre.
We would like to thank J.\ Külshammer for help with terminologies, and C.\ Lomp for pointing us to the work \cite{AMS}.
We also thank C.\ Schweigert, R.\ Kinser and B.\ \c{S}enol for helpful discussions.
E.H.\ is supported by the Australian Research Council grant DP230100654.
M.S.\ is supported by the Knut and Alice Wallenberg Foundation (Grant No.~2024.0339). Large parts of this work were done while M.S.\ was supported by Lundström-Åmans stipendiestiftelse.

\addtocontents{toc}{\protect\setcounter{tocdepth}{2}}
\section{Preliminaries}\label{prelims}

\subsection{Abelian categories}

Let $\mathbb{k}$ be an algebraically closed field. Throughout we assume all categories and functors considered to be $\mathbb{k}$-linear, unless otherwise stated. 

\begin{remark}
    It seems very likely that the results of this document hold equally well when $\mathbb{k}$ is a perfect, not necessarily algebraically closed field. However, this would require dealing with additional difficulties, e.g.\ in the proof of naturality of isomorphisms considered in \autoref{extarrowsextra}, and in \autoref{hereditary} we would need to adjust the definition of the Frobenius--Perron dimension, following \cite[Section~3]{Sa}.
\end{remark}

\begin{definition}
Given categories $\cat{A,B}$, we denote by $\cat{A} \kotimes \cat{B}$ their $\mathbb{k}$-linear tensor product. Its objects are pairs of objects in $\cat{A}$ and $\cat{B}$, and morphisms spaces are tensor products of those in $\cat{A}$ and $\cat{B}$. As such, $\mathbb{k}$-linear functors from $\cat{A}\otimes_{\mathbb{k}} \cat{B}$ are equivalently $\mathbb{k}$-bilinear functors from $\cat{A} \times \cat{B}$.    
\end{definition}

\begin{definition}
    Given additive categories $\cat{A,B}$, we denote by $\cat{A}\oplus \cat{B}$ the product category, whose objects are pairs $(X,Y)_{X \in \cat{A}, Y \in \cat{B}}$ and morphism spaces are direct sums of those in $\cat{A}$ and $\cat{B}$. Via the canonical projections, $\cat{A}\oplus \cat{B}$ is a bicategorical product in the $2$-category of additive categories, and via the injections $X \mapsto (X,0)$ and $Y \mapsto (0,Y)$ it simultaneously becomes a bicategorical coproduct in that $2$-category, justifying our use of the direct sum symbol.
\end{definition}

\begin{definition}
 Given a category $\cat{A}$, an {\it ideal $\mathfrak{I}$ in $\cat{A}$} consists of subspaces $\mathfrak{I}(X,Y) \subseteq \cat{A}(X,Y)$ for all $X,Y \in \cat{A}$, such that for all $g \in \mathfrak{I}$ and morphisms $h,f$ of $\cat{A}$ such that $h \circ g \circ f$ is defined, the composite $h \circ g \circ f$ lies in $\mathfrak{I}$. 
 
 The quotient category $\cat{A}/\mathfrak{I}$ is the category whose objects are given by $\on{Ob}(\cat{A}/\mathfrak{I})=\on{Ob}(\cat{A}) $, and whose morphism spaces are given by $\on{Hom}_{\cat{A}/\mathfrak{I}}(X,Y) = \on{Hom}_{\cat{A}}(X,Y)/\mathfrak{I}(X,Y)$, with the obvious induced composition. It satisfies the universal property that all functors from  $\cat{A}$ vanishing on $\mathfrak{I}$ factor through the canonical quotient functor $\cat{A} \to \cat{A}/\mathfrak{I}$.

 Given an ideal $\mathfrak{I}$ and a positive integer $n$, $\mathfrak{I}^{n}$ is the ideal generated by $n$-fold composites of morphisms in $\mathfrak{I}$. As such, we find that for a pair of positive integers $m,n$, such that $m>n$, we have inclusions $\mathfrak{I}^{n}(X,Y) \subseteq \mathfrak{I}^{m}(X,Y)$ for all $X,Y$. In other words, $\mathfrak{I}^{n}$ is a subideal of $\mathfrak{I}^{m}$.
 
 In that case, we define $\mathfrak{I}^{m}/\mathfrak{I}^{n}(X,Y) \defeq \mathfrak{I}^{m}(X,Y)/\mathfrak{I}^{n}(X,Y)$. In fact, $\mathfrak{I}^{m}/\mathfrak{I}^{n}$ is the quotient object in the category of {\it profunctors} on $\cat{A}$, but we will not need that fact.
\end{definition}

\begin{definition}
  Given an abelian category $\cat{A}$, we denote 
  \begin{itemize}
      \item $\mathbb{L}(\cat{A})$ as the full subcategory of $\cat{A}$ whose objects are the semisimple objects of $\cat{A}$;
      \item $\projectives{\cat{A}}$ as its category of projectives;
      \item $\on{Irr}(\cat{A)}$ as a chosen set of representatives of its isomomorphism classes of simple objects; and
      \item $\gr{\cat{A}}$ as the Grothendieck group of $\cat{A}$.
  \end{itemize}
  
\end{definition}

\begin{definition}
 A {\it finite abelian category} is an abelian ($\mathbb{k}$-linear) category $\cat{A}$ whose objects are of finite length, $\on{Hom}$-spaces are of finite dimension, and $\on{Irr}(\cat{A})$ is finite. Equivalently, $\cat{A} \simeq \on{mod}_{\mathbb{k}}(A)$, where $A$ is a finite-dimensional (associative, unital) $\mathbb{k}$-algebra and $\on{mod}_{\mathbb{k}}(A)$ is the category of its finite-dimensional modules.
\end{definition}

\begin{definition}
 Given an additive category $\cat{A}$, its {\it (Jacobson) radical} is the unique ideal in $\cat{A}$ such that for all $X \in \cat{A}$, the space $\mathfrak{J}(\mathcal{A})(X,X)$ is the Jacobson radical $J(\on{End}_{\cat{A}}(X))$ of $\on{End}_{\cat{A}}(X)$.
\end{definition}

\begin{definition}
   Let $\cat{A}$ be a finite abelian category and let $X \in \cat{A}$. The {\it projective cover} of $X$ is an epimorphism $p_{0}: P \twoheadrightarrow X$ such that any $\alpha \in \on{End}_{\cat{A}}(P)$ satisfying $p_{0}\circ \alpha = p_{0}$ is an automorphism of $P$. The projective cover is unique up to a unique isomorphism, and any object of a finite abelian category admits a projective cover.
\end{definition}

\begin{definition}\label{topsocle}
  Given a finite abelian category $\cat{A}$, we have an adjoint triple 
\[\begin{tikzcd}[column sep=large]
	{\mathbb{L}(\cat{A})} & {\cat{A}}
	\arrow[hook, from=1-1, to=1-2]
	\arrow["{\on{top}}"', shift right=3, from=1-2, to=1-1]
	\arrow["{\on{soc}}", shift left=3, from=1-2, to=1-1]
\end{tikzcd}\]
and furthermore, the composite $\mathbb{L}(\cat{A}) \hookrightarrow \cat{A} \twoheadrightarrow \cat{A}/\mathfrak{J}(\cat{A})$ is an equivalence of categories, under which we identify the two. 

The {\it radical} of an object $X \in \cat{A}$ is the kernel of the unit of adjunction $\eta_{X}: X \rightarrow \iota(\on{top}(M))$.
Under an equivalence $\mathcal{A} \simeq \on{mod}_{\mathbb{k}}A$, we find $\mathcal{A}/\mathfrak{J}(\mathcal{A}) \simeq \on{mod}_{\mathbb{k}}(A/J(A))$, and the adjunction $\on{top} \dashv \iota$ is that between induction and restriction along the projection $\pi: A \twoheadrightarrow A/J(A)$. Since we assume $\mathbb{k}$ to be algebraically closed, in particular perfect, we find a section $\sigma_{0}: A/J \rightarrow A$, and a further adjoint triple associated to the restriction functor $\on{res}(\sigma_{0})$, with $P_0$ and $I_0$ correspond to taking projective covers and injective envelops respectively: 
\[\begin{tikzcd}[ampersand replacement=\&,column sep=large]
	{\cat{A}} \&\& {\mathbb{L}(\cat{A})}
	\arrow["{\on{res}(\sigma_{0})}"{description}, from=1-1, to=1-3]
	\arrow["{I_{0}}", shift left=3, from=1-3, to=1-1]
	\arrow["{P_{0}}"', shift right=4, from=1-3, to=1-1]
\end{tikzcd}\]
Since the composite $\on{res}(\sigma_{0}) \circ \on{res}(\pi)$ is naturally isomorphic to the identity functor, by passing to left adjoints we find
  \begin{equation}\label{topcoverbijection}
      \on{top}(P_{0}(X)) \simeq \on{top}(X) \text{ for all }X,\text{ and } P_{0}(\on{top}(P)) \simeq P \text{ for all projective }P.
  \end{equation} 
\end{definition}

\begin{definition}
 The {\it $\on{Ind}$-completion} $\on{Ind}(\cat{A})$ of a category $\cat{A}$ is defined as the full subcategory of the presheaf category on $\cat{A}$ consisting of filtered colimits of representable objects. As such, there is a full, faithful embedding of $\cat{A}$ in $\on{Ind}(\cat{A})$, which is a corestriction of the Yoneda embedding. 
 
 The $\on{Ind}$-completion of a finite abelian category is a complete, cocomplete abelian category, and the embedding of $\cat{A}$ in it is an exact functor.
\end{definition}

\begin{proposition}\label{unnaturality}
 Let $\cat{A}$ be a finite semisimple category and let $\cat{B}$ be an additive category. Then two functors $\functor{F},\functor{G}: \cat{A} \rightarrow \cat{B}$ are naturally isomorphic if and only if $\functor{F}(L) \simeq \functor{G}(L)$, for any $L \in \on{Irr}(\cat{A})$
\end{proposition}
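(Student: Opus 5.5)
The forward direction is immediate: if $\eta\colon \functor{F} \Rightarrow \functor{G}$ is a natural isomorphism, then each component $\eta_L\colon \functor{F}(L) \to \functor{G}(L)$ is an isomorphism. So the plan concentrates on the converse. The idea is to use that a finite semisimple category $\cat{A}$ is equivalent to a direct sum of copies of $\vect$, one for each $L \in \on{Irr}(\cat{A})$. This holds because $\mathbb{k}$ is algebraically closed, so $\on{End}(L) \simeq \mathbb{k}$ by Schur's lemma, and every object is a finite direct sum of simples. Concretely, $\cat{A}$ is equivalent to its full subcategory $\cat{A}_0$ whose objects are formal finite direct sums $\bigoplus_i L_i^{\oplus n_i}$ of the chosen representatives, with morphisms given by block matrices of scalars.

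First, for each $L \in \on{Irr}(\cat{A})$ I choose an isomorphism $\phi_L\colon \functor{F}(L) \xrightarrow{\sim} \functor{G}(L)$. Any choice works, because the only endomorphisms of $L$ are scalars and $\functor{F},\functor{G}$ are $\mathbb{k}$-linear. Thus $\functor{G}(\lambda)\circ\phi_L = \lambda\phi_L = \phi_L \circ \functor{F}(\lambda)$, and there are no nonzero morphisms between non-isomorphic simples. This gives a natural isomorphism between the restrictions of $\functor{F}$ and $\functor{G}$ to the full subcategory on $\on{Irr}(\cat{A})$.

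Second, I extend to all objects. Additive functors preserve finite biproducts, so for $X \in \cat{A}$ I fix a decomposition $X \simeq \bigoplus_k L_k$, given by inclusions $\iota_k$ and projections $p_k$ with $\sum_k \iota_k p_k = \on{id}_X$. I then set
\[ \eta_X \defeq \sum_k \functor{G}(\iota_k)\,\phi_{L_k}\,\functor{F}(p_k), \]
which is an isomorphism with inverse $\sum_k \functor{F}(\iota_k)\,\phi_{L_k}^{-1}\,\functor{G}(p_k)$. For naturality with respect to $f\colon X \to Y$, I decompose $f$ into components $p'_l f \iota_k\colon L_k \to L'_l$. Each of these is a scalar multiple of a fixed identification when $L_k = L'_l$, and zero otherwise. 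Naturality then reduces to naturality on the simples, which was established in the first step.

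The main obstacle is bookkeeping rather than substance: one must check that $\eta_X$ is well defined, or at least natural, despite the choice of decomposition. I would handle this by not claiming independence of the choice. Instead I would verify naturality directly for the chosen decompositions, using the identity $\functor{F}(f) = \sum_{k,l} \functor{F}(\iota'_l)\functor{F}(p'_l f\iota_k)\functor{F}(p_k)$ and likewise for $\functor{G}$. Alternatively, and more cleanly, I would transport everything along the equivalence $\cat{A}_0 \simeq \cat{A}$, where the decompositions are canonical, and then conclude for $\cat{A}$ by whiskering with a quasi-inverse.
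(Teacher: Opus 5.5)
Your proof is correct and follows essentially the same route as the paper. The paper identifies $\cat{A}$ with the additive completion $\on{Irr}(\cat{A})^{\oplus}$ and combines $\on{Fun}_{\mathbb{k}}(\cat{E},\cat{B})\simeq\on{Fun}_{\mathbb{k}}(\cat{E}^{\oplus},\cat{B})$ with $\on{Fun}_{\mathbb{k}}(\on{Irr}(\cat{A}),\cat{B})\simeq\cat{B}^{|\on{Irr}(\cat{A})|}$. Your first step is exactly the latter identification, and your explicit formula for $\eta_X$ unpacks the former; its naturality does hold for arbitrary chosen decompositions, using $\mathbb{k}$-linearity of $\functor{F}$ and $\functor{G}$.
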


\begin{proof}
  Clearly, $\cat{A}$ is the completion $\on{Irr}(\cat{A})^{\oplus}$ of $\on{Irr}(\cat{A})$ under finite direct sums, where we view $\on{Irr}(\cat{A})$ as a ($\mathbb{k}$-linear) category obtained from the full subcategory of $\cat{A}$ with the same objects as $\on{Irr}(\cat{A})$. The claim follows by combining the isomorphism $\on{Fun}_{\mathbb{k}}(\cat{E},\cat{B}) \simeq \on{Fun}_{\mathbb{k}}(\cat{E}^{\oplus},\cat{B})$, with the obvious isomorphism $\on{Fun}_{\mathbb{k}}(\on{Irr}(\cat{A}), \cat{B}) \simeq \cat{B}^{|\on{Irr}(\cat{A})|}$.
\end{proof}
 
\subsection{Tensor and fusion categories}
We follow \cite{EGNO} for the following two definitions.
\begin{definition}
 A \emph{finite tensor category} is a finite abelian rigid monoidal category $(\cat{C},\otimes,\mathbb{1},\mathsf{a})$ satisfying $\on{End}_{\cat{C}}(\mathbb{1}) \simeq \mathbb{k}$.
\end{definition}

\begin{definition}
 A {\it fusion category} is a finite tensor category which is semisimple. 
\end{definition}

For the remainder of this document, we fix a finite tensor category $\cat{C}$ (specifying semisimplicity or fusion whenever required). From \autoref{sec:fusioncat} onwards, we will also work with a fusion category $\cat{D}$.

\begin{definition}
We remark that while we use the same convention as \cite{EGNO} regarding terminology for left and right duals in a rigid monoidal category, we use the opposite convention regarding the associated decorations. Namely, we will denote by $\rightdual{V}$ the {\it right dual} of an object $V \in \cat{C}$, equipped with morphisms $\coev\from \mathbb{1}\to \rightdual{V} \otimes V$ and $\ev\from V\otimes \rightdual{V} \to \mathbb{1}$; and vice versa.
\end{definition}

\subsection{Module categories}
For the rest of this section, $\cat{C}$ denotes a finite tensor category.
\begin{definition}
For a monoidal category $\cat{C}$, a {\it left module category over }$\cat{C}$ is a category $\cat{M}$ equipped with an action functor $\lact\from \cat{C}\kotimes \cat{M} \to \cat{M}$ sending $(V,X)$ to $V \lact X$, equipped with coherent isomorphisms $(V\otimes W)\lact X \xiso V\lact (W\lact X)$ and $\mathbb{1}\lact X \xiso X$; see \cite[Definition 7.1.2]{EGNO} for details.
A right module category over $\cat{C}$ is defined in a similar fashion.
\end{definition}

\begin{definition}
 Given $\cat{C}$-module categories $\cat{M,N}$, a $\cat{C}$-module functor from $\cat{M}$ to $\cat{N}$ is a functor $\functor{F}\from \cat{M} \to \cat{N}$, together with coherent isomorphisms $V\lact \functor{F}(X) \xiso \functor{F}(V\lact X)$; see \cite[Definition 7.1.2]{EGNO} for details.
\end{definition}

Combining \cite[Remark~4]{Os} with \cite[Theorem~1.2]{Kel}, we find the following:
\begin{lemma} \label{adjointmodule}
  An adjoint (left or right) of a $\cat{C}$-module functor itself is a $\cat{C}$-module functor.
\end{lemma}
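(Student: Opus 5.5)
The plan is to prove the right-adjoint case in detail; the left-adjoint case is formally dual. Let $\functor{F}\from \cat{M}\to\cat{N}$ be a $\cat{C}$-module functor with structure isomorphisms $s_{V,X}\from V\lact \functor{F}(X)\xiso \functor{F}(V\lact X)$. Let $\functor{G}$ be a right adjoint of $\functor{F}$, with unit $\eta$ and counit $\varepsilon$. The idea is to transport the structure $s$ across the adjunction using rigidity of $\cat{C}$. The key observation is that, since $\cat{C}$ is rigid, each action functor $V\lact\blank$ on a module category has both adjoints, given by acting with the duals of $V$. On $\cat{M}$ we have $V\lact\blank \dashv \rightdual{V}\lact\blank$, via the evaluation and coevaluation maps together with the associativity and unit constraints. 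The same holds on $\cat{N}$.

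First, I will define a candidate structure map $t_{V,Y}\from V\lact \functor{G}(Y)\to \functor{G}(V\lact Y)$ as the mate of $s$:
\[
V\lact \functor{G}(Y)\xrightarrow{\eta} \functor{G}\functor{F}(V\lact \functor{G}(Y))\xrightarrow{\functor{G}(s^{-1})} \functor{G}(V\lact \functor{F}\functor{G}(Y))\xrightarrow{\functor{G}(V\lact\varepsilon)} \functor{G}(V\lact Y).
\]
Naturality of $t$ in $V$ and $Y$ is immediate. The coherence axioms (compatibility with the associator and the unitor) follow from the corresponding axioms for $s$ by the standard calculus of mates, i.e.\ doctrinal adjunction in the sense of Kelly. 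This calculus identifies lax-module structures on a right adjoint with oplax-module structures on the left adjoint. So $\functor{G}$ is at least a lax $\cat{C}$-module functor.

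The substantive step is to show that $t$ is invertible; this is the main obstacle, since doctrinal adjunction alone only yields lax structure. Here rigidity is used. For any object $Z$, compute $\on{Hom}(Z, V\lact \functor{G}(Y))\cong \on{Hom}(\leftdual{V}\lact Z,\functor{G}(Y))$ using the adjunction $\leftdual{V}\lact\blank\dashv V\lact\blank$. Then apply $\functor{F}\dashv\functor{G}$ to get $\on{Hom}(\functor{F}(\leftdual{V}\lact Z),Y)$, and use $s$ to rewrite this as $\on{Hom}(\leftdual{V}\lact \functor{F}(Z),Y)$. Applying the duality adjunction on $\cat{N}$ gives $\on{Hom}(\functor{F}(Z),V\lact Y)$, and finally $\functor{F}\dashv\functor{G}$ gives $\on{Hom}(Z,\functor{G}(V\lact Y))$. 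This chain of natural bijections shows, by Yoneda, that $V\lact\functor{G}(Y)\cong \functor{G}(V\lact Y)$ naturally. One then checks, by unwinding the adjunction units and counits and using the zig-zag identities for $\ev,\coev$, that this Yoneda isomorphism is exactly $t_{V,Y}$, so $t$ is invertible. The check can be done on the representative $Z=V\lact\functor{G}(Y)$ and the identity morphism.

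The left adjoint case is the same argument with the roles of left and right duals exchanged, or it follows by applying the right-adjoint case to the opposite categories, which are module categories over $\cat{C}^{\mathrm{op}}$ (again rigid). This recovers precisely the combination of \cite[Remark~4]{Os} with \cite[Theorem~1.2]{Kel}: Kelly's doctrinal adjunction gives the lax structure, and Ostrik's rigidity remark upgrades it to a strong structure.
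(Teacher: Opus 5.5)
Your proposal is correct and takes the same route as the paper. The paper gives no argument of its own beyond citing Kelly's doctrinal adjunction together with Ostrik's Remark~4, and your proof unpacks exactly those two citations: mates give the lax structure $t$ with its coherence, and the Hom-chain through the duality adjunctions $\leftdual{V}\lact\blank \dashv V\lact\blank$ shows that $t$ is invertible.

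One detail should be added when you identify your Yoneda isomorphism with $t_{V,Y}$. The zig-zag identities alone do not suffice there. You also need the compatibility of $s$ with $\otimes$ and the unit, and its naturality in the $\cat{C}$-variable applied to $\ev\from \leftdual{V}\otimes V\to\mathbb{1}$. These are what let you trade $s_{\leftdual{V},\,V\lact G(Y)}$ for $s^{-1}_{V,G(Y)}$.
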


\begin{definition}
Given an object $X \in \cat{M}$, the {\it internal $\on{Hom}$ from $X$} is the functor $\hom{X}{-}$ right adjoint to the functor $- \lact X\from \cat{C}\to \cat{M}$. For $Y \in \cat{M}$, the {\it internal $\on{Hom}$ from $X$ to $Y$} is the object $\hom{X}{Y} \in \cat{C}$.
We also denote $\underline{\on{End}}_{\cat{M}}(X) \coloneqq \hom{X}{X}$.

We say that a $\cat{C}$-module category $\cat{M}$ is {\it closed} if $\hom{X}{-}$ exists for all $X \in \cat{M}$. 
\end{definition}

The following claim is established in \cite[\S1]{Kock} in the symmetric case, but both this result as well as the proofs of \cite{Kock} carry over verbatim also to the case of non-symmetric $\cat{C}$, where the notion of enrichment in $\cat{C}$ is that of enrichment in a bicategory of \cite{Street}, applied to the one-object bicategory $\mathbf{B}\cat{C}$ obtained by delooping $\cat{C}$.

\begin{proposition}\label{EnrichmentsKock}
    Let $\cat{M,N}$ be closed $\cat{C}$-module categories. The assignments $\on{Ob}(\underline{\cat{M}}) := \on{Ob}(\cat{M})$ and $\on{Hom}_{\underline{\cat{M}}}(X,Y) := \underline{\on{Hom}}_{\cat{M}}(X,Y)$ define a $\cat{C}$-enriched category $\underline{\cat{M}}$. Given a $\cat{C}$-module functor $F\from \cat{M} \to \cat{N}$, 
    the composite 
\begin{equation}\label{internaldiagram}
\begin{tikzcd}
	{\on{Hom}_{\mathcal{C}}(V, \underline{\on{Hom}}_{\mathcal{M}}(X,Y))} && {\on{Hom}_{\mathcal{C}}(V, \underline{\on{Hom}}_{\mathcal{N}}(F(X),F(Y)))} \\
	{\on{Hom}_{\mathcal{M}}(V \triangleright X, Y)} & {\on{Hom}_{\mathcal{N}}(F(V \triangleright X), F(Y))} & {\on{Hom}_{\mathcal{N}}(V \triangleright F(X), F(Y))}
	\arrow["\simeq", from=1-1, to=2-1]
	\arrow["{F_{V\triangleright X, Y}}", from=2-1, to=2-2]
	\arrow["{-\circ \varphi_{V,F(X)}}", from=2-2, to=2-3]
	\arrow["\simeq", from=2-3, to=1-3]
\end{tikzcd}
\end{equation}
induces, by Yoneda lemma,
a morphism $\underline{F_{X,Y}}: \underline{\on{Hom}}_{\mathcal{M}}(X,Y) \rightarrow \underline{\on{Hom}}_{\mathcal{N}}(F(X),F(Y))$. The assignments $\on{Ob}(\underline{F}) = \on{Ob}(F)$ and $\underline{F}_{X,Y} := \underline{F_{X,Y}}$ define a $\cat{C}$-enriched functor $\underline{F}\from \underline{\cat{M}} \to \underline{\cat{N}}$.

In particular, $\underline{F}_{X,X}$ is a morphism of algebra objects in $\cat{C}$ (see \autoref{defn:algebra}).
\end{proposition}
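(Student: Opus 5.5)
The plan is to construct the enriched structure from the universal property of internal Hom and then reduce every axiom to an identity between honest morphisms in $\cat{M}$ or $\cat{N}$, using the Yoneda lemma.

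\textbf{The enriched category $\underline{\cat{M}}$.} Write $\mathrm{ev}_{X,Y}\from \hom{X}{Y}\lact X \to Y$ for the counit of the adjunction $-\lact X \dashv \hom{X}{-}$.
\begin{itemize}
\item Composition $\hom{Y}{Z}\otimes\hom{X}{Y}\to\hom{X}{Z}$ is the adjunct of
\[(\hom{Y}{Z}\otimes\hom{X}{Y})\lact X \xiso \hom{Y}{Z}\lact(\hom{X}{Y}\lact X)\xrightarrow{\mathrm{id}\lact \mathrm{ev}}\hom{Y}{Z}\lact Y\xrightarrow{\mathrm{ev}} Z.\]
\item The unit $\mathbb{1}\to\hom{X}{X}$ is the adjunct of $\mathbb{1}\lact X\xiso X$.
\end{itemize}
To check associativity, I compare the adjuncts of the two composites $\hom{Z}{W}\otimes\hom{Y}{Z}\otimes\hom{X}{Y}\to\hom{X}{W}$. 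Both adjuncts become the same triple evaluation once the module associativity constraint (pentagon) and naturality of $\mathrm{ev}$ are applied. The unit laws follow the same way from the triangle axioms of the module category. Since taking adjuncts is bijective, equality of the adjuncts gives equality of the maps. This is exactly the argument of \cite[\S1]{Kock}; symmetry is never used there, only the associator and unitor of the action. The construction is a $\cat{C}$-enrichment in the sense of \cite{Street} for $\mathbf{B}\cat{C}$, so the order of tensor factors must be kept as written.

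\textbf{The enriched functor $\underline{F}$.} The composite \eqref{internaldiagram} is natural in $V$: the adjunction isomorphisms are natural, $F_{V\lact X,Y}$ is natural by functoriality of $F$, and $\varphi$ is natural. So Yoneda produces $\underline{F}_{X,Y}$. Concretely, $\underline{F}_{X,Y}$ is the adjunct of
\[\hom{X}{Y}\lact F(X)\xrightarrow{\varphi} F(\hom{X}{Y}\lact X)\xrightarrow{F(\mathrm{ev})}F(Y).\]
The enriched-functor axioms then reduce to identities of adjuncts.
\begin{itemize}
\item \emph{Composition.} Compatibility of $\underline{F}$ with composition comes down to: $F$ preserves composition, $\varphi$ is natural, and $\varphi$ satisfies the module-functor coherence $\varphi_{V\otimes W,X}$ versus $\varphi_{V,W\lact X}\circ(V\lact\varphi_{W,X})$ up to associators.
\item \emph{Unit.} Compatibility with units is the unit coherence of $\varphi$ with $\mathbb{1}\lact -$.
\end{itemize}

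\textbf{The main obstacle and the algebra map.} The only genuinely delicate point is bookkeeping in the composition check. It requires the hexagon-type coherence of $\varphi$ to line up with the associators used to define composition in both $\underline{\cat{M}}$ and $\underline{\cat{N}}$. I would verify it diagrammatically, pasting naturality squares of $\mathrm{ev}$ and $\varphi$.

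Finally, for $X=Y$ the enriched-functor axioms say precisely that $\underline{F}_{X,X}$ commutes with the multiplications and units of $\iend{X}$ and $\underline{\mathrm{End}}_{\cat{N}}(F(X))$. Hence $\underline{F}_{X,X}$ is a morphism of algebra objects in $\cat{C}$.
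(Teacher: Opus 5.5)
Your proposal is correct and follows essentially the same route as the paper, which gives no independent argument: it simply invokes \cite[\S1]{Kock}, observing that symmetry of $\cat{C}$ is never used there and that enrichment is to be read in the sense of \cite{Street} over $\mathbf{B}\cat{C}$. Your adjunct-based sketch of the composition, unit and enriched-functor axioms unpacks that reference, with $\underline{F}_{X,Y}$ the adjunct of $F(\mathrm{ev})\circ\varphi$ and the algebra-morphism claim obtained as the case $X=Y$.
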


\begin{lemma}\label{enrichedhom}
    Let $\cat{M,N}$ be closed $\cat{C}$-module categories. Then $F$ is full and faithful if and only if $\underline{F}_{X,Y}$ is invertible for all $X,Y$.
\end{lemma}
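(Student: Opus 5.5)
The plan is to compare $F_{X,Y}$ and $\underline{F}_{X,Y}$ through the defining diagram \eqref{internaldiagram}, taking $V = \mathbb{1}$ for one direction and arbitrary $V$ for the other. Throughout I use the identification $\on{Hom}_{\cat{C}}(V,\hom{X}{Y}) \simeq \on{Hom}_{\cat{M}}(V\lact X,Y)$, natural in $V$.

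Suppose first that $\underline{F}_{X,Y}$ is invertible for all $X,Y$. By construction (the Yoneda step in \autoref{EnrichmentsKock}), postcomposition
\[
\underline{F}_{X,Y}\circ - \colon \on{Hom}_{\cat{C}}(V,\hom{X}{Y}) \to \on{Hom}_{\cat{C}}(V,\hom[\cat{N}]{F(X)}{F(Y)})
\]
agrees with the composite of the bottom row of \eqref{internaldiagram} under the two vertical isomorphisms. Set $V=\mathbb{1}$. The unitor gives $\mathbb{1}\lact X\simeq X$, and the module structure $\varphi_{\mathbb{1},F(X)}$ is compatible with the unitors by the coherence axioms for module functors. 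So after identifying $\on{Hom}_{\cat{M}}(\mathbb{1}\lact X,Y)$ with $\on{Hom}_{\cat{M}}(X,Y)$ and doing the same on the $\cat{N}$ side, the bottom row becomes exactly $F_{X,Y}$. Postcomposition with an isomorphism is bijective, so $F_{X,Y}$ is bijective, and $F$ is full and faithful.

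Conversely, suppose $F$ is full and faithful. For an arbitrary $V\in\cat{C}$, the bottom row of \eqref{internaldiagram} is $F_{V\lact X,Y}$ followed by precomposition with the isomorphism $\varphi_{V,F(X)}$. Both maps are bijective, so $\underline{F}_{X,Y}\circ -$ is a bijection $\on{Hom}_{\cat{C}}(V,\hom{X}{Y})\to\on{Hom}_{\cat{C}}(V,\hom[\cat{N}]{F(X)}{F(Y)})$ for every $V$. This family is natural in $V$, by the naturality of the adjunction isomorphisms and of $\varphi$, which is exactly what makes the Yoneda step in \autoref{EnrichmentsKock} valid. The Yoneda lemma then shows that $\underline{F}_{X,Y}$ is an isomorphism.

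The only delicate point is the $V=\mathbb{1}$ identification: one has to check, via the unit coherence of module functors, that the bottom row really reduces to $F_{X,Y}$. Everything else is formal.
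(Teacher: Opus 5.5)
Your proof is correct and is essentially the paper's argument: when $F$ is fully faithful, the bottom row of the diagram is a bijection natural in $V$, so Yoneda makes $\underline{F}_{X,Y}$ invertible; for the converse you specialise to $V=\mathbb{1}$. The unit-coherence check you flag is not actually needed, since bijectivity of $F_{\mathbb{1}\lact X,Y}$ transfers to $F_{X,Y}$ along the isomorphism $\mathbb{1}\lact X\simeq X$ by functoriality of $F$ alone.
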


\begin{proof}
 If $F$ is full and faithful, then the morphisms in \autoref{internaldiagram} are invertible, and hence so is $\underline{F}_{X,Y}$, since the Yoneda embedding reflects isomorphisms. If $\underline{F}_{X,Y}$ is invertible, then so are the morphisms in \autoref{internaldiagram}, and hence so is $F_{X,Y}$, since it can be obtained by setting $V = \mathbb{1}$ in \autoref{internaldiagram}.
\end{proof}

\begin{definition}
  A {\it finite $\cat{C}$-module category} is a module category over $\cat{C}$ which is finite abelian and such that for every $X \in \cat{M}$, the functor $- \lact X$ is right exact.

 In particular, the internal Hom functor $\hom{X}{-}$, defined as the right adjoint of $-\lact X$, exists for all $X \in \cat{M}$. In other words, a finite $\cat{C}$-module category is closed.
\end{definition}
\begin{remark}
    The standard definition of a finite $\cat{C}$-module category (cf.~ \cite[Definition 7.3.1]{EGNO}) assumes the stronger condition that the functor $- \lact X$ is exact. 
    Our weaker condition is equivalent to this; see \autoref{cor:exactonboth}.
\end{remark}

We note that when $\cat{C}$ is semisimple (i.e.\ fusion), the internal $\on{Hom}$ is easy to compute.
Being the right adjoint of $- \lact X$ by definition, we have $\on{Hom}_\cat{M}(L\lact X, Y) \simeq \on{Hom}_\cat{C}(L, \hom{X}{Y})$.
By going through all of the simple objects in $\cat{C}$, we see that
\begin{equation} \label{eqn:internalhomformula}
\hom{X}{Y} \simeq \bigoplus_{L \in \on{Irr}(\cat{C})} L^{\oplus d_L}
\end{equation}
where $d_L \coloneqq \dim_{\mathbb{k}} \on{Hom}_\cat{M}(L\lact X, Y) = \dim_{\mathbb{k}} \on{Hom}_\cat{C}(L, \hom{X}{Y})$.

\begin{definition}
 We say that an object $X \in \cat{M}$ is {\it $\cat{C}$-projective} if the internal Hom functor $\hom{X}{-}$ is exact.   

  A $\cat{C}$-projective object is said to be a $\cat{C}$-projective generator if $\hom{X}{Y} = 0$ implies $Y = 0$.
  
  Note that if $\cat{C}$ is a fusion category, then an object is $\cat{C}$-projective if and only if it is projective.
\end{definition}

The following is observed already in \cite[Corollary~3.15]{EO}; see \cite[Proposition~4.3, Proposition~4.5]{SZ} for a detailed proof in a slightly more general setting.

\begin{proposition}\label{cprojectiveobjects}
  An object $X$ in a finite $\cat{C}$-module category $\cat{M}$ is $\cat{C}$-projective if and only if for 
 any projective object $P \in \cat{C}$, the object $P \lact X$ is projective, and it is a $\cat{C}$-projective generator if and only if any projective object of $\cat{M}$ is a direct summand of one of the form $P \lact X$.
\end{proposition}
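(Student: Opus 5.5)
The plan is to use the adjunction defining internal Hom, $\on{Hom}_{\cat{M}}(V\lact X, Y) \simeq \on{Hom}_{\cat{C}}(V, \hom{X}{Y})$, natural in $V \in \cat{C}$ and $Y \in \cat{M}$, together with rigidity of $\cat{C}$. Rigidity makes $V \lact -$ have both adjoints (given by the duals of $V$), so it is exact. Throughout I take $\cat{C}$ to have enough projectives, which it does as a finite abelian category, and I use that a right exact functor is exact iff it sends short exact sequences to ones that stay exact on the left.

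\emph{First claim.} Suppose $\hom{X}{-}$ is exact and $P \in \cat{C}$ is projective. Then
\[
\on{Hom}_{\cat{M}}(P \lact X, -) \simeq \on{Hom}_{\cat{C}}(P, \hom{X}{-})
\]
is a composite of exact functors, so $P\lact X$ is projective.

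For the converse, $\hom{X}{-}$ is a right adjoint, hence left exact. So it suffices to show it preserves epimorphisms. Let $Y \twoheadrightarrow Z$ be an epimorphism in $\cat{M}$. For every projective $P\in\cat{C}$ the map
\[
\on{Hom}_{\cat{C}}(P,\hom{X}{Y}) \to \on{Hom}_{\cat{C}}(P,\hom{X}{Z})
\]
is identified with $\on{Hom}_{\cat{M}}(P\lact X, Y)\to \on{Hom}_{\cat{M}}(P\lact X,Z)$, which is surjective because $P\lact X$ is projective. A morphism $f\colon A\to B$ in $\cat{C}$ such that $\on{Hom}(P,f)$ is surjective for all projectives $P$ is an epimorphism: take $P$ to be the projective cover of $B$ and lift it. Hence $\hom{X}{-}$ is exact.

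\emph{Second claim.} Assume $X$ is $\cat{C}$-projective. Suppose first that every projective of $\cat{M}$ is a summand of some $P\lact X$, and let $Y\neq 0$. Its projective cover $Q$ maps nontrivially to $Y$, and $Q$ is a summand of some $P\lact X$. This gives a nonzero map $P\lact X\to Y$, i.e.\ a nonzero map $P \to \hom{X}{Y}$, so $\hom{X}{Y}\neq 0$.

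Conversely, suppose $\hom{X}{-}$ detects zero objects. Put $G = P_{\cat{C}}\lact X$, where $P_{\cat{C}}$ is a projective generator of $\cat{C}$ (e.g.\ the sum of the projective covers of all simples). Then $G$ is projective by the first claim. Consider the evaluation map $G\otimes_{\mathbb{k}}\on{Hom}(G,Y)\to Y$ and let $C$ be its cokernel. Suppose $C \neq 0$. Then $\hom{X}{C}\neq 0$, so it receives a nonzero map from $P_{\cat{C}}$; that is, there is a nonzero map $G\to C$. Since $G$ is projective, this map lifts to $Y$, and the composite $G \to Y \to C$ then factors through the image of the evaluation map and is zero. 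This is a contradiction, so $C = 0$.

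Thus every object is a quotient of a sum of copies of $G$. Each indecomposable projective is then a summand of $G^{\oplus n} \simeq P_{\cat{C}}^{\oplus n}\lact X$, which proves the claim.

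The step I expect to need the most care is the epimorphism-detection in the converse of the first claim, together with the use of projective covers in $\cat{C}$; both rely only on finiteness.
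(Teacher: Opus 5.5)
Your proof is correct. The paper gives no argument of its own here: it defers to \cite[Corollary~3.15]{EO} and \cite[Propositions~4.3 and~4.5]{SZ}, so there is no internal proof to compare against.

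Your proof is the natural self-contained route. For the first equivalence, the defining adjunction $\on{Hom}_{\cat{M}}(V\lact X,-)\simeq\on{Hom}_{\cat{C}}(V,\hom{X}{-})$ turns projectivity of the objects $P\lact X$ into the statement that $\on{Hom}_{\cat{C}}(P,-)$ sees every map $\hom{X}{Y}\to\hom{X}{Z}$ induced by an epimorphism as surjective. A projective cover then shows that such maps are epimorphisms. For the generator statement, you show that the single projective object $P_{\cat{C}}\lact X$ covers every object of $\cat{M}$.

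Your argument never uses rigidity; your remark that $V\lact -$ is exact plays no role. It only uses that $\cat{C}$ and $\cat{M}$ are finite abelian and that $-\lact X$ is right exact, so that its right adjoint exists. It therefore goes through verbatim for non-rigid finite $\cat{C}$, which is in the spirit of the more general setting the paper attributes to \cite{SZ}.

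You were also right to assume that $X$ is $\cat{C}$-projective in the second equivalence. Without that hypothesis, or without requiring the objects $P\lact X$ to be projective, the summand condition is too weak. For example, take $\cat{C}=\vect$ and $X=R\oplus S$ in $\on{mod}_{\mathbb{k}}(R)$, where $S$ is a non-projective simple. Every projective is a summand of $X^{\oplus n}$, but $\hom{X}{-}=\on{Hom}_{R}(X,-)$ is not exact.
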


In view of \autoref{cprojectiveobjects}, the following is a rephrasing of \cite[Definition~3.1]{EO}.

\begin{definition}
  A finite $\cat{C}$-module category $\cat{M}$ is said to be {\it exact} if all of its objects are $\cat{C}$-projective. 
\end{definition}

The following lemma is immediate.
\begin{lemma} \label{lem:semisimpleisexact}
 A semisimple $\cat{C}$-module category is automatically exact, since all of its objects are projective. If $\cat{C}$ is fusion, then $\mathbb{1}$ is projective, and hence any object of an exact $\cat{C}$-module category is projective, showing that an exact module category over a fusion category is necessarily semisimple.   
\end{lemma}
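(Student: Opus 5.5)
The lemma has two assertions, and I will prove them in order using only the definitions of $\cat{C}$-projectivity and exactness.

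For the first assertion, let $\cat{M}$ be a semisimple finite $\cat{C}$-module category and take $X \in \cat{M}$. In a semisimple abelian category every short exact sequence splits. An additive functor preserves split short exact sequences. Hence every additive functor out of $\cat{M}$ is exact, and in particular so is the internal Hom functor $\hom{X}{-}\from \cat{M} \to \cat{C}$. This functor exists because finite module categories are closed, and it is $\mathbb{k}$-linear, hence additive, being a right adjoint. Therefore every object $X$ is $\cat{C}$-projective, so $\cat{M}$ is exact. This also agrees with the phrasing ``since all of its objects are projective'': one could instead invoke \autoref{cprojectiveobjects}, since every object of $\cat{M}$, in particular each $P \lact X$, is projective. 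The direct argument is simpler, so I will use it.

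For the second assertion, let $\cat{C}$ be fusion. Then every object of $\cat{C}$ is projective, in particular $\mathbb{1}$. Now let $\cat{M}$ be an exact $\cat{C}$-module category and $X \in \cat{M}$, so $X$ is $\cat{C}$-projective. By \autoref{cprojectiveobjects}, $P \lact X$ is projective for every projective $P \in \cat{C}$. Taking $P = \mathbb{1}$ and using the unit isomorphism $\mathbb{1} \lact X \simeq X$, we get that $X$ is projective in $\cat{M}$.

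It remains to see that a finite abelian category in which every object is projective is semisimple. Given any monomorphism $Y \hookrightarrow X$, the quotient $X/Y$ is projective, so the epimorphism $X \twoheadrightarrow X/Y$ splits. Thus every subobject is a direct summand. Since all objects have finite length, induction on length shows every object is a direct sum of simples.

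I expect no real obstacle. The only point needing care is that \autoref{cprojectiveobjects} is stated for finite module categories. This is satisfied, since exact module categories are finite by definition.
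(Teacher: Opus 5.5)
Your proof is correct and follows the paper's (essentially inline) argument. In the fusion case you reason exactly as the paper does: $X \simeq \mathbb{1} \lact X$ is projective by \autoref{cprojectiveobjects} because $\mathbb{1}$ is projective, and a finite abelian category in which every object is projective is semisimple. The only deviation is in the first part, where you get $\cat{C}$-projectivity by noting that every additive functor out of a semisimple category, in particular $\hom{X}{-}$, is exact, instead of arguing that each $P \lact X$ is projective and invoking \autoref{cprojectiveobjects}. Both routes are equally immediate.
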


\subsection{Module categories are categories of modules}

\begin{definition}\label{defn:algebra}
 An {\it algebra object} in $\cat{C}$ consists of an object $A \in \cat{C}$ together with a multiplication map $\mu\from A \otimes A \to A$ and a unit map $\eta\from \mathbb{1} \to A$, satisfying associativity and unitality axioms of \cite[Definition~7.8.1]{EGNO}. Similarly to \cite{EGNO}, we will refer to algebra objects in $\cat{C}$ simply as {\it algebras in $\cat{C}$}.   
\end{definition}

\begin{definition}
 There is an obvious notion of a morphism of algebras, and we denote the category of algebras in $\cat{C}$ together with such morphisms by $\Alg{\cat{C}}$.   
\end{definition}

\begin{definition}
 Given $A \in \Alg{\cat{C}}$, a {\it right $A$-module (in $\cat{C}$)} is an object $M \in \cat{C}$ together with an action map $\nabla_{M}\from M \otimes A \to M$, satisfying multiplicativity and unitality axioms of \cite[Definition~7.8.5]{EGNO}. Similarly one defines left $A$-modules.

\end{definition}

\begin{definition}
  There is an obvious notion of morphism of right $A$-modules. We denote the category of right $A$-modules simply by $\modca$.
  
 The category $\modca$ of \emph{right} $A$-modules is canonically a \emph{left} $\cat{C}$-module category, which in the strict case is defined simply by $V \lact (M,\nabla_{M}) \defeq (V \otimes M, V \otimes \nabla_{M})$.
\end{definition}

The following is a slight reformulation of \cite[Theorem~7.10.1]{EGNO}:
\begin{theorem}[{\cite[Theorem~7.10.1]{EGNO}}]\label{EGNOreconstruction}
   If $P$ is a $\cat{C}$-projective generator for a finite $\cat{C}$-module category $\cat{M}$, then the functor $\hom{P}{-}\from \cat{M} \to \modw{\underline{\on{End}}_\cat{M}(P)}$ is an equivalence of $\cat{C}$-module categories.

   Furthermore, a projective generator for $\cat{M}$ is a fortiori a $\cat{C}$-projective generator for $\cat{M}$. Hence, for any finite $\cat{C}$-module category, there is an $A \in \Alg{\cat{C}}$ such that $\cat{M} \simeq \modca$.
\end{theorem}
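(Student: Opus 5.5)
The first part of the statement is \cite[Theorem~7.10.1]{EGNO}, so I will only recall how it goes and focus on the ``furthermore'' part. Put $A \defeq \iend{P}$. This is an algebra by \autoref{EnrichmentsKock}, with multiplication given by enriched composition. For every $Y \in \cat{M}$, the object $\hom{P}{Y}$ is a right $A$-module via the composition $\hom{P}{Y}\otimes\hom{P}{P}\to\hom{P}{Y}$. Hence $\hom{P}{-}$ lifts to a functor $\cat{M}\to\modca$.

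This lift is a $\cat{C}$-module functor. Indeed, $\hom{P}{-}$ is right adjoint to the module functor $-\lact P$, so \autoref{adjointmodule} makes it a module functor, and one checks that the structure isomorphisms $V\otimes\hom{P}{Y}\simeq\hom{P}{V\lact Y}$ are $A$-linear.

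For the equivalence I would argue in three steps.
\begin{enumerate}
\item The functor is exact, because $P$ is $\cat{C}$-projective.
\item It is fully faithful on objects of the form $V\lact P$. We have $\hom{P}{V\lact P}\simeq V\otimes A$, the free module, and
\[
\on{Hom}_{\cat{M}}(V\lact P,W\lact P)\simeq\on{Hom}_{\cat{C}}(V,W\otimes A)\simeq\on{Hom}_{A}(V\otimes A,W\otimes A).
\]
\item It extends to full faithfulness and essential surjectivity. Every $Y\in\cat{M}$ has a presentation $V_1\lact P\to V_0\lact P\to Y\to 0$: take the counit $\hom{P}{Y}\lact P\to Y$, which is epi since $\hom{P}{-}$ reflects zero, and repeat on its kernel. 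Every $A$-module has a free presentation $M\otimes A\otimes A\to M\otimes A\to M$. Exactness together with step (ii) then gives both properties.
\end{enumerate}

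For the ``furthermore'' part, let $G$ be a projective generator of $\cat{M}$. Then $P'\lact G$ is projective for every projective $P'\in\cat{C}$. This follows because $P'\lact-$ has right adjoint $\rightdual{P'}\lact-$ (or the other dual, depending on conventions), and this adjoint is exact. It is exact because $\lact$ is exact in $\cat{M}$, using the equivalence referenced in \autoref{cor:exactonboth}, or more simply because the action by any object of a rigid $\cat{C}$ has both adjoints. By \autoref{cprojectiveobjects}, $G$ is therefore $\cat{C}$-projective.

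It is a generator: if $\hom{G}{Y}=0$, then $\on{Hom}(G,Y)=\on{Hom}(\mathbb{1},\hom{G}{Y})=0$, which forces $Y=0$ since $G$ is a projective generator. Applying the first part with $P=G$ yields $A=\iend{G}$ with $\cat{M}\simeq\modca$. Since any finite abelian category has a projective generator, the last claim follows.

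I expect the main obstacle to be justifying exactness of $P'\lact-$ on $\cat{M}$ using only right exactness of $-\lact X$ from the definition. I would use rigidity: $P'\lact-$ has both a left and a right adjoint, given by the two duals, so it is exact.
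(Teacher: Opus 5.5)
Your proposal is correct and is essentially the standard argument behind \cite[Theorem~7.10.1]{EGNO}, which the paper only cites and does not prove, including the ``furthermore'' clause that you obtain from rigidity and \autoref{cprojectiveobjects}. Nothing in your argument uses exactness of $-\lact X$ in the first variable, so it does cover the paper's weaker definition of a finite module category. Two small fixes: drop the alternative appeal to \autoref{cor:exactonboth}, which would be circular because the paper deduces that corollary from this very theorem. Also state step (ii) for an arbitrary target, $\on{Hom}_{\cat{M}}(V\lact P,Y)\simeq\on{Hom}_{\cat{C}}(V,\hom{P}{Y})\simeq\on{Hom}_{A}(V\otimes A,\hom{P}{Y})$, since the full-faithfulness argument in (iii) needs this for general $Y$, not only $Y=W\lact P$.
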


\begin{corollary} \label{cor:exactonboth}
  For a finite $\cat{C}$-module category $\cat{M}$, the functor $- \lact -$ is exact in both variables.   
\end{corollary}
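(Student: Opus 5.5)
We have to show that for a finite $\cat{C}$-module category $\cat{M}$ (where $-\lact X$ is only assumed right exact), the action bifunctor is exact in each variable. The plan is to transport everything along the equivalence of \autoref{EGNOreconstruction}. By that theorem there is an algebra $A \in \Alg{\cat{C}}$ and an equivalence of $\cat{C}$-module categories $\cat{M} \simeq \modca$. The module structure maps of such an equivalence $F$ give natural isomorphisms $V \lact F(X) \simeq F(V \lact X)$, and $F$ together with its quasi-inverse are exact. Hence exactness of $-\lact-$ in either variable on $\cat{M}$ is equivalent to the same property on $\modca$, and it suffices to treat $\cat{M} = \modca$ with $V \lact (M,\nabla_M) = (V\otimes M, V\otimes \nabla_M)$.

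On $\modca$, kernels and cokernels are computed in $\cat{C}$: the forgetful functor $\modca \to \cat{C}$ is exact and reflects exactness. To see this, take a morphism of right $A$-modules $f$ and its kernel or cokernel in $\cat{C}$. This object inherits an $A$-action because $-\otimes A$ is exact in $\cat{C}$; here we use that $\cat{C}$ is rigid, so $-\otimes A$ has both adjoints. The inherited object is then the kernel (respectively cokernel) in $\modca$. Consequently a sequence of $A$-modules is exact if and only if its underlying sequence in $\cat{C}$ is exact.

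Exactness in each variable then reduces to exactness of $\otimes$ in $\cat{C}$. For fixed $M$, the functor $V \mapsto V\otimes M$ has underlying functor $-\otimes M\colon \cat{C}\to\cat{C}$. This has left and right adjoints given by tensoring with duals of $M$, so it is exact. For fixed $V$, the functor $M \mapsto V\otimes M$ on $\modca$ has underlying functor $V\otimes -$, which is exact for the same reason. Combining this with the previous paragraph gives exactness in both variables.

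The main subtle point is the first step. One must check that the equivalence of \autoref{EGNOreconstruction} applies under the weaker right-exactness hypothesis, which requires the existence of internal Homs. This is guaranteed, since a finite module category in the paper's sense is closed, and the EGNO reconstruction argument only uses the internal Hom adjunction and the projective generator. One must also check that it is compatible with the module structures up to natural isomorphism. The remaining steps are routine consequences of rigidity.
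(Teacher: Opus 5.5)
Your proposal is correct and matches the paper's argument: the paper gives no separate proof and deduces the corollary directly from \autoref{EGNOreconstruction}. That theorem transports the action to $\modca$, where it is $V\otimes M$, which is exact in both variables because $\otimes$ is biexact in the rigid category $\cat{C}$. Your additional checks are exactly the points the paper leaves implicit: that kernels and cokernels in $\modca$ are computed in $\cat{C}$, and that the reconstruction uses right exactness of $-\lact X$ only through the existence of internal Homs.
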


\begin{example}
    The forgetful functor sending $\vect_G \to \vect$ makes $\vect$ a $\vect_G$-module category; equivalently, $\vect$ is equipped with a (strong) categorical $G$-action with $g \lact V = V$ for all $g \in G$ and $V \in \vect$.
    Then $\vect \simeq \on{mod}_{\vect_G}(\underline{\on{End}}_{\cat{M}}(\mathbb{k}))$, where using \autoref{eqn:internalhomformula}, one shows that $\underline{\on{End}}_{\cat{M}}(\mathbb{k}) = \mathbb{k}[G]$; here, $\mathbb{k}[G]$ is viewed as a $G$-graded algebra with each $g \in G$ sitting in degree $g$. This result is a special case of the fundamental theorem of Hopf modules, applied to the Hopf algebra given by the dual to the group algebra.
\end{example}

\subsection{Bimodules, ideals and the \texorpdfstring{$\cat{C}$}{C}-radical of an algebra}

\begin{definition}[{\cite[Definition~7.8.25]{EGNO}}]
 Given $A,B \in \Alg{\cat{C}}$, an {\it $A$-$B$-bimodule object in $\cat{C}$} is an object $M \in \cat{C}$ which is a right $B$-module and left $A$-module, such that the two actions commute.

 A morphism of bimodules is a morphism of $\cat{C}$ which is simultaneously a morphism of $A$-modules and of $B$-modules. We denote the category of $A$-$B$-bimodules by $\bimod(A,B)$.
\end{definition}

\begin{definition}
 Given an $A$-$B$-bimodule $M$, the $\cat{C}$-module functor $- \otimes_{A} M\from \modca \to \modw{B}$ is defined on objects by $N \otimes_{A} M = \on{coeq}(N \otimes \nabla_{M}, \nabla_{N}\otimes M)$. The right $B$-action is inherited from that on $M$, and well-defined by associativity and (right) exactness of $\otimes$. It is extended to morphisms by functoriality of colimits. It becomes a $\cat{C}$-module functor as a consequence of associativity of $\otimes$.

 We say that $A,B \in \Alg{\cat{C}}$ are 
 \emph{Morita equivalent} if there is a $\cat{C}$-module equivalence between $\modca$ and $\modw{B}$. As a direct consequence of \cite[Proposition~7.11.1]{EGNO}, equivalently there are bimodules ${}_{A}M_{B}, {}_{B}N_{A}$ with bimodule isomorphisms $A \simeq M \otimes_{B} N$ and $B \simeq N \otimes_{A} M$.

 Observe that if we instead consider algebras in $\on{Ind}(\cat{C})$, then also the bimodules realizing the Morita equivalence need to be bimodules in $\on{Ind}(\cat{C})$, rather than just in $\cat{C}$.
\end{definition}

\begin{definition}
 A morphism $\varphi: A \rightarrow B$ in $\Alg{\cat{C}}$ endows $B$ with left and right actions of $A$, which commute with the regular actions of $B$ on itself. The functor $-\otimes_{A} B\from \modca \to \modw{B}$, which we denote by $\on{ind}(\varphi)$ and refer to as {\it induction along $\varphi$}, has an exact right adjoint $\on{res}(\varphi)$, which takes a $B$-module $M$ and sends it to the $A$-module whose $A$-action is given by $M \otimes A \xrightarrow{M \otimes \varphi} M \otimes B \xrightarrow{\nabla_{M}} M$. 
\end{definition}

\begin{definition}
 For $A \in \Alg{\cat{C}}$, a {\it (two-sided) ideal} in $A$ is a subobject of $A$ in $\bimod(A,A)$.
\end{definition}

\begin{definition}\label{idealtimesbimodule}
 Given $M \in \bimod(B,A)$ and an ideal $I$ in $A$, we denote by $MI$ the image of the morphism 
 \[
 M \otimes I \hookrightarrow M \otimes A \xrightarrow{\nabla_{M}} M.
 \]
 As a consequence of associativity and (right) exactness of $\otimes$, the object $MI$ is a $B$-$A$-subbimodule of $M$.
\end{definition}

\begin{proposition}
  Given an ideal $I$ in $A$, the quotient $A/I$ is again an algebra in $\cat{C}$, and the morphism $\pi\from A \to A/I$ is a morphism in $\Alg{\cat{C}}$. The functor $\on{res}(\pi)$ is full, faithful, and its essential image can be characterized as consisting of $M \in \modca$ satisfying $MI = 0$.
\end{proposition}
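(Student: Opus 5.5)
The plan has three parts: build the algebra structure on $A/I$, then show $\on{res}(\pi)$ is full and faithful, then identify its essential image.

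\emph{Algebra structure.} Since $I \hookrightarrow A$ is a bimodule monomorphism, the cokernel $A/I$ inherits an $A$-$A$-bimodule structure. I will use right exactness of $\otimes$ in each variable, which holds because $\cat{C}$ is rigid and so $\otimes$ is exact. It gives that $A\otimes A \to A/I \otimes A/I$ is an epimorphism with kernel the image of $I\otimes A \oplus A \otimes I$. The composite $A\otimes A \xrightarrow{\mu} A \to A/I$ kills $I \otimes A$ and $A\otimes I$, because $I$ is a two-sided ideal: $\mu(I\otimes A) \subseteq I$ and $\mu(A \otimes I) \subseteq I$. It therefore factors uniquely through a map $\bar\mu\from A/I\otimes A/I \to A/I$. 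The unit is $\bar\eta = \pi\circ\eta$. Associativity and unitality for $\bar\mu$ follow from those of $\mu$ after precomposing with the epimorphisms $\pi^{\otimes 3}$ and $\pi$. By construction $\pi\circ\mu = \bar\mu\circ(\pi\otimes\pi)$ and $\pi\circ\eta=\bar\eta$, so $\pi$ is an algebra morphism.

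\emph{Full faithfulness.} $\on{res}(\pi)$ does not change the underlying object, so it is faithful. For fullness, take $A/I$-modules $M,N$ and an $A$-linear $f\from M\to N$. The two maps $f\circ\nabla_M$ and $\nabla_N\circ(f\otimes A/I)$ agree after precomposition with the epimorphism $M\otimes\pi$, since $\nabla_M\circ(M\otimes\pi)$ is exactly the restricted $A$-action. Hence the two maps are equal and $f$ is $A/I$-linear.

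\emph{Essential image.} If $M$ is restricted from $A/I$, its $A$-action factors through $M\otimes\pi$, which vanishes on $M\otimes I$ because $\pi$ does on $I$; so $MI=0$. Conversely, suppose $MI=0$, meaning $\nabla_M$ vanishes on $M\otimes I$. Right exactness of $M\otimes-$ gives the exact sequence $M\otimes I\to M\otimes A\to M\otimes A/I\to 0$. Hence $\nabla_M$ factors through $M\otimes A/I$, yielding a map $\bar\nabla_M$. Its associativity and unitality are checked again by precomposing with epimorphisms, and restricting $\bar\nabla_M$ along $\pi$ recovers $M$.

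The main obstacle is the bookkeeping in the first part. One must use that $\ker(\pi\otimes\pi)$ is generated by $I\otimes A$ and $A\otimes I$. This follows by factoring $\pi\otimes\pi=(\pi\otimes A/I)\circ(A\otimes\pi)$ and applying exactness of $\otimes$ in each variable separately.
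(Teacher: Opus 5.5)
Your proposal is correct. The paper states this proposition without proof as a standard fact, and your argument is exactly the routine verification it leaves implicit: you descend $\mu$ and $\nabla_{M}$ along the relevant cokernels, then check associativity, unitality and $A/I$-linearity after precomposing with the epimorphisms $\pi^{\otimes 3}$, $M \otimes \pi$ and $M \otimes \pi \otimes \pi$.

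One small remark: your argument only uses right exactness of $\otimes$ in each variable, so the appeal to rigidity is more than you need. This matters because the paper later applies the statement to algebras in $\on{Ind}(\cat{C})$, for instance in \autoref{ressimple} and for quotients $T_{A}(E)/I$. There objects need not be dualizable, but $\otimes$ still preserves colimits in each variable, so your proof carries over unchanged.
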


\begin{definition}
   Binary (more generally, finite) products in the category $\Alg{\cat{C}}$ of algebras in $\mathcal{C}$ are lifted from $\mathcal{C}$: given $A_{1},A_{2} \in \Alg{\cat{C}}$, the product algebra $A_{1} \times A_{2} \in \Alg{\cat{C}}$ is given by $\left(A_{1} \oplus A_{2}, \left(\begin{smallmatrix} \mu_{1} & 0 \\ 0 & \mu_{2}\end{smallmatrix}\right), \left(\begin{smallmatrix} \eta_{1} \\ \eta_{2} \end{smallmatrix} \right)\right)$. The canonical injections $\iota_{1}\from A_{1} \to A_{1} \oplus A_{2}$ and $\iota_{2}$ into the coproduct $A_{1} \oplus A_{2}$ in $\mathcal{C}$ realize $A_{1}$ and $A_{2}$ as ideals in $A_{1} \times A_{2}$, hence also non-unital subalgebras of $A_{1} \times A_{2}$. We denote the idempotent endomorphisms $\iota_{1} \circ \pi_{1}$ and $\iota_{2} \circ \pi_{2}$ by $e_{1}$ and $e_{2}$. 
\end{definition}

\begin{proposition}\label{modproduct}
    There is an equivalence $\modw{A_{1}\times A_{2}} \simeq \modw{A_{1}}\oplus \modw{A_{2}}$. 
\end{proposition}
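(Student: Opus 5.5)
We prove \autoref{modproduct} with the central idempotents $e_{1},e_{2}$ of $A_{1}\times A_{2}$, mirroring the classical argument for products of rings. We write $A \defeq A_{1}\times A_{2}$ and construct mutually inverse $\cat{C}$-module functors
\[
\Phi\from \modw{A_{1}}\oplus\modw{A_{2}} \to \modw{A}, \qquad \Psi\from \modw{A}\to \modw{A_{1}}\oplus \modw{A_{2}}.
\]
The functor $\Phi$ sends $(M_{1},M_{2})$ to $M_{1}\oplus M_{2}$ with action
\[
(M_{1}\oplus M_{2})\otimes(A_{1}\oplus A_{2}) \simeq \bigoplus_{i,j} M_{i}\otimes A_{j} \to M_{1}\oplus M_{2},
\]
which is $\nabla_{M_{i}}$ on the diagonal summands $M_{i}\otimes A_{i}$ and zero on the off-diagonal ones. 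Equivalently, $\Phi(M_{1},M_{2}) = \on{res}(\pi_{1})(M_{1})\oplus \on{res}(\pi_{2})(M_{2})$, where the projections $\pi_{i}\from A\to A_{i}$ are algebra morphisms by construction of the product. Associativity and unitality then come for free from restriction.

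For $\Psi$, take $M\in\modw{A}$ and define endomorphisms
\[
\epsilon_{i}\defeq \nabla_{M}\circ (M\otimes (\iota_{i}\circ\eta_{i}))\from M\to M,
\]
i.e.\ right multiplication by the idempotent unit of the $i$th factor. Three facts about these maps carry the argument:
\begin{itemize}
\item $\epsilon_{1}+\epsilon_{2}=\mathrm{id}_{M}$, because $\eta=\iota_{1}\eta_{1}+\iota_{2}\eta_{2}$ and $M$ is unital.
\item $\epsilon_{i}\epsilon_{j}=\delta_{ij}\epsilon_{i}$, by associativity of $\nabla_{M}$ together with the multiplication being diagonal, so that $\mu\circ(\iota_{i}\eta_{i}\otimes \iota_{j}\eta_{j})=\delta_{ij}\iota_{i}\eta_{i}$.
\item Each $\epsilon_{i}$ is a morphism of right $A$-modules. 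This uses that $\iota_{i}\eta_{i}$ is central: both $\mu\circ(\iota_{i}\eta_{i}\otimes A)$ and $\mu\circ(A\otimes \iota_{i}\eta_{i})$ equal $e_{i}$, which follows from the unit axioms of $A_{i}$ and the diagonal form of $\mu$.
\end{itemize}
Hence $M\simeq M_{1}\oplus M_{2}$ as $A$-modules, where $M_{i}=\on{im}(\epsilon_{i})$; the idempotents split because $\cat{C}$ is abelian. Moreover $A_{j}$ acts by zero on $M_{i}$ for $j\neq i$: precomposing the action with $M_{i}\otimes\iota_{j}$ equals precomposing with $\epsilon_{i}\otimes \iota_{j}$, which vanishes by associativity and $\mu(\iota_{i}\eta_{i}\otimes\iota_{j})=0$. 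So $M_{i}$ is an $A_{i}$-module via $M_{i}\otimes A_{i}\xrightarrow{M_{i}\otimes\iota_{i}} M_{i}\otimes A\to M_{i}$, and we set $\Psi(M)=(M_{1},M_{2})$. On morphisms, any $A$-module map commutes with the $\epsilon_{i}$ and so restricts to the summands.

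It remains to check that the two composites are identities. The natural isomorphism $\Phi\Psi(M)\simeq M$ is the decomposition just constructed. For $\Psi\Phi\simeq \mathrm{id}$, note that on $\Phi(M_{1},M_{2})$ the map $\epsilon_{1}$ is exactly the projection onto $M_{1}$, by unitality of $M_{1}$ and vanishing of the cross terms. Both functors are compatible with the left $\cat{C}$-action, since $V\otimes\epsilon_{i}^{M}=\epsilon_{i}^{V\otimes M}$, so $\Phi$ is a $\cat{C}$-module equivalence.

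The only step needing care is the centrality computation for $\iota_{i}\eta_{i}$ and the resulting vanishing of the cross terms. Everything there follows from the explicit matrix form of $\mu$ and $\eta$, so I expect this to be bookkeeping rather than a real obstacle.
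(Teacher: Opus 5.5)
Your proposal is correct and is essentially the paper's argument: the paper also defines $\varepsilon_{i}$ as the action of $e_{i}\circ\eta=\iota_{i}\eta_{i}$ on a module. It proves $\varepsilon_{2}\circ\varepsilon_{1}=0$ by the same interchange-plus-associativity computation, reducing to $\mu\circ(e_{1}\otimes e_{2})=0$, and your explicit checks that $\varepsilon_{1}+\varepsilon_{2}=\mathrm{id}$ and that each $\varepsilon_{i}$ is an $A$-module map (by centrality) fill in steps the paper only asserts or leaves implicit.
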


\begin{proof}
    To an object $(M_{1},M_{2}) \in \modw{A_{1}}\oplus \modw{A_{2}}$, where hence $M_{i} \in \modw{A_{i}}$, we associate the object $M_{1}\oplus M_{2}$ in $\modw{A_{1}\times A_{2}}$, with the obvious $A_{1}\times A_{2}$-module structure. Conversely, we send $N \in \modw{A_{1}\times A_{2}}$ to the pair $(\varepsilon_{1}(N),\varepsilon_{2}(N))$, where $\varepsilon_{i}$ is the endomorphism
    \[
    N \xiso \mathbb{1}\otimes N \xrightarrow{\eta_{A_{1}\times A_{2}}} (A_{1}\times A_{2})\otimes N \xrightarrow{e_{i}\otimes N} (A_{1}\times A_{2})\otimes N \xrightarrow{\nabla_{N}} N.
    \]
    It is easy to extend these assignments to functors, and to show that $\varepsilon_{i}(M_{i}) = M_{i}$ and $\varepsilon_{i}(M_{j}) = 0$ if $i \neq j$. It remains to show that $N \simeq \varepsilon_{1}(N) \oplus \varepsilon_{2}(N)$.

    Since $\nabla_{N}$ is an epimorphism, $N = \on{Im}(\varepsilon_{1}N) + \on{Im}(\varepsilon_{2}N)$. To prove that the sum is direct, we show that $\varepsilon_{2}\circ \varepsilon_{1} =0$, as by symmetry we will find $\varepsilon_{1}\circ \varepsilon_{2} =0$. Let $A := A_{1}\times A_{2}$. The commutativity of all the faces of the following diagram follows from the interchange law, except for the bottom-right face, which follows from multiplicativity of action on $N$:
\[\begin{tikzcd}
	N & {\mathbb{1}\otimes N} & {A \otimes N} & {A \otimes N} & N \\
	{\mathbb{1}\otimes N} & {\mathbb{1}\otimes \mathbb{1}\otimes N} & {\mathbb{1}\otimes A\otimes N} & {\mathbb{1}\otimes A\otimes N} & {\mathbb{1}\otimes N} \\
	{A\otimes N} & {A\otimes \mathbb{1}\otimes N} & {A\otimes A\otimes N} & {A\otimes A\otimes N} & {A \otimes N} \\
	{A\otimes N} & {A\otimes \mathbb{1}\otimes N} & {A\otimes A\otimes N} & {A\otimes A\otimes N} & {A \otimes N} \\
	&&& {A \otimes N} & N
	\arrow["\simeq", from=1-1, to=1-2]
	\arrow["\simeq"', from=1-1, to=2-1]
	\arrow["{\eta_{A}\otimes N}", from=1-2, to=1-3]
	\arrow["\simeq"', from=1-2, to=2-2]
	\arrow["{e_{2}\otimes N}", from=1-3, to=1-4]
	\arrow["\simeq"', from=1-3, to=2-3]
	\arrow["{\nabla_{N}}", from=1-4, to=1-5]
	\arrow["\simeq"', from=1-4, to=2-4]
	\arrow["\simeq"', from=1-5, to=2-5]
	\arrow["\simeq", from=2-1, to=2-2]
	\arrow["{\eta_{A}\otimes N}"', from=2-1, to=3-1]
	\arrow["{\mathbb{1}\otimes \eta_{A}\otimes N}", from=2-2, to=2-3]
	\arrow["{\eta_{A}\otimes \mathbb{1}\otimes N}"', from=2-2, to=3-2]
	\arrow["{\mathbb{1}\otimes e_{2}\otimes N}", from=2-3, to=2-4]
	\arrow["{\eta_{A}\otimes A\otimes N}"', from=2-3, to=3-3]
	\arrow["{\mathbb{1}\otimes \nabla_{N}}", from=2-4, to=2-5]
	\arrow["{\eta_{A}\otimes A\otimes N}"', from=2-4, to=3-4]
	\arrow["{\eta_{A}\otimes N}"', from=2-5, to=3-5]
	\arrow["\simeq", from=3-1, to=3-2]
	\arrow["{e_{1}\otimes N}"', from=3-1, to=4-1]
	\arrow["{A\otimes \eta_{A}\otimes N}", from=3-2, to=3-3]
	\arrow["{e_{1}\otimes \mathbb{1}\otimes  N}"', from=3-2, to=4-2]
	\arrow["{A\otimes e_{2}\otimes N}", from=3-3, to=3-4]
	\arrow["{e_{1}\otimes A \otimes  N}"', from=3-3, to=4-3]
	\arrow["{A\otimes \nabla_{N}}", from=3-4, to=3-5]
	\arrow["{e_{1}\otimes A \otimes  N}"', from=3-4, to=4-4]
	\arrow["{e_{1}\otimes N}"', from=3-5, to=4-5]
	\arrow["\simeq", from=4-1, to=4-2]
	\arrow["{A\otimes \eta_{A}\otimes N}", from=4-2, to=4-3]
	\arrow["{A\otimes e_{2}\otimes N}", from=4-3, to=4-4]
	\arrow["{A\otimes \nabla_{N}}", from=4-4, to=4-5]
	\arrow["{\mu_{A}\otimes N}"', from=4-4, to=5-4]
	\arrow["{\nabla_{N}}"', from=4-5, to=5-5]
	\arrow["{\nabla_{N}}", from=5-4, to=5-5]
\end{tikzcd}\] 
This shows that $\varepsilon_{2} \circ \varepsilon_{1}$ equals the composite $\nabla_{N} \circ (\mu_{A} \otimes N) \circ (e_{1}\otimes e_{2}\otimes N) \circ (\eta_{A} \otimes \eta_{A} \otimes N)$. However, by definition of $\mu_{A}$, we have $\mu_{A} \circ (e_{1} \otimes e_{2}) = 0$, establishing the claim. 
\end{proof}

\begin{notation}
  For the remainder of this document, $A$ will always denote an algebra in a finite tensor category. Furthermore, given an algebra $A\in \cat{C}$, we introduce the following shorthand notations:
  \begin{itemize}
      \item $\projca$ is the category of projective objects in $\modca$.
      \item $\on{Irr}(A)$ is the set of (representatives of isomorphism classes of) simple objects in $\modca$.
      \item $\mathbb{L}(A)$ is the category of semisimple objects in $\modca$.
      \item $\homa(M,N)$ denotes $\on{Hom}_{\modca}(M,N)$.
  \end{itemize}
\end{notation}

\begin{definition}
  An algebra $A \in \cat{C}$ is said to be {\it simple} if it has no proper (not zero or equal to $A$ itself) two-sided ideals.   
\end{definition}

\begin{definition}\label{defn:semisimplealgebra}
  An algebra $A \in \cat{C}$ is said to be {\it semisimple} if it is isomorphic to a finite product of simple algebras.
\end{definition}

\begin{remark}
    Suppose $\cat{C}$ is moreover a fusion category. A common definition of a semisimple algebra $A$ is that $\modca$ is a semisimple category; see e.g.\ \cite[Definition 9]{Os}. These two definitions are equivalent -- refer to \autoref{CSZEO} and \autoref{cor:reconcilesemisimplealg}.
\end{remark}

\begin{definition}
 Given an ideal $I$ in $A$ and a positive integer $k$, we define $I^{k}$ inductively as $I^{k-1}I$, following \autoref{idealtimesbimodule}. An ideal $I$ is said to be {\it nilpotent} if there is $m$ such that $I^{m} = 0$.
\end{definition}

\begin{definition}
 A {\it $\cp$-stable ideal} in $\projca$ is an ideal $\mathfrak{I}$ in the additive category $\projca$ such that for $f \in \mathfrak{I}$ and $P \in \cat{C}$ projective, the morphism $P \lact f$ again belongs to $\mathfrak{I}$.
\end{definition}

\begin{definition}
 A {\it mixed subfunctor} of the functor $\on{Hom}_{\cat{C}}(-,A): \projectives{\cat{C}}\to \vect$ is a subobject $\mathtt{I}$ of $\on{Hom}_{\cat{C}}(-,A)$ in the category of such functors, such that for $P,Q\in \projectives{\cat{C}}$ and morphisms $g \in \mathtt{I}(P)$ and $f \in \homa(P \lact A, Q\lact A)$, the morphism
 \[
 Q \xiso Q \otimes \mathbb{1} \xrightarrow{Q \otimes \eta} Q \otimes A \xrightarrow{f} P\otimes A \xrightarrow{g \otimes A} A \otimes A \xrightarrow{\mu} A
 \]
 lies in $\mathtt{I}(Q)$, and such that the analogous condition for left $A$-module morphisms also holds.
\end{definition}

\begin{definition}
   The $\cp$-stable ideal $\mathfrak{J}_{\cat{C}}(A)$ (denoted by $\radca$ in \cite{CSZ}) is uniquely defined by
 \begin{equation}\label{eqmoduleradical}
 \mathfrak{J}_{\cat{C}}(A)(P\lact A,P'\lact A) =
 \left\{
   f \in \homa (P\triangleright A,P' \triangleright A)
 \ \Bigg|\ %
   \begin{aligned}
     &Q\triangleright f \in \mathfrak{J}(\projca)(Q\otimes P\triangleright A,Q\otimes P'\triangleright A) \\
     &\text{for all } Q \in \projectives{\cat{C}}
   \end{aligned}
 \right\}.
 \end{equation}
 for $P,P' \in \projectives{\cat{C}}$.
\end{definition}

\begin{theorem}\label{CSZBijection}
Fix an epimorphism $q: Q \twoheadrightarrow \mathbb{1}$, with $Q$ projective. By \cite[Section~5]{CSZ}, for a $\cp$-stable ideal $\mathfrak{I}$ in $\projca$, one obtains a mixed subfunctor $\mathbb{S}(\mathfrak{I})$ of $\on{Hom}_{\cat{C}}(-,A)$, by defining $\mathbb{S}(\mathfrak{I})(P)$ as the space of morphisms of the form
\[
P \xiso P\otimes \mathbb{1} \xrightarrow{P \otimes \eta} P \otimes A \xrightarrow{g} Q \otimes A \xrightarrow{q \otimes A} \mathbb{1} \otimes A \xiso A,
\]
where $g \in \mathfrak{I}(P\otimes A, Q \otimes A)$.
The mixed subfunctor $\mathbb{S}(\mathfrak{I})$ defined above is independent of the choice of $Q$ and $q: Q \twoheadrightarrow \mathbb{1}$.

Furthermore, the object $I$ representing the right exact extension of a mixed subfunctor $\mathtt{I}$ of $\on{Hom}_{\cat{C}}(-,A)$ is an ideal in $A$. Furthermore, the maps
 \[
    \left\{ \mathcal{C}_p\text{-stable ideals in } \projca \right\}
    \xrightarrow{\mathbb{S}} 
    \left\{ \text{mixed subfunctors of} \on{Hom}_{\cat{C}}(-,A) \right\} \to
    \left\{ \text{two-sided ideals of } A \right\} 
 \]
 are poset isomorphisms which respect multiplications. 
\end{theorem}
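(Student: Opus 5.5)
The plan is to build both maps explicitly, check that they land in the claimed sets, and then exhibit inverses; multiplicativity is checked at the end. The guiding identification is the free-module adjunction $\homa(P\lact A, M)\simeq \on{Hom}_{\cat{C}}(P,M)$. Under it a morphism $g\colon P\lact A\to Q\lact A$ corresponds to $(g)\circ(P\otimes\eta)\colon P\to Q\otimes A$. Moreover, $\on{Hom}_{\cat{C}}(-,A)$ restricted to $\projectives{\cat{C}}$ is the functor $P\mapsto \homa(P\lact A, A)$. Since $\projectives{\cat{C}}$ determines $\cat{C}$ through right exact extension (every object has a projective presentation), subfunctors of $\on{Hom}_{\cat{C}}(-,A)|_{\projectives{\cat{C}}}$ correspond to subobjects of $A$: send $\mathtt{I}$ to the image of $\bigoplus_{P,\, g\in\mathtt{I}(P)} P\to A$.

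\emph{Independence of $(Q,q)$.} Given two epimorphisms $q\colon Q\twoheadrightarrow\mathbb{1}$ and $q'\colon Q'\twoheadrightarrow\mathbb{1}$, projectivity of $Q$ yields a lift $s\colon Q\to Q'$ with $q'\circ s=q$. Then $s\lact A$ is an $A$-module map, and $\cp$-stability together with the ideal property gives $(s\lact A)\circ g\in\mathfrak{I}$. Hence the $Q$-construction is contained in the $Q'$-construction, and by symmetry they coincide.

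\emph{$\mathbb{S}(\mathfrak{I})$ is a mixed subfunctor.}
\begin{itemize}
\item Precomposition with $P'\to P$ becomes precomposition with a module map $P'\lact A\to P\lact A$, so $\mathbb{S}(\mathfrak{I})$ is a subfunctor.
\item Closure under right $A$-module maps $f\colon Q'\lact A\to P\lact A$ is the same precomposition argument.
\item Closure under the left-module condition is where $\cp$-stability is genuinely needed. I would rewrite the composite $\mu\circ(g\otimes A)\circ\cdots$ as $(q\otimes A)\circ(Q\lact g')\circ\cdots$, using $\mathbb{1}\otimes-$ and the fact that $Q\lact A\to\mathbb{1}\lact A$ is a module map.
\end{itemize}

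\emph{The ideal $I$.} Define $I$ as the image of the canonical map $\bigoplus P\to A$ over generators of $\mathtt{I}$; this is the object representing the right exact extension. The right-module condition says $I$ is stable under right multiplication, i.e.\ $\mu(I\otimes A)\subseteq I$. To see this, note that $P\otimes A$ is covered by projectives, and the composite $P\otimes A\to I\otimes A\to A$ arises from maps of the required form. The left-module condition gives $\mu(A\otimes I)\subseteq I$ in the same way. Hence $I$ is a two-sided ideal. Conversely, an ideal $I$ yields the mixed subfunctor $\on{Hom}_{\cat{C}}(-,I)\subseteq\on{Hom}_{\cat{C}}(-,A)$. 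These two assignments are mutually inverse and order preserving, because projectives detect subobjects: $I\subseteq I'$ if and only if every $P\to I$ factors through $I'$.

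\emph{Inverting $\mathbb{S}$.} The inverse sends an ideal $I$ to $\mathfrak{I}_I(P\lact A,P'\lact A)$, the set of module maps whose adjoint $P\to P'\otimes A$ factors through $P'\otimes I$. That this is a $\cp$-stable ideal follows from $I$ being a two-sided ideal: composition corresponds to multiplication in $A$, which preserves $I$ on either side. Checking that $\mathbb{S}$ and this map are mutually inverse is where I expect the main obstacle. Recovering $\mathfrak{I}$ from $\mathbb{S}(\mathfrak{I})$ requires an ``unfolding'' argument: a map $g\colon P\lact A\to P'\lact A$ must be reconstructed from its components $P\to P'\otimes A\to \mathbb{1}\otimes A$ after tensoring with duals. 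I would use rigidity: since $\rightdual{P'}\otimes P'\to\mathbb{1}$ via $\ev$, and $\rightdual{P'}\otimes P$ is projective, we have $g\in\mathfrak{I}$ if and only if $\rightdual{P'}\lact g$, postcomposed with $\ev\otimes A$, lies in the image of $\mathfrak{I}$. This uses $\cp$-stability, and the fact that $\ev$ factors through $q$ up to a projective lift.

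\emph{Multiplicativity.} Products of ideals correspond to compositions. For a product $g'\circ g$, its image under $\mathbb{S}$ is computed as $\mu\circ(\mathbb{S}g'\otimes\mathbb{S}\tilde g)$ after moving $g$ through the lift. This shows $\mathbb{S}(\mathfrak{I}\mathfrak{I}')$ generates $II'$; since all maps are poset isomorphisms, equality follows.
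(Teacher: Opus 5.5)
The paper does not prove this statement: it is quoted from \cite[Section~5]{CSZ}, so there is no in-paper argument to compare with. On its own terms your architecture is the right one. The pieces that work are the free-module adjunction, subfunctors of $\on{Hom}_{\cat{C}}(-,A)|_{\projectives{\cat{C}}}$ corresponding to subobjects $I\subseteq A$ (read your infinite direct sum as a sum of images, which stabilises by finite length), the right-module condition being $IA\subseteq I$, the lift $s$ for independence of $(Q,q)$ (only the ideal property is used there), and the inverse $I\mapsto\mathfrak{I}_I$. But two steps are only asserted, and in each the actual idea is missing.

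\emph{The left-module condition for $\mathbb{S}(\mathfrak{I})$.} You must show $\mu\circ(A\otimes x)\circ f'\in\mathbb{S}(\mathfrak{I})(Q')$ for $x=(q\otimes A)\circ g\circ(P\otimes\eta)$ and arbitrary $f'\colon Q'\to A\otimes P$. Your rewrite starts from the right-action composite $\mu\circ(g\otimes A)$ and tensors with $Q$. That misses the real obstacle: $A$ is not projective, so $\cp$-stability says nothing about $A\lact g$.

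What works is the following. Take a projective cover $r\colon R\twoheadrightarrow A$ and lift $f'$ through the epimorphism $r\otimes P$ to $f''\colon Q'\to R\otimes P$. Up to unitors the element becomes $\mu\circ(r\otimes A)\circ(R\otimes q\otimes A)\circ(R\lact g)\circ(f''\lact A)\circ(Q'\otimes\eta)$. Since $(R\otimes Q)\lact A$ is projective and $q\otimes A$ is an epimorphism of modules, the module map $\mu\circ(r\otimes A)\circ(R\otimes q\otimes A)$ factors as $(q\otimes A)\circ h$. Then $h\circ(R\lact g)\circ(f''\lact A)\in\mathfrak{I}$.

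Your unfolding step is also only half stated. In the paper's conventions the evaluation is $\ev\colon\leftdual{P'}\otimes P'\to\mathbb{1}$. The direction that matters is that $(\ev\otimes A)\circ(\leftdual{P'}\lact g)=(q\otimes A)\circ g'$ with $g'\in\mathfrak{I}$ forces $g\in\mathfrak{I}$. This follows from the snake identity $g=(P'\otimes q\otimes A)\circ(P'\lact g')\circ(\coev\otimes P\otimes A)$ together with $\cp$-stability for $P'$. You also still owe $\on{Hom}_{\cat{C}}(-,I)\subseteq\mathbb{S}(\mathfrak{I}_I)$. This is a lift of $x\colon P\to I$ along the epimorphism $q\otimes I\colon Q\otimes I\to I$.

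\emph{Multiplicativity.} Saying that $\mathbb{S}(\mathfrak{I}\mathfrak{I}')$ ``generates'' $II'$ and concluding equality because the maps are poset isomorphisms does not give the nontrivial inclusion. The inclusion $\mathfrak{I}_{I'}\circ\mathfrak{I}_I\subseteq\mathfrak{I}_{I'I}$ is immediate from adjoints.

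For the converse, let $h\colon P\lact A\to P''\lact A$ have adjoint factoring through $P''\otimes I'I$. Choose a projective cover $p\colon R\twoheadrightarrow P''\otimes I'$. Lift the factorisation along the epimorphisms $P''\otimes I'\otimes I\twoheadrightarrow P''\otimes I'I$ and $p\otimes I\colon R\otimes I\twoheadrightarrow P''\otimes I'\otimes I$ to get $y\colon P\to R\otimes I$. Then $h$ is the module map adjoint to $y$, which lies in $\mathfrak{I}_I$, followed by the one adjoint to $p$, which lies in $\mathfrak{I}_{I'}$. With these arguments supplied, your outline becomes a complete proof.
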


\begin{definition}[\protect{\cite[Definition~6.9]{CSZ}}]\label{internalradical}
 The Jacobson radical $J(A)$ of $A$ is defined as the ideal in $A$ corresponding to $\mathfrak{J}_{\cat{C}}(A)$ under the bijection of \autoref{CSZBijection}.
\end{definition}

\begin{theorem}[{\cite[Theorem~7.1]{CSZ}}]\label{CSZEO}
    The radical $J(A)$ is the greatest nilpotent ideal in $A$. The following conditions are equivalent:
   \begin{enumerate}
       \item $\modca$ is an exact $\cat{C}$-module category.
       \item $A$ is semisimple.
       \item $A$ has no non-trivial nilpotent ideals.
       \item $J(A) = 0$.
   \end{enumerate}
\end{theorem}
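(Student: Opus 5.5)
The first assertion is the part that uses \autoref{CSZBijection}. Since the bijection $\mathfrak{I}\mapsto$ ideal is an isomorphism of posets respecting multiplication, an ideal $I$ of $A$ is nilpotent if and only if the corresponding $\cp$-stable ideal $\mathfrak{I}$ of $\projca$ is nilpotent. So it suffices to show that $\mathfrak{J}_{\cat{C}}(A)$ is the greatest nilpotent $\cp$-stable ideal of $\projca$.

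\emph{Nilpotency.} By definition (taking $Q$ with an epimorphism onto $\mathbb{1}$, so that $f$ is a summand-restriction of $Q\lact f$), $\mathfrak{J}_{\cat{C}}(A)\subseteq \mathfrak{J}(\projca)$. The radical of the additive category of projectives of a finite abelian category is nilpotent: $\modca\simeq\on{mod}_{\mathbb{k}}(B)$ for a finite-dimensional $B$, and $\mathfrak{J}(\projca)^{n}=0$ once $J(B)^n=0$. Hence $\mathfrak{J}_{\cat{C}}(A)$ is nilpotent.

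\emph{Maximality.} Let $\mathfrak{I}$ be a nilpotent $\cp$-stable ideal. Every endomorphism in $\mathfrak{I}(X,X)$ generates a nilpotent two-sided ideal of $\on{End}(X)$, so it lies in $J(\on{End}(X))$. The usual matrix argument for additive categories then gives $\mathfrak{I}\subseteq\mathfrak{J}(\projca)$. Because $\mathfrak{I}$ is $\cp$-stable, $Q\lact f\in\mathfrak{I}\subseteq\mathfrak{J}(\projca)$ for all projective $Q$, so by \eqref{eqmoduleradical} we get $\mathfrak{I}\subseteq\mathfrak{J}_{\cat{C}}(A)$.

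The equivalence (iii)$\Leftrightarrow$(iv) is then immediate.

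For (ii)$\Rightarrow$(iii), a nilpotent ideal of a product of algebras decomposes componentwise, since the idempotents $e_i$ cut it into ideals of the factors. In a simple algebra $A_i$, a nilpotent ideal is $0$ or $A_i$, and $A_i$ itself is not nilpotent because the unit $\eta$ survives all powers.

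For (iv)$\Rightarrow$(ii), I would argue by induction on the length of $A$ in $\cat{C}$. If $A$ is not simple, pick a minimal nonzero ideal $I$. Then $I^2\neq 0$ by (iii), so $I^2=I$. I would then produce a complementary ideal (for instance the annihilator of $I$) with $A\simeq I\times A'$, and use \autoref{modproduct} to check that both factors again have no nilpotent ideals. Constructing the complement in this categorical setting is where care is needed. Alternatively, I would pass through (i): an exact module category decomposes into indecomposable exact summands, and \autoref{modproduct} transports that decomposition back to $A$.

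For (i)$\Rightarrow$(iv), assume $\modca$ is exact and let $0\neq f\colon P\lact A\to P'\lact A$. I would pick a simple quotient $L$ of the image of $f$ and a projective $Q\in\cat{C}$ with $Q\lact L$ nonzero and projective; such $Q$ exists since the action is exact on both sides by \autoref{cor:exactonboth}. The adjunction $\homa(Q\lact X,Y)\simeq\homa(X,\rightdual{Q}\lact Y)$ together with projectivity of $Q\lact L$ should split off a summand on which $Q\lact f$ restricts to an isomorphism. Then $Q\lact f\notin\mathfrak{J}(\projca)$, so $f\notin\mathfrak{J}_{\cat{C}}(A)$.

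The main obstacle is (iv)$\Rightarrow$(i). If $\modca$ is not exact, there is a simple $L$ and a projective $Q$ with $Q\lact L$ not projective, and from this I must manufacture a nonzero $\cp$-stable ideal inside the radical. My first attempt is to take the ideal of morphisms between objects $P\lact A$ that vanish after applying $-\otimes_A A/\mathfrak{a}$ for a suitable annihilator-type ideal $\mathfrak{a}$, and show it is nonzero and $\cp$-stable. Failing that, I would fall back on the argument of \cite{CSZ}, which compares $\mathfrak{J}_{\cat{C}}(A)$ with the ideal of morphisms $f$ such that $Q\lact f$ is radical for all $Q$, and uses that $\cat{C}$-projectivity is detected by \autoref{cprojectiveobjects}.
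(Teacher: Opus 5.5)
The paper gives no proof of this statement; it imports it from \cite[Theorem~7.1]{CSZ}. Your handling of the formal parts is essentially sound, modulo a correction below: transporting nilpotency along \autoref{CSZBijection}, identifying $\mathfrak{J}_{\cat{C}}(A)$ as the largest nilpotent $\cp$-stable ideal, (iii)$\Leftrightarrow$(iv), and (ii)$\Rightarrow$(iii).

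The genuine gap is that (iv)$\Rightarrow$(i) and (iv)$\Rightarrow$(ii) are not proved, and these carry all the substance.
\begin{itemize}
\item In your ``first attempt'', $\cp$-stability of the ideal of morphisms killed by $-\otimes_{A}A/\mathfrak{a}$ is automatic, since it is the kernel of a $\cat{C}$-module functor. What must be shown is that this ideal is nonzero \emph{and contained in the radical}. By your own first paragraph, that amounts to $\mathfrak{a}$ being a nonzero nilpotent ideal. This is exactly the object you are supposed to extract from a simple $L$ and a projective $Q$ with $Q\lact L$ not projective, and you give no mechanism for doing so.
\item The ``fallback'' is circular. By \eqref{eqmoduleradical}, the ideal of morphisms $f$ with $Q\lact f$ radical for all $Q\in\projectives{\cat{C}}$ \emph{is} $\mathfrak{J}_{\cat{C}}(A)$. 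Also, \autoref{cprojectiveobjects} only restates what exactness means.
\item For (iv)$\Rightarrow$(ii), nothing shows that a minimal ideal $I$ with $I^{2}=I$ satisfies $I+\on{Ann}(I)=A$. Classically this comes from idempotents inside $I$, which have no counterpart when $I$ is merely a subobject of $A$ in $\cat{C}$. Your alternative route through (i) relies on the missing implication.
\end{itemize}

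This is not a formality. Already for simple $A$, $J(A)$ is nilpotent, hence proper, hence zero. So (iv)$\Rightarrow$(i) specialises to ``the module category of a simple algebra is exact'', which is a central, non-formal result of \cite{CSZ}. As written, your argument gives the first assertion, (ii)$\Rightarrow$(iii)$\Leftrightarrow$(iv), and a sketch of (i)$\Rightarrow$(iv). It does not give the equivalence of (i) and (ii) with (iii) and (iv).

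Two smaller corrections.
\begin{itemize}
\item Unless $\mathbb{1}$ is projective, $q\colon Q\twoheadrightarrow\mathbb{1}$ does not split, so $f$ is not a restriction of $Q\lact f$ to summands. The inclusion $\mathfrak{J}_{\cat{C}}(A)\subseteq\mathfrak{J}(\projca)$ instead follows from $f\circ(q\lact X)=(q\lact Y)\circ(Q\lact f)$ for $f\colon X\to Y$. Since $q\lact X$ is epi, $\on{Im}(f)$ is the image of $\on{Im}(Q\lact f)\subseteq\on{rad}(Q\lact Y)$ under $q\lact Y$, and so lies in $\on{rad}(Y)$.
\item In your sketch of (i)$\Rightarrow$(iv), projectivity of $Q\lact L$ only splits $Q\lact\on{Im}(f)\twoheadrightarrow Q\lact L$. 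To split $Q\lact L$ off $Q\lact Y$, and so obtain a component of $Q\lact f$ that is an isomorphism, you also need $Q\lact L$ to be injective, because a projective subobject can lie in the radical. This does hold when $\modca$ is exact: $\homa(-,Q\lact L)\simeq\homa({}^{\vee}Q\lact -,L)$, and ${}^{\vee}Q\lact -$ sends short exact sequences to split ones. It should be stated, since it is the step that makes the argument work.
\end{itemize}
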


Observe that the proof of the following statement does not use the exactness of the monoidal structure in $\cat{C}$.
\begin{lemma}\label{ressimple}
 Let $A \in \Alg{\on{Ind}(\cat{C})}$, let $I$ be a two-sided ideal in $A$ and let $\pi: A \rightarrow A/I$ be the associated projection. Then for the restriction functor $\on{res}(\pi): \on{mod}_{\on{Ind}(\cat{C})}(A/I) \to \on{mod}_{\on{Ind}(\cat{C})}(A)$ and $L \in \Irr(\modw{A/I})$, we have $\on{res}(\pi)(L) \in \Irr(\modca)$. In other words, the restriction of a simple $A/I$-module along $\pi$ is a simple $A$-module.
\end{lemma}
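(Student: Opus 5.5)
The plan is to use the fact that $\on{res}(\pi)$ is fully faithful and that its essential image is closed under subobjects. Then I check directly that $\on{res}(\pi)(L)$ has no proper nonzero subobjects in $\on{mod}_{\on{Ind}(\cat{C})}(A)$. Concretely, let $L$ be a simple $A/I$-module with action $\nabla_L\colon L\otimes A/I\to L$. Its restriction has action $\nabla_L\circ(L\otimes\pi)$. Suppose $N\hookrightarrow \on{res}(\pi)(L)$ is an $A$-submodule, given by a monomorphism $\iota\colon N\to L$ in $\on{Ind}(\cat{C})$ compatible with the $A$-actions. It suffices to show that $N$ carries an $A/I$-module structure making $\iota$ a morphism of $A/I$-modules. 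Simplicity of $L$ then forces $N=0$ or $N=L$. We also need $L\neq 0$, which is clear.

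To construct the $A/I$-action on $N$, I will use that $\pi$ is an epimorphism and that $L\otimes -$ preserves cokernels. In the Ind-setting we only use that $\otimes$ is right exact in each variable, which holds because $\otimes$ commutes with colimits in each variable; no exactness of $\otimes$ is needed, matching the remark before the statement. Hence $N\otimes A\to N\otimes A/I$ is the cokernel of $N\otimes I\to N\otimes A$. The action $\nabla_N\colon N\otimes A\to N$ factors through $N\otimes A/I$ if and only if its restriction to $N\otimes I$ vanishes.

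To see this vanishing, compose with the monomorphism $\iota$. We have
\[
\iota\circ\nabla_N|_{N\otimes I}=\nabla_L\circ(L\otimes\pi)\circ(\iota\otimes A)\circ(N\otimes I\to N\otimes A),
\]
and by interchange this equals $\nabla_L\circ(\iota\otimes A/I)\circ(N\otimes(\pi|_I))$. The last expression is zero since $\pi|_I=0$. Because $\iota$ is mono, $\nabla_N|_{N\otimes I}=0$, so we obtain $\bar\nabla_N\colon N\otimes A/I\to N$. Associativity and unitality of $\bar\nabla_N$ follow from those of $\nabla_N$, because $N\otimes\pi$ and $N\otimes\pi\otimes\pi$ are epimorphisms, again by right exactness. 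The map $\iota$ is $A/I$-linear, since $\iota\circ\bar\nabla_N$ and $\nabla_L\circ(\iota\otimes A/I)$ agree after precomposition with the epimorphism $N\otimes\pi$.

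The only delicate point is the claim that $N\otimes\pi$ is an epimorphism and the cokernel of $N\otimes I\to N\otimes A$ without exactness of $\otimes$. I will justify this purely from cocontinuity of $N\otimes-$ in $\on{Ind}(\cat{C})$. The rest is routine diagram checking.
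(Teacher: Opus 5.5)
Your proposal is correct and follows essentially the same route as the paper. Both arguments show that the essential image of $\on{res}(\pi)$ is closed under subobjects: an $A$-submodule $N$ of a module $L$ with $LI=0$ again satisfies $NI=0$. The paper gets this from the induced monomorphism of images $NI \hookrightarrow LI$; you get it by cancelling the monomorphism $\iota$ from the vanishing composite $\iota \circ \nabla_N \circ (N \otimes (I \hookrightarrow A))$. The only difference is that you descend the action to $A/I$ by hand using right exactness of $\otimes$, whereas the paper cites its earlier characterisation of the essential image of $\on{res}(\pi)$ as the modules $M$ with $MI=0$.
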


\begin{proof}
 Observe first that in an abelian category, for a commutative square
\[\begin{tikzcd}[ampersand replacement=\&,cramped,sep=small]
	M \& {M'} \\
	N \& {N'}
	\arrow["f", from=1-1, to=1-2]
	\arrow["h", from=1-1, to=2-1]
	\arrow["{h'}", from=1-2, to=2-2]
	\arrow["g"', from=2-1, to=2-2]
\end{tikzcd}\]
the map $\widetilde{h'}: \on{Im}(f) \to \on{Im}(g)$ is mono if $h'$ is mono. It now suffices to apply this observation to a monomorphism $f$ in $\on{mod}_{\on{Ind}(\cat{C})}(A)$ in the diagram
\[\begin{tikzcd}[ampersand replacement=\&,cramped,sep=small]
	{M\otimes I} \& MI \& M \\
	{N \otimes I} \& NI \& N
	\arrow[from=1-1, to=1-2]
	\arrow["{\nabla_{M}}", curve={height=-12pt}, from=1-1, to=1-3]
	\arrow["{f \otimes I}"', from=1-1, to=2-1]
	\arrow[from=1-2, to=1-3]
	\arrow["{\widetilde{f}}"{description}, from=1-2, to=2-2]
	\arrow["f"', from=1-3, to=2-3]
	\arrow[from=2-1, to=2-2]
	\arrow["{\nabla_{N}}"', curve={height=12pt}, from=2-1, to=2-3]
	\arrow[from=2-2, to=2-3]
\end{tikzcd}\]
to find the epimorphism $\widetilde{f}: MI \hookrightarrow NI$. It follows that if $NI=0$ (equivalently, $N$ is in the essential image of $\on{res}(\pi)$) and $M$ is a subobject of $M$, then $MI = 0$. Hence the essential image of $\on{res}(\pi)$ is closed under subobjects, and the claim follows.
\end{proof}

\begin{theorem}[{\cite[Theorem~8.9, Proposition~8.13]{CSZ}}]\label{CSZquotient}
    The radical of $A/J(A)$ vanishes: $J(A/J(A)) = 0$. Hence, $\modw{A/J(A)}$ is an exact $\cat{C}$-module category. The image of (fully faithful, $\cat{C}$-module) restriction functor $\on{res}(\pi)$ along the projection $\pi: A \twoheadrightarrow A/J(A)$ consists of subquotients of objects of the form $Q \lact L$, for $Q \in \projectives{\cat{C}}$ and $L \in \mathbb{L}(A)$.
\end{theorem}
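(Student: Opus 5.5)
The plan is to prove the three assertions in order: first the vanishing of the radical by a nilpotency argument, then exactness as a direct consequence, and finally the description of the essential image by showing two inclusions.

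\emph{Vanishing of the radical.} I would show $J(A/J(A)) = 0$ using the characterisation in \autoref{CSZEO} of the radical as the greatest nilpotent ideal. Let $K$ be a nilpotent ideal of $A/J$, with $K^m = 0$, and let $K' \defeq \pi^{-1}(K)$ be its preimage in $A$. Then $K'$ is an ideal of $A$ containing $J$. Since $\pi$ is a morphism of algebras, the image of $K'^m$ under $\pi$ lies in $K^m = 0$, so $K'^m \subseteq J$. Because $J$ is itself nilpotent, say $J^n=0$, we get $K'^{mn} = 0$. Hence $K'$ is nilpotent, so $K' \subseteq J$ by maximality of $J$, and therefore $K = \pi(K') = 0$. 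Thus $A/J$ has no non-trivial nilpotent ideals. By \autoref{CSZEO} (iii)$\Rightarrow$(i), the category $\modw{A/J}$ is an exact $\cat{C}$-module category.

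\emph{Essential image: first inclusion.} By the earlier proposition on quotient algebras, the essential image of $\on{res}(\pi)$ consists of those $M \in \modca$ with $MJ = 0$.
\begin{itemize}
\item This class is closed under subobjects, by the argument in the proof of \autoref{ressimple}.
\item It is closed under quotients: if $M \twoheadrightarrow N$, then $NJ$ is the image of $MJ$, since images of epimorphisms are preserved by $-\otimes J$ followed by the action.
\item It contains every simple $L \in \Irr(A)$. Indeed, $LJ$ is a submodule of $L$. If $LJ = L$, iterating gives $L = LJ^n = 0$, a contradiction; hence $LJ = 0$.
\item It contains $Q\lact L$ for $Q \in \projectives{\cat{C}}$ and $L\in\mathbb{L}(A)$. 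Here one needs $(Q\otimes L)J = Q \otimes (LJ)$. This holds because $Q\otimes -$ is exact (\autoref{cor:exactonboth}), so it commutes with taking the image of $L\otimes J \to L$.
\end{itemize}
Together these show that every subquotient of some $Q \lact L$ lies in the image.

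\emph{Essential image: reverse inclusion.} Let $M$ satisfy $MJ=0$, and view $M$ as an $A/J$-module. Let $X$ be the direct sum of representatives of $\Irr(\modw{A/J})$. By \autoref{ressimple} each summand restricts to a simple $A$-module, so $\on{res}(\pi)X \in \mathbb{L}(A)$. Since $\modw{A/J}$ is exact, $X$ is $\cat{C}$-projective. Moreover $\hom{X}{Y}=0$ forces $\on{Hom}(X,Y)=0$ (take $V=\mathbb{1}$), hence $Y=0$; so $X$ is a $\cat{C}$-projective generator. By \autoref{cprojectiveobjects}, the projective cover of $M$ in $\modw{A/J}$ is a summand of some $P\lact X$ with $P\in\projectives{\cat{C}}$. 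Hence $M$ is a quotient of $P \lact X$, and applying the $\cat{C}$-module functor $\on{res}(\pi)$ exhibits $M$ as a quotient of $P\lact \on{res}(\pi)X$.

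I expect the main technical obstacle to be the bookkeeping in the first inclusion. This means verifying that the image $MJ$ is compatible with the $\cat{C}$-action and with epimorphisms, which relies on exactness of $\otimes$ in each variable. The rest is formal from \autoref{CSZEO}, \autoref{ressimple} and \autoref{cprojectiveobjects}.
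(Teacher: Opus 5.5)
Your proof is correct. Note that the paper gives no argument of its own here: the statement is imported from \cite{CSZ} (Theorem~8.9 and Proposition~8.13). What you supply is therefore a self-contained derivation from results the paper states earlier, namely \autoref{CSZEO}, \autoref{ressimple}, \autoref{cprojectiveobjects}, and the characterisation of the essential image of $\on{res}(\pi)$ by the condition $MJ=0$.

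\textbf{Vanishing of the radical.} This is the standard preimage argument for nilpotent ideals, and it uses only that $J$ is the \emph{greatest} nilpotent ideal. You say ``maximality'', but it is the greatest-ness stated in \autoref{CSZEO} that gives $K' \subseteq J$. You also use several identities implicitly:
\begin{itemize}
\item $\pi(K'^m)=\pi(K')^m$;
\item $(MI)I'=M(II')$;
\item monotonicity of products of ideals.
\end{itemize}
All three follow from right exactness of $\otimes$, exactly as in the proof of \autoref{radicalcontainment}. They are worth stating, since your iteration $L=LJ^n$ depends on them.

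\textbf{First inclusion.} You can skip the computation $(Q\otimes L)J=Q\otimes LJ$ entirely. $\on{res}(\pi)$ is a $\cat{C}$-module functor, so its essential image is stable under the action once it contains $L$.

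\textbf{Reverse inclusion.} This part proves slightly more than stated. Every object in the image is a \emph{quotient}, not merely a subquotient, of some $Q\lact L$ with $Q$ projective. This holds because $P\lact X$ is a projective generator of the exact category $\modw{A/J}$.

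\textbf{Logical dependence.} Your argument rests on $J$ being nilpotent and greatest among nilpotent ideals. Within this paper that is legitimate, since \autoref{CSZEO} is stated beforehand. However, the paper itself cites \cite[Lemma~8.8]{CSZ}, which sits in the same section as the result you are proving, for the nilpotency of $J$. If you want your argument to serve as an independent proof of the cited results, you should check that this input does not itself rely on $J(A/J)=0$.
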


\begin{lemma}\label{radicalcontainment}
    Let $I$ be an ideal in $A$. If $A/I$ is semisimple, then $J(A) \subseteq I$. 
\end{lemma}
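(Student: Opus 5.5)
The plan is to deduce the containment from the characterisation of $J(A)$ as the greatest nilpotent ideal (\autoref{CSZEO}), by way of the image of $J(A)$ in $A/I$. Write $\pi\from A \twoheadrightarrow A/I$ for the projection. Consider the image $\pi(J(A)) = (J(A)+I)/I$. This is the image of an $A$-$A$-subbimodule under a bimodule morphism, so it is an $A$-$A$-subbimodule of $A/I$. Since the $A/I$-actions on $A/I$ factor through $\pi$ (and $\pi \otimes \pi$ is epi by right exactness of $\otimes$), it is also an $A/I$-$A/I$-subbimodule, i.e.\ a two-sided ideal of $A/I$.

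Next I will show that $\pi(J(A))$ is nilpotent. By \autoref{CSZEO} there is $m$ with $J(A)^m = 0$. The key compatibility is $\pi(K)\pi(L) = \pi(KL)$ for ideals $K,L$ of $A$. To see this, compare the two composites
\[
K\otimes L \to A\otimes A \xrightarrow{\mu} A \xrightarrow{\pi} A/I
\]
and $K\otimes L \twoheadrightarrow \pi(K)\otimes \pi(L) \to A/I$. These agree because $\pi$ is an algebra morphism. The map $K\otimes L\to\pi(K)\otimes\pi(L)$ is an epimorphism, since $\otimes$ is exact in each variable in a finite tensor category, so the images coincide. Note that $KL$ is the image of $K\otimes L\to A$, because the left action of $A$ on $K$ restricts the multiplication. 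By induction, $\pi(J(A))^m = \pi(J(A)^m) = 0$.

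Now $A/I$ is semisimple, so \autoref{CSZEO} gives $J(A/I)=0$, and $A/I$ has no non-trivial nilpotent ideals. Hence $\pi(J(A))=0$, i.e.\ $J(A)\subseteq \ker\pi = I$.

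The main point to check is the identity $\pi(K)\pi(L)=\pi(KL)$, specifically the epimorphism claim and the identification of $MI$ in \autoref{idealtimesbimodule} with images of tensor products. Both follow from exactness of $\otimes$ and the image factorisation. As a fallback, one can instead use \autoref{CSZBijection}: under the poset isomorphism, nilpotence is detected on $\cp$-stable ideals, and $J(A)+I$ corresponds to the sum of the associated ideals in $\projca$.
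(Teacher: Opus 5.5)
Your proposal is correct and follows essentially the same route as the paper: show that $\pi(J(A))$ is an ideal of $A/I$, prove $\pi(J(A)^m)=\pi(J(A))^m$ by comparing the two composites out of $J(A)\otimes J(A)$, and conclude from $J(A/I)=0$ via \autoref{CSZEO}. You also make explicit the epimorphism $J(A)\otimes J(A)\twoheadrightarrow \pi(J(A))\otimes\pi(J(A))$ that the paper leaves implicit; your only slip is that $KL$ is the image of $K\otimes L$ under the \emph{right} action $\nabla_K$ of $A$ on $K$, not the left action.
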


\begin{proof}
    Let $\pi: A \twoheadrightarrow A/I$ be the projection algebra morphism. As such, it equips $A/I$ with the structure of an $A$-$A$-bimodule such that $\pi$ becomes an $A$-$A$-bimodule morphism. 
    
    The image $\pi(J(A)) := \on{Im}(J(A)\hookrightarrow A \twoheadrightarrow A/I)$ is the image of such a bimodule morphism, and hence an $A$-$A$-subbimodule of $A/I$. It is easy to verify that it is both left and right annihilated by $I$, and hence a two-sided ideal in $A/I$. Moreover, since $\pi(J(A)^{2})$ is the image of the composite 
    \[
    J(A) \otimes J(A) \xrightarrow{\iota \otimes \iota} A \otimes A \xrightarrow{\mu_{A}} A \xrightarrow{\pi} A/I
    \]
    and $\pi(J(A))^{2}$ is the image of the composite 
    \[
    J(A)\otimes J(A) \xrightarrow{\iota \otimes \iota} A \otimes A \xrightarrow{\pi \otimes \pi} A/I \otimes A/I \xrightarrow{\mu_{A/I}} A/I,
    \]
    we find $\pi(J(A)^{2}) = \pi(J(A))^{2}$, since $\pi$ is an algebra morphism. Inductively, $\pi(J(A)^{m}) = \pi(J(A))^{m}$, showing that $\pi(J(A))$ is nilpotent. Thus $\pi(J(A)) \subseteq J(A/I) = 0$, showing that $\pi(J(A)) = 0$ and hence that $J(A) \subseteq I$.
\end{proof}

\begin{lemma}\label{notorsion}
    Let $A,B \in \Alg{\mathcal{C}}$. Assume that $A$ is a simple algebra. Then for any $A$-$B$-bimodule $M$, the functor $- \otimes_{A} M: \on{mod}_{\mathcal{C}}(A) \rightarrow \on{mod}_{\mathcal{C}}(B)$ reflects zero objects: if $N \in \on{mod}_{\mathcal{C}}(A)$ is such that $N \otimes_{A} M = 0$, then $N = 0$.
\end{lemma}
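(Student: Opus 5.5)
The plan is to use that a simple algebra has no proper two-sided ideals. From a nonzero module $N$ with $N\otimes_A M=0$ we will manufacture a proper nonzero two-sided ideal of $A$ that annihilates $N$, and then argue that this ideal must act trivially on $M$. Presumably the implicit hypothesis is $M\neq 0$, since for $M=0$ the statement is false; we assume this throughout.

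\textbf{Step 1: the annihilator ideal.} For $N\in\modca$, consider the internal annihilator $\on{Ann}(N)\subseteq A$. It is the largest subobject $I\subseteq A$ such that $N\otimes I\hookrightarrow N\otimes A\xrightarrow{\nabla_N} N$ is zero, i.e.\ $NI=0$ in the notation of \autoref{idealtimesbimodule}. It exists as the kernel of the map $A\to \underline{\on{Hom}}(N,N)$ adjoint to $\nabla_N$, using the internal Hom of $\cat{C}$ acting on itself (equivalently, on the dual, $A\to \rightdual{N}\otimes N$ or its mirror). One checks that this kernel is a two-sided ideal: it is a right ideal by associativity of the action, and a left ideal because $NA\subseteq N$. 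If $N\neq 0$, unitality forces $\on{Ann}(N)\neq A$, since $\eta$ followed by the action is the identity of $N$. Simplicity of $A$ then gives $\on{Ann}(N)=0$. This means the adjoint map $A\to \underline{\on{End}}(N)$ is a monomorphism.

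\textbf{Step 2: relate to $N\otimes_A M$.} By rigidity, $\rightdual{N}\otimes N$ (or $N\otimes \rightdual{N}$, depending on sides) carries a natural $A$-module structure. The relevant point is that $N$ generates $A$ in an appropriate sense. The faithfulness from Step 1 should imply that $A$ embeds, as a right $A$-module, into $V\otimes N$ for some $V\in\cat{C}$: take $V=\leftdual{N}$, and use the mono $A\hookrightarrow \leftdual{N}\otimes N$ adjoint to $\nabla_N$, which is a right $A$-module map. Then, by exactness of $V\otimes -$, we have $V\otimes (N\otimes_A M)\simeq (V\otimes N)\otimes_A M=0$. Here we would like to conclude that $A\otimes_A M\simeq M$ vanishes. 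However, $-\otimes_A M$ is only right exact, so an embedding does not pass through it.

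\textbf{Main obstacle and fix.} The difficulty is converting the monomorphism $A\hookrightarrow V\otimes N$ into an epimorphism, which right exactness can use. The first approach to try is to dualise. Since $A\neq 0$ and $\on{Ann}(N)=0$, also consider the \emph{evaluation}-type map $N\otimes \rightdual{N}\otimes A$ or, better, the trace ideal: the image of $\hom[\modca]{N}{A}\lact N\to A$ (the counit $\underline{\on{Hom}}(N,A)\lact N\to A$). This image is a two-sided ideal, and it is nonzero provided $N$ admits a nonzero map to some $V\lact A$. That condition holds: the mono $N\hookrightarrow N\otimes A$ via the coaction-like map $N\otimes\eta$ is not a module map, but $N\to \underline{\on{Hom}}(A, N)$-type arguments give a nonzero module map $N\to V\otimes A$ with $V=N\otimes\rightdual{A}$ up to sides. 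By simplicity, this trace ideal equals $A$, yielding an epimorphism $W\otimes N\twoheadrightarrow A$ of right $A$-modules for some $W\in\cat{C}$. Applying the right exact functor $-\otimes_A M$ and using that it is a $\cat{C}$-module functor gives an epimorphism $W\otimes(N\otimes_A M)\twoheadrightarrow A\otimes_A M\simeq M$. Hence $N\otimes_A M=0$ forces $M=0$, a contradiction. The step to check most carefully is the existence of the nonzero module map $N\to V\lact A$ (equivalently, that every nonzero $N$ embeds in a free module $V\lact A$, which follows from the unit of the free/forgetful adjunction $N\to N\otimes A$, $n\mapsto$ the coaction, built via $\nabla_N$ and duality), together with identifying the trace as a two-sided ideal.
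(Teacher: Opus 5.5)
The hypothesis $M\neq 0$ is indeed needed; for $M=0$ the statement fails. This matches what the paper's argument actually proves: a nonzero module $N$ with $N\otimes_A M=0$ forces the bimodule to vanish. The outer shell of your fix is also sound. The image $T$ of the counit $\on{ev}\colon \underline{\on{Hom}}_{A}(N,A)\lact N\to A$ is a two-sided ideal: the composite $\mu\circ(A\otimes\on{ev})$ is a right $A$-module map out of $(A\otimes\underline{\on{Hom}}_{A}(N,A))\lact N$, so it factors through $\on{ev}$. And if $T=A$, right exactness and the $\cat{C}$-module structure of $-\otimes_A M$ give an epimorphism $\underline{\on{Hom}}_{A}(N,A)\lact(N\otimes_A M)\twoheadrightarrow M$.

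The gap is the step you flag yourself: that $T\neq 0$, i.e.\ that $N\neq 0$ has a nonzero module map to some free module $V\lact A$. Your justification for it does not work. The map $N\otimes\eta$ is not $A$-linear, and the unit of the forgetful functor against its right adjoint embeds $N$ into a \emph{cofree} module (the underlying object tensored with a dual of $A$), not a free one. The monomorphism from Step 1 goes from $A$ into a module built from $N$, which is the wrong direction. More fundamentally, the mechanism you propose never uses simplicity, and the claim fails without it. For the path algebra $A$ of the quiver $1\to 2$ in $\vect$, the simple injective non-projective module $S$ has $\on{Hom}_{A}(S,A)=0$.

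The missing input is that $\modca$ is exact when $A$ is simple (\autoref{CSZEO}), which is exactly what the paper's proof rests on. With it, the step is short. Pick a projective $Q\twoheadrightarrow\mathbb{1}$ in $\cat{C}$. Then $Q\lact N$ is nonzero and projective in $\modca$ (\autoref{cprojectiveobjects}), so the action epimorphism $(Q\otimes N)\lact A\twoheadrightarrow Q\lact N$ splits. The resulting nonzero map $Q\lact N\to (Q\otimes N)\lact A$ corresponds, under the duality adjunctions for the $\cat{C}$-action, to a nonzero map $W\lact N\to A$ for a suitable $W\in\cat{C}$. Note that you need only a nonzero map here, not an embedding.

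After this repair your route is a genuine alternative to the paper's. The paper uses exactness together with indecomposability to make $P\lact N$ a projective generator, then kills $A$ by right exactness. You instead use simplicity a second time, through the trace ideal, to exhibit $A$ as a quotient of $W\lact N$.
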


\begin{proof}
    Since $A$ is a simple algebra, $\on{mod}_{\mathcal{C}}(A)$ is an indecomposable exact $\mathcal{C}$-module category. Hence, for any $M \in \on{mod}_{\mathcal{C}}(A)$, there is a projective object $P \in \mathcal{C}$ such that $P \triangleright M$ is a projective generator for $\on{mod}_{\mathcal{C}}(A)$. If $M \otimes_{A} N = 0$ then $0 = P \triangleright (M \otimes_{A} N) \simeq (P \triangleright M) \otimes_{A} N$. Thus, the right exact functor $- \otimes_{A} N$ sends a projective generator to zero, and hence it sends every object of $\on{mod}_{\mathcal{C}}(A)$ to zero. In particular, $0 = A \otimes_{A} N \simeq N $.
\end{proof}

\subsection{Tensor algebras}

\begin{definition}[{\cite[Definition~3.2]{EKW}}]
        For $A \in \Alg{\cat{C}}$ and an $A$-$A$-bimodule $E$ in $\on{Ind}(\mathcal{C})$, the {\it tensor algebra} $T_{A}(E)$ is the algebra in $\mathbf{Ind}(\mathcal{C})$ given by $T_{A}(E) \coloneqq \bigoplus_{m=0}^{\infty} E^{\otimes_{A} m}$, whose multiplication is given by the maps $E^{\otimes_{A} m} \otimes E^{\otimes_{A} n} \twoheadrightarrow E^{\otimes_{A} m} \otimes_{A} E^{\otimes_{A} n}$.

        The tensor algebra $T_{A}(E)$ is naturally $\mathbb{Z}_{\geq 0}$-graded: for $m \in \mathbb{Z}_{\geq 0}$, via $T_{A}(E)_{m} = E^{\otimes_{A} m}$. 
        We also denote $T_{A}(E)_{\geq r} \coloneqq \bigoplus_{m=r}^\infty T_A(E)_m$
\end{definition}

\begin{proposition}\label{algmapfromtensoralg}
  An algebra morphism $\varphi: T_{A}(E) \rightarrow B$ is uniquely determined by $\varphi_{0}: A \rightarrow B$ and $\varphi_{1}: E \rightarrow B$. The map $\varphi_{0}$ defines an algebra morphism from $A$ to $B$, and $\varphi_{1}$ defines an $A$-$A$-bimodule morphism, where $B$ inherits the $A$-$A$-bimodule structure from $\varphi_{0}$.
  
  Conversely, any pair $(\varphi_{0},\varphi_{1})$, with $\varphi_{0} \in \Alg{\cat{C}}(A,B)$ and $\varphi_{1} \in \bimod(A,A)(E,B)$ defines the algebra morphism $(\varphi_{0},\varphi_{1},\varphi_{1}^{\otimes_{A}2},\cdots): T_{A}(E) \rightarrow B$, which establishes a bijective correspondence.
\end{proposition}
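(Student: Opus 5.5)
We need to prove the universal property of the tensor algebra $T_A(E)$ in $\on{Ind}(\cat{C})$. Since $T_A(E)=\bigoplus_{m\ge 0}E^{\otimes_A m}$ is a coproduct in $\on{Ind}(\cat{C})$, a morphism $\varphi\colon T_A(E)\to B$ is the same as a family of components $\varphi_m\colon E^{\otimes_A m}\to B$. The plan has three parts: show that for an algebra morphism $\varphi$ the components satisfy $\varphi_m=\varphi_1^{\otimes_A m}$ (which gives uniqueness), check the structural claims about $\varphi_0,\varphi_1$, and then verify that an arbitrary admissible pair assembles into an algebra morphism.

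\textbf{Necessity.}
\begin{enumerate}
\item The unit of $T_A(E)$ factors through the summand $A=E^{\otimes_A 0}$, and the multiplication on this summand is $\mu_A$, because $A\otimes A\twoheadrightarrow A\otimes_A A\simeq A$ is $\mu_A$. Hence $\varphi_0$ is an algebra morphism.
\item The left and right $A$-actions on $E$ are the restrictions of the multiplication maps $A\otimes E\to E$ and $E\otimes A\to E$ of $T_A(E)$. Multiplicativity of $\varphi$ on these summands therefore says exactly that $\varphi_1$ is an $A$-$A$-bimodule map, where $B$ carries the bimodule structure induced by $\varphi_0$.
\item Multiplicativity on $E^{\otimes_A m}\otimes E$ gives $\varphi_{m+1}\circ q=\mu_B\circ(\varphi_m\otimes\varphi_1)$, where $q\colon E^{\otimes_A m}\otimes E\twoheadrightarrow E^{\otimes_A(m+1)}$ is the coequalizer map.
\item Since $q$ is an epimorphism, $\varphi_{m+1}$ is determined by $\varphi_m$ and $\varphi_1$. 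By induction, $\varphi_{m+1}$ is the map induced by $\mu_B\circ(\varphi_1^{\otimes_A m}\otimes\varphi_1)$, i.e.\ $\varphi_{m+1}=\varphi_1^{\otimes_A(m+1)}$.
\end{enumerate}
Here $\varphi_1^{\otimes_A m}$ means the composite $E^{\otimes_A m}\to B^{\otimes_A m}\to B$, with the last map induced by iterated multiplication. This is well defined because $\mu_B$ is $A$-balanced with respect to the $A$-bimodule structure on $B$ coming from $\varphi_0$.

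\textbf{Sufficiency.} Given a pair $(\varphi_0,\varphi_1)$, define $\varphi_m\colon E^{\otimes_A m}\to B$ as above. Well-definedness is the universal property of iterated coequalizers. For this, tensoring in $\on{Ind}(\cat{C})$ must preserve these coequalizers; this holds because $\otimes$ is right exact in each variable and commutes with filtered colimits, and $E^{\otimes_A m}$ is a quotient of $E^{\otimes m}$. Then:
\begin{enumerate}
\item Unitality of $\varphi$ is unitality of $\varphi_0$.
\item For multiplicativity, take the summands $E^{\otimes_A m}\otimes E^{\otimes_A n}$. Precompose with the epimorphism $E^{\otimes m}\otimes E^{\otimes n}\twoheadrightarrow E^{\otimes_A m}\otimes E^{\otimes_A n}$ (for $m=0$ or $n=0$, use $A$ in place of the corresponding tensor power).
\item After this precomposition, both $\varphi\circ\mu_T$ and $\mu_B\circ(\varphi\otimes\varphi)$ become the iterated product $\mu_B^{(m+n)}\circ\varphi_1^{\otimes(m+n)}$, by associativity of $\mu_B$.
\item When $m=0$ or $n=0$, the same comparison reduces to the bimodule property of $\varphi_1$ together with $\varphi_0$ being an algebra map.
\end{enumerate}
Since the two sides agree after precomposing with an epimorphism, they agree. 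The two constructions are mutually inverse by uniqueness.

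\textbf{Main obstacle.} The main difficulty is bookkeeping: ensuring the iterated balanced tensor products and induced maps are well defined in $\on{Ind}(\cat{C})$. This needs the $\otimes$-compatibility of coequalizers, i.e.\ that $E^{\otimes_A m}\otimes E^{\otimes_A n}\to E^{\otimes_A(m+n)}$ is an epimorphism whose source is a quotient of $E^{\otimes(m+n)}$. The other checks are routine diagram chases.
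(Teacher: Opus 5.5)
The paper gives no proof of this proposition. It is stated as the standard universal property of the tensor algebra, the algebra-level form of \cite[Proposition~3.5]{EKW}, so there is no competing argument to compare with. Your proposal is a correct and complete verification. The one step worth making explicit is that checking multiplicativity summand by summand relies on $T_{A}(E)\otimes T_{A}(E)\simeq\bigoplus_{m,n}E^{\otimes_{A}m}\otimes E^{\otimes_{A}n}$. This holds because $\otimes$ on $\on{Ind}(\cat{C})$ commutes with infinite direct sums, which follows from the same compatibility with filtered colimits that you already invoke.
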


\begin{proposition}
  Since a right $A$-module structure on $M$ is equivalently a morphism $A \rightarrow \hom{M}{M}$ of algebra objects, the correspondence of \autoref{algmapfromtensoralg} yields the correspondence of \cite[Proposition~3.5]{EKW}, between right $T_{A}(E)$-module structures on $M$ and pairs consisting of a right $A$-module structure on $M$ and an $A$-module morphism $M \otimes_{A} E \rightarrow M$.
\end{proposition}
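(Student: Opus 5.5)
The statement has two layers. The first is an adjunction-type identification: a right $A$-module structure on $M$ is the same as an algebra morphism $A \to \hom[\cat{C}]{M}{M}$. The second feeds this into the universal property of \autoref{algmapfromtensoralg}. I will work throughout in $\on{Ind}(\cat{C})$, regarded as a module category over itself (or over $\cat{C}$), so that the internal endomorphism object $\underline{\on{End}}(M) = \hom[\cat{C}]{M}{M}$ exists. Concretely, $\underline{\on{End}}(M) \simeq M \otimes \leftdual{M}$ with the appropriate dual, or is given by the internal Hom of the closed monoidal structure for Ind-objects. I will equip it with the algebra structure induced from composition, as in \autoref{EnrichmentsKock}.

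The first step is to use the defining adjunction
\[
\on{Hom}(M \otimes X, M) \simeq \on{Hom}(X, \underline{\on{End}}(M)).
\]
I will check that $\nabla\colon M\otimes A\to M$ is associative and unital exactly when its adjunct $A \to \underline{\on{End}}(M)$ is multiplicative and unital. This is the standard verification: the multiplication on $\underline{\on{End}}(M)$ is by definition the adjunct of the iterated evaluation $M \otimes \underline{\on{End}}(M) \otimes \underline{\on{End}}(M) \to M$. Naturality of the adjunction therefore translates $\nabla\circ(\nabla\otimes A) = \nabla\circ(M\otimes\mu)$ into the equality of the two composites $A\otimes A \to \underline{\on{End}}(M)$, and the same argument handles the units.

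Second, I apply \autoref{algmapfromtensoralg} with $B = \underline{\on{End}}(M)$. A right $T_A(E)$-module structure on $M$ is then a pair $(\varphi_0,\varphi_1)$. Here $\varphi_0 \colon A \to \underline{\on{End}}(M)$ is an algebra morphism, i.e.\ a right $A$-module structure on $M$ by the first step. The other component $\varphi_1\colon E\to \underline{\on{End}}(M)$ is an $A$-$A$-bimodule map, where the bimodule structure on $\underline{\on{End}}(M)$ comes from $\varphi_0$.

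The remaining step is to identify such bimodule maps $E \to \underline{\on{End}}(M)$ with right $A$-module maps $M\otimes_A E\to M$. Under the adjunction, a morphism $E\to\underline{\on{End}}(M)$ is the same as a morphism $f\colon M\otimes E\to M$. I expect this bookkeeping to be the only real obstacle. Compatibility of $\varphi_1$ with the left $A$-action on $E$ should translate into $f$ coequalising $\nabla_M\otimes E$ and $M\otimes\nabla^{\mathrm{left}}_E$, so that $f$ descends to $M\otimes_A E$. Compatibility with the right $A$-action should translate into $f$ being right $A$-linear. Both are again naturality-of-adjunction computations, one diagram each. For the first I would use that $M\otimes_A E$ is a coequaliser, and that $M\otimes -$ preserves it because the tensor product in $\on{Ind}(\cat{C})$ is exact in each variable for rigid $\cat{C}$. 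Since every translation is a bijection, composing them gives the claimed correspondence, which recovers \cite[Proposition~3.5]{EKW}.
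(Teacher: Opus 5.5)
Your proposal is correct and follows the paper's route: you identify right module structures (over $A$ and over $T_{A}(E)$) with algebra maps into the internal endomorphism algebra of $M$, apply \autoref{algmapfromtensoralg}, and use the adjunction to translate the two linearity conditions on $\varphi_{1}$ into $f$ factoring through $M\otimes_{A}E$ and $f$ being right $A$-linear. One small slip: for right modules the relevant internal End is the right adjoint of $M\otimes -$, which in the paper's conventions is $\rightdual{M}\otimes M$ rather than $M\otimes\leftdual{M}$ (the latter governs left modules); since your argument only uses the adjunction, nothing downstream is affected.
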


\begin{lemma}\label{compactquotient}
 Let $I$ be an ideal of $T_{A}(E)$ such that $T_{A}(E)/I \in \Alg{\cat{C}}$; in other words, such that $T_{A}(E)/I$ is a compact object of $\on{Ind}(\cat{C})$. Then there is $m \in \mathbb{Z}_{\geq 0}$ such that $\left\langle E^{\otimes_{A}m} \right\rangle := T_{A}(E)_{\geq m}$ is contained in $I$.
\end{lemma}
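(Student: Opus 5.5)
The plan is to use compactness of $T_A(E)/I$ in $\on{Ind}(\cat{C})$, together with the grading of $T_A(E)$, to show that the increasing chain of images of the truncations $\bigoplus_{k<m} E^{\otimes_A k}$ in the quotient must stabilize. Stabilization will then force $E^{\otimes_A m}$ to lie in $I + T_A(E)_{\geq m+1}$, and multiplicativity of the ideal will bootstrap this to $T_A(E)_{\geq m}\subseteq I$.

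First, write $\pi\colon T_A(E)\twoheadrightarrow B:=T_A(E)/I$. Set $F_m := \bigoplus_{k\le m} E^{\otimes_A k}$, so that $T_A(E)=\on{colim}_m F_m$ is a filtered colimit in $\on{Ind}(\cat{C})$. Let $B_m:=\pi(F_m)\subseteq B$. Then $B=\bigcup_m B_m$ is a filtered union of subobjects, since images commute with filtered colimits in $\on{Ind}(\cat{C})$, where filtered colimits are exact.

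Since $B$ is compact, $\on{id}_B$ factors through some $B_m$, so $B=B_m$. Alternatively, $B$ has finite length in $\cat{C}$, so any increasing chain of subobjects of $B$ stabilizes. Either way we obtain $m$ with $\pi(F_{m})=B$.

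Next, consider the summand $E^{\otimes_A (m+1)}$. Its image under $\pi$ lies in $\pi(F_m)$, so $E^{\otimes_A(m+1)}\subseteq I+F_m$. This says the two-sided ideal generated by $I$ and $F_m$ already contains all of $T_A(E)$, which by itself is not yet the statement. To conclude, I would rather pass to the associated graded. The subobjects $\pi(T_A(E)_{\geq r})$ form a decreasing chain of subobjects of $B$. Each is an ideal, since $T_A(E)_{\geq r}$ is an ideal because the grading is multiplicative. By finite length this chain stabilizes at some $r$, giving $\pi(T_{\ge r})=\pi(T_{\ge r+1})$.

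Now use $T_{\ge r+1}=T_{\ge r}\cdot T_{\ge 1}$, together with $\pi(T_{\geq 1})$ being an ideal of $B$. Writing $N:=\pi(T_{\ge r})$, we get $N=N\cdot\pi(T_{\ge1})$, and iterating, $N=N\cdot \pi(T_{\geq 1})^k = \pi(T_{\ge r+k})$ for all $k$.

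The key remaining step, which I expect to be the main obstacle, is to show that this forces $N=0$. I would argue by grading: $N=\pi(T_{\geq r+k})$ for every $k$, and $N$ is compact, so $N$ is the image of a compact subobject of $T_{\geq r}$, hence contained in $\pi(\bigoplus_{r\le j\le s}E^{\otimes_A j})$ for some $s$. Taking $k>s$ only gives $N$ as the image of disjoint degrees, which does not immediately contradict anything modulo $I$.

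So instead I would use a Nakayama-type argument. Viewing $N$ as a right $B$-module, the ideal $\pi(T_{\ge 1})$ acts, and $N=N\cdot\pi(T_{\geq1})$. Because $B$ is generated in degrees $\le m$, the ideal $\pi(T_{\ge 1})$ is the image of $\pi(E)\otimes\ldots$, so that $\pi(T_{\ge k})=\pi(E)^{k}$ as ideals of $B$. It then suffices to show that $\pi(E)$ generates a nilpotent ideal, i.e.\ that $\pi(E)^k \subseteq J(B)$. I would try to deduce this by checking that $\pi(T_{\geq 1})$ acts nilpotently on every simple $B$-module $L$. Each such $L$ restricts to a finite-length $T_A(E)$-module, and the grading, via twisting by the $\mathbb{G}_m$-action on $T_A(E)$, shows that the subobjects $L\cdot\pi(T_{\ge k})$ strictly decrease until they vanish. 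Combined with \autoref{CSZEO}, which states that $J(B)$ is the greatest nilpotent ideal, this yields $N=\pi(T_{\geq r})=0$, i.e.\ $T_A(E)_{\ge r}\subseteq I$.
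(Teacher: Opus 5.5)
\textbf{The final step fails, and it cannot be repaired: the lemma is false as stated.}

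Your descending chain $\pi(T_{A}(E)_{\geq r}) = (I+\langle E^{\otimes_A r}\rangle)/I$ is exactly the chain the paper uses. The paper's proof consists only of noting that this chain stabilises, because $\on{bimod}_{\cat{C}}(T_A(E)/I,T_A(E)/I)$ is finite abelian, and then declaring the claim proved. You are right that stabilisation only gives $I+T_A(E)_{\geq r}=I+T_A(E)_{\geq r+k}$ for all $k$, not $T_A(E)_{\geq r}\subseteq I$. You are also right that the conclusion is equivalent to nilpotency of the ideal $\pi(T_A(E)_{\geq 1})$ of $B$, since $(T_A(E)_{\geq 1})^{m}=T_A(E)_{\geq m}$ and $\pi(K^m)=\pi(K)^m$ as in \autoref{radicalcontainment}.

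The genuine gap is your last step. The grading automorphisms (the $\mathbb{G}_m$-action) of $T_A(E)$ preserve $I$ only when $I$ is homogeneous. Otherwise they induce nothing on $B$ or on simple $B$-modules, and nothing forces $\pi(T_A(E)_{\geq 1})$ to act nilpotently on a simple $B$-module.

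Here is a counterexample. Take $\cat{C}=\vect$, $A=\mathbb{k}$ and $E=\mathbb{k}$. Then $T_A(E)=\mathbb{k}[x]$ and $T_A(E)_{\geq m}=x^m\mathbb{k}[x]$. Let $I=(x-1)$. Then $T_A(E)/I\simeq\mathbb{k}$ is compact, yet $x^m\notin I$ for every $m$. Correspondingly, the simple module $L=\mathbb{k}$ on which $x$ acts by $1$ satisfies $L\cdot\pi(T_A(E)_{\geq k})=L$ for all $k$. Requiring $I\subseteq T_A(E)_{\geq 2}$ does not help: $I=(x^2-x^3)$ gives $T_A(E)/I\simeq \mathbb{k}[x]/(x^2)\times\mathbb{k}$, and again $x^m\notin I$ for all $m$.

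What can be proved is the conclusion under the extra hypothesis that $(I+T_A(E)_{\geq 1})/I$ is nilpotent. This holds, for instance, when $I$ is homogeneous: then compactness of $\bigoplus_m T_A(E)_m/(I\cap T_A(E)_m)$ forces $I\cap T_A(E)_m=T_A(E)_m$ for $m\gg 0$.

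This weaker version covers every use in the paper. For admissible ideals, as in \autoref{radicalofadmissiblequotient}, the containment is part of \autoref{admissibleideal}. For $\on{Ker}\Phi(\sigma_0,\sigma_1)$ in \autoref{weakGabriel}, it follows because $\Phi(\sigma_0,\sigma_1)$ maps $T_{A/J}(J/J^2)_n$ into $J^n$, and $J$ is nilpotent by \autoref{CSZEO}.
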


\begin{proof}
 Given ideals $I,I'$ of $T_{A}(E)$, we denote the supremum of $\setj{I,I'}$ in the poset of two-sided ideals of $T_{A}(E)$ by $I + I'$.
 It exists since the category of $T_{A}(E)$-$T_{A}(E)$-bimodules is abelian - note that this operation makes no reference to the monoidal structure of $\cat{C}$. Similarly, the remainder of this argument does not use the monoidal structure.

 We have an infinite descending chain of inclusions
 \[
 I + \left\langle E^{\otimes_{A}2} \right\rangle \supseteq I + \left\langle E^{\otimes_{A}3} \right\rangle \supseteq \cdots 
 \]
 which yields an infinite chain of inclusions
 \[
 \left(I + \left\langle E^{\otimes_{A}2} \right\rangle\right)/I \supseteq \left(I + \left\langle E^{\otimes_{A}3} \right\rangle\right)/I \supseteq \cdots 
 \]
 of $T_{A}(E)/I$-$T_{A}(E)/I$-bimodules. Since $\on{bimod}_{\cat{C}}(T_{A}(E)/I,T_{A}(E)/I)$ is a finite abelian category, all of its objects are Artinian, and thus the latter chain must terminate, proving the claim.
\end{proof}

\begin{definition}\label{admissibleideal}
 An ideal $I$ of $T_{A}(E)$ is {\it admissible} if $I$ is contained $T_{A}(E)_{\geq 2}$ and there is $m \in \mathbb{Z}_{\geq 0}$ such that $\left\langle E^{\otimes_{A}m} \right\rangle := T_{A}(E)_{\geq m}$ is contained in $I$.
 
 We say that a quotient $T_{A}(E)/I$ is an {\it admissible quotient} if $I$ is admissible.
\end{definition}

\begin{remark}
   By \autoref{admissibleideal}, an admissible quotient of $T_{A}(E)$ is an algebra object in $\cat{C}$, rather than in $\on{Ind}(\cat{C})$. In other words, it is a compact object of $\on{Ind}(\cat{C})$.
\end{remark}

\begin{lemma}
  Let $I$ be an admissible ideal of $T_{A}(E)$ and let $\pi: T_{A}(E) \rightarrow T_{A}(E)/I$ be the associated projection. Let $M \in \Irr(T_{A}(E)/I)$. Then $\on{res}(\pi)(M)E = 0$.
\end{lemma}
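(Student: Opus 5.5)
Write $B := T_{A}(E)/I$ and $N := \on{res}(\pi)(M)$. By \autoref{ressimple}, $N$ is a simple $T_{A}(E)$-module. The subobject $NE$ is interpreted as the image of $N\otimes E \to N$ induced by the action. It equals $N\langle E\rangle$, where $\langle E\rangle = T_{A}(E)_{\geq 1}$ is the two-sided ideal generated by $E$. The plan is to show $N\langle E\rangle$ is a proper submodule of $N$, and then conclude by simplicity.

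First, $N\langle E\rangle$ is a $T_{A}(E)$-submodule of $N$, by \autoref{idealtimesbimodule} applied to the ideal $\langle E\rangle$. Simplicity of $N$ forces $N\langle E\rangle \in \{0, N\}$. Suppose, for contradiction, that $N\langle E\rangle = N$. Iterating gives
\[
N = N\langle E\rangle = \bigl(N\langle E\rangle\bigr)\langle E\rangle = N\langle E\rangle^{2} = \cdots = N\langle E\rangle^{m}.
\]
The middle equality uses the associativity of the construction $M \mapsto MI$, namely $(NI)I' = N(II')$. This follows from associativity of the action together with right exactness of $\otimes$, since images are preserved under the relevant epimorphisms.

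Next I would identify the powers: $\langle E\rangle^{m} = T_{A}(E)_{\geq m}$. This holds because the multiplication $E^{\otimes_{A}a}\otimes E^{\otimes_{A}b}\to E^{\otimes_{A}(a+b)}$ is an epimorphism. By admissibility (\autoref{admissibleideal}) there is $m$ with $T_{A}(E)_{\geq m} \subseteq I$. Since $N$ is restricted from $B$, we have $NI = 0$, hence
\[
N = N\langle E\rangle^{m} \subseteq NI = 0.
\]
This contradicts $N$ being simple, hence nonzero. Therefore $N\langle E\rangle = 0$, and in particular $NE = 0$, because $NE$ is contained in $N\langle E\rangle$ (in fact they coincide).

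The main technical point is the identity $(NI)I' = N(II')$ for ideals in an algebra in $\on{Ind}(\cat{C})$, together with the computation $\langle E\rangle^{m} = T_{A}(E)_{\geq m}$. For the identity, I would write both sides as images of $N\otimes I\otimes I' \to N$. On one side, $N\otimes I\otimes I' \twoheadrightarrow NI\otimes I'$ is an epimorphism by right exactness of $-\otimes I'$. On the other, $I\otimes I' \twoheadrightarrow II'$ is an epimorphism, so $N \otimes I \otimes I' \twoheadrightarrow N\otimes II'$ is one too. Both composites equal the iterated action by module associativity, so the two images agree.

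The admissibility condition $I \subseteq T_{A}(E)_{\geq 2}$ is not needed for this argument; only the nilpotency-type condition $T_{A}(E)_{\geq m}\subseteq I$ is used.
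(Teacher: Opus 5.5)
Your proof is correct and follows essentially the same route as the paper: simplicity of the restricted module gives the dichotomy $NE \in \{0, N\}$. Assuming $NE = N$, iterating gives $N = N\,T_A(E)_{\geq k}$ for all $k$, which contradicts $T_A(E)_{\geq m} \subseteq I$ together with $NI = 0$. Working with the genuine two-sided ideal $\langle E\rangle = T_A(E)_{\geq 1}$ and the identity $(NI)I' = N(II')$ cleanly justifies two points the paper's proof leaves implicit: that $NE$ is a $T_A(E)$-submodule, and that $NE = N$ propagates to all higher powers.
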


\begin{proof}
 By \autoref{ressimple}, we have that $\on{res}(\pi)(M)$ is simple. Thus we either have $\on{res}(\pi)(M)E = 0$ or $\on{res}(\pi)(M)E = M$. Assume the latter. Then
\[\begin{tikzcd}[ampersand replacement=\&,cramped,sep=small]
	{M\otimes (E\otimes_{A} E)} \& {M \otimes_{A} (E\otimes_{A} E)} \& {(M\otimes_{A}E) \otimes_{A} E} \&\& {M\otimes_{A}E} \& M
	\arrow[two heads, from=1-1, to=1-2]
	\arrow["\simeq", from=1-2, to=1-3]
	\arrow["{\nabla^{1}_{M}\otimes_{A}E}", from=1-3, to=1-5]
	\arrow["\simeq"', from=1-3, to=1-5]
	\arrow["{\nabla^{1}_{M}}", from=1-5, to=1-6]
	\arrow["\simeq"', from=1-5, to=1-6]
\end{tikzcd}\]
shows that $MT_{A}(E)_{\geq 2} = M$, and inductively one shows that $MT_{A}(E)_{\geq k} = M$ for all $k$. However, this contradicts $MI = 0$, by \autoref{compactquotient}. Hence we must have $\on{res}(\pi)(M)E = 0$.
\end{proof}

\begin{corollary}\label{simpleskillarrows}
 Let $I$ be an admissible ideal of $T_{A}(E)$. Let 
 \[
 \phi: (T_{A}(E)/I)/(\left\langle E\right\rangle/I) \xiso A
 \]
 be the obvious induced isomorphism of algebra objects. Then the fully faithful functor
 \[
 \mathbb{L}(\modca) \rightarrow \mathbb{L}(\modw{T_{A}(E)/I})
 \]
 restricted from $\on{res}(\phi)$ to the subcategory of semisimple objects is an equivalence of categories.
\end{corollary}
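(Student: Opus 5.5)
We have $B \defeq T_{A}(E)/I$ and the ideal $\langle E\rangle/I$, the image of $T_{A}(E)_{\geq 1}$, with $B/(\langle E\rangle/I)\simeq A$ via $\phi$. Write $\psi\colon B \twoheadrightarrow A$ for the composite of the projection with $\phi$. Then $\on{res}(\phi)$ is identified with $\on{res}(\psi)\colon \modca \to \modw{B}$. This functor is fully faithful, and its essential image consists of those $B$-modules $M$ with $M(\langle E\rangle/I) = 0$, by the proposition on quotients by ideals.

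The plan is to show essential surjectivity on semisimple objects, together with the fact that $\on{res}(\psi)$ sends semisimple objects to semisimple objects.

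First, preservation of semisimplicity. By \autoref{ressimple}, applied to the ideal $\langle E\rangle/I$ of $B$ (everything lives in $\cat{C}$, hence in $\on{Ind}(\cat{C})$), the restriction of a simple $A$-module along $\psi$ is simple. Since $\on{res}(\psi)$ is additive, semisimple objects are sent to semisimple objects. So the functor $\mathbb{L}(\modca)\to \mathbb{L}(\modw{B})$ is well defined, and it is fully faithful.

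Second, essential surjectivity. Every semisimple $B$-module is a finite direct sum of simple ones, so it suffices to treat a simple $M\in \Irr(B)$. Let $\pi\colon T_{A}(E)\to B$ be the projection. By the preceding lemma, $\on{res}(\pi)(M)E = 0$.

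We must deduce $M(\langle E\rangle/I)=0$ as a $B$-module. The ideal $\langle E\rangle = T_A(E)_{\geq 1}$ is generated by $E$, that is, $T_{A}(E)_{\geq 1} = E\, T_{A}(E)$. So I would show
\[
\on{res}(\pi)(M)\, T_{A}(E)_{\geq 1} = \bigl(\on{res}(\pi)(M) E\bigr)\, T_{A}(E) = 0.
\]
The middle equality is associativity of the action: the morphism $M\otimes E\otimes T_A(E)\to M$ factors through $M\otimes E \to M$, and the image of $M \otimes E^{\otimes_A k}$ is contained in $(ME)E^{\otimes_A (k-1)}$. Then $M(\langle E\rangle/I)$ is the image of $M\otimes \langle E\rangle/I$, and $M\otimes \langle E\rangle \to M\otimes\langle E\rangle/I$ is epi by right exactness of $\otimes$. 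Hence
\[
M(\langle E\rangle/I) = \on{res}(\pi)(M)\langle E\rangle = 0,
\]
so $M$ lies in the essential image of $\on{res}(\psi)$.

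Finally, if $M\simeq \on{res}(\psi)(N)$ with $M$ simple, then $N$ is simple: any subobject $N'\subseteq N$ restricts to a subobject of $M$, and $\on{res}(\psi)$ is exact and faithful. Hence $N$ is semisimple, which completes essential surjectivity.

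The main obstacle is the bookkeeping step identifying $M T_A(E)_{\geq 1}$ with $(ME)T_A(E)$. This uses the image factorization of the actions and the epimorphism $E\otimes E^{\otimes_A(k-1)}\twoheadrightarrow E^{\otimes_A k}$, both provided by right exactness of $\otimes$.
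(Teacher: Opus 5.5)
Your proposal is correct and follows essentially the same route as the paper. The paper gives no separate proof and deduces the corollary directly from the preceding lemma ($\on{res}(\pi)(M)E=0$ for simple $M$), together with \autoref{ressimple} and the description of the essential image of restriction along a quotient; your step $M\langle E\rangle = (ME)\,T_A(E) = 0$ supplies exactly the detail the paper leaves implicit.
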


Using \autoref{simpleskillarrows}, one can deduce the claim below; we opt to provide a separate proof. 

\begin{proposition}\label{radicalofadmissiblequotient}
   Let $B$ be a semisimple algebra in $\cat{C}$, let $E \in \bimod(B,B)$ and let $I$ be an admissible ideal of $T_{B}(E)$. Then $J(T_{B}(E)/I) = \left\langle E\right\rangle/I$. 
\end{proposition}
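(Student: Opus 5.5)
Write $\Lambda \defeq T_{B}(E)/I$ and $N \defeq \left\langle E\right\rangle/I$, where $\left\langle E\right\rangle = T_{B}(E)_{\geq 1}$. The plan is to prove two inclusions using \autoref{CSZEO} and \autoref{radicalcontainment}, which say that $J(\Lambda)$ is the greatest nilpotent ideal of $\Lambda$ and is contained in every ideal with semisimple quotient.

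First we check that $N$ is a two-sided ideal of $\Lambda$. The subobject $T_{B}(E)_{\geq 1}$ is a two-sided ideal of $T_{B}(E)$, because the multiplication maps $E^{\otimes_B m}\otimes E^{\otimes_B n}\to E^{\otimes_B (m+n)}$ respect the grading. Admissibility gives $I \subseteq T_{B}(E)_{\geq 2}\subseteq T_{B}(E)_{\geq 1}$, so the image $N$ of $\left\langle E\right\rangle$ in $\Lambda$ is an ideal. The quotient is $\Lambda/N \simeq T_{B}(E)/T_{B}(E)_{\geq 1}\simeq B$, and this is an algebra isomorphism (it is the map $\phi$ of \autoref{simpleskillarrows}). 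Since $B$ is semisimple, \autoref{radicalcontainment} gives $J(\Lambda)\subseteq N$.

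For the reverse inclusion we show $N$ is nilpotent. Admissibility gives $m$ with $T_{B}(E)_{\geq m}\subseteq I$. We claim $N^{k}$ is the image of $T_{B}(E)_{\geq k}$ in $\Lambda$. As in the proof of \autoref{radicalcontainment}, images of products under the algebra map $\pi\colon T_{B}(E)\to \Lambda$ are products of images, so $N^k=\pi(\left\langle E\right\rangle^k)$. It therefore suffices to show $\left\langle E\right\rangle^{k} = T_{B}(E)_{\geq k}$ inside $T_{B}(E)$. This holds because, by definition of the tensor algebra multiplication, the multiplication $E^{\otimes_B a}\otimes E^{\otimes_B b}\twoheadrightarrow E^{\otimes_B(a+b)}$ is an epimorphism. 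Hence $\left\langle E\right\rangle\cdot T_{B}(E)_{\geq k-1}$ has image exactly $\bigoplus_{n\geq k}E^{\otimes_B n}$, and induction on $k$ finishes the claim. Thus $N^{m}=0$, so $N$ is a nilpotent ideal, and by \autoref{CSZEO} we get $N\subseteq J(\Lambda)$.

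The main obstacle is the bookkeeping in the ideal-product computation. The products $MI$ are defined as images in $\cat{C}$ or $\on{Ind}(\cat{C})$, while $T_B(E)$ is only an algebra in $\on{Ind}(\cat{C})$. We need $\left\langle E\right\rangle^k=T_B(E)_{\geq k}$ there, and we need images to commute with $\pi$. Because $\otimes$ is exact and images are computed degreewise on the direct sum, both should follow from the epimorphisms onto the tensor powers $E^{\otimes_B n}$. Only the final step, applying \autoref{CSZEO}, takes place in $\cat{C}$, where $\Lambda$ lives by \autoref{compactquotient}.
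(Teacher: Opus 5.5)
Your proof is correct and follows the same route as the paper. You show $\left\langle E\right\rangle/I$ is nilpotent, using $\left\langle E\right\rangle^{k}=T_{B}(E)_{\geq k}$ and the compatibility $(\left\langle E\right\rangle/I)^{k}=(\left\langle E\right\rangle^{k}+I)/I$ (which the paper reads off a commutative diagram), and conclude with \autoref{CSZEO}; for the other inclusion you use $\Lambda/N\simeq B$ together with \autoref{radicalcontainment}.

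Two small points. Your direct derivation of $\left\langle E\right\rangle^{k}=T_{B}(E)_{\geq k}$ from the multiplication maps being epimorphisms is fine, even though the paper cites \autoref{notorsion} at this step. Also, $\Lambda\in\cat{C}$ follows directly from $T_{B}(E)_{\geq m}\subseteq I$, not from \autoref{compactquotient}, which is the converse statement.
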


\begin{proof}
 As a consequence of \autoref{notorsion}, for any $n \geq 0$ we have an equality of ideals $\left\langle E\right\rangle^{n} = \left\langle E^{\otimes_B n}\right\rangle$ of ideals in $T_{B}(E)$.
 By \autoref{compactquotient}, we may choose $m$ such that $\left\langle E \right\rangle^{m} \subseteq I$. From the commutativity of 
\[\begin{tikzcd}[column sep=scriptsize]
	{I^{\otimes n}} & {T_{B}(E)^{\otimes n}} & {T_{B}(E)} \\
	{\left\langle E\right\rangle^{\otimes n}} & {T_{B}(E)^{\otimes n}} & {T_{B}(E)} \\
	{(\left\langle E\right\rangle/I)^{\otimes n}} & {(T_{B}(E)/I)^{\otimes n}} & {T_{B}(E)/I}
	\arrow[hook, from=1-1, to=1-2]
	\arrow[hook, from=1-1, to=2-1]
	\arrow["{\mu_{n}}", from=1-2, to=1-3]
	\arrow[from=1-2, to=2-2]
	\arrow[from=1-3, to=2-3]
	\arrow[hook, from=2-1, to=2-2]
	\arrow[two heads, from=2-1, to=3-1]
	\arrow["{\mu_{n}}", from=2-2, to=2-3]
	\arrow[two heads, from=2-2, to=3-2]
	\arrow[two heads, from=2-3, to=3-3]
	\arrow[hook, from=3-1, to=3-2]
	\arrow["{\mu_{n}}", from=3-2, to=3-3]
\end{tikzcd}\]
we obtain the familiar equation $(\left\langle E \right\rangle/I)^{n} = (\left\langle E\right\rangle^{n}+I)/I$, for any $n$. This shows $(\left\langle E \right\rangle/I)^{m} = 0$ for the previously chosen $m$, hence $\left\langle E \right\rangle/I$ is nilpotent by \autoref{CSZEO}. Thus $\left\langle E \right\rangle/I\subseteq J(T_{B}(E))$ On the other hand there is an obvious isomorphism $(T_{B}(E)/I)/(\left\langle E\right\rangle/I) \simeq B$, thus the quotient $(T_{B}(E)/I)/(\left\langle E\right\rangle/I)$ is semisimple. By \autoref{radicalcontainment} this shows $J(T_{B}(E)) \subseteq \left\langle E \right\rangle/I$, proving the result.
\end{proof}

\subsection{Division algebras}

 \begin{definition}
 An algebra $A$ in a finite tensor category $\mathcal{C}$ is a {\it division algebra} if $A$ is simple as a right $A$-module.
 \end{definition}

 For a survey on properties of division algebras in monoidal categories, see \cite{KW}. We will consider a slight generalisation of \cite[Proposition~3.6]{KW}:

\begin{proposition}\label{prop:divisionalgs}
 Let $A$ be an algebra in a finite tensor category. The following are equivalent:
 \begin{enumerate}
  \item\label{division} $A$ is a division algebra.
  \item\label{endexactsimple} $A \simeq \underline{\on{End}}_{\mathcal{M}}(L)$ for a simple object $L$ in an exact $\mathcal{C}$-module category $\mathcal{M}$.
  \item\label{endsimple} $A \simeq \underline{\on{End}}_{\mathcal{M}}(L)$ for a simple object $L$ in a $\mathcal{C}$-module category $\mathcal{M}$.
 \end{enumerate}
\end{proposition}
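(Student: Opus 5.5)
I would prove the cycle (i)$\Rightarrow$(ii)$\Rightarrow$(iii)$\Rightarrow$(i). The implication (ii)$\Rightarrow$(iii) is trivial, so the work lies in the other two.

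For (iii)$\Rightarrow$(i), let $L$ be simple in $\cat{M}$ and set $A=\iend{L}$. Consider the $\cat{C}$-module functor $F\defeq \hom{L}{-}\colon \cat{M}\to \modca$. It sends $L$ to $A$, viewed as a right $A$-module via the enriched composition of \autoref{EnrichmentsKock}. Its left adjoint is $-\otimes_A L$, which is a module functor by \autoref{adjointmodule}, and the counit $A\otimes_A L\simeq L$ is an isomorphism. Now let $N\hookrightarrow A$ be a nonzero submodule. Applying the adjunction gives a morphism $N\otimes_A L\to L$, and this is nonzero. To see this, compose the mate back: the unit map $N\to \hom{L}{N\otimes_A L}\to \hom{L}{L}=A$ recovers the inclusion. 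Since $L$ is simple, $N\otimes_A L\to L$ is an epimorphism. Passing to internal Homs does not immediately give $N=A$, so I would argue through the unit instead. The unit $\eta\colon A\to \hom{L}{L}$ is the identity, and the unit map $\mathbb{1}\to A$ factors through $N$. To show this, I would use $\on{Hom}_{\cat{C}}(\mathbb{1},A)\simeq \on{End}(L)=\mathbb{k}$ together with the fact that $\on{id}_L$ factors as $L\simeq\mathbb{1}\lact L\to N\otimes_A L\to L$. The latter holds because the epimorphism $N\otimes_A L\to L$, combined with simplicity, lets $\on{id}_L$ lift after using that $\mathbb{1}\lact -$ preserves the image. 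This lifting is the delicate step. An alternative route is: $\on{Hom}_{A}(A,A/N)\simeq \on{Hom}_{\cat{C}}(\mathbb{1},A/N)$, and a nonzero proper quotient $A/N$ would produce a nonzero $\cat{M}$-morphism from a proper quotient of $L$. If either route yields $\eta\colon\mathbb{1}\to N$, then $N\supseteq \eta\cdot A=A$, since $N$ is a right submodule.

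For (i)$\Rightarrow$(ii), take $\cat{M}=\modca$ and $L=A$, which is simple by assumption. We have $\iend[\modca]{A}\simeq A$ because $\hom{A}{-}$ is the forgetful functor; this follows from the free--forgetful adjunction $\on{Hom}_A(V\otimes A,M)\simeq \on{Hom}_{\cat{C}}(V,M)$. The remaining point is exactness. The ideal $J(A)$ is a right submodule of the simple module $A$, so either $J(A)=0$ or $J(A)=A$. The second case is impossible because $J(A)$ is nilpotent by \autoref{CSZEO} and $A\neq 0$. Hence $J(A)=0$, and \autoref{CSZEO} then shows that $\modca$ is exact.

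The main obstacle I expect is (iii)$\Rightarrow$(i) in the non-exact, non-semisimple setting. There I must show that every nonzero right ideal contains the unit, using only the simplicity of $L$ and the adjunction $-\lact L\dashv \hom{L}{-}$ applied with $V=N$. Concretely, I would apply $\on{Hom}_{\cat{M}}(N\lact L,L)\simeq\on{Hom}_{\cat{C}}(N,A)$ together with the multiplicativity of the action $A\lact L\to L$.
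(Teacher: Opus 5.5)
Your (i)$\Rightarrow$(ii) is fine, and (ii)$\Rightarrow$(iii) is trivial. However, (iii)$\Rightarrow$(i) has a genuine gap. In your cycle it also carries the whole weight of (ii)$\Rightarrow$(i).

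\textbf{First route.} The proposed factorisation $L\simeq \mathbb{1}\lact L\to N\otimes_A L\to L$ of $\on{id}_L$ needs two things:
\begin{enumerate}
\item a section of the epimorphism $N\otimes_A L\twoheadrightarrow L$, which need not exist because $\cat{M}$ is not assumed semisimple;
\item that this section be induced by a morphism $\mathbb{1}\to N$, since only then does it show that $\eta$ factors through $N$.
\end{enumerate}
The second requirement is exactly the statement you are trying to prove, so the argument is circular. The phrase ``$\mathbb{1}\lact -$ preserves the image'' adds nothing, as $\mathbb{1}\lact -\simeq \on{Id}$.

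\textbf{Second route.} This fails for the same reason. From the epimorphism $N\otimes_A L\twoheadrightarrow L$ and right exactness you only get $(A/N)\otimes_A L=0$. To conclude $A/N=0$ you would need $-\otimes_A L$ to detect $A/N$, i.e.\ a nonzero morphism from $A/N$ into some $\hom{L}{Y}$. Nothing in your argument supplies this.

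In both routes the obstruction is that $\hom{L}{-}$ is only left exact when $L$ is not $\cat{C}$-projective. Simplicity of $L$ plus the adjunction does not compensate for this.

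\textbf{What is missing.} You need a reduction to the case where $L$ is $\cat{C}$-projective. Once $\hom{L}{-}$ is exact, your abandoned first instinct (``passing to internal Homs'') works. The unit $M\to \hom{L}{M\otimes_A L}$ is an isomorphism on free modules $V\otimes A$, and both sides are right exact in $M$, so it is an isomorphism for every $M$. Applying $\hom{L}{-}$ to $N\otimes_A L\twoheadrightarrow L$ then shows that the inclusion $N\hookrightarrow A$ is an epimorphism, so $N=A$.

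This is the exact case (ii)$\Rightarrow$(i). The paper handles it via \autoref{EGNOreconstruction}: on the indecomposable summand $\cat{M}_L$ containing $L$, the functor $\hom{L}{-}$ is an equivalence onto $\modw{\iend{L}}$ sending $L$ to $\iend{L}$.

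For a general $\cat{M}\simeq\modw{A'}$, the paper then uses \autoref{CSZquotient}. It says $\modw{A'/J(A')}$ is a full, exact $\cat{C}$-module subcategory of $\cat{M}$ containing the simple $L$. By \autoref{enrichedhom} and \autoref{EnrichmentsKock}, the internal endomorphism algebra of $L$ computed there is isomorphic, as an algebra, to $\iend{L}$.

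The exactness of this subcategory is a substantial input from the radical theory of algebra objects. Your sketch contains nothing that replaces it, so as written (iii)$\Rightarrow$(i) and (ii)$\Rightarrow$(i) remain unproved.
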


\begin{proof}
  Assume that $A$ is a division algebra. We have $A \simeq \underline{\on{End}}_{A}(A)$, proving that \autoref{division} implies \autoref{endsimple}. Since a division algebra is simple (and hence semisimple), we find that $\on{mod}_{\mathcal{C}}(A)$ is an exact $\mathcal{C}$-module category by \autoref{CSZBijection}, so \autoref{division} also implies \autoref{endexactsimple}.

  Let $\mathcal{M}$ be an exact $\mathcal{C}$-module category. Let $L$ be a simple object of $\mathcal{M}$ and let $\mathcal{M}_{L}$ be the indecomposable summand of $\mathcal{M}$ containing $L$. Then we find an equivalence of $\mathcal{C}$-module categories
  \[
   \mathcal{M}_{L} \xrightarrow[\sim]{\underline{\on{Hom}}_{\mathcal{M}}(L,-)} \on{mod}_{\mathcal{C}} \underline{\on{End}}_{\mathcal{M}}(L)
  \]
   which maps $L$ to $\underline{\on{End}}_{\mathcal{M}}(L)$. Since $L$ is a simple object of $\mathcal{M}_{L}$, it follows that $\underline{\on{End}}_{\mathcal{M}}(L)$ is a simple $\underline{\on{End}}_{\mathcal{M}}(L)$-module. Thus, \autoref{endexactsimple} implies \autoref{division}.

   Now let $\mathcal{M}$ be a $\mathcal{C}$-module category which is not necessarily exact, and let $L$ be a simple object of $\mathcal{M}$. Similarly to above, we may assume $\mathcal{M}$ to be indecomposable. Let $A \in \Alg{\cat{C}}$ be such that $\cat{M} \simeq \modca$. Since the category $\modw{A/J(A)}$
   is a full $\cat{C}$-module subcategory of $\mathcal{M}$ (see \autoref{CSZquotient}), we find an isomorphism $\underline{\on{End}}_{\mathcal{M}}(L) \simeq \underline{\on{End}}_{A/J(A)}(L)$, by \autoref{enrichedhom}, which, by \autoref{EnrichmentsKock}, is an isomorphism of algebra objects.

   Since we have already established that $\underline{\on{End}}_{A/J(A)}(L)$ is a division algebra, so is $\underline{\on{End}}_{\mathcal{M}}(L)$, establishing the result.
\end{proof}

Note that the above proposition can be used to compute all division algebras of $\cat{C}$ up to isomorphism -- not just Morita equivalence -- once we are handed a classification of its exact $\cat{C}$-module categories.

\begin{example}\label{eg:vecdivisionalg}
    Let $\cat{C}=\vect$ be the category of finite-dimensional vector spaces.
    It is immediate that the only indecomposable semisimple module category over $\cat{C}$ is $\cat{C}$ over itself.
    We thus obtain the well-known fact that the only division algebra in $\cat{C}$ is the trivial algebra (recall that we always work over an algebraically closed field).
\end{example}

\begin{example}\label{eg:Fibdivisionalg}
    Let $\cat{C} = \Fib$ be the Fibonacci fusion category, defined by $\on{Irr}(\cat{C}) = \{ \mathbb{1}, \Phi \}$ and the fusion rule
    \[
    \Phi \otimes \Phi \simeq \mathbb{1} \oplus \Phi.
    \]
    Technically, there are two fusion categories with this fusion rule, differentiated through $\Phi$ having different quantum dimension q$\dim(\Phi) = \frac{1\pm\sqrt{5}}{2}$; both can be realised by fusion subcategories associated to $U_{q}(\mathfrak{sl}_{2})$ for $q$ at a primitive 10th-root of unity. 
    One may pick either of these two for the upcoming discussion, though $\Fib$ is usually used to denote the one with q$\dim(\Phi) = \frac{1+\sqrt{5}}{2}$.

    It is known that $\cat{C}$ has exactly one indecomposable exact module category, namely $\cat{C}$ acting on itself (one way to show this is to prove in a similar fashion to \cite{EK} that there is no other irreducible $\mathbb{Z}_+$-module over the fusion ring of $\cat{C}$ other than the fusion ring itself, and then apply \autoref{lem:rigidityofregularmodule}).
    
    We conclude that there are exactly two non-isomorphic (but Morita equivalent) division algebras in $\cat{C}$: the trivial algebra $\mathbb{1}$ and the algebra $\underline{\on{End}}_{\cat{C}}(\Phi) \simeq  \Phi \otimes \Phi$ (with algebra structure given exactly in \cite[Example 7.8.4]{EGNO}).
\end{example}

\begin{example}\label{eg:Z2divisionalg}
    Let $\vect_G$ be the category of finite-dimensional $G$-graded vector spaces.
    Its indecomposable semisimple module categories are classified by the data of a subgroup $L \subseteq G$ and a class of 2-cocycle $\psi \in H^2(L,\mathbb{k}^*)$ \cite[Example 7.4.10]{EGNO}.
    Assume $\on{char}(\mathbb{k})=0$. 
    For $G = \mathbb{Z}/2\mathbb{Z} = \{0,1\}$, we get three indecomposable semisimple module categories:
    \begin{itemize}
        \item When $L= \{0\}$, there is only one class of 2-cocycle $\psi = 1$. This corresponds to the regular module category $\vect_G$.
        \item When $L = G$, we get two choices $\psi = \pm 1$. These correspond to $\vect$ over $\vect_G$, one induced by the forgetful functor $\vect_G \to \vect$ and the other has action twisted by $\psi = -1$.
    \end{itemize}
    From this we conclude that there are exactly three division algebras up to isomorphism:
    \begin{itemize}
        \item While $\cat{M} = \vect_G$ has two simple objects $\mathbb{k}_0$ and $\mathbb{k}_1$, we only obtain one division algebra (cf.\ \autoref{eg:Fibdivisionalg}), since:
        \[
        \underline{\on{End}}_{\cat{M}}(\mathbb{k}_0) \simeq \underline{\on{End}}_{\cat{M}}(\mathbb{k}_1) \simeq \mathbb{k}_0.
        \]
        \item On the other hand, we get two non-isomorphic division algebras for the different choices of $\psi$: the standard graded group algebra $\mathbb{k}[G] \coloneqq \mathbb{k}[x]/\langle x^2-1 \rangle$ and the twisted graded group algebra $\mathbb{k}^\psi[G]  \coloneqq \mathbb{k}[x]/\langle x^2+1\rangle$ (in both cases, $x$ is in degree 1, and $1$ is in degree 0).
    \end{itemize}
\end{example}

\subsection{Artin--Wedderburn theorem}
In this section we show that any semisimple algebra in a finite tensor category is Morita equivalent to a finite direct product of division algebras.

\begin{definition} \label{defn:C-orbitsimples}
 Let $\cat{M}$ be an exact $\cat{C}$-module category. Consider the following equivalence relations on the set $\on{Irr}(\cat{M})$ of representatives (of isomorphism classes) of simple objects in $\cat{M}$:
 \begin{itemize}
     \item $L \sim_{H} L'$ if and only if there is $V \in \cat{C}$ such that $L$ is a subquotient of $V \lact L'$.
     \item $L \sim_{F} L'$ if and only if there is $P \in \projectives{\cat{C}}$ such that $\on{Hom}_{\cat{M}}(P\lact L, L') \neq 0$. 
 \end{itemize} 
 The following proposition shows that these two equivalence relations are the same, and henceforth will simply be denoted by $\sim$.
\end{definition}

\begin{proposition}
    The two equivalence relations introduced above coincide: $\sim_{F} = \sim_{H}$.
\end{proposition}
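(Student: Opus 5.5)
The plan is to prove the two inclusions $\sim_{F} \subseteq \sim_{H}$ and $\sim_{H} \subseteq \sim_{F}$ directly. The proof rests on four facts:
\begin{itemize}
\item exactness of the action in both variables (\autoref{cor:exactonboth});
\item in the exact module category $\cat{M}$, every object $P \lact X$ with $P \in \projectives{\cat{C}}$ is projective (\autoref{cprojectiveobjects});
\item projectives of $\cat{C}$ form a tensor ideal closed under duals;
\item the rigidity adjunctions $\on{Hom}_{\cat{M}}(V \lact X, Y) \simeq \on{Hom}_{\cat{M}}(X, V' \lact Y)$, where $V'$ is the appropriate (left or right) dual of $V$. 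Applied with the two duals, these also give $\on{Hom}_{\cat{M}}(X, V \lact Y) \simeq \on{Hom}_{\cat{M}}(V'' \lact X, Y)$ for the other dual $V''$.
\end{itemize}
I will fix the correct side of the dual once and for all, and will not track it further.

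\emph{$L \sim_{F} L' \Rightarrow L \sim_{H} L'$.} Suppose $\on{Hom}_{\cat{M}}(P \lact L, L') \neq 0$ with $P$ projective. By the rigidity adjunction, this is isomorphic to $\on{Hom}_{\cat{M}}(L, P' \lact L')$ for the appropriate dual $P'$ of $P$. Since $L$ is simple, a nonzero morphism $L \to P' \lact L'$ is a monomorphism. Hence $L$ is a subobject, in particular a subquotient, of $P' \lact L'$, which is exactly $L \sim_{H} L'$.

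\emph{$L \sim_{H} L' \Rightarrow L \sim_{F} L'$.} Let $L$ be a subquotient of $V \lact L'$. First replace $V$ by a projective: choose an epimorphism $P \twoheadrightarrow V$ in $\cat{C}$ with $P$ projective. Exactness of $- \lact L'$ gives an epimorphism $P \lact L' \twoheadrightarrow V \lact L'$, so $L$ is a composition factor of $X \coloneqq P \lact L'$. Let $P_{0}(L)$ denote the projective cover of $L$ in $\cat{M}$. Then $\on{Hom}_{\cat{M}}(P_{0}(L), X) \neq 0$.

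Next realise $P_{0}(L)$ in the required form. Take a projective cover $q \colon Q \twoheadrightarrow \mathbb{1}$ in $\cat{C}$. Then $Q \lact L \twoheadrightarrow L$ is an epimorphism, again by exactness of the action, and $Q \lact L$ is projective because $\cat{M}$ is exact. So $P_{0}(L)$ is a direct summand of $Q \lact L$, and therefore $\on{Hom}_{\cat{M}}(Q \lact L, P \lact L') \neq 0$. Applying the rigidity adjunction to move $P$ across gives $\on{Hom}_{\cat{M}}(P'' \lact Q \lact L, L') \neq 0$ for the appropriate dual $P''$ of $P$. Since $P'' \otimes Q$ is projective, this shows $L \sim_{F} L'$.

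The only delicate point is the second implication. There one must pass from ``$L$ is a composition factor of a projective-acted $L'$'' to a nonzero morphism out of a projective-acted $L$. The key observation is that $P_{0}(L)$ is a summand of $Q \lact L$, which uses exactness of $\cat{M}$ essentially. Beyond that, care is only needed to choose the correct side of the duals in the adjunctions.
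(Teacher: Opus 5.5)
Your proof is correct and is essentially the paper's argument. For $\sim_H\subseteq\sim_F$ the paper likewise takes $Q_{\mathbb{1}}\lact L$ (projective because $\cat{M}$ is exact), gets $P_0(L)$ as a summand, deduces $\on{Hom}_{\cat{M}}(Q_{\mathbb{1}}\lact L, V\lact L')\neq 0$, and moves $V$ across by rigidity — so your preliminary replacement of $V$ by a projective is harmless but unnecessary, since $Q_{\mathbb{1}}$ already makes the tensor product projective. For $\sim_F\subseteq\sim_H$ the paper just notes that $L'$ is a quotient of $P\lact L$, whereas your adjunction step giving $L\hookrightarrow P'\lact L'$ matches the orientation in the definition of $\sim_H$ literally, without appealing to symmetry.
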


\begin{proof}
    Clearly, if $L \sim_{F} L'$ then $L'$ is a quotient, and hence in particular subquotient, of $P\lact L$, for some $P \in \cat{C}$, showing $L \sim_{H} L'$.

    If $L \sim_{H} L'$, let $V$ be such that the composition multiplicity $[V\lact L':L]$ is non-zero. Let $Q_{\mathbb{1}}\in \projectives{\cat{C}}$ be such that there is an epimorphism $q: Q_{\mathbb{1}}\twoheadrightarrow \mathbb{1}$. 

    Then $Q_{\mathbb{1}}\lact L$ is a projective object with an epimorphism $q \lact L$ onto $L$, so that $L$ is a direct summand of $\on{top}(Q_{\mathbb{1}}\lact L)$. Using the equality $[M:\on{top}P] = \on{dim}\on{Hom}(P,M)$ and $[V\lact L':L] \neq 0$ we find $\on{dim}\on{Hom}_{\cat{M}}(Q_{\mathbb{1}}\lact L,V\lact L') \neq 0$ and hence $\on{dim}\on{Hom}_{\cat{M}}(\leftdual{V}\otimes Q_{\mathbb{1}}\lact L,L') \neq 0$, showing that $L \sim_{F} L'$.
\end{proof}

\begin{proposition}\label{endproduct}
    Let $L,L'$ be such that $L \not\sim L'$. Then $\hom{L}{L'} = 0$. Hence, we find an isomorphism $\underline{\on{End}}_{\cat{M}}(L\oplus L') \simeq \underline{\on{End}}_{\cat{M}}(L) \times \underline{\on{End}}_{\cat{M}}(L')$ in $\Alg{\cat{C}}$.
\end{proposition}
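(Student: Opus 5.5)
We prove the vanishing $\hom{L}{L'} = 0$ first, and then read off the algebra isomorphism from the enriched structure of \autoref{EnrichmentsKock}.

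\textbf{Vanishing.} Suppose $\hom{L}{L'} \neq 0$. Since $\cat{C}$ is finite abelian, the nonzero object $\hom{L}{L'}$ admits a nonzero morphism from some projective $P \in \projectives{\cat{C}}$; for instance, take the projective cover of a simple subquotient appearing in its top. By the adjunction defining the internal Hom,
\[
\on{Hom}_{\cat{C}}(P, \hom{L}{L'}) \simeq \on{Hom}_{\cat{M}}(P \lact L, L'),
\]
so $\on{Hom}_{\cat{M}}(P\lact L, L') \neq 0$. This says exactly $L \sim_{F} L'$, and hence $L \sim L'$ by the preceding proposition, contradicting the assumption. Therefore $\hom{L}{L'} = 0$. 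Since $\sim$ is symmetric, the same argument gives $\hom{L'}{L} = 0$.

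\textbf{Decomposition of the internal End.} The functor $\hom{-}{-}$ is additive in both variables: in the second variable because it is a right adjoint, and in the first because $-\lact(X\oplus Y) \simeq (-\lact X)\oplus(-\lact Y)$ and adjoints of a direct sum are direct sums. Hence
\[
\iend{L\oplus L'} \simeq \iend{L} \oplus \hom{L}{L'} \oplus \hom{L'}{L} \oplus \iend{L'} = \iend{L} \oplus \iend{L'}.
\]

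\textbf{Compatibility with multiplication.} It remains to check that this isomorphism respects the algebra structures, i.e.\ that the multiplication on $\iend{L\oplus L'}$ is block diagonal. The multiplication is the enriched composition of $\underline{\cat{M}}$. Under the additive decomposition, the composition
\[
\iend{L\oplus L'} \otimes \iend{L\oplus L'} \to \iend{L\oplus L'}
\]
decomposes into components
\[
\hom{Y}{Z}\otimes\hom{X}{Y} \to \hom{X}{Z}, \qquad X,Y,Z \in \{L,L'\}.
\]
This decomposition uses the naturality of composition in each variable, applied with the inclusion and projection morphisms of the summands.

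All mixed terms vanish because their sources contain a zero factor. The only surviving components are the compositions $\iend{L}^{\otimes 2}\to\iend{L}$ and $\iend{L'}^{\otimes 2}\to\iend{L'}$. The unit $\mathbb{1}\to\iend{L\oplus L'}$ corresponds to $\on{id}_{L\oplus L'} = \on{id}_L \oplus \on{id}_{L'}$, so it is the pair of units. This is precisely the product algebra structure.

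\textbf{Main obstacle.} The delicate point is making the claimed decomposition of enriched composition along direct summands precise. The cleanest route is to apply \autoref{EnrichmentsKock} to the inclusions $\iota_L$, $\iota_{L'}$ and projections $\pi_L$, $\pi_{L'}$. Concretely, one checks that the idempotents $\underline{\iota_L\pi_L}$ and $\underline{\iota_{L'}\pi_{L'}}$ act compatibly with multiplication, using the Yoneda description of composition via $\on{Hom}_{\cat{M}}(V\otimes W\lact X, Z)$.
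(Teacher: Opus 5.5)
Your proof is correct and follows the paper's argument: the vanishing of $\hom{L}{L'}$ comes from $\on{Hom}_{\cat{C}}(P,\hom{L}{L'}) \simeq \on{Hom}_{\cat{M}}(P\lact L,L') = 0$ for every $P \in \projectives{\cat{C}}$, which is exactly your argument via $\sim_{F}$. The paper leaves the compatibility with the algebra structures implicit in its ``Hence'', and your matrix-multiplication description of enriched composition fills in that omitted step. That step needs only the enriched structure of $\underline{\cat{M}}$ and the functoriality of $\hom{-}{-}$; it does not use the module-functor part of \autoref{EnrichmentsKock}, which concerns functors rather than the morphisms $\iota_L,\pi_L$.
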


\begin{proof}
    For $P \in \projectives{\cat{C}}$ we find $\on{Hom}_{\cat{C}}(P,\hom{L}{L'}) = \on{Hom}_{\cat{M}}(P\lact L,L') = 0$. 
\end{proof}

\begin{proposition}\label{basicsemisimple}
   Let $\cat{M}$ be an exact $\cat{C}$-module category. Choose representatives $L_{1},\ldots,L_{r}$ for the equivalence classes of $\on{Irr}(\cat{M})/\sim$. Then $\bigoplus_{i=1}^{r}L_{i}$ is a $\cat{C}$-projective generator for $\cat{M}$.
\end{proposition}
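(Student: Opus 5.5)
Set $G \defeq \bigoplus_{i=1}^{r} L_i$. Since $\cat{M}$ is exact, every object is $\cat{C}$-projective, so $G$ is $\cat{C}$-projective; it remains to prove the generating property. By \autoref{cprojectiveobjects}, it suffices to show that every indecomposable projective object of $\cat{M}$ is a direct summand of some $P \lact G$ with $P \in \projectives{\cat{C}}$. Alternatively, we can use the defining condition directly: if $\hom{G}{Y} = 0$ then $Y = 0$. I will follow the second route, since it reduces immediately to simple objects.

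Let $Y \neq 0$ and choose a simple subobject $L' \hookrightarrow Y$. By the choice of representatives, there is some $i$ with $L_i \sim L'$, that is, $L_i \sim_F L'$. By the definition of $\sim_F$, this gives $Q \in \projectives{\cat{C}}$ with $\on{Hom}_{\cat{M}}(Q \lact L_i, L') \neq 0$. Composing with the monomorphism $L' \hookrightarrow Y$ yields a nonzero morphism $Q \lact L_i \to Y$, because a nonzero map into $L'$ stays nonzero after composing with a mono. Adjunction then gives
\[
0 \neq \on{Hom}_{\cat{M}}(Q\lact L_i, Y) \simeq \on{Hom}_{\cat{C}}(Q, \hom{L_i}{Y}).
\]
Therefore $\hom{L_i}{Y} \neq 0$. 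Since $\hom{G}{Y} \simeq \bigoplus_j \hom{L_j}{Y}$ (internal Hom is additive in its first argument, being defined via adjunction to $-\lact G$), we conclude $\hom{G}{Y} \neq 0$.

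The only point needing care is that $\sim$ is genuinely an equivalence relation, so that representatives make sense; in particular, symmetry of $\sim_F$ is needed. This is implicit in \autoref{defn:C-orbitsimples} and the preceding proposition. The argument above only requires that $L'$ be related to some chosen $L_i$ in the direction $\on{Hom}(Q\lact L_i, L') \neq 0$. If only the opposite direction $\on{Hom}(Q\lact L', L_i)\neq 0$ were available, I would transport it using duals: $\on{Hom}(Q\lact L', L_i) \simeq \on{Hom}(L', \rightdual{Q}\lact L_i)$ (or with the appropriate left dual), which gives $L'$ as a subobject of $V\lact L_i$, i.e.\ $L'\sim_H L_i$. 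Precomposing with a projective cover $Q_{\mathbb 1}\lact L_i$ as in the proof of the previous proposition then produces a projective $P$ with $\on{Hom}(P\lact L_i, L')\neq 0$. I expect this bookkeeping with duals and symmetry to be the main (mild) obstacle. Everything else is a formal consequence of the adjunction and of exactness.
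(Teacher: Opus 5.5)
Your argument is correct and is essentially the paper's. Both proofs use the definition of $\sim_F$ to get a nonzero map $Q\lact L_i\to L$ into each simple $L$, and then apply the adjunction $\on{Hom}_{\cat{M}}(Q\lact L_i,-)\simeq\on{Hom}_{\cat{C}}(Q,\hom{L_i}{-})$. The only difference is that the paper phrases this as $P\lact\bigoplus_i L_i$ being a projective generator for a projective generator $P$ of $\cat{C}$, so it detects $Y\neq 0$ through a simple quotient using projectivity. You detect $Y\neq 0$ through a simple subobject, which avoids projectivity entirely.

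Your final hedging paragraph is unnecessary: the paper asserts that $\sim$ is an equivalence relation, and its own proof uses the same direction as yours. As written it is also circular. Dualizing to get $L'$ as a subobject of $V\lact L_i$ and rerunning the previous proposition's argument only returns $\on{Hom}(P\lact L',L_i)\neq 0$ again. The correct move is to lift $q\lact L_i\colon Q_{\mathbb{1}}\lact L_i\twoheadrightarrow L_i$ along the epimorphism $Q\lact L'\twoheadrightarrow L_i$. This gives $\on{Hom}(Q_{\mathbb{1}}\lact L_i,Q\lact L')\neq 0$, and $\on{Hom}(Q_{\mathbb{1}}\lact L_i,Q\lact L')\simeq\on{Hom}((\leftdual{Q}\otimes Q_{\mathbb{1}})\lact L_i,L')$ then finishes.
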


\begin{proof}
    Since $\cat{M}$ is exact, all of its objects are $\cat{C}$-projective. Hence it suffices to show that the functor $\hom{\bigoplus_{i=1}^{r}L_{i}}{-}$ reflects zero objects. Let $P$ be a projective generator for $\cat{C}$. Then for any $L \in \on{Irr}(\cat{M})$ we have $\on{Hom}_{\cat{M}}(P\lact \bigoplus_{i=1}^{r} L_{i},L) \neq 0$, by definition of $\bigoplus_{i=1}^{r}L_{i}$. Thus, $P\lact \bigoplus_{i=1}^{r} L_{i}$ is a projective generator for $\cat{M}$. Hence
    \[
    \on{Hom}_{\cat{C}}(P,\hom{\bigoplus_{i=1}^{r}L_{i}}{-}) \simeq \on{Hom}_{\cat{C}}(P\lact \bigoplus_{i=1}^{r}L_{i}, -)
    \]
    reflects zero objects, and thus so does $\hom{\bigoplus_{i=1}^{r}L_{i}}{-}$.
\end{proof}

\begin{proposition}\label{ArtinWedderburn}
   Any semisimple algebra is Morita equivalent to a finite direct product of division algebras.  
\end{proposition}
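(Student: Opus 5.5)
Let $A$ be a semisimple algebra in $\cat{C}$ and set $\cat{M} \defeq \modca$. By \autoref{CSZEO}, $\cat{M}$ is an exact $\cat{C}$-module category. This lets us apply the preceding results on equivalence classes of simples.

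Choose representatives $L_{1},\ldots,L_{r}$ of the classes of $\on{Irr}(\cat{M})/\sim$ (\autoref{defn:C-orbitsimples}). By \autoref{basicsemisimple}, $L \defeq \bigoplus_{i=1}^{r} L_i$ is a $\cat{C}$-projective generator of $\cat{M}$. Then \autoref{EGNOreconstruction} gives a $\cat{C}$-module equivalence
\[
\cat{M} \simeq \modw{\underline{\on{End}}_{\cat{M}}(L)}.
\]
Since $\modca \simeq \cat{M}$, this shows that $A$ is Morita equivalent to $\underline{\on{End}}_{\cat{M}}(L)$.

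Next I decompose this internal endomorphism algebra. Since $L_i \not\sim L_j$ for $i \neq j$, \autoref{endproduct} gives $\hom{L_i}{L_j} = 0$ for $i\neq j$. Applying \autoref{endproduct} inductively (the case of more than two summands follows by grouping, as $\hom{L_i}{\bigoplus_{j\neq i}L_j}=0$ by additivity of internal Hom) yields an isomorphism of algebras
\[
\underline{\on{End}}_{\cat{M}}(L) \simeq \prod_{i=1}^{r} \underline{\on{End}}_{\cat{M}}(L_i).
\]
Each $L_i$ is simple in the exact module category $\cat{M}$, so by \autoref{prop:divisionalgs} (implication \autoref{endexactsimple}$\Rightarrow$\autoref{division}) each $\underline{\on{End}}_{\cat{M}}(L_i)$ is a division algebra. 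This gives the claim.

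The main point needing care is the product decomposition. One must check that the identification of $\underline{\on{End}}_{\cat{M}}(L_1\oplus\cdots\oplus L_r)$ with the matrix-style algebra having off-diagonal entries $\hom{L_i}{L_j}$ is compatible with multiplication, so that vanishing off-diagonal terms gives the product algebra structure. This is exactly what \autoref{endproduct} asserts, so the induction on $r$ should be routine.
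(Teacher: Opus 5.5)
Your proof is correct and is essentially the paper's own argument. You use \autoref{basicsemisimple} with \autoref{EGNOreconstruction} for the Morita equivalence, \autoref{endproduct} for the product decomposition, and \autoref{prop:divisionalgs} for each factor; your explicit appeal to \autoref{CSZEO} for exactness and the extension of \autoref{endproduct} to $r>2$ summands by additivity only spell out details the paper leaves implicit.
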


\begin{proof}
   Let $B$ be a semisimple algebra and let $\cat{M} = \modw{B}$.
    Choose representatives $L_{1},\ldots,L_{r}$ for the equivalence classes of $\on{Irr}(\cat{M})/\sim$. Then by \autoref{basicsemisimple}, we have $\cat{M} \simeq \modw{\underline{\on{End}}_{\cat{M}}( \bigoplus_{i=1}^{r}L_{i})}$, showing that $B$ is Morita equivalent to ${\underline{\on{End}}_{\cat{M}}( \bigoplus_{i=1}^{r}L_{i})}$. 
    
    By \autoref{endproduct}, we find ${\underline{\on{End}}_{\cat{M}}( \bigoplus_{i=1}^{r}L_{i})} \simeq \prod_{i=1}^{r} {\underline{\on{End}}_{\cat{M}}(L_{i})}$, and ${\underline{\on{End}}_{\cat{M}}(L_{i})}$ is a division algebra for every $i$, by \autoref{prop:divisionalgs}.
\end{proof}

\section{Species, quivers and path algebras}\label{species}

\begin{definition}\label{defn:speciesandquiver}
A {\it quasi-species} in $\cat{C}$, or $\cat{C}$-quasi-species, consists of a triple $(I, (B_i)_{i \in I} (E_{ij})_{i,j \in I})$, where:
\begin{itemize}
  \item  $I$ is a finite set;
  \item  For each $i \in I$, $B_{i}$ is a simple algebra in $\cat{C}$;
  \item  For each $i,j \in I$, $E_{ij}$ is a (possibly zero) $B_{i}$-$B_{j}$-bimodule in $\cat{C}$.
\end{itemize}
 A {\it $\cat{C}$-species} is a $\cat{C}$-quasi-species such that $B_{i}$ is a division algebra, for all $i \in I$. A {\it $\cat{C}$-quiver} is a $\cat{C}$-species such that $B_{i} = \mathbb{1}$, for all $i \in I$.

 Given a species $(I,(B_{i})_{i \in I}, (E_{ij})_{i,j\in I})$, we denote by $B$ the product algebra $\prod_{i \in I} B_{i}$, and we denote by $E$ the $B$-$B$-bimodule $\bigoplus_{i,j \in I} E_{ij}$, with bimodule structure obtained via \autoref{modproduct}. We refer to the pair $(B,E)$ as the {\it type} of $(I,(B_{i})_{i \in I}, (E_{ij})_{i,j\in I})$.
\end{definition}

\begin{remark}
    What we call a $\cat{C}$-quasi-species is simply called a $\cat{C}$-species in \cite{QZ_fusionstable} (in fact, they even allow $B_i$ to be semisimple). Given \autoref{ArtinWedderburn}, we would prefer to call these $\cat{C}$-quasi-species, so that the notion of $\cat{C}$-species fits well with the classical case (e.g.\ quivers are typically given by $B_i = \mathbb{k}$; not matrix algebras).
\end{remark}

\begin{definition}\label{pathalgebra}
  Given a quasi-species $(I,(B_{i})_{i \in I}, (E_{ij})_{i,j\in I})$ with type $(B,E)$, we define its {\it path algebra} as the tensor algebra $T_{B}(E)$ in $\on{Ind}(\cat{C})$.
\end{definition}

The following motivates the terminology of \autoref{pathalgebra}:

\begin{proposition}\label{gradedtensor}
   Let $B = \prod_{i=1}^{n} D_{i}$ be a finite product of simple algebras in $\cat{C}$ and $E \in \bimod(B,B)$. 
   Using \autoref{modproduct} we can decompose $E$ as $\bigoplus_{i,j} E_{ij}$ where $E_{ij} \in \bimod(D_{i},D_{j})$ the B-B-bimodule.

   For the standard grading of the tensor algebra $T_{B}(E) = \bigoplus_{i=0}^{\infty} T_{B}(E)_{k}$, where $T_{B}(E)_{k} = E^{\otimes_{B}k}$, we find $T_{B}(E)_{k} = \bigoplus_{(i_{1},\ldots,i_{k})\in\setj{1,\ldots,n}^{k}} E_{(i_{1},\ldots,i_{m})}$, where we set
   $E_{(i_{1},\ldots,i_{m})} := E_{i_{n-1}i_{n}} \otimes_{D_{i_{n-1}}} E_{i_{n-2}i_{n-1}} \otimes_{D_{i_{n-2}}} \cdots \otimes_{D_{i_{2}}} E_{i_{1}i_{2}}$.
\end{proposition}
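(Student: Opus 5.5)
We prove this by induction on $k$, using only the decomposition of bimodules over a product algebra from \autoref{modproduct} and the fact that $-\otimes_{B}-$ commutes with finite direct sums in each variable.

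\textbf{The base cases.} For $k=0$ we have $T_{B}(E)_{0}=B=\bigoplus_{i}D_{i}$, matching the empty-path convention with one idempotent summand per vertex. For $k=1$ the statement is exactly the decomposition $E=\bigoplus_{i,j}E_{ij}$. This decomposition comes from applying \autoref{modproduct} twice: once to $\modw{B}$, viewing $E$ as a right module, and once to the left action. Concretely, $E_{ij}=e_{i}Ee_{j}$, where the idempotents $e_{i}$ act via the unit of $B$ as in the proof of \autoref{modproduct}.

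\textbf{The key lemma.} For bimodules $M\in\bimod(D_{i},D_{j})$ and $N\in\bimod(D_{l},D_{m})$, both regarded as $B$-$B$-bimodules, we want
\[
M\otimes_{B}N\simeq \delta_{jl}\, M\otimes_{D_{j}}N .
\]
To prove it, recall that $M\otimes_{B}N$ is the coequalizer of $M\otimes B\otimes N\rightrightarrows M\otimes N$. Decompose $B=\bigoplus_{t}D_{t}$; the coequalizer then splits as a direct sum over $t$, and the summands factor through $Me_{t}$ and $e_{t}N$. Since $Me_{t}=0$ for $t\neq j$ and $e_{t}N=0$ for $t\neq l$, two cases arise:
\begin{itemize}
\item If $j\neq l$, both parallel maps are induced from pieces where one of $Me_{t}$, $e_{t}N$ vanishes. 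The key observation is that the unit of $B$ acting on $M$ from the right is $e_{j}$, so the coequalizer identifies $m\otimes n$ with $me_{j}\otimes n=m\otimes e_{j}n=0$. Hence the coequalizer is $0$.
\item If $j=l$, only the $D_{j}$ summand acts nontrivially. The remaining summands $D_{t}$ with $t\neq j$ contribute pairs of maps that are both zero, which do not change the coequalizer. Hence the result is $M\otimes_{D_{j}}N$.
\end{itemize}

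\textbf{Identifying the coequalizer in the first case.} The main obstacle is to make the identification "$m\otimes n=me_{j}\otimes n$" rigorous without elements. The cleanest route is through the unitality axiom. The map $M\otimes N\to M\otimes B\otimes N$ given by inserting $\eta_{B}=\sum_{t}\eta_{t}$ splits both parallel arrows. Composing with the two actions shows that the coequalizer map $q$ satisfies $q=q\circ(\nabla_{M}(M\otimes e_{j}\eta)\otimes N)=q\circ(M\otimes\nabla_{N}(e_{j}\eta\otimes N))$. In the case $j\neq l$, the second composite is zero, so $q=0$ and the coequalizer vanishes. In the case $j=l$, we compare the universal properties of the two coequalizers directly.

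\textbf{The inductive step.} Assume the formula for $k$. Write $E^{\otimes_{B}(k+1)}=E\otimes_{B}E^{\otimes_{B}k}$ and expand both factors as direct sums, using additivity of $\otimes_{B}$ in each variable, which holds since it is a coequalizer of additive functors. Applying the lemma kills every summand whose adjacent indices do not match. The surviving summands are indexed by sequences $(i_{1},\ldots,i_{k+1})$, each given by the iterated tensor product $E_{i_{k}i_{k+1}}\otimes_{D_{i_{k}}}\cdots\otimes_{D_{i_{2}}}E_{i_{1}i_{2}}$ (with the path-composition order of the statement). We use associativity of $\otimes_{D}$ to rebracket. This gives the formula for $k+1$ and completes the induction.
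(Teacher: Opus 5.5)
The paper states this proposition without proof, treating it as a direct consequence of \autoref{modproduct}. Your argument is the verification being left implicit, and it is correct. This covers the induction, the vanishing lemma $M\otimes_{B}N\simeq\delta_{jl}\,M\otimes_{D_{j}}N$ proved by the unitality identity $q=q\circ\bigl(M\otimes\nabla_{N}(e_{j}\eta\otimes N)\bigr)$, and the remark that the summands $M\otimes D_{t}\otimes N$ with $t\neq j$ contribute only zero maps to the coequalizer.

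The only slips are in bookkeeping, inherited from typos in the statement:
\begin{itemize}
\item $E^{\otimes_{B}(k+1)}$ is indexed by sequences $(i_{0},\ldots,i_{k+1})$ of length $k+2$, not $k+1$.
\item With your convention $E_{ij}=e_{i}Ee_{j}\in\bimod(D_{i},D_{j})$, your lemma produces the summands $E_{i_{0}i_{1}}\otimes_{D_{i_{1}}}E_{i_{1}i_{2}}\otimes_{D_{i_{2}}}\cdots\otimes_{D_{i_{k}}}E_{i_{k}i_{k+1}}$. The reversed order displayed in the statement only makes sense if $E_{ij}\in\bimod(D_{j},D_{i})$.
\end{itemize}
So you should state this corrected formula explicitly rather than appeal to ``the path-composition order of the statement''.
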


\begin{remark}
   Observe that for a quiver in $\cat{C} = \vect$, the space $E_{(i_{1},\ldots,i_{m})}$ consists of linear combinations of paths traversing the sequence $(i_{1},\ldots,i_{m})$ of vertices in the quiver.
\end{remark}

The following proposition tells us up to Morita equivalence, we will only need to concern ourselves with $\cat{C}$-species.
\begin{proposition}\label{quasirepresentative}
  Let $\big(I,(B_{i})_{i \in I}, (E_{ij})_{i,j\in I}\big)$ be a quasi-species in $\cat{C}$. There is a species $\big(I,(B'_{i})_{i \in I},(E'_{ij})_{i,j \in I}\big)$ such that the path algebras of $(I,(B_{i})_{i \in I}, (E_{ij})_{i,j\in I})$ and of $\big(I,(B'_{i})_{i \in I},(E'_{ij})_{i,j \in I}\big)$ are Morita equivalent.
\end{proposition}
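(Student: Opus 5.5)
The plan is to replace each simple algebra $B_i$ by a Morita equivalent division algebra, transport the bimodules $E_{ij}$ along these equivalences, and then check that the Morita equivalence $B \sim B'$ extends to the tensor algebras.

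\emph{Choosing $B_i'$.} Since $B_i$ is simple, it is semisimple in the sense of \autoref{defn:semisimplealgebra}. Moreover $\modw{B_i}$ is an indecomposable exact $\cat{C}$-module category: any decomposition of it would decompose $B_i$ as a product via \autoref{modproduct}, contradicting simplicity. Hence all simples of $\modw{B_i}$ lie in a single $\sim$-class. Following the proof of \autoref{ArtinWedderburn}, we pick a simple $L_i \in \modw{B_i}$ and set $B'_i := \underline{\on{End}}_{B_i}(L_i)$. This is a division algebra by \autoref{prop:divisionalgs}, and \autoref{basicsemisimple} together with \autoref{EGNOreconstruction} give $\modw{B_i}\simeq \modw{B_i'}$. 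By the bimodule characterisation of Morita equivalence (\cite[Proposition~7.11.1]{EGNO}), we obtain bimodules ${}_{B_i}M_{i\,B_i'}$ and ${}_{B_i'}N_{i\,B_i}$ with $M_i\otimes_{B_i'}N_i\simeq B_i$ and $N_i\otimes_{B_i}M_i\simeq B_i'$. Assembling these gives $M=\bigoplus_i M_i$ and $N=\bigoplus_i N_i$, which realise a Morita equivalence between $B=\prod B_i$ and $B'=\prod B'_i$ via \autoref{modproduct}.

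\emph{Defining $E'$.} Set $E'_{ij} := N_i\otimes_{B_i} E_{ij}\otimes_{B_j} M_j$, which is a $B'_i$-$B'_j$-bimodule; equivalently $E' = N\otimes_B E\otimes_B M$. The resulting species $(I,(B_i'),(E'_{ij}))$ is our candidate.

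\emph{Morita equivalence of path algebras.} Using $M\otimes_{B'}N\simeq B$, we compute
\[
E'^{\otimes_{B'} m} \simeq N\otimes_B E^{\otimes_B m}\otimes_B M,
\]
so summing over $m$ gives $T_{B'}(E')\simeq N\otimes_B T_B(E)\otimes_B M$ as objects in $\on{Ind}(\cat{C})$. This is the standard corner-algebra picture. The cleanest way to make it rigorous is to set $\widetilde M := T_B(E)\otimes_B M$, a right $T_{B'}(E')$-module, and show two things. First, $\widetilde M$ is a progenerator-type bimodule: the functor $-\otimes_{T_B(E)}\widetilde M$ agrees with $-\otimes_B M$ after forgetting the arrow actions. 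Second, $\underline{\on{End}}(\widetilde M)\simeq T_{B'}(E')$ as algebras. An alternative and perhaps easier route is at the level of module categories. By \cite[Proposition~3.5]{EKW}, a $T_B(E)$-module is a $B$-module $X$ together with a map $X\otimes_B E\to X$. Applying the equivalence $-\otimes_B M$ turns this into the $B'$-module $X\otimes_B M$ together with a map
\[
X\otimes_B M\otimes_{B'} E' \simeq X\otimes_B E\otimes_B M \to X\otimes_B M,
\]
which is exactly a $T_{B'}(E')$-module. This produces a $\cat{C}$-module equivalence of module categories (in $\on{Ind}(\cat{C})$, or restricted to compact objects), i.e.\ a Morita equivalence.

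The main obstacle is coherence: the isomorphisms $M\otimes_{B'}N\simeq B$ must be compatible with the associativity of $\otimes_B$ so that the action maps, and hence the multiplications, match. For this I would rely on the fact that Morita contexts can be chosen to satisfy the usual associativity compatibility (a Morita context), and otherwise argue entirely via the module-category description above. That description only uses the natural isomorphism $(-\otimes_B E)\otimes_B M\simeq (-\otimes_B M)\otimes_{B'} E'$ of $\cat{C}$-module functors, which holds by construction. The equivalence is compatible with the $\cat{C}$-action because $-\otimes_B M$ is a $\cat{C}$-module functor.
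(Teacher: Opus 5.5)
Your proposal is correct and follows essentially the same route as the paper: replace each $B_i$ by a Morita equivalent division algebra via the Artin--Wedderburn argument, set $E' = N\otimes_B E\otimes_B M$, and conclude via the module description of \cite[Proposition~3.5]{EKW}. That conclusion is exactly your transport of pairs $(X,\, X\otimes_B E\to X)$ along $-\otimes_B M$, so the coherence worry about Morita contexts never arises. You are also more explicit than the paper in justifying that a \emph{simple} $B_i$ yields a single division algebra, via indecomposability of $\modw{B_i}$; note this uses the converse of \autoref{modproduct} (a decomposition of $\modw{B_i}$ splits $B_i\simeq\underline{\on{End}}(B_i)$ as a product), which is standard and is also used without proof in \autoref{notorsion}.
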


\begin{proof}
  Using \autoref{ArtinWedderburn}, we may let $B'_{i}$ be a division algebra Morita equivalent to $B_{i}$. Hence, by \autoref{modproduct}, $B'$ is Morita equivalent to $B$. Let $M \in \bimod(B,B')$ and $N \in \bimod(B',B)$ be the bimodules realising the Morita equivalence. Let $E'_{ij} = N\otimes_{B} E_{ij} \otimes_{B} M$. Then by \cite[Proposition~3.5]{EKW}, the path algebras $T_{B}(E)$ and $T_{B'}(E')$ are Morita equivalent. But since $B_{i}'$ is a division algebra, the quasi-species $\big(I,(B'_{i})_{i \in I},(E'_{ij})_{i,j \in I}\big)$ is a species.
\end{proof}

\begin{definition}
    Given a $\cat{C}$-species $\big(I,(B_{i})_{i \in I}, (E_{ij})_{i,j\in I}\big)$, we define its {\it underlying directed graph} as the directed graph whose vertex set is $I$, where, for $i,j\in I$, we have an arrow $i \to j$ if and only if $E_{ij} \neq 0$.

    We say that a $\cat{C}$-species is {\it acyclic} if and only if its underlying directed graph is acyclic (i.e.\ there is no non-trivial path that starts and end at the same vertex).
\end{definition}

Borrowing the convention from finite-dimensional algebras, we shall simply present a $\cat{C}$-species $(I,(B_{i})_{i \in I}, (E_{ij})_{i,j\in I}\big)$ by its underlying directed graph together with the vertex and arrow labels.
Below are some examples.

\begin{example}
    Let $\cat{C}=\vect$ be the category of finite-dimensional vector spaces.
    Recall that the only division algebra in $\cat{C}$ is the trivial algebra (we always work over an algebraically closed field).
    As such, all $\cat{C}$-species are $\cat{C}$-quivers.
    
    Consider the $\cat{C}$-quiver defined by $I=\{1,2\}$, $B_1 = \mathbb{k} = B_2$, $E_{12}=\mathbb{k}^2$ and $E_{21}=0$.
    Its underlying directed graph is simply a directed graph with two vertices $1$ and $2$, and a single arrow from $1$ to $2$.
    However, the avid reader may realise that the path algebra of this $\cat{C}$-quiver is (isomorphic to) the path algebra of the Kronecker quiver, which is instead a quiver that has two arrows going from $1$ to $2$.
    Indeed, to encode the full data of the $\cat{C}$-quiver, both the vertices and the arrow have to be labelled as follows:
    \[
    \begin{tikzcd}
        \mathbb{k} \ar[r, "\mathbb{k}^2"] & \mathbb{k}
    \end{tikzcd}.
    \]
    In the setting of finite-dimensional vector spaces, there is no loss of data in using $n$ parallel arrows versus using a single arrow labelled by an $n$-dimensional vector space, as a vector space is completely determined by its dimension up to isomorphism.
    As we shall see, for general finite tensor categories (even fusion categories) it is not always possible to replace a labelled arrow by multiple arrows.
\end{example}

\begin{example} \label{eg:pentagonCoxquiver}
    Recall from \autoref{eg:Fibdivisionalg} the fusion category $\Fib$ and its division algebras.
    The following are, respectively, a $\cat{C}$-quiver and a $\cat{C}$-species (whose path algebras are Morita equivalent):
    \[
    \begin{tikzcd}
        \mathbb{1} \ar[r, "\Phi"] & \mathbb{1}
    \end{tikzcd};
    \quad
        \begin{tikzcd}
        \underline{\on{End}}_{\cat{C}}(\Phi) \ar[r, "\Phi\otimes\Phi"] & \underline{\on{End}}_{\cat{C}}(\Phi)
    \end{tikzcd}.
    \]
    Comparing with our previous example, the $\cat{C}$-quiver on the left has $\Phi$ labelling its single arrow, which would've required a ``golden-ratio number of arrows'', as $\FPdim(\Phi)=\text{golden ratio}$.
\end{example}

\begin{example} \label{eg:Z2species}
    Let $G \coloneqq \mathbb{Z}/2\mathbb{Z}$ and let $\cat{C} \coloneqq \vect_{G}$; cf.\ \autoref{eg:Z2divisionalg}.
    The following are some examples of $\cat{C}$-species, where the bimodule structures on the arrow labels are the ``obvious'' ones (no pair of which have Morita equivalent path algebras).
    For notation simplicity we will denote $D \coloneqq \mathbb{k}[G]$ and $D^\psi \coloneqq \mathbb{k}^\psi[G]$ the division algebras from \autoref{eg:Z2divisionalg}.
    \begin{enumerate}
        \item \label{item:twoD4copies}
        \begin{tikzcd}
            \mathbb{k}_0 \ar[r, "\mathbb{k}_0"] & \mathbb{k}_0 \ar[r, "\mathbb{k}_0"] & \mathbb{k}_0 \\
            {} & \mathbb{k_0} \ar[u, "\mathbb{k}_0"]
        \end{tikzcd} \qquad (a $\cat{C}$-quiver);
        \item \label{item:stdZ2action}
        \begin{tikzcd}
            \mathbb{k}_0 \ar[r, "D"] & D \ar[r, "D"] & D
        \end{tikzcd} \qquad (not a $\cat{C}$-quiver);
        \item \label{item:twistedZ2action}
        \begin{tikzcd}
            \mathbb{k}_0 \ar[r, "D^\psi"] & D^\psi \ar[r, "D^\psi"] & D^\psi 
        \end{tikzcd} \qquad (not a $\cat{C}$-quiver).
    \end{enumerate}
\end{example}

\begin{proposition}
   Let $\big(I,(B_{i})_{i \in I}, (E_{ij})_{i,j\in I}\big)$ be a quasi-species in $\cat{C}$. Its path algebra is a compact object of $\mathbf{Ind}(\mathcal{C})$ -- equivalently, an object of $\mathcal{C}$ -- if and only if $\big(I,(B_{i})_{i \in I}, (E_{ij})_{i,j\in I}\big)$ is acyclic.
\end{proposition}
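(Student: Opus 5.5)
The plan is to use the grading of \autoref{gradedtensor}. An object of $\on{Ind}(\cat{C})$ given as a countable direct sum $\bigoplus_{k\ge 0} X_k$ with $X_k \in \cat{C}$ is compact exactly when only finitely many $X_k$ are nonzero. Indeed, compactness means $\on{Hom}(\bigoplus_k X_k,-)$ commutes with filtered colimits. Applying this to the identity morphism, viewed as an element of the filtered colimit of the finite partial sums, shows that the identity factors through some finite partial sum. So all but finitely many $X_k$ vanish. The converse is clear, since $\cat{C}$ embeds in $\on{Ind}(\cat{C})$ as compact objects. It therefore suffices to show that $T_B(E)_k = E^{\otimes_B k} \neq 0$ for infinitely many $k$ if and only if the underlying directed graph has a cycle.

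By \autoref{gradedtensor}, $T_B(E)_k$ decomposes as the direct sum over vertex sequences $(i_1,\ldots,i_{k+1})$ of the iterated tensor products $E_{i_k i_{k+1}}\otimes_{B_{i_k}}\cdots\otimes_{B_{i_2}} E_{i_1 i_2}$ (up to the ordering convention used there). Such a summand can be nonzero only if every factor $E_{i_l i_{l+1}}$ is nonzero, that is, only if the sequence is a path in the underlying graph. If the graph is acyclic, every path has length less than $|I|$. Hence $T_B(E)_k=0$ for $k\ge |I|$, and the path algebra is a finite direct sum of objects of $\cat{C}$, so it is compact.

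For the converse, suppose there is a cycle $i_1\to i_2\to\cdots\to i_m\to i_1$. Repeating it gives paths of every length $k$ that are multiples of $m$. I must show that the corresponding iterated tensor products are nonzero. This is the main point, and \autoref{notorsion} handles it. Since each $B_i$ is simple, the functor $-\otimes_{B_i} E_{ij}$ reflects zero objects. Inductively, the last factor $E_{i_l i_{l+1}}$ is a nonzero $B_{i_l}$-module, and tensoring it over the simple algebra $B_{i_l}$ with the nonzero bimodule $E_{i_{l-1} i_l}$ stays nonzero. The side conventions must be matched to the ordering in \autoref{gradedtensor}; if needed I would use the left-handed analogue of \autoref{notorsion}, proved symmetrically. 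Therefore $E^{\otimes_B k}\neq 0$ for infinitely many $k$, and $T_B(E)$ is not compact.

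The only delicate step is this nonvanishing of tensor products along a cycle. In the classical case it is automatic; here it genuinely uses simplicity of the $B_i$, through \autoref{notorsion}. The remaining bookkeeping reduces to the decomposition of \autoref{gradedtensor} via \autoref{modproduct}.
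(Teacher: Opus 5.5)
Your proposal is correct and follows essentially the same route as the paper. You decompose $T_B(E)$ via \autoref{gradedtensor}, use \autoref{notorsion} (simplicity of the $B_i$, applied with nonzero bimodules) to see that a summand along a vertex sequence is nonzero exactly when every arrow label on it is nonzero, then conclude by pigeonhole in the acyclic case and by iterating a cycle otherwise. Two points that the paper's proof only asserts or states loosely are made explicit in your version: the retract argument showing that a countable direct sum of objects of $\cat{C}$ is compact in $\on{Ind}(\cat{C})$ only if almost all summands vanish, and the indexing of degree-$k$ paths by $(k+1)$-tuples of vertices.
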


\begin{proof}
   As in \autoref{gradedtensor}, we have $T_{B}(E) \simeq \bigoplus_{m=0}^{\infty} T_{B}(E)_{m}$, where $T_{B}(E)_{m} \simeq \bigoplus_{(i_{1},\ldots,i_{m}) \in I^{m}} E_{(i_{1},\ldots,i_{m})}$ and we set
   $E_{(i_{1},\ldots,i_{m})} := E_{i_{n-1}i_{n}} \otimes_{B_{i_{n-1}}} E_{i_{n-2}i_{n-1}} \otimes_{B_{i_{n-2}}} \cdots \otimes_{B_{i_{2}}} E_{i_{1}i_{2}}$.
   
   By \autoref{notorsion}, for $(i_{1},\ldots,i_{m}) \in I^{m}$, we have $E_{(i_{1},\ldots,i_{m})} = 0$ if and only if there is $k \in \setj{1,\ldots,m}$ such that $E_{i_{k-1}i_{k}} = 0$. For $n > |I|$, every sequence in $I^{n}$ contains a repetition, and hence $T_{B}(E)_{n} = 0$ for all $n > |I|$ if and only if $\big(I,(B_{i})_{i \in I}, (E_{ij})_{i,j\in I}\big)$
   is acyclic. On the other hand, the former condition is equivalent to compactness of $T_{B}(E)$.
\end{proof}

We now fix an algebra $A$ and denote the radical $J(A)$ of $A$ (in the sense of \autoref{internalradical}) simply by $J$. The following condition is analogous to \cite[Proposition~4.1.10]{Ben}, \cite{Ber}:
\begin{definition}\label{def:radicalsplit}
  We say that $A$ is {\it radical-split} if the following two conditions hold:   
  \begin{enumerate}
      \item The projection $\pi_{0}: A \rightarrow A/J$ of algebras admits a section $\sigma_{0}$ in $\Alg{\cat{C}}$.
      \item The projection $\pi_{1}: J \rightarrow J/J^{2}$ of $A/J$-$A/J$-bimodules admits a section $\sigma_{1}$ in $\bimod(A/J,A/J)$.
  \end{enumerate}

  We say that the pair $(\sigma_{0},\sigma_{1})$ constitutes a {\it radical-splitting} for $A$.

  By \autoref{algmapfromtensoralg}, a radical-splitting for $A$ gives rise to a morphism of algebras from $T_{A/J}(J/J^{2})$ to $A$. We denote this morphism by $\Phi(\sigma_{0},\sigma_{1})$.
\end{definition}

\begin{remark}\label{radicalsplittingtensor}
    Given a semisimple algebra $B$, together with $E \in \bimod(B,B)$ and an admissible ideal $I$ of $T_{B}(E)$, the algebra $T_{B}(E)/I$ is radical-split. 
    
    The splitting $(T_{B}(E)/I)/J(T_{B}(E)/I) \rightarrow T_{B}(E)/I$ is obtained as 
    \[
    (T_{B}(E)/I)/J(T_{B}(E)/I) = (T_{B}(E)/I)/(\left\langle E\right\rangle/I) \simeq B \hookrightarrow T_{B}(E)\twoheadrightarrow T_{B}(E)/I.
    \]
    The first equality above follows from \autoref{radicalofadmissiblequotient}. 

    The splitting $J(T_{B}(E)/I)/J^{2}(T_{B}(E)/I)\rightarrow J(T_{B}(E)/I)$, is given by 
    \[
    (\left\langle E\right\rangle/I)/(\langle E^{\otimes 2}\rangle/I) \simeq \left\langle E\right\rangle / \langle E^{\otimes 2}\rangle \hookrightarrow \left\langle E\right\rangle \twoheadrightarrow \left\langle E\right\rangle /I. 
    \]
    
    Here we use the obvious isomorphism
    $\left\langle E\right\rangle / \langle E^{\otimes 2}\rangle \simeq T_{B}(E)_{1} \simeq E$ to obtain an induced embedding of $B$-$B$-bimodules
    $\left\langle E\right\rangle / \langle E^{\otimes 2}\rangle \hookrightarrow \left\langle E\right\rangle$ from the embedding $E \hookrightarrow \left\langle E\right\rangle$.
\end{remark}

\begin{lemma}\label{kertensor}
 Let $\mathcal{C}$ be a ring category and $f: U \rightarrow V$ and $g: W \rightarrow Y$ morphisms of $\mathcal{C}$, we have
 \[
 \on{Ker}(f \otimes g) = \on{Ker}(f) \otimes W + U \otimes \on{Ker}(g).
 \]
\end{lemma}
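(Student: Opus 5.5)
The plan is to reduce to the case where $f$ and $g$ are epimorphisms, and then to compute the kernel of a composite of two epimorphisms. Throughout, the only input is that in a ring category $\otimes$ is exact in each variable. In particular, for a subobject $K \hookrightarrow U$ the morphism $K \otimes W \to U \otimes W$ is again a monomorphism, so $\on{Ker}(f)\otimes W$ and $U\otimes \on{Ker}(g)$ are genuinely subobjects of $U \otimes W$. The sum on the right-hand side is taken in the subobject lattice of $U \otimes W$.

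\emph{Step 1: reduction to epimorphisms.} Factor $f = i_f \circ p_f$ with $p_f\colon U \twoheadrightarrow \on{Im} f$ and $i_f\colon \on{Im} f \hookrightarrow V$, and similarly $g = i_g\circ p_g$. By the interchange law, $f\otimes g = (i_f\otimes i_g)\circ(p_f\otimes p_g)$. Next write $i_f \otimes i_g = (i_f\otimes Y)\circ(\on{Im}f\otimes i_g)$. Each factor is a monomorphism by exactness in each variable, so $i_f\otimes i_g$ is a monomorphism. Hence $\on{Ker}(f\otimes g) = \on{Ker}(p_f\otimes p_g)$. Since $\on{Ker}(p_f) = \on{Ker}(f)$ and $\on{Ker}(p_g)=\on{Ker}(g)$, we may assume from now on that $f$ and $g$ are epimorphisms, with $V = \on{Im} f$ and $Y = \on{Im} g$.

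\emph{Step 2: the epimorphism case.} Factor
\[
f\otimes g = b\circ a, \qquad a \defeq f\otimes W\colon U\otimes W \to V\otimes W, \qquad b \defeq V\otimes g\colon V\otimes W\to V\otimes Y.
\]
Both $a$ and $b$ are epimorphisms. By exactness, $\on{Ker}(a) = \on{Ker}(f)\otimes W$ and $\on{Ker}(b) = V\otimes \on{Ker}(g)$. For a composite of epimorphisms there is a short exact sequence
\[
0\to \on{Ker}(a)\to \on{Ker}(b\circ a)\xrightarrow{\ a\ } \on{Ker}(b)\to 0,
\]
that is, $\on{Ker}(b\circ a) = a^{-1}(\on{Ker}(b))$. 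It therefore suffices to show that the subobject $S \defeq \on{Ker}(f)\otimes W + U\otimes\on{Ker}(g)$ satisfies two conditions: it lies in $\on{Ker}(b\circ a)$, and $a$ maps it onto $\on{Ker}(b)$. Once both hold, $S \supseteq \on{Ker}(a)$ together with $a(S) = \on{Ker}(b)$ forces $S = a^{-1}(\on{Ker}(b))$.

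For the containment, $\on{Ker}(f)\otimes W$ is killed by $f\otimes g$ because $f$ vanishes on it. For $U\otimes\on{Ker}(g)$, use the interchange law to write $f\otimes g = (V \otimes g)\circ (f\otimes W)$; restricted to $U\otimes \on{Ker}(g)$, this factors through $V\otimes\on{Ker}(g)$, on which $V\otimes g$ vanishes. For the surjectivity, note that $a$ restricted to $U\otimes\on{Ker}(g)$ is $f\otimes \on{Ker}(g)\colon U\otimes\on{Ker}(g)\to V\otimes\on{Ker}(g) = \on{Ker}(b)$. This morphism is an epimorphism by right exactness of $-\otimes\on{Ker}(g)$.

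I expect the main obstacle to be bookkeeping rather than substance. One has to make sure that the identifications $\on{Ker}(a) = \on{Ker}(f)\otimes W$ and $\on{Ker}(b)=V\otimes\on{Ker}(g)$ are compatible with the embeddings into $U\otimes W$ and $V\otimes W$; this follows by applying the exact functors $-\otimes W$ and $V\otimes -$ to the kernel sequences. One also has to check that the three-term kernel sequence for $b\circ a$ holds in an arbitrary abelian category, which is the snake lemma applied to the obvious diagram, or an elementary diagram chase.
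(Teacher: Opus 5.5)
Your proof is correct and complete. The paper gives no proof of \autoref{kertensor}; it states it as a standard fact and then uses it in the proof of \autoref{weakGabriel}. So there is no argument in the paper to compare against, and yours fills that gap. The structure works as you set it up:
\begin{enumerate}
\item Strip off the monomorphic parts of $f$ and $g$, using that $i_f\otimes i_g=(i_f\otimes Y)\circ(\on{Im}f\otimes i_g)$ is a monomorphism.
\item In the epimorphism case, write $\on{Ker}(b\circ a)=a^{-1}(\on{Ker}(b))$.
\item Conclude from $\on{Ker}(a)=\on{Ker}(f)\otimes W\subseteq S$ and $a(S)=\on{Ker}(b)$, the latter computed via $a\circ(U\otimes\iota_g)=(V\otimes\iota_g)\circ(f\otimes\on{Ker}(g))$.
\end{enumerate}

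Two small remarks. First, in Step~2 you never use that $b$ (equivalently $g$) is an epimorphism, only that $a$ is. Since $V\otimes-$ is left exact, $\on{Ker}(b)=V\otimes\on{Ker}(g)$ holds regardless, so it would suffice to factor $f$ alone.

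Second, your argument uses only that the category is abelian and that $\otimes$ is exact in each variable. It does not use rigidity or $\on{End}(\mathbb{1})\simeq\mathbb{k}$. Hence it works verbatim in any multiring category. That is the generality the paper actually needs, since it applies the lemma to $\otimes_{A/J}$ on $A/J$-$A/J$-bimodules, where the unit $A/J$ need not have endomorphism algebra $\mathbb{k}$.
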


\begin{lemma}\label{sumcover}
    Consider a diagram of the form
\[\begin{tikzcd}
	& {M'} & {M''} \\
	0 & {N'} & N & {N''} & 0
	\arrow["{p'}", two heads, from=1-2, to=2-2]
	\arrow["{p''}", from=1-3, to=2-3]
	\arrow[from=2-1, to=2-2]
	\arrow["f", hook, from=2-2, to=2-3]
	\arrow["g", two heads, from=2-3, to=2-4]
	\arrow[from=2-4, to=2-5]
\end{tikzcd}\]
in an abelian category, with exact rows. If $g\circ p''$ is surjective, then $M' \oplus M'' \xrightarrow{\left(\begin{smallmatrix} f \circ p' & p'' \end{smallmatrix} \right)} N$ is an epimorphism.
\end{lemma}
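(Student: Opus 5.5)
The plan is a direct diagram chase using only exactness of the bottom row. Write $\psi \defeq \left(\begin{smallmatrix} f\circ p' & p''\end{smallmatrix}\right)\colon M'\oplus M''\to N$. It suffices to show that $\on{Im}(\psi)=N$, or equivalently that the cokernel $c\colon N\twoheadrightarrow \on{Coker}(\psi)$ vanishes. I will argue with subobjects of $N$. The image of $\psi$ is the sum $\on{Im}(f\circ p')+\on{Im}(p'')$ in the lattice of subobjects of $N$. Since $p'$ is an epimorphism, $\on{Im}(f\circ p')=\on{Im}(f)=\on{Ker}(g)$, where the last equality is exactness at $N$.

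Next, put $S\defeq \on{Im}(\psi)=\on{Ker}(g)+\on{Im}(p'')$. Its image under $g$ satisfies $g(S)\supseteq g(\on{Im}(p''))=\on{Im}(g\circ p'')=N''$, using the hypothesis that $g\circ p''$ is surjective. So $S$ contains $\on{Ker}(g)$, and $g$ restricted to $S$ is onto $N''$.

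The key step is the standard modular-law argument: a subobject $S\subseteq N$ containing $\on{Ker}(g)$ with $g(S)=g(N)$ must equal $N$. Concretely, consider the composite $N/S$. The morphism $g$ induces $\bar g\colon N/\on{Ker}(g)\xrightarrow{\sim} N''$, and $S/\on{Ker}(g)$ maps isomorphically onto $g(S)=N''$. Hence $S/\on{Ker}(g)=N/\on{Ker}(g)$, and by the third isomorphism theorem $N/S\simeq (N/\on{Ker}g)/(S/\on{Ker}g)=0$. Therefore $S=N$, and $\psi$ is an epimorphism.

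Alternatively, to avoid subobject lattices, I would argue via the cokernel: $c\circ f\circ p'=0$ and $p'$ epi give $c\circ f=0$, so $c$ factors as $c=h\circ g$ since $g=\on{coker}(f)$. Then $0=c\circ p''=h\circ g\circ p''$, and surjectivity of $g\circ p''$ gives $h=0$, hence $c=0$. I expect this version to be the cleanest, with no real obstacle. The only point to check is that $g$ is the cokernel of $f$, which follows from exactness of the short exact sequence.
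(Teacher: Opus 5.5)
Your proof is correct. There is nothing to compare it against: the paper states this lemma without proof, as a routine fact, and only invokes it inductively along the filtration $J^{n+1}\subseteq J^{n}$ in the proof of \autoref{weakGabriel}.

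Your cokernel version is the cleanest complete argument, and it is presumably the chase the authors had in mind:
\begin{itemize}
\item $c\circ f\circ p'=0$ and $p'$ epi give $c\circ f=0$;
\item hence $c=h\circ g$, since exactness makes $g=\on{coker}(f)$;
\item then $h\circ(g\circ p'')=0$ with $g\circ p''$ epi forces $h=0$, so $c=0$.
\end{itemize}
This uses only exactness at $N$. Epimorphicity of $g$ already follows from that of $g\circ p''$, and injectivity of $f$ is never needed, so the lemma holds under slightly weaker hypotheses than stated.

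Your subobject-lattice version is also valid, with two small corrections. The step you call the modular law is really the correspondence theorem for subobjects of $N$ containing $\on{Ker}(g)$. The phrase ``consider the composite $N/S$'' is garbled and should be reworded.
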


\begin{theorem}[weak Gabriel's theorem for finite tensor categories]\label{weakGabriel}
 Let $A$ be an algebra in a finite tensor category $\cat{C}$ and let $J$ denote its radical. If $(\sigma_{0},\sigma_{1})$ is a radical-splitting for $A$, then the morphism $\Phi(\sigma_{0},\sigma_{1}): T_{A/J}(J/J^{2}) \rightarrow A$ is a surjective morphism of algebras.
 Moreover, $\on{Ker}(\Phi(\sigma_{0},\sigma_{1}))$ is an admissible ideal of $T_{A/J}(J/J^{2})$, in the sense of \autoref{admissibleideal}.

 Hence, any radical-split algebra in $\cat{C}$ is an admissible quotient of the path algebra of a quasi-species in $\cat{C}$ of type $(A/J, J/J^{2})$.
\end{theorem}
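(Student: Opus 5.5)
Write $\Phi \defeq \Phi(\sigma_{0},\sigma_{1})$, $B \defeq A/J$ and $E \defeq J/J^{2}$, and regard $A$ as a $B$-$B$-bimodule via $\sigma_{0}$. By \autoref{algmapfromtensoralg}, $\Phi$ restricts to $\sigma_{0}$ on degree $0$, to $\sigma_{1}$ (composed with $J \hookrightarrow A$) on degree $1$, and on degree $m$ to the composite
\[
E^{\otimes_{B} m} \xrightarrow{\sigma_{1}^{\otimes_{B} m}} J^{\otimes_{B} m} \to A
\]
induced by multiplication. The argument then has two parts: surjectivity of $\Phi$, and admissibility of $I \defeq \on{Ker}(\Phi)$.

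For surjectivity, I would prove by descending induction on $k$ that $J^{k}$ lies in the image of $\Phi$ restricted to $T_{B}(E)_{\geq k}$. Since $J$ is nilpotent by \autoref{CSZEO}, $J^{N}=0$ for some $N$, which starts the induction. For the inductive step, multiplication induces an epimorphism $(J/J^{2})^{\otimes_{B} k} \twoheadrightarrow J^{k}/J^{k+1}$. To see this, note $J^{k}$ is defined as the image of $J^{\otimes k} \to A$; pass to the quotient by $J^{k+1}$ and use right exactness of $\otimes$ (or \autoref{kertensor}) to see that the kernel contains $\on{Ker}(J\to J/J^{2})\otimes J^{\otimes k-1}$ etc. Therefore $\Phi|_{E^{\otimes_{B} k}}$ followed by $J^{k}\twoheadrightarrow J^{k}/J^{k+1}$ is surjective, because $\sigma_{1}$ is a section of $\pi_{1}$. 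Now apply \autoref{sumcover} to the short exact sequence $0\to J^{k+1}\to J^{k}\to J^{k}/J^{k+1}\to 0$, taking $M'$ to be the higher-degree part, which surjects onto $J^{k+1}$ by induction, and $M''=E^{\otimes_{B} k}$. This gives the claim for $k$. At $k=0$ the same argument runs with $0\to J\to A\to B\to 0$, using that $\sigma_{0}$ is a section of $\pi_{0}$, so $\Phi$ is an epimorphism.

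For admissibility, I first show $T_{B}(E)_{\geq N}\subseteq I$. This holds because $\Phi$ maps $E^{\otimes_{B} m}$ into $J^{m}$: its image is that of $J^{\otimes m}\to A$ precomposed with an epimorphism $E^{\otimes m}\twoheadrightarrow E^{\otimes_{B} m}$, so it vanishes for $m\geq N$. Next I show $I\subseteq T_{B}(E)_{\geq 2}$, i.e.\ that the composite $I\to T_{B}(E)\twoheadrightarrow T_{B}(E)/T_{B}(E)_{\geq 2}\simeq B\oplus E$ is zero. Compose $\Phi$ with $A\twoheadrightarrow A/J^{2}$. Degrees $\geq 2$ then vanish, and on $B\oplus E$ the resulting map is an isomorphism onto $A/J^{2}$: it is compatible with the filtrations $0\subset E\subset B\oplus E$ and $0\subset J/J^{2}\subset A/J^{2}$, and on the associated graded pieces it is $\pi_{0}\sigma_{0}=\mathrm{id}$ and $\pi_{1}\sigma_{1}=\mathrm{id}$. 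Hence $I\cap(B\oplus E)$ maps to zero under this isomorphism, so the projection of $I$ to $B\oplus E$ vanishes. More precisely, an element of $I$ maps to zero in $A/J^{2}$, and the map factors through $B\oplus E$ isomorphically, so its image in $B\oplus E$ is zero. The final sentence of the theorem then follows directly from \autoref{defn:speciesandquiver}, together with \autoref{modproduct} applied to a decomposition of the semisimple algebra $B$.

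I expect the main obstacle to be the identification $(J/J^{2})^{\otimes_{B} k}\twoheadrightarrow J^{k}/J^{k+1}$ in the internal setting. There are two points to check here. First, the $B$-balancedness of multiplication: $J\cdot J\subseteq J^{2}$, so $J$ acts on the $J^{i}/J^{i+1}$ trivially, and the actions factor through $B$. Second, surjectivity, which needs images and tensor products to commute via right exactness of $\otimes$ in $\cat{C}$. A lesser obstacle is the compatibility of the $B$-actions induced through $\sigma_{0}$ with those through $\pi_{0}$; these agree on $J^{k}/J^{k+1}$ for the same reason.
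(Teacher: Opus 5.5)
Your proposal is correct and follows essentially the same route as the paper. Multiplication induces epimorphisms $(J/J^{2})^{\otimes_{A/J} k} \twoheadrightarrow J^{k}/J^{k+1}$, realized by $\Phi$ through $\sigma_{1}^{\otimes k}$; nilpotency of $J$ and an inductive use of \autoref{sumcover} then give surjectivity, and the sections control the kernel in degrees $0$ and $1$. Your two additional steps, getting $\on{Ker}(\Phi)\subseteq T_{A/J}(J/J^{2})_{\geq 2}$ by passing to $A/J^{2}$ and checking explicitly that $T_{A/J}(J/J^{2})_{\geq N}\subseteq \on{Ker}(\Phi)$, spell out points the paper's proof leaves terse or implicit.
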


\begin{proof}
 Applying inductively \autoref{kertensor}, we obtain for every integer $n>0$ the exact sequence
\[\begin{tikzcd}
	0 & { \sum_{k=1}^{n} J^{\otimes_{A/J}\; k-1} \otimes_{A/J} J^{2} \otimes_{A/J} J^{\otimes_{A/J} \; n-k}} & {J^{\otimes_{A/J}\; n}} & {(J/J^{2})^{\otimes_{A/J}\; n}} & 0
	\arrow[from=1-1, to=1-2]
	\arrow[from=1-2, to=1-3]
	\arrow["{\pi_{1}^{\otimes_{A/J} \; n}}", from=1-3, to=1-4]
	\arrow[from=1-4, to=1-5]
\end{tikzcd}\]
In the diagram below, the epimorphisms represented by solid vertical arrows are obtained using $n$-fold multiplication, the vertical dashed arrow is the induced epimorphism on cokernels, and $\gamma$ is the split epimorphism obtained by applying the splitting lemma to the splitting of the exact sequence in the top row given by $\sigma_{1}^{\otimes_{A/J} n}$.
\[\begin{tikzcd}
	0 & { \sum_{k=1}^{n} J^{\otimes_{A/J}\; k-1} \otimes_{A/J} J^{2} \otimes_{A/J} J^{\otimes_{A/J} \; n-k}} & {J^{\otimes_{A/J}\; n}} & {(J/J^{2})^{\otimes_{A/J}\; n}} & 0 \\
	0 & {J^{n+1}} & {J^{n}} & {J^{n}/J^{n+1}} & 0
	\arrow[from=1-1, to=1-2]
	\arrow["\kappa", shift left, from=1-2, to=1-3]
	\arrow[two heads, from=1-2, to=2-2]
	\arrow["\gamma", shift left=2, dashed, from=1-3, to=1-2]
	\arrow["{\pi^{\otimes_{A/J} \; n}}", shift left, from=1-3, to=1-4]
	\arrow[two heads, from=1-3, to=2-3]
	\arrow["{\sigma_{1}^{\otimes_{A/J} \; n}}", shift left=2, dashed, from=1-4, to=1-3]
	\arrow[from=1-4, to=1-5]
	\arrow[dashed, two heads, from=1-4, to=2-4]
	\arrow[from=2-1, to=2-2]
	\arrow[from=2-2, to=2-3]
	\arrow[from=2-3, to=2-4]
	\arrow[from=2-4, to=2-5]
\end{tikzcd}\]

By the commutativity of the left solid square, precomposing the map $\psi: J^{\otimes n} \twoheadrightarrow J^{n}/J^{n+1}$ obtained by composing solid arrows in the diagram above, with the idempotent $\kappa \circ \gamma$ is zero. Hence, since $\on{id}_{J^{\otimes n}} = \kappa \circ \gamma + \sigma_{1}^{\otimes_{A/J} n} \circ \pi_{1}^{\otimes_{A/J} n}$, we find that $\psi$ coequalizes $\on{id}_{J^{\otimes_{A/J} n}}$ and $\sigma_{1}^{\otimes_{A/J} n} \circ \pi_{1}^{\otimes_{A/J} n}$, so $\psi \circ \sigma_{1}^{\otimes_{A/J} n}$ coincides with the dashed vertical arrow, and hence it is an epimorphism.

By definition of $\Phi(\sigma_{0},\sigma_{1})$, the following diagram commutes:
\[\begin{tikzcd}
	{(J/J^{2})^{\otimes_{A/J}\; n}} & {J^{\otimes_{A/J}\; n}} & {J^{n}} \\
	{T_{A/J}(J/J^{2})_{n}} & {T_{A/J}(J/J^{2})} & A
	\arrow["{\sigma_{1}^{\otimes_{A/J} \; n}}", from=1-1, to=1-2]
	\arrow["{=}"', from=1-1, to=2-1]
	\arrow[two heads, from=1-2, to=1-3]
	\arrow[hook, from=1-3, to=2-3]
	\arrow[hook, from=2-1, to=2-2]
	\arrow["{\Phi(\sigma_{0},\sigma_{1})}"', from=2-2, to=2-3]
\end{tikzcd}\]
By \cite[Lemma~8.8]{CSZ}, there exists a maximal $m$ such that $J^{m} \neq 0$. Combining this with an inductive application of \autoref{sumcover}, we find that the surjectivity of the maps $T_{A/J}(J/J^{2})_{n} \twoheadrightarrow J^{n}/J^{n+1}$ entails the surjectivity of $\Phi$.

From commutativity of
\[\begin{tikzcd}[ampersand replacement = \&]
	\& {A/J\oplus J/J^{2}} \\
	{J/J^{2}} \& {T_{A/J}(J/J^{2})_{\leq 1}} \& {A/J} \\
	J \& A \& {A/J}
	\arrow["{=}", from=1-2, to=2-2]
	\arrow["{(\begin{smallmatrix} 1 & 0 \end{smallmatrix})}", from=1-2, to=2-3]
	\arrow["\begin{array}{c} (\begin{smallmatrix} 0 \\ 1 \end{smallmatrix}) \end{array}", from=2-1, to=1-2]
	\arrow["{\sigma_{1}}"', from=2-1, to=3-1]
	\arrow["{\Phi(\sigma_{0},\sigma_{1})_{\leq 1}}", from=2-2, to=3-2]
	\arrow["{=}", from=2-3, to=3-3]
	\arrow[shift left, from=3-1, to=3-2]
	\arrow[shift left, from=3-2, to=3-3]
	\arrow["{\sigma_{0}}", shift left, dashed, from=3-3, to=3-2]
\end{tikzcd}\]
and $\sigma_{0},\sigma_{1}$ being mono, it follows that $\on{Ker}(\Phi(\sigma_{0},\sigma_{1})) \subseteq T_{A/J}(J/J^{2})_{\geq 2}$.
\end{proof}

\begin{corollary} \label{cor:radicalsplitspecies}
    A radical-split algebra $A$ in $\cat{C}$ is Morita equivalent to an admissible quotient of a path algebra of a species.
\end{corollary}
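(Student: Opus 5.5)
By \autoref{weakGabriel}, $A$ is an admissible quotient $T_{A/J}(J/J^{2})/I$ with $I = \on{Ker}(\Phi(\sigma_{0},\sigma_{1}))$. What remains is to pass from the quasi-species of type $(A/J, J/J^{2})$ to a genuine species, and to check that admissibility survives the Morita equivalence. First I will see that this type comes from a quasi-species. By \autoref{CSZquotient}, $J(A/J) = 0$, so $A/J$ is semisimple by \autoref{CSZEO}, i.e.\ $A/J \simeq \prod_{i\in I} B_{i}$ with each $B_{i}$ simple. Using \autoref{modproduct} on bimodules, $J/J^{2} \simeq \bigoplus_{i,j} E_{ij}$ with $E_{ij} \in \on{bimod}_{\cat{C}}(B_{i},B_{j})$. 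This gives a quasi-species whose path algebra is $T_{A/J}(J/J^{2})$.

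Next, as in the proof of \autoref{quasirepresentative}, I choose division algebras $B'_{i}$ Morita equivalent to $B_{i}$, which is possible by \autoref{ArtinWedderburn} since a simple algebra is semisimple. Let $M\in\on{bimod}_{\cat{C}}(B,B')$ and $N\in\on{bimod}_{\cat{C}}(B',B)$ realise the Morita equivalence between $B$ and $B'$, and set $E' = N\otimes_{B} E\otimes_{B} M$. I then want a Morita equivalence between $T_{B}(E)/I$ and $T_{B'}(E')/I'$ for a suitable admissible $I'$, rather than merely between the path algebras. I would construct it concretely. Take $P \coloneqq M\otimes_{B'}T_{B'}(E')$, or more naturally $T_B(E)\otimes_B M$, as a $T_{B}(E)$-$T_{B'}(E')$-bimodule. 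There is then a graded algebra isomorphism
\[
  T_{B'}(E') \simeq N\otimes_{B} T_{B}(E)\otimes_{B} M,
\]
computed degreewise as $N\otimes_B E^{\otimes_B n}\otimes_B M \simeq (N\otimes_B E\otimes_B M)^{\otimes_{B'} n}$, which uses $M\otimes_{B'}N\simeq B$. This says $T_{B'}(E')$ is the "corner" endomorphism algebra of the progenerator $T_B(E)\otimes_B M$.

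Now set $I' \coloneqq N\otimes_{B} I\otimes_{B} M$, viewed as a subobject via exactness of $N\otimes_B-\otimes_B M$; exactness holds since this is a Morita equivalence of bimodule categories. Then $T_{B'}(E')/I' \simeq N\otimes_B (T_B(E)/I)\otimes_B M$. The standard Morita argument, that $P=(T_B(E)/I)\otimes_B M$ is a progenerator over the quotient with endomorphism algebra $N\otimes_B(T_B(E)/I)\otimes_B M$, gives $\modw{T_{B}(E)/I}\simeq\modw{T_{B'}(E')/I'}$. To make this rigorous I would apply \autoref{EGNOreconstruction} to $P$: it is a $\cat{C}$-projective generator because $-\otimes_B M$ is an equivalence and $P\otimes_{B'}N\simeq T_B(E)/I$. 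Its internal endomorphism algebra is then the stated quotient. Admissibility of $I'$ is immediate from the degreewise description: $I\subseteq T_B(E)_{\ge 2}$ gives $I'\subseteq T_{B'}(E')_{\geq2}$, and $T_B(E)_{\ge m}\subseteq I$ gives $T_{B'}(E')_{\ge m}\subseteq I'$. Finally, $(I,(B'_i),(E'_{ij}))$ is a species.

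I expect the main obstacle to be verifying that the degreewise isomorphism is an algebra isomorphism, and that the internal End of $P$ really equals $N\otimes_B(T_B(E)/I)\otimes_B M$ as an algebra. I would first try to reduce both to the corresponding statement for $T_B(E)$ itself, which \cite[Proposition~3.5]{EKW} essentially provides via the description of $T_{B}(E)$-modules as $B$-modules $X$ with a map $X\otimes_{B}E\to X$. I would then pass to quotients by observing that the equivalence of \cite{EKW} is compatible with the grading, so it matches the full subcategories cut out by the conditions $XI=0$ and $YI'=0$.
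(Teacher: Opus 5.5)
Your proposal is correct and takes essentially the paper's route. You apply \autoref{weakGabriel}, replace $A/J$ by Morita-equivalent division algebras as in \autoref{quasirepresentative}, transport $I$ to $I'=N\otimes_{B}I\otimes_{B}M$ along the monoidal equivalence $\bimod(B,B)\simeq\bimod(B',B')$, and read off admissibility from $N\otimes_{B}T_{B}(E)_{m}\otimes_{B}M\simeq T_{B'}(E')_{m}$; your annihilator-subcategory argument ($XI=0$ iff $(X\otimes_{B}M)I'=0$) is, if anything, more explicit than the paper's passage through bimodule categories. One harmless left/right slip: with right modules, $(T_{B}(E)/I)\otimes_{B}M$ lies in $\modw{T_{B'}(E')/I'}$ and has internal End $T_{B}(E)/I$, whereas the generator of $\modw{T_{B}(E)/I}$ whose internal End is $N\otimes_{B}(T_{B}(E)/I)\otimes_{B}M$ is $N\otimes_{B}(T_{B}(E)/I)$; either choice gives the required equivalence.
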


\begin{proof}
  From \autoref{weakGabriel}, we know that $A \simeq (T_{A/J}(J/J^{2}))/I$ for an admissible ideal $I$ in $T_{A/J}(J/J^{2})$. Let $B:=A/J$ and $E=J/J^{2}$. Using \autoref{quasirepresentative}, we may let $(B',E')$ be a species such that $T_{B}(E)$ is Morita equivalent to $T_{B'}(E')$. The Morita equivalence extends to a monoidal equivalence $\functor{F}:\bimod(T_{B}(E), T_{B}(E)) \simeq \bimod(T_{B'}(E'), T_{B'}(E'))$, hence giving a bijection between ideals in $T_{B}(E)$ and ideals in $T_{B'}(E')$, sending $I$ to $\functor{F}(I)$. The equivalence $\functor{F}$ induces further a biequivalence $\on{bimod}(\bimod(T_{B}(E), T_{B}(E))) \simeq \on{bimod}(\bimod(T_{B'}(E'), T_{B'}(E')))$, sending $T_{B}(E)/I$ to $T_{B'}(E')/F(I)$, and giving rise to an
  equivalence $\on{mod}_{\bimod(T_{B}(E), T_{B}(E))}T_{B}(E)/I \simeq \on{mod}_{\bimod(T_{B'}(E'), T_{B'}(E'))}T_{B'}(E')/F(I)$. The result follows by passing under the well-known equivalences 
  \[
  \on{mod}_{\bimod(T_{B'}(E'), T_{B'}(E'))}T_{B'}(E')/F(I) \simeq \on{mod}_{\cat{C}}T_{B'}(E')/F(I)
  \]
  and $\on{mod}_{\bimod(T_{B}(E), T_{B}(E))}T_{B}(E)/I \simeq \on{mod}_{\cat{C}}T_{B}(E)/I$.

  Since the Morita equivalence $F$ of \autoref{quasirepresentative} is given by $N \otimes_{B} - \otimes_{B} M$, for $M \in \bimod(B,B')$ and $N \in \bimod(B',B)$, we find that not only $\functor{F}(T_{B}(E)) = T_{B'}(E')$, but further also $\functor{F}(T_{B}(E)_{m}) = T_{B'}(E')_{m}$ for any $m \in \mathbb{Z}_{\geq 0}$, showing that, since $I$ is an admissible ideal, so is $\functor{F}(I)$, proving that $T_{B'}(E')/\functor{F}(I)$ is an admissible quotient of $T_{B'}(E')$.
\end{proof}

\section{The fusion category setting}\label{sec:fusioncat}
For the remainder of this document, $\cat{D}$ will denote a fusion category (a semisimple finite tensor category).
In this section, we prove results about radicals and semisimple algebras in the setting of fusion categories, including showing that semisimple objects are closed under the $\cat{D}$-action, and reconciling the different notions of ``semisimple'' associated with an algebra. 
Moreover, we associate to every finite $\cat{D}$-module category a directed graph which is an invariant of $\cat{D}$-module categories.
This directed graph will be used at the final section for our main theorems.

\subsection{Radicals of algebras in fusion categories and semisimplicity}
\begin{proposition}\label{mixedfusion}
    The bijection of \autoref{CSZBijection} simplifies in the fusion case: for $A \in \Alg{\cat{D}}$, it sends a $\cat{D}$-stable ideal $\mathfrak{I}$ in $\projda$ to the presheaf $\mathfrak{I}(-,A)$.
\end{proposition}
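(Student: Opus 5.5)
In the fusion case $\mathbb{1}$ is projective, so I can take $Q = \mathbb{1}$ and $q = \on{id}_{\mathbb{1}}$ in \autoref{CSZBijection}. The theorem says $\mathbb{S}(\mathfrak{I})$ does not depend on this choice, so this is legitimate. For $P \in \cat{D}$ (every object is projective), the construction then describes $\mathbb{S}(\mathfrak{I})(P)$ as the set of composites
\[
P \xiso P\otimes\mathbb{1} \xrightarrow{P\otimes\eta} P\otimes A \xrightarrow{g} \mathbb{1}\otimes A \xiso A,
\]
where $g$ ranges over $\mathfrak{I}(P\lact A, A)$. Here I identify $\mathbb{1}\lact A$ with the regular module $A$.

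The free--forgetful adjunction gives a bijection $\homa(P\lact A, A)\simeq \on{Hom}_{\cat{D}}(P,A)$. It sends $g$ to $g\circ(P\otimes\eta)$, up to the unitor, and this is exactly the composite displayed above. So $\mathbb{S}(\mathfrak{I})(P)$ is the image of $\mathfrak{I}(P\lact A, A)$ under the adjunction isomorphism. This is what the statement means by "the presheaf $\mathfrak{I}(-,A)$": the functor $P\mapsto \mathfrak{I}(P\lact A, A)$ on $\projectives{\cat{D}}=\cat{D}$, transported to a subfunctor of $\on{Hom}_{\cat{D}}(-,A)$. Here $\mathfrak{I}$ is regarded on the full subcategory of free modules $P\lact A$.

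It remains to check that this identification is natural in $P$, so that the two subfunctors agree and not just their values. For $h\colon P'\to P$, precomposition with $h$ on $\on{Hom}_{\cat{D}}(-,A)$ corresponds under the adjunction to precomposition with $h\lact A$. The family $\mathfrak{I}(-\lact A, A)$ is stable under this operation because $\mathfrak{I}$ is an ideal. Naturality of the adjunction bijection then gives the equality of subfunctors.

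Finally, since $\cat{D}$ is semisimple, every additive functor $\cat{D}^{\mathrm{op}}\to\vect$ is already exact. So the right exact extension step in \autoref{CSZBijection} is trivial, and the ideal corresponding to $\mathfrak{I}$ is represented directly by $\mathfrak{I}(-\lact A, A)$. I expect the only delicate point to be bookkeeping: making sure the isomorphism $\mathbb{1}\lact A\simeq A$ and the unitors match the conventions of \cite{CSZ}. The remaining steps follow formally from the independence of the choice of $(Q,q)$ and from adjunction naturality.
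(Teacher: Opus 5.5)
Your proposal is correct and follows the paper's proof. The paper likewise takes $q = \mathrm{id}_{\mathbb{1}}$, which is legitimate because $\mathbb{1}$ is projective in a fusion category and $\mathbb{S}$ does not depend on the choice of $q$, and it calls the claim immediate; your identification of the defining composite with the free--forgetful adjunction $\mathrm{Hom}_{\mathrm{mod}_{\cat{D}}(A)}(P \lact A, A) \simeq \mathrm{Hom}_{\cat{D}}(P,A)$, and your naturality and representability checks, spell out what the paper leaves implicit.
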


\begin{proof}
    Since for a fusion category, $\on{id}_{\mathbb{1}}$ is an epimorphism onto $\mathbb{1}$ from a projective object, we may choose it as $q$ in \autoref{CSZBijection} (recall that the construction is independent of the choice of $q$), obtaining the claim as an immediate consequence.
\end{proof}

\begin{proposition}\label{radicalrepresentation}
  The radical $J(A)$ represents the presheaf $\mathfrak{J}_{\cat{D}}(-,A)$.   
\end{proposition}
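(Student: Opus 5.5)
The plan is to unwind \autoref{internalradical} through the simplified bijection of \autoref{mixedfusion}. By definition, $J(A)$ is the ideal corresponding to the $\cat{D}$-stable ideal $\mathfrak{J}_{\cat{D}}(A)$ under the chain of poset isomorphisms of \autoref{CSZBijection}. The first map sends $\mathfrak{J}_{\cat{D}}(A)$ to the mixed subfunctor $\mathbb{S}(\mathfrak{J}_{\cat{D}}(A))$. The second sends a mixed subfunctor to the object representing its right exact extension. So it suffices to identify $\mathbb{S}(\mathfrak{J}_{\cat{D}}(A))$ with $\mathfrak{J}_{\cat{D}}(-,A)$, and then to check that no genuine extension is needed in the fusion case.

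For the first step I would invoke \autoref{mixedfusion} directly. Taking $q = \on{id}_{\mathbb{1}}$, the mixed subfunctor $\mathbb{S}(\mathfrak{J}_{\cat{D}}(A))$ evaluated at $P \in \projectives{\cat{D}} = \cat{D}$ consists of the composites $P \to P\otimes A \xrightarrow{g} A$ with $g \in \mathfrak{J}_{\cat{D}}(A)(P\lact A, A)$. Under the free-module adjunction $\homa(P\lact A, A) \simeq \on{Hom}_{\cat{D}}(P,A)$, these composites are exactly the images of $\mathfrak{J}_{\cat{D}}(A)(P\lact A, \mathbb{1}\lact A)$. Hence the presheaf in question is $P \mapsto \mathfrak{J}_{\cat{D}}(P\lact A, A)$, which is what the statement denotes $\mathfrak{J}_{\cat{D}}(-,A)$.

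For the second step, observe that since $\cat{D}$ is semisimple, $\projectives{\cat{D}} = \cat{D}$. A right exact extension of a presheaf on $\projectives{\cat{D}}$ to $\cat{D}$ is therefore the presheaf itself. Moreover, any subfunctor of the representable $\on{Hom}_{\cat{D}}(-,A)$ on a finite semisimple category is representable, by a subobject of $A$. Concretely, take the subobject $I \subseteq A$ whose simple isotypic components are the subspaces $\mathfrak{J}_{\cat{D}}(L\lact A, A) \subseteq \on{Hom}(L,A)$ for $L \in \on{Irr}(\cat{D})$. Additivity of the subfunctor, together with \autoref{unnaturality}-style reasoning, shows that $\on{Hom}_{\cat{D}}(-,I) = \mathfrak{J}_{\cat{D}}(-,A)$. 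By \autoref{CSZBijection}, this $I$ is $J(A)$.

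The main obstacle is purely bookkeeping: making sure the identification in the first step really goes through the adjunction isomorphism, with the unit $\eta$ inserted correctly, so that the subspaces match on the nose rather than merely up to an isomorphism that might fail to be natural in $P$. Naturality follows because $\mathfrak{J}_{\cat{D}}$ is an ideal and hence closed under precomposition with $f\lact A$. So I expect this step to be short once written out.
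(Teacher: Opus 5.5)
Your proposal is correct and follows the paper's own proof. The paper simply combines \autoref{mixedfusion} (taking $q=\on{id}_{\mathbb{1}}$) with the definition \eqref{eqmoduleradical} of $\mathfrak{J}_{\cat{D}}(A)$. Your remarks that the right exact extension is trivial because $\projectives{\cat{D}}=\cat{D}$, and that a subpresheaf of $\on{Hom}_{\cat{D}}(-,A)$ is represented by a subobject of $A$, spell out what the paper leaves implicit.
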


\begin{proof}
 This follows immediately by combining \autoref{mixedfusion} with \autoref{eqmoduleradical}.
\end{proof}

\begin{corollary}\label{radicalfractions}
For any $m,n \in \mathbb{Z}_{\geq 0}$ with $m\leq n$ and $V \in \mathcal{C}$, we have
\[
 \on{Hom}_{\mathcal{C}}(V,J^{m}/J^{n}) \simeq (\mathfrak{J}_{\cat{D}}^{m}/\mathfrak{J}_{\cat{D}}^{n})_{\on{mod}_{\mathcal{C}}(A)}(V\otimes A, A)
\]
\end{corollary}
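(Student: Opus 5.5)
The plan is to reduce the statement to \autoref{radicalrepresentation}, together with the compatibility of the bijection of \autoref{CSZBijection} with multiplication, and then pass to quotients using semisimplicity of $\cat{D}$. Throughout, $V$ is an object of the fusion category $\cat{D}$. Since $\cat{D}$ is semisimple, every object of $\cat{D}$ is projective, so $\projda$ consists of the modules $V \lact A$ and their summands. The presheaf $V \mapsto \on{Hom}_{\cat{D}}(V,-)$ on $\cat{D}$ is then exact.

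\emph{Step 1: identify the powers.} By \autoref{CSZBijection}, the bijection between $\cp$-stable ideals of $\projda$ and two-sided ideals of $A$ respects multiplication. Since $J$ corresponds to $\mathfrak{J}_{\cat{D}}$, the power $J^{k}$ corresponds to $\mathfrak{J}_{\cat{D}}^{k}$. By \autoref{mixedfusion}, this correspondence sends an ideal $\mathfrak{I}$ to the presheaf $\mathfrak{I}(-\otimes A, A)$. Hence for every $k$ we obtain a natural isomorphism
\[
\on{Hom}_{\cat{D}}(V,J^{k}) \simeq \mathfrak{J}_{\cat{D}}^{k}(V\otimes A, A),
\]
which is compatible with the inclusions $J^{n}\subseteq J^{m}$ and $\mathfrak{J}_{\cat{D}}^{n}\subseteq \mathfrak{J}_{\cat{D}}^{m}$ for $m\le n$. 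This compatibility holds because both sides sit inside $\on{Hom}_{\cat{D}}(V,A)\simeq \homa(V\otimes A,A)$ via the free-forgetful adjunction.

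One subtlety needs checking here. The correspondence of \autoref{CSZBijection} defines the ideal attached to $\mathfrak{J}^{k}_{\cat{D}}$ as the ideal representing the associated mixed subfunctor. We must confirm that ``respects multiplications'' means precisely $J^{k}\leftrightarrow\mathfrak{J}^{k}_{\cat{D}}$, with $J^{k}$ defined via \autoref{idealtimesbimodule}. I will take this as the content of the cited theorem. I expect this bookkeeping to be the only real obstacle: ensuring that the identification of subobjects of $\on{Hom}_{\cat{D}}(-,A)$ is literally by the sub-presheaves, so that inclusions match.

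\emph{Step 2: pass to quotients.} Apply the exact functor $\on{Hom}_{\cat{D}}(V,-)$ to the short exact sequence $0\to J^{n}\to J^{m}\to J^{m}/J^{n}\to 0$. Exactness holds because $V$ is projective in the semisimple category $\cat{D}$. This gives
\[
\on{Hom}_{\cat{D}}(V,J^{m}/J^{n}) \simeq \on{Hom}_{\cat{D}}(V,J^{m})/\on{Hom}_{\cat{D}}(V,J^{n}).
\]
Substituting the isomorphisms from Step 1, whose compatibility with inclusions guarantees that the quotients correspond, the right-hand side becomes $\mathfrak{J}_{\cat{D}}^{m}(V\otimes A,A)/\mathfrak{J}_{\cat{D}}^{n}(V\otimes A,A)$. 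By the definition of quotients of ideals, this is exactly $(\mathfrak{J}_{\cat{D}}^{m}/\mathfrak{J}_{\cat{D}}^{n})(V\otimes A,A)$.

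Naturality in $V$ follows from the naturality of each step. The degenerate cases, where $m=0$ or $m=n$, reduce to $\on{Hom}_{\cat{D}}(V,A)\simeq\homa(V\otimes A,A)$ and to $0$ respectively.
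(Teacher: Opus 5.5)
Your proposal is correct and follows essentially the same route as the paper. The paper also identifies $\on{Hom}_{\cat{D}}(V,J^{k})$ with $\mathfrak{J}_{\cat{D}}^{k}(V\otimes A,A)$ by combining \autoref{radicalrepresentation} with the compatibility of the correspondence with products, and then passes to $J^{m}/J^{n}$ using projectivity of $V$. The one difference is a matter of citation: the paper takes the inductive description of the product $J^{k-1}J$ directly from \cite[Lemma~8.2]{CSZ}, which is exactly the bookkeeping you flagged, whereas you read it off the multiplicativity clause of \autoref{CSZBijection}; these amount to the same thing.
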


\begin{proof}
 Combining \autoref{radicalrepresentation} with the characterization obtained in \cite[Lemma~8.2]{CSZ} of a product $MI$ such as in \autoref{idealtimesbimodule}, we find inductively $\on{Hom}_{\mathcal{C}}(V,J^{m}) \simeq \mathfrak{J}_{\cat{D}}^{m}(V \otimes A, A)$. The result follows by observing that by projectivity of $V$ we have 
\[
 \on{Hom}_{\mathcal{C}}(V,J^{m}/J^{n}) \simeq \mathfrak{J}_{\cat{D}}^{m}(V \otimes A, A)/\mathfrak{J}_{\cat{D}}^{n}(V \otimes A, A) =\mathfrak{J}_{\cat{D}}^{m}/\mathfrak{J}_{\cat{D}}^{n}(V \otimes A, A). 
\]
\end{proof}

\begin{proposition}\label{FusionRadical}
 The $\cat{D}$-radical $\mathfrak{J}_{\cat{D}}$ coincides with the ordinary radical $\mathfrak{J}$ of $\projda$.
\end{proposition}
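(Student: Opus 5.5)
One inclusion is immediate from the definitions, and the other will follow from stability of the ordinary radical under the $\cat{D}$-action. We use that, $\cat{D}$ being fusion, every object of $\cat{D}$ is projective, so $\projectives{\cat{D}} = \cat{D}$ and $\mathbb{1}$ is projective.

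For $\mathfrak{J}_{\cat{D}} \subseteq \mathfrak{J}$: take $f \in \mathfrak{J}_{\cat{D}}(P\lact A, P'\lact A)$. Choosing $Q = \mathbb{1}$ in \autoref{eqmoduleradical} gives $\mathbb{1}\lact f \in \mathfrak{J}(\projda)$. Since $\mathbb{1}\lact f$ and $f$ differ only by the unitor isomorphisms, and $\mathfrak{J}$ is an ideal, we get $f \in \mathfrak{J}$.

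For $\mathfrak{J} \subseteq \mathfrak{J}_{\cat{D}}$: it suffices to show that $\mathfrak{J}(\projda)$ is stable under $V \lact -$ for every $V \in \cat{D}$. Then any $f \in \mathfrak{J}(P\lact A,P'\lact A)$ satisfies the defining condition of \autoref{eqmoduleradical}. We also note that objects of the form $P \lact A$ already exhaust $\projda$ up to summands, by \autoref{cprojectiveobjects}. We use the characterisation that $f\colon X \to Y$ in an additive Krull--Schmidt category lies in $\mathfrak{J}$ iff for all $g\colon Y \to X$ the endomorphism $1 - gf$ is invertible. Equivalently, $f \in \mathfrak{J}$ iff no composite $X' \to X \xrightarrow{f} Y \to X'$ with $X'$ indecomposable is an isomorphism.

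Let $f \in \mathfrak{J}(X,Y)$ and $g\colon V\lact Y \to V\lact X$. We use the adjunction $V \lact - \dashv \rightdual{V}\lact -$ (or the appropriate dual), which holds because $\modda$ is a $\cat{D}$-module category and $\cat{D}$ is rigid. Under this adjunction, $g$ corresponds to $\tilde g\colon Y \to \rightdual{V}\lact V\lact X$. We then want to show that $g\circ(V\lact f)$ lies in $J(\on{End}(V\lact X))$. The cleanest route is to check that $V \lact f$ annihilates all simple tops. Concretely, $f \in \mathfrak{J}(X,Y)$ for projectives $X,Y$ means the induced map $\on{top}(X) \to \on{top}(Y)$ vanishes, equivalently $f$ factors through $\on{rad}(Y) \hookrightarrow Y$. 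So it suffices to show that $V \lact \on{rad}(Y) \subseteq \on{rad}(V\lact Y)$, i.e.\ that $V\lact \on{top}(Y)$ is semisimple. This is the expected main obstacle: that semisimple objects of $\modda$ are closed under the $\cat{D}$-action. I would prove it as follows. Let $L$ be simple and consider $\on{soc}(V\lact L) \hookrightarrow V\lact L$. Adjunction gives $\on{Hom}(V\lact L, S) \simeq \on{Hom}(L, \rightdual{V}\lact S)$ and $\on{Hom}(S, V\lact L) \simeq \on{Hom}(\leftdual{V}\lact S, L)$ for $S$ simple. Exactness of $V\lact -$ (\autoref{cor:exactonboth}) together with semisimplicity of $\cat{D}$, which makes $V\lact -$ a summand of $V \otimes \rightdual{V}\otimes V \lact -$, lets one split the inclusion $\on{rad}(V\lact L)\hookrightarrow V \lact L$ via a retraction built from $\coev$ and $\ev$. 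Hence $\on{rad}(V \lact L)$ is a direct summand of $V\lact L$ contained in every maximal submodule, which forces it to vanish.

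With this in hand, $V \lact f$ factors through $V\lact\on{rad}(Y) \subseteq \on{rad}(V\lact Y)$. Since $V\lact X$ and $V\lact Y$ are projective by \autoref{cprojectiveobjects}, this gives $V\lact f \in \mathfrak{J}$. That completes the second inclusion and hence the equality.
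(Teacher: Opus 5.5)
\textbf{Gap: the proof that semisimples are closed under the action.} Your outer reductions are sound, but the crucial lemma, that $V\lact L$ is semisimple for $L$ simple, is not actually proved.

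The zigzag identities exhibit $V$ as a retract of $V\otimes\rightdual{V}\otimes V$ in \emph{any} rigid category, so semisimplicity of $\cat{D}$ plays no role there. The induced retraction of $V\lact L$ through $(V\otimes\rightdual{V}\otimes V)\lact L$ composes to the identity of $V\lact L$, naturally in $L$. It therefore carries no information about the submodule $\on{rad}(V\lact L)$.

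A retraction of $\on{rad}(V\lact L)\hookrightarrow V\lact L$ is the same as an idempotent endomorphism of $V\lact L$ with image $\on{rad}(V\lact L)$. As you note yourself, its existence is equivalent to $\on{rad}(V\lact L)=0$. So the sentence asserting that $\coev$ and $\ev$ produce it simply restates the claim.

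The adjunction isomorphisms you list only compare the socle and top of $V\lact L$ with $L$; they do not control the Loewy structure. Finishing from them is circular. For example, a simple $S\subseteq V\lact L$ gives an epimorphism $(V\otimes\leftdual{V})\lact S\twoheadrightarrow V\lact L$, but semisimplicity of $(V\otimes\leftdual{V})\lact S$ is the same question again.

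\textbf{What is needed instead.} The missing input is a genuine fact about $A$ from \cite{CSZ}:
\begin{itemize}
\item the internal radical $J=J(A)$ is nilpotent, so every simple $A$-module is annihilated by $J$ and lies in the $\cat{D}$-stable, subquotient-closed subcategory $\on{mod}_{\cat{D}}(A/J)$;
\item $J(A/J)=0$ forces $\on{mod}_{\cat{D}}(A/J)$ to be exact, hence semisimple because $\cat{D}$ is fusion (\autoref{CSZEO}, \autoref{CSZquotient}, \autoref{lem:semisimpleisexact}).
\end{itemize}
With this input, your route becomes the alternative proof indicated in the remark following \autoref{semisimplestable}.

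The paper's own proof uses the same input more directly. From \autoref{radicalrepresentation} and the exact sequences $\on{Hom}_{\cat{D}}(V,W\lact J)\hookrightarrow\on{Hom}_{\cat{D}}(V,W\lact A)\twoheadrightarrow\on{Hom}_{\cat{D}}(V,W\lact A/J)$, it obtains $\projda/\mathfrak{J}_{\cat{D}}\simeq\on{proj}_{\cat{D}}(A/J)$. The right-hand side is semisimple, so $\mathfrak{J}\subseteq\mathfrak{J}_{\cat{D}}$.

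\textbf{Smaller points.} Your $Q=\mathbb{1}$ argument for $\mathfrak{J}_{\cat{D}}\subseteq\mathfrak{J}$ is fine; the paper cites \cite[Lemma~6.11]{CSZ} there instead.

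There is, however, a slip in your reduction. Semisimplicity of $V\lact\on{top}(Y)$ only gives $\on{rad}(V\lact Y)\subseteq V\lact\on{rad}(Y)$, which is the opposite of the inclusion you use. What you need is that every morphism $V\lact Y\to S$ with $S$ simple kills $V\lact\on{rad}(Y)$. This follows via the adjunction $V\lact-\dashv\rightdual{V}\lact-$ from semisimplicity of $\rightdual{V}\lact S$, so it is harmless once the lemma is established for all $V$.
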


\begin{proof}
 Let $V,W \in \cat{D}$. Since $\cat{D}$ is semisimple, all objects in $\cat{D}$ are projective, so we have a short exact sequence
\[\begin{tikzcd}[ampersand replacement=\&]
	{\on{Hom}_{\cat{D}}(V, W\lact J)} \& {\on{Hom}_{\cat{D}}(V, W\lact A)} \& {\on{Hom}_{\cat{D}}(V, W\lact A/J)}
	\arrow[hook, from=1-1, to=1-2]
	\arrow[two heads, from=1-2, to=1-3]
\end{tikzcd}\]
Since $\on{Hom}_{\cat{D}}(V, W\lact A/J)$ determines $\on{Hom}_{A/J}(V\lact A/J, W\triangleright A/J)$, and the category of free $A/J$-modules determines that of projective $A/J$-modules, the above sequence together with \autoref{radicalrepresentation} establishes an equivalence $\projda/\mathfrak{J}_{\cat{D}} \simeq \on{proj}_{\cat{D}}(A/J)$.

By \cite[Lemma~6.11]{CSZ}, we have $\mathfrak{J}_{\cat{D}} \subseteq \mathfrak{J}$. But since $\on{mod}_{\cat{D}}(A/J)$ is an exact $\cat{D}$-module category (by \autoref{CSZEO}), and $\cat{D}$ is fusion, we know that $\on{mod}_{\cat{D}}(A/J) = \on{proj}_{\cat{D}}(A/J)$ is semisimple. Thus from the equivalence $\projda/\mathfrak{J}_{\cat{D}} \simeq \on{proj}_{\cat{D}}(A/J)$ we find that $\mathfrak{J} \subseteq \mathfrak{J}_{\cat{D}}$.
\end{proof}

\begin{remark}
    In the characteristic zero case with $\cat{D}$ the category of modules over a semisimple Hopf algebra, the proposition above is shown in \cite[Theorem 3.6]{LMS}
\end{remark}

The following result shows that semisimple objects in a module category are closed under the fusion category action.
\begin{proposition}\label{semisimplestable}
  Given a finite $\cat{D}$-module category $\cat{M} \simeq \on{mod}_{\cat{D}}(A)$, the full subcategory $\mathbb{L}(\cat{M})$ of its semisimple objects forms a $\cat{D}$-module subcategory of $\cat{M}$, equivalent to $\on{mod}_{\cat{D}}(A/J)$.   
  In particular, the following are equivalent:
  \begin{enumerate}
      \item \label{item:Lsemisimple}
        $L \in \cat{M}$ is semisimple.
      \item \label{item:allVLsemisimple}
        $V \lact L \in \cat{M}$ is semisimple for all $V \in \cat{D}$.
      \item \label{item:someVLsemisimple}
        $V \lact L \in \cat{M}$ is semisimple for some $V \in \cat{D}$.
  \end{enumerate} 
\end{proposition}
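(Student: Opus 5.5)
The plan is to identify $\mathbb{L}(\cat{M})$ with the essential image of $\on{res}(\pi)$, where $\pi\colon A \twoheadrightarrow A/J$, and then use that $\on{res}(\pi)$ is a fully faithful $\cat{D}$-module functor. First, \autoref{CSZquotient} together with \autoref{CSZEO} shows that $\modw{A/J}$ is an exact $\cat{D}$-module category. Since $\cat{D}$ is fusion, \autoref{lem:semisimpleisexact} then tells us $\modw{A/J}$ is semisimple. By \autoref{ressimple}, restriction sends simple $A/J$-modules to simple $A$-modules. Being additive, it therefore lands in $\mathbb{L}(\cat{M})$.

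Conversely, I will show that every simple $A$-module $L$ satisfies $LJ=0$, so that $L$ lies in the essential image of $\on{res}(\pi)$. The image $LJ\subseteq L$ is a submodule, hence is $0$ or $L$. If $LJ=L$, then inductively $LJ^k=L$ for all $k$, using $L J^{k} = (LJ^{k-1})J$ and associativity as in \autoref{idealtimesbimodule}. This contradicts the nilpotency of $J$ from \autoref{CSZEO}. Direct sums of modules annihilated by $J$ are again annihilated by $J$. Hence the essential image of $\on{res}(\pi)$ is exactly $\mathbb{L}(\cat{M})$.

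It remains to get $\cat{D}$-stability. Here $\on{res}(\pi)$ is a $\cat{D}$-module functor, and its source $\modw{A/J}$ is a $\cat{D}$-module category in which every object is semisimple. So for $L$ semisimple, write $L\simeq \on{res}(\pi)(N)$. Then $V\lact L \simeq \on{res}(\pi)(V\lact N)$ is again semisimple. This gives the equivalence of $\cat{D}$-module categories $\mathbb{L}(\cat{M})\simeq\modw{A/J}$, and the implication \autoref{item:Lsemisimple}$\Rightarrow$\autoref{item:allVLsemisimple}. The step \autoref{item:allVLsemisimple}$\Rightarrow$\autoref{item:someVLsemisimple} is trivial, taking $V = \mathbb{1}$ or any nonzero $V$.

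The main obstacle is \autoref{item:someVLsemisimple}$\Rightarrow$\autoref{item:Lsemisimple}. For this I use rigidity and semisimplicity of $\cat{D}$. Take $V\neq 0$ (for $V=0$ the statement fails, so nonzero $V$ is implicitly required). Then $\mathbb{1}$ is a direct summand of $\leftdual{V}\otimes V$ (or $\rightdual{V}\otimes V$, depending on convention), because the coevaluation $\mathbb{1}\to \leftdual V\otimes V$ is a nonzero map out of a simple object in a semisimple category, hence split mono. Consequently $L\simeq \mathbb{1}\lact L$ is a direct summand of $\leftdual V\lact (V\lact L)$. That object is semisimple by \autoref{item:Lsemisimple}$\Rightarrow$\autoref{item:allVLsemisimple} applied to $V\lact L$. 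Finally, direct summands of semisimple objects are semisimple, so $L$ is semisimple.
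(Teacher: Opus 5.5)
Your proof is correct. The step (iii)$\Rightarrow$(i) is exactly the paper's argument, and your remark that $V\neq 0$ must be assumed in (iii) is right; the paper assumes it implicitly. One small convention point: the paper uses $\rightdual{V}\otimes V$, where the coevaluation $\mathbb{1}\to\rightdual{V}\otimes V$ is the split mono. For $\leftdual{V}\otimes V$ you should instead use the evaluation $\leftdual{V}\otimes V\to\mathbb{1}$, which splits as an epi. Either way $\mathbb{1}$ is a summand.

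Where you genuinely differ is in proving $\cat{D}$-stability of $\mathbb{L}(\cat{M})$.

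The paper uses the second half of \autoref{CSZquotient}: the image of $\on{res}(\pi)$ consists of subquotients of objects $Q\lact L$ with $Q$ projective and $L$ semisimple. Since every $V\in\cat{D}$ is projective, $V\lact L$ lies in this image. It is therefore semisimple, because $\modw{A/J}$ is.

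You bypass that description. You identify the essential image of $\on{res}(\pi)$ with $\mathbb{L}(\cat{M})$ directly: \autoref{ressimple} gives one inclusion, and the nilpotency argument $LJ=L\Rightarrow LJ^{k}=L$ gives the other. This is the same trick the paper uses for admissible quotients of tensor algebras. Stability then comes for free, because $\on{res}(\pi)$ is a $\cat{D}$-module functor.

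The paper's route is shorter, given the CSZ result as a black box. Yours needs only $J(A/J)=0$, nilpotency of $J$, and the description of the image by $MJ=0$. It also explicitly proves the asserted equivalence $\mathbb{L}(\cat{M})\simeq\modw{A/J}$, which the paper's proof leaves implicit (there it follows from \autoref{CSZquotient} with $Q=\mathbb{1}$).
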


\begin{proof}
 By \autoref{CSZquotient}, the embedding $\on{res}(\pi)$ of $\on{mod}_{\cat{D}}(A/J)$ in $\on{mod}_{\cat{D}}(A)$ embeds it as subquotients of objects of the form $V\lact L$, for $V \in \cat{C}$ and $L \in \mathbb{L}(\on{mod}_{\cat{D}}(A))$. Since the embedding preserves and reflects exact sequences, a Jordan-Hölder filtration of $V \lact L$ in $\on{mod}_{\cat{D}}(A)$ yields a filtration in $\on{mod}_{\cat{D}}(A/J)$, which is split by semisimplicity of $\on{mod}_{\cat{D}}(A/J)$. This establishes the semisimplicity of $V\lact L$, by $\mathbb{k}$-linearity of $\on{res}(\pi)$.

 We now establish the three equivalences.
 The implication \autoref{item:Lsemisimple} $\implies$ \autoref{item:allVLsemisimple} follows from the previous paragraph, and \autoref{item:allVLsemisimple} $\implies$ \autoref{item:someVLsemisimple} is immediate.
 Now suppose $V \lact L \in \cat{M}$ is semisimple for some $V \in \cat{D}$. Then it follows that $\rightdual{V} \lact (V \lact L)$ is semisimple. 
 Since $\mathbb{1}$ is a summand of $\rightdual{V} \otimes V$, $L$ appears as a summand of the semisimple object $\rightdual{V} \lact (V \lact L) \simeq (\rightdual{V} \otimes V) \lact L$, and so $L$ is also semisimple.
\end{proof}

\begin{remark}
    A different proof of \autoref{FusionRadical} can be given using \autoref{semisimplestable}: the equivalence $\on{proj}_{\cat{D}}(A)/\mathfrak{J}_{\cat{D}} \simeq \mathbb{L}(\on{mod}_{\cat{D}}(A))$ immediately entails $\mathfrak{J}_{\cat{D}} = \mathfrak{J}$.
\end{remark}

The following corollary reconciles the different properties of ``semisimple'' associated to an algebra.
\begin{corollary}\label{cor:reconcilesemisimplealg}
    Let $\cat{D}$ be a fusion category and $A$ be an algebra in $\cat{D}$. The following are equivalent.
    \begin{enumerate}
        \item \label{item:modulecatsemisimple}
            The $\cat{D}$-module category $\modda$ is semisimple.
        \item \label{item:fusionAsemisimplealgebra}
            $A$ is a semisimple algebra (in the sense of \autoref{defn:semisimplealgebra}). 
        \item  \label{item:fusionAsemisimplemodule}
            $A \in \modda$ is a semisimple module over itself.
    \end{enumerate}
\end{corollary}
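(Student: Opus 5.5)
We prove the cycle of implications \autoref{item:modulecatsemisimple} $\Rightarrow$ \autoref{item:fusionAsemisimplemodule} $\Rightarrow$ \autoref{item:fusionAsemisimplealgebra} $\Rightarrow$ \autoref{item:modulecatsemisimple}. The first implication is immediate: $A$ is an object of $\modda$ (the regular right module), so if every object of $\modda$ is semisimple, then in particular $A$ is.

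For \autoref{item:fusionAsemisimplealgebra} $\Rightarrow$ \autoref{item:modulecatsemisimple} we use \autoref{CSZEO}. If $A$ is semisimple in the sense of \autoref{defn:semisimplealgebra}, then $\modda$ is an exact $\cat{D}$-module category. Since $\cat{D}$ is fusion, \autoref{lem:semisimpleisexact} shows that an exact $\cat{D}$-module category is semisimple.

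The main step is \autoref{item:fusionAsemisimplemodule} $\Rightarrow$ \autoref{item:fusionAsemisimplealgebra}, and for this we use \autoref{semisimplestable}.

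\begin{itemize}
\item Suppose $A$ is semisimple as a right $A$-module. By \autoref{semisimplestable}, the object $V \lact A$ is semisimple for every $V \in \cat{D}$.
\item Every projective object of $\modda$ is a direct summand of a free module $V \lact A$. By \autoref{cprojectiveobjects} and \autoref{EGNOreconstruction}, $A$ is a $\cat{D}$-projective generator, and every $V$ is projective because $\cat{D}$ is fusion. Hence every projective $A$-module is semisimple.
\item Every object of $\modda$ is a quotient of a projective, so it is a quotient of a semisimple object and therefore semisimple. Thus $\modda$ is semisimple.
\item In particular $\modda$ is exact, by \autoref{lem:semisimpleisexact}, and \autoref{CSZEO} then gives that $A$ is semisimple.
\end{itemize}

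This argument also proves \autoref{item:fusionAsemisimplemodule} $\Rightarrow$ \autoref{item:modulecatsemisimple} directly.

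As an alternative to invoking \autoref{semisimplestable} in the first bullet, one can argue through the radical, and this is the step where care is needed. By \autoref{semisimplestable}, $\mathbb{L}(\modda)$ is identified with $\on{mod}_{\cat{D}}(A/J)$ via $\on{res}(\pi)$. So $A \in \mathbb{L}(\modda)$ means that $A$ lies in the essential image of $\on{res}(\pi)$, i.e.\ $AJ = 0$. Since $AJ = J$, this forces $J = 0$, and \autoref{CSZEO} gives semisimplicity of $A$. Either route works, and both depend essentially on \autoref{semisimplestable} together with the projectivity of all objects of the fusion category $\cat{D}$.
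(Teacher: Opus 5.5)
Your proposal is correct and follows essentially the paper's proof. The paper also gets (i)$\Leftrightarrow$(ii) from \autoref{CSZEO} together with \autoref{lem:semisimpleisexact}, and proves (iii)$\Rightarrow$(i) exactly as in your main argument: every projective is a summand of some $X \lact A$, which is semisimple by \autoref{semisimplestable}. Your alternative route through $AJ = J = 0$ is also valid, but the paper does not use it.
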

\begin{proof}
    The first two equivalences are exactly \autoref{CSZEO}, since over fusion categories, exact module categories are semisimple module categories; see \autoref{lem:semisimpleisexact}.
    The implication \autoref{item:modulecatsemisimple} $\implies$ \autoref{item:fusionAsemisimplemodule} is immediate.
    To show \autoref{item:fusionAsemisimplemodule} $\implies$ \autoref{item:modulecatsemisimple}, recall that a semisimple category is an abelian category where all of its projectives are semisimple. 
    Since all projectives in $\modda$ are summands of $X \lact A$ for some $X \in \cat{D}$ and \autoref{semisimplestable} shows that $X \lact A$ is semisimple, our desired result follows.
\end{proof}

\begin{corollary}\label{projcovercompatible}
   Let $X \in \cat{M}$ be an object of a finite $\cat{D}$-module category $\cat{M}$. Let $P_{1} \xrightarrow{p_{1}} P_{0}$ be a minimal projective presentation of $X$.
   
   Then for any $V \in \cat{D}$, we have that $V \lact p_{1}$ is a minimal projective presentation of $V \lact X$. In particular, $P_{0}(V\lact X) = V \lact P_{0}(X)$.
\end{corollary}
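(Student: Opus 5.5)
We need to show: if $P_1 \xrightarrow{p_1} P_0 \to X \to 0$ is a minimal projective presentation, then so is $V\lact P_1 \to V\lact P_0 \to V\lact X \to 0$.

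First, exactness and projectivity. By \autoref{cor:exactonboth}, $V \lact -$ is exact, so $V\lact P_1 \to V\lact P_0 \to V\lact X \to 0$ is exact. Since $\cat{D}$ is fusion, $V$ is projective in $\cat{D}$, so $V\lact P_i$ is projective by \autoref{cprojectiveobjects}. Thus $V\lact p_1$ is a projective presentation, and only minimality needs checking.

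I will express minimality radically. An epimorphism $p_0\colon P_0 \twoheadrightarrow X$ from a projective is a projective cover iff $\on{Ker}(p_0) \subseteq \on{rad}(P_0)$, i.e.\ iff $\on{top}(P_0) \to \on{top}(X)$ is an isomorphism. Similarly, $P_1 \to \on{Ker}(p_0)$ is a projective cover. So it suffices to show that $V \lact -$ preserves projective covers, applied first to $P_0 \to X$ and then to $P_1 \to \on{Ker}(p_0)$, using $V\lact \on{Ker}(p_0) = \on{Ker}(V\lact p_0)$ by exactness.

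The key step, and the main point, is that $V\lact \on{rad}(M) = \on{rad}(V\lact M)$ for every $M$.

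For the inclusion $\supseteq$: the quotient $V\lact M / V\lact \on{rad}(M) \simeq V\lact \on{top}(M)$ is semisimple by \autoref{semisimplestable}. Hence $\on{rad}(V\lact M) \subseteq V\lact \on{rad}(M)$.

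For the inclusion $\subseteq$: take the canonical epimorphism $V\lact M \twoheadrightarrow \on{top}(V\lact M) =: S$. By adjunction this gives a nonzero map $M \to \rightdual{V}\lact S$, up to choosing the correct dual for the adjunction $V\lact - \dashv {}^?V\lact -$. The target $\rightdual{V}\lact S$ is semisimple by \autoref{semisimplestable}, so this map kills $\on{rad}(M)$. Since the original map is recovered as the adjunct of a map vanishing on $\on{rad}(M)$, it follows that $V\lact \on{rad}(M) \to S$ is zero. Therefore $V\lact \on{rad}(M) \subseteq \on{rad}(V\lact M)$.

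Given this equality, if $\on{Ker}(p_0) \subseteq \on{rad}(P_0)$, then $\on{Ker}(V\lact p_0) = V\lact \on{Ker}(p_0) \subseteq V\lact \on{rad}(P_0) = \on{rad}(V\lact P_0)$. So $V\lact p_0$ is a projective cover, and in particular $P_0(V\lact X)\simeq V\lact P_0(X)$. Applying the same argument to the cover $P_1 \twoheadrightarrow \on{Ker}(p_0)$ gives minimality of $V\lact p_1$.
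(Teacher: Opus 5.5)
Your proof is correct, but it rests on a different key lemma than the paper's.

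The paper's argument is a one-liner at the level of the additive category $\projectives{\cat{M}}$. Minimality is expressed by $p_1$ being a radical morphism. Then $V \lact -$ preserves radical morphisms because the ordinary radical $\mathfrak{J}$ of $\projectives{\cat{M}}$ coincides with the $\cat{D}$-radical $\mathfrak{J}_{\cat{D}}$ (\autoref{FusionRadical}), which is stable under the action by construction.

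You instead work with radicals of objects. You deduce $V\lact \on{rad}(M) = \on{rad}(V \lact M)$ from \autoref{semisimplestable} and the adjunction $V\lact - \dashv \rightdual{V}\lact -$. You then use the characterisation of projective covers as epimorphisms whose kernel lies in the radical.

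What each route buys:
\begin{itemize}
\item The paper's route is shorter given \autoref{FusionRadical}. It produces exactly the $\cat{D}$-stability of $\mathfrak{J}$ that is reused in \autoref{extarrowsextra}.
\item Yours avoids the identification $\mathfrak{J} = \mathfrak{J}_{\cat{D}}$, though the two inputs are closely related, as the remark following \autoref{semisimplestable} shows. It also gives an object-level statement for arbitrary $M$, the radical-side counterpart of $\on{top}$ being a $\cat{D}$-module functor.
\item Yours treats both covers in the presentation explicitly. This is a genuine bonus. Radicality of $V\lact p_1$ by itself only shows that $V\lact P_0 \to V\lact X$ is a projective cover. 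To get minimality of $V\lact P_1$ along the paper's lines, one must apply the same reasoning to the next differential of a minimal projective resolution.
\end{itemize}
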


\begin{proof}
    It suffices to check that $V \lact p_{1}$ is radical. This follows from $p_{1}$ being radical, together with $\cat{D}$-stability of the radical $\mathfrak{J}(\projectives{\cat{M}})$, which in turn is a consequence of \autoref{FusionRadical}.
\end{proof}

\begin{proposition}
    Taking tops of objects in a finite $\cat{D}$-module category $\cat{M}$ is a $\cat{D}$-module functor:
    \[
     \on{top}(V \lact X) \simeq V \lact \on{top}(X),
    \]
    for all $V \in \cat{D}$ and $X \in \cat{M}$.
\end{proposition}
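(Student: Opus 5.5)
We plan to realize the top of $X$ as a quotient functor, namely the cokernel of the inclusion of the radical, $\on{rad}(X) \hookrightarrow X \twoheadrightarrow \on{top}(X)$. The statement then reduces to two facts. First, $V \lact \on{top}(X)$ is semisimple. Second, $V \lact \on{rad}(X)$ is contained in $\on{rad}(V \lact X)$.

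Given these, we argue as follows. By \autoref{cor:exactonboth}, the functor $V \lact -$ is exact, so it gives a short exact sequence $V \lact \on{rad}(X) \hookrightarrow V \lact X \twoheadrightarrow V \lact \on{top}(X)$. Its quotient is semisimple by \autoref{semisimplestable}. Since $\on{rad}(V\lact X)$ is the smallest subobject with semisimple quotient, we get $\on{rad}(V \lact X) \subseteq V \lact \on{rad}(X)$. Combined with the second fact, this gives equality, and hence $\on{top}(V\lact X) \simeq V \lact \on{top}(X)$.

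To get the naturality making this an isomorphism of $\cat{D}$-module functors, we would instead use the adjunction $\on{top} \dashv \iota$ of \autoref{topsocle}. The inclusion $\iota\colon \mathbb{L}(\cat{M}) \to \cat{M}$ is a $\cat{D}$-module functor by \autoref{semisimplestable}, so its left adjoint $\on{top}$ is a $\cat{D}$-module functor by \autoref{adjointmodule}. Concretely, the structure map $\on{top}(V \lact X) \to V \lact \on{top}(X)$ is the mate of $V \lact \eta_X$. The isomorphism computed above shows that this mate is invertible.

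The main obstacle is the inclusion $V \lact \on{rad}(X) \subseteq \on{rad}(V\lact X)$. We plan to prove it with the rigid dual. Suppose $V \lact X \twoheadrightarrow L$ is any map to a simple object $L$. By adjunction it corresponds to a map $X \to \rightdual{V} \lact L$, and the target is semisimple by \autoref{semisimplestable}. This map therefore factors through $\on{top}(X)$ and kills $\on{rad}(X)$. Transporting back along the adjunction $V\lact - \dashv \rightdual{V} \lact -$ (or its left/right variant, whichever the duality conventions dictate), the original map kills $V \lact \on{rad}(X)$. Since $\on{rad}(V\lact X)$ is the intersection of the kernels of all maps to simples, the inclusion follows.

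Alternatively, one could use \autoref{projcovercompatible} and compare $\on{top}$ of projective covers via \eqref{topcoverbijection}. However, the adjunction argument is cleaner, and it needs only semisimplicity of $\cat{D}$ through \autoref{semisimplestable}.
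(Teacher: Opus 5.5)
Your proposal is correct, and your third paragraph is exactly the paper's proof. The inclusion $\iota\colon\mathbb{L}(\cat{M})\to\cat{M}$ is a $\cat{D}$-module functor by \autoref{semisimplestable}, so its left adjoint $\on{top}$ is one by \autoref{adjointmodule}.

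What you add is a direct check that the resulting structure map is invertible. Doctrinal adjunction on its own only gives $\on{top}$ an oplax structure $\on{top}(V\lact X)\to V\lact\on{top}(X)$, namely the adjunct of $V\lact\eta_X$. The paper absorbs the invertibility of this map into \autoref{adjointmodule}, which relies on rigidity of $\cat{D}$ through the cited results. You instead prove the two inclusions $\on{rad}(V\lact X)\subseteq V\lact\on{rad}(X)\subseteq\on{rad}(V\lact X)$. These show that $V\lact\eta_X$ and $\eta_{V\lact X}$ are epimorphisms with the same kernel. The adjunct factors $V\lact\eta_X$ through $\eta_{V\lact X}$, so it is an isomorphism.

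Both routes use rigidity. Yours uses it explicitly, through $V\lact\blank\dashv\rightdual{V}\lact\blank$ combined with \autoref{semisimplestable}. Your argument is self-contained and yields the slightly finer identity $\on{rad}(V\lact X)=V\lact\on{rad}(X)$. The paper's argument is a one-line appeal to general theory.
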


\begin{proof}
  Since the inclusion $\functor{L}: \mathbb{L}(\cat{M}) \rightarrow \cat{M}$ is a $\cat{D}$-module functor by \autoref{semisimplestable}, by \autoref{adjointmodule} so is its left adjoint $\on{top}$, as described in \autoref{topsocle}. 
\end{proof}

\subsection{Basic semisimples and projective generators}
Recall from \autoref{defn:C-orbitsimples} the equivalence relation $\sim \coloneqq \sim_H = \sim_F$ defined on $\on{Irr}(\cat{M})$ for a finite $\cat{D}$-module category $\cat{M}$.
In the fusion category setting, we have yet another equivalent definition.
\begin{proposition} \label{prop:semisimpleorbitfusion}
  Let $\cat{M}$ be a finite $\cat{D}$-module category. 
  By \autoref{semisimplestable}, the equivalence relation $\sim$ on $\on{Irr}(\cat{M})$ from \autoref{defn:C-orbitsimples} simplifies to the following:
  \[
 L \sim L' \text{ if there is } V \in \cat{D} \text{ such that } L' \text{ is a direct summand of } V\lact L.
  \]
\end{proposition}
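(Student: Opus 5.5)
Write $L \approx L'$ if there is $V \in \cat{D}$ with $L'$ a direct summand of $V \lact L$. I will show that $\approx$ coincides with $\sim_{H}$, which by the preceding proposition equals $\sim$. There are two implications, and both rest on one fact from \autoref{semisimplestable}: for $L$ simple (hence semisimple) and any $V \in \cat{D}$, the object $V \lact L$ is semisimple.

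\emph{Step 1: $\approx$ implies $\sim_{H}$.} If $L'$ is a direct summand of $V \lact L$, then it is a subquotient of $V \lact L$. This is exactly the defining condition for $L' \sim_{H} L$. The relation $\sim_{H}$ was already shown to be an equivalence relation, since it coincides with $\sim_F$, so it is symmetric and we get $L \sim_{H} L'$.

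\emph{Step 2: $\sim_{H}$ implies $\approx$.} Suppose $L \sim_{H} L'$, so $L$ is a subquotient of $V \lact L'$ for some $V$. By \autoref{semisimplestable}, $V \lact L'$ is semisimple. In a semisimple object, every simple subquotient is isomorphic to a direct summand, because composition factors of a direct sum of simples are exactly its summands by Jordan--Hölder. Hence $L$ is a direct summand of $V \lact L'$.

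It remains to reverse the direction, so that $L'$ appears in some $W \lact L$. Take $W = \rightdual{V}$. If $L$ is a direct summand of $V \lact L'$, then $\on{Hom}_{\cat{M}}(V \lact L', L) \neq 0$. By duality, this gives $\on{Hom}_{\cat{M}}(L', \rightdual{V} \lact L) \neq 0$; the adjunction is $V \lact - \dashv \rightdual{V} \lact -$ or its left-dual analogue, depending on the paper's conventions, and either dual works since $\cat{D}$ is rigid. The object $\rightdual{V} \lact L$ is again semisimple by \autoref{semisimplestable}. A nonzero map from the simple $L'$ into a semisimple object exhibits $L'$ as a direct summand, so $L \approx L'$ with $W = \rightdual{V}$.

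\emph{Expected obstacle.} The only delicate point is the bookkeeping of left versus right duals in the adjunction used in Step 2. If that step causes trouble, there is a fallback: $\approx$ is visibly reflexive and transitive, so it suffices to prove symmetry of $\approx$ directly. Given $L'$ a summand of $V \lact L$, we have $\on{Hom}(V \lact L, L') \neq 0$. Adjunction gives $\on{Hom}(L, W \lact L') \neq 0$ for the appropriate dual $W$ of $V$. Semisimplicity of $W \lact L'$ then makes $L$ a summand of $W \lact L'$, which is symmetry.
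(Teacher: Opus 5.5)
Your proof is correct and follows the paper's implicit argument. \autoref{semisimplestable} makes $V \lact L$ semisimple, so simple subquotients become direct summands, and the adjunction fixes the direction; with the paper's conventions this adjunction is indeed $V \lact - \dashv \rightdual{V} \lact -$, as used in the proof of \autoref{prop:hereditaryacyclic}. Comparing with $\sim_F$ instead of $\sim_H$ would avoid the duals entirely: every $V \in \cat{D}$ is projective, so $\on{Hom}_{\cat{M}}(V \lact L, L') \neq 0$ with $V \lact L$ semisimple says exactly that $L'$ is a direct summand of $V \lact L$.
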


\begin{remark}
    Note that if $\cat{M}$ is a $\vect_G$-module category; equivalently $\cat{M}$ has a (strong) categorical action of $G$, then $L \sim L'$ if and only if $L$ and $L'$ are in the same $G$-orbit. In particular, $\on{Irr}(\cat{M})/\sim$ is exactly the set of $G$-orbits of the simple objects.
\end{remark}
\begin{definition}
    We say that a semisimple object $L \in \mathbb{L}(\cat{M})$ is {\it $\cat{D}$-basic} if for any two different simple summands $L_{1},L_{2}$ of $L$, we have $L_{1} \not\sim L_{2}$.
    We say that a projective object $P \in \projectives{\cat{M}}$ is {\it $\cat{D}$-basic} if $\on{top}(P)$ is $\cat{D}$-basic in $\mathbb{L}(\cat{M})$.
    Accordingly, an algebra $A$ in $\cat{D}$ is \emph{$\cat{D}$-basic} if $A$ is a $\cat{D}$-basic as a projective module in $\modda$.
\end{definition}

\begin{remark}
    Let $\on{indproj}(\cat{M})$ denote a set of representatives for the indecomposable projectives in $\cat{M}$.
    It is well known that $P \in \on{indproj}(\cat{M}) \mapsto \on{top}(A) \in \on{Irr}(\cat{M})$ is a bijection.
    If we define the following a similar equivalence relation on $\on{indproj}(\cat{M})$: $P \sim_p P'$ if and only if $P$ is a direct summand of $V \lact P$ for some $V \in \cat{D}$, the bijection above descends to a bijection between $\on{indproj}(\cat{M})/\sim_p$ and $\on{Irr}(\cat{M})/\sim$.
    Moreover, a (not necessarily indecomposable) projective $P$ is $\cat{D}$-basic if and only if for any two different indecomposable summands $P_{1},P_{2}$ of $P$, we have $P_{1} \not\sim_p P_{2}$.
\end{remark}

\begin{proposition}\label{dbasic}
   If $L \in \mathbb{L}(\cat{M})$ is $\cat{D}$-basic, then $\underline{\on{End}}_{\cat{M}}(L)$ is a finite direct product of division algebras $\prod_{i=1}^r \underline{\on{End}}_{\cat{M}}(L_i)$, with $L_i$ simple objects and $L = \bigoplus_{i=1}^r L_i$. As such, if $P \in \projectives{\cat{M}}$ is $\cat{D}$-basic, then $\underline{\on{End}}_{\cat{M}}(\on{top}(P))$ is a finite direct product of division algebras. 
\end{proposition}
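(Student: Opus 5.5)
Since $L$ is semisimple, we write $L = \bigoplus_{i=1}^r L_i$ with each $L_i$ simple. By the definition of $\cat{D}$-basic, $L_i \not\sim L_j$ for all $i \neq j$; note that this already forces the $L_i$ to be pairwise non-isomorphic, since $L_i \simeq L_j$ would give $L_i \sim L_j$ with $V = \mathbb{1}$ in \autoref{prop:semisimpleorbitfusion}. The plan is to reduce to the exact setting, where \autoref{endproduct} and \autoref{prop:divisionalgs} apply directly.

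First, I pass to the semisimple $\cat{D}$-module subcategory $\mathbb{L}(\cat{M})$. By \autoref{semisimplestable}, this is a $\cat{D}$-module subcategory of $\cat{M}$, equivalent to $\modw{A/J}$ with $\cat{M} \simeq \modda$. By \autoref{CSZEO} it is exact; alternatively, \autoref{lem:semisimpleisexact} gives exactness since it is semisimple. Its inclusion into $\cat{M}$ is a fully faithful $\cat{D}$-module functor. By \autoref{enrichedhom} and \autoref{EnrichmentsKock}, the induced map $\underline{\on{Hom}}_{\mathbb{L}(\cat{M})}(X,Y) \to \hom{X}{Y}$ is an isomorphism for all semisimple $X,Y$, and on $X = Y$ it is an isomorphism of algebras. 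So it suffices to compute $\underline{\on{End}}_{\mathbb{L}(\cat{M})}(L)$.

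Second, I check that the relation $\sim$ on $\on{Irr}(\cat{M})$ agrees with the relation $\sim$ of \autoref{defn:C-orbitsimples} computed inside the exact module category $\mathbb{L}(\cat{M})$. This holds because $V \lact L_i$ is semisimple and therefore lies in $\mathbb{L}(\cat{M})$, while Hom spaces coincide under the full inclusion. Applying \autoref{endproduct} repeatedly to the pairwise non-$\sim$ summands, by induction on $r$ using $L_1 \oplus \cdots \oplus L_{r-1}$ against $L_r$, gives an algebra isomorphism
\[
\underline{\on{End}}_{\cat{M}}(L) \simeq \prod_{i=1}^r \underline{\on{End}}_{\cat{M}}(L_i).
\]
For this induction step, the vanishing $\hom{L'}{L_r} = 0$ for a semisimple $L'$ whose summands are all $\not\sim L_r$ follows from additivity of internal Hom. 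Each factor $\underline{\on{End}}_{\cat{M}}(L_i)$ is a division algebra by \autoref{prop:divisionalgs}.

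The second statement of the proposition follows by applying the first to $L = \on{top}(P)$, which is $\cat{D}$-basic by definition.

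The only delicate point is that \autoref{endproduct} is stated for exact module categories, while $\cat{M}$ need not be exact. The reduction in the first step, via the full $\cat{D}$-module subcategory $\mathbb{L}(\cat{M})$ and \autoref{enrichedhom}, is exactly what handles this. I also need that the product decomposition is compatible with the algebra structures; this is the content of \autoref{endproduct} once the off-diagonal internal Homs vanish.
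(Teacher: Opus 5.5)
Your proof is correct and is essentially the paper's argument. The paper says only that both claims are an immediate consequence of \autoref{endproduct}, with \autoref{prop:divisionalgs} making each factor $\underline{\on{End}}_{\cat{M}}(L_i)$ a division algebra, and it takes $L=\on{top}(P)$ for the second claim exactly as you do.

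Your detour through the exact subcategory $\mathbb{L}(\cat{M})$ via \autoref{enrichedhom} is a valid way to respect the exactness hypothesis under which \autoref{endproduct} was stated, but it is not strictly needed. The one-line proof of \autoref{endproduct} works verbatim in $\cat{M}$: by \autoref{semisimplestable}, each $V \lact L_i$ is semisimple with no summand isomorphic to $L_j$, so $\on{Hom}_{\cat{M}}(V\lact L_i, L_j)=0$ for all $V$ and hence $\hom{L_i}{L_j}=0$.
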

\begin{proof}
    Both of the claims are an immediate consequence of \autoref{endproduct}.
\end{proof}

\begin{lemma}\label{topaj}
    There is a right $A$-module isomorphism $A/J \simeq \on{top}(A)$.
\end{lemma}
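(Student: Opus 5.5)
The plan is to identify the radical of the regular module $A \in \modda$ with $J$ itself, and then read off the top. Recall from \autoref{topsocle} that $\on{top}(A)$ is the cokernel of the inclusion of the radical $\on{rad}(A)$ of the object $A$. So it suffices to show that $\on{rad}(A) = J$ as subobjects of $A$ in $\modda$. Here $J$ is a right $A$-submodule, being an ideal.

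First I show $\on{rad}(A) \subseteq J$. By \autoref{CSZquotient} (or \autoref{CSZEO}), $A/J$ is a semisimple algebra. By \autoref{cor:reconcilesemisimplealg}, $A/J$ is then a semisimple object of $\modw{A/J}$. Restriction along $\pi\colon A \twoheadrightarrow A/J$ is fully faithful and exact, and its image is closed under subobjects by the argument of \autoref{ressimple}. Hence a semisimple decomposition of $A/J$ as an $A/J$-module restricts to one as an $A$-module, so $A/J$ is a semisimple quotient of $A$ in $\modda$. Since $\on{rad}(A)$ is the smallest subobject with semisimple quotient, $\on{rad}(A) \subseteq J$.

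Next I show $J \subseteq \on{rad}(A)$. It is enough to check that every morphism $f\colon A \to L$ with $L$ simple in $\modda$ vanishes on $J$. By \autoref{semisimplestable}, $L$ lies in the essential image of $\on{res}(\pi)$, that is, $LJ = 0$. Since $f$ is a right $A$-module map, the image $f(J)$ is the image of
\[
A\otimes J \to A \otimes A \xrightarrow{\mu} A \xrightarrow{f} L,
\]
which equals the image of $f\otimes J$ followed by the action map $L \otimes J \to L$. Its image is therefore contained in $LJ = 0$, so $f(J)=0$. Taking the intersection of the kernels of all such maps gives $J \subseteq \on{rad}(A)$.

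Combining the two inclusions gives $\on{rad}(A) = J$, hence $\on{top}(A) \simeq A/J$ as right $A$-modules.

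The main obstacle is the identification $A \otimes J \to A$ has image $J$ (i.e.\ $AJ = J$ via unitality) and the compatibility of images with $f$. That is only routine bookkeeping, using right exactness of $\otimes$ as in \autoref{idealtimesbimodule}. The substantive inputs are \autoref{semisimplestable} and \autoref{cor:reconcilesemisimplealg}.
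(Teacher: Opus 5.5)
Your proof is correct, but it takes a different route from the paper's.

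\textbf{The paper's route.} The paper argues via the Yoneda lemma inside $\modv{A/J}$. For an $A/J$-module $L$, it chains the free--forgetful adjunctions for $A/J$ and for $A$ with the adjunction $\on{top}\dashv\iota$ to obtain natural isomorphisms $\on{Hom}_{A/J}(A/J,L)\simeq\on{Hom}_{\cat{D}}(\mathbb{1},L)\simeq\on{Hom}_{A}(A,L)\simeq\on{Hom}_{A/J}(\on{top}(A),L)$. Here \autoref{semisimplestable} is what allows passing between $\mathbb{L}(A)$ and $\modv{A/J}$. The paper then concludes $A/J\simeq\on{top}(A)$ in $\modv{A/J}$ and restricts to $\modda$.

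\textbf{Your route.} You instead identify the radical of the object $A$ with the ideal $J$ as subobjects, via two inclusions. Semisimplicity of $\on{res}(\pi)(A/J)$ (from \autoref{CSZEO}, \autoref{cor:reconcilesemisimplealg} and \autoref{ressimple}) gives $\on{rad}(A)\subseteq J$. The vanishing $LJ=0$ for simple $L$ (from \autoref{semisimplestable}), combined with $f\circ\mu=\nabla_{L}\circ(f\otimes A)$ and unitality, gives $J\subseteq\on{rad}(A)$.

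\textbf{Comparison.} Both arguments rest on the same input: $\on{res}(\pi)$ identifies $\modv{A/J}$ with $\mathbb{L}(\modda)$, and both use this in both directions. The paper's version is shorter and needs no bookkeeping with images of morphisms. Yours makes explicit the slightly sharper fact that the canonical projection $A\twoheadrightarrow A/J$ is the top quotient of $A$. That compatibility also holds in the Yoneda argument, but only becomes visible after tracing through the isomorphisms. Your first inclusion could be shortened by citing \autoref{semisimplestable} directly, since its proof already shows that $\on{res}(\pi)$ lands in semisimple objects.
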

\begin{proof}
Let $L$ be an $A/J$-module, hence also naturally an $A$-module through the quotient algebra morphism $A \twoheadrightarrow A/J$. We have a chain of isomorphisms natural in $L$:
\[
 \on{Hom}_{A/J}(A/J, L) \simeq \on{Hom}_{\cat{C}}(\mathbb{1},L) \simeq \on{Hom}_{A}(A,L) \simeq \on{Hom}_{\mathbb{L}(A)}(\on{top}(A), L) \simeq \on{Hom}_{A/J}(\on{top}(A), L),
\]
the first coming from the free-forgetful adjunction for $A/J$, the second from the free-forgetful adjunction for $A$, the third by the adjunction of \autoref{topsocle} and the last by \autoref{semisimplestable}.
By Yoneda lemma it follows that $A/J \simeq \on{top}(A)$ in $\modv{A/J}$, and hence we also have an isomorphism after passing with the inclusion $\modv{A/J} \hookrightarrow \modda$.
\end{proof}

\begin{proposition}\label{prop:basicalgebra}
    If $A$ is a $\cat{D}$-basic algebra, then $A/J$ is a product of division algebras $\prod_{i=1}^r \ienda{L_i}$, where $L_i$ are simple $A$-modules such that $A/J \simeq \bigoplus_{i=1}^r L_i \in \modda$.
\end{proposition}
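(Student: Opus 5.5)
We want to show that if $A$ is $\cat{D}$-basic then $A/J \simeq \prod_{i=1}^r \ienda{L_i}$ as algebras, where $A/J \simeq \bigoplus_{i=1}^r L_i$ in $\modda$. The plan is to realise $A/J$ as an internal endomorphism algebra and then apply \autoref{dbasic}.

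First we identify the underlying module. By \autoref{topaj}, $A/J \simeq \on{top}(A)$ as right $A$-modules. Since $A$ is $\cat{D}$-basic, $\on{top}(A)$ is a $\cat{D}$-basic semisimple object. Hence $\on{top}(A) \simeq \bigoplus_{i=1}^r L_i$, where the $L_i$ are simple $A$-modules that are pairwise non-equivalent under $\sim$. A priori a simple could occur with multiplicity, but two isomorphic summands are trivially $\sim$-related. So $\cat{D}$-basicness forces multiplicity one, with all the $L_i$ distinct.

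Next we identify the algebra structure. Inside $\modw{A/J}$, the regular module gives an isomorphism of algebras $A/J \simeq \underline{\on{End}}_{A/J}(A/J)$. We want to compare this with the internal endomorphism algebra taken in $\modda$. The restriction functor $\on{res}(\pi)\colon \modw{A/J} \to \modda$ is a fully faithful $\cat{D}$-module functor, by \autoref{CSZquotient} (equivalently \autoref{semisimplestable}). So \autoref{enrichedhom} and \autoref{EnrichmentsKock} give an isomorphism of algebra objects
\[
\underline{\on{End}}_{A/J}(A/J) \simeq \ienda{\on{res}(\pi)(A/J)}.
\]
Here $\on{res}(\pi)(A/J) \simeq \on{top}(A) \simeq \bigoplus_i L_i$ in $\modda$, by the previous step.

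Finally, applying \autoref{dbasic} with $\cat{M} = \modda$ and $L = \bigoplus_i L_i$, which is $\cat{D}$-basic, gives
\[
\ienda{\textstyle\bigoplus_i L_i} \simeq \prod_{i=1}^r \ienda{L_i},
\]
and each factor is a division algebra. Internal endomorphism algebras of isomorphic objects are isomorphic as algebras, so composing these isomorphisms yields the claim.

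The step needing the most care is the middle one: making sure the isomorphism $A/J \simeq \underline{\on{End}}_{A/J}(A/J)$ is one of algebras, and that the internal End computed in $\modw{A/J}$ agrees with the one computed in $\modda$. The first is the standard identification of an algebra with the internal End of its regular module (the reconstruction in \autoref{EGNOreconstruction} applied to the generator $A/J$). The second is exactly what full faithfulness combined with \autoref{enrichedhom} provides, as already used in the proof of \autoref{prop:divisionalgs}.
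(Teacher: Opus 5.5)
Your proposal is correct and follows essentially the same route as the paper. Both arguments get $A/J \simeq \on{top}(A)$ from \autoref{topaj}, identify $A/J \simeq \underline{\on{End}}_{A/J}(A/J) \simeq \ienda{A/J} \simeq \ienda{\on{top}(A)}$ as algebras via the fully faithful restriction and \autoref{enrichedhom}, and conclude with \autoref{dbasic}. Your extra remark, that $\cat{D}$-basicness forces each simple summand to occur with multiplicity one because $\sim$ is reflexive, makes explicit a point the paper leaves implicit.
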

\begin{proof}
    Since $A/J \simeq \on{top}(A)$ as $A$-modules by \autoref{topaj}, we have the following chain of isomorphisms of algebras
    \[
    A/J \simeq \underline{\on{End}}_{A/J}(A/J) \simeq \ienda{A/J} \simeq \ienda{\on{top}(A)},
    \]
    where the middle isomorphism a consequence of \autoref{semisimplestable} and \autoref{enrichedhom}.
    Now $A$ is $\cat{D}$-basic means $\on{top}(A) \in \mathbb{L}(\modda)$ is $\cat{D}$-basic, and so the fact that $A/J$ is a product of division algebras just follows from \autoref{dbasic}.
\end{proof}

\begin{proposition} \label{prop:Mtobasicalg}
    Let $\cat{M}$ be a finite module category over $\cat{D}$. 
    Then there exists a $\cat{D}$-basic algebra $A$ such that $\mathcal{M} \simeq \modda$ as $\cat{D}$-module categories.
\end{proposition}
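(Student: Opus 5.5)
The plan is to build a $\cat{D}$-basic projective generator and apply \autoref{EGNOreconstruction}. Choose representatives $L_{1},\ldots,L_{r}$ of the equivalence classes $\on{Irr}(\cat{M})/\sim$, and set $P \coloneqq \bigoplus_{i=1}^{r} P_{0}(L_{i})$, the direct sum of their projective covers. By \autoref{topcoverbijection}, $\on{top}(P) \simeq \bigoplus_{i} L_{i}$. The $L_{i}$ are pairwise non-equivalent, so $\on{top}(P)$ is $\cat{D}$-basic, and hence $P$ is a $\cat{D}$-basic projective.

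Next I show that $P$ is a $\cat{D}$-projective generator. Since $\cat{D}$ is fusion, projective objects are $\cat{D}$-projective, so only the generation condition needs checking. By \autoref{cprojectiveobjects}, it suffices that every indecomposable projective $P_{0}(L)$, with $L \in \on{Irr}(\cat{M})$, is a summand of some $V \lact P$. Given $L$, pick $i$ with $L \sim L_{i}$. By \autoref{prop:semisimpleorbitfusion}, there is $V \in \cat{D}$ with $L$ a direct summand of $V \lact L_{i}$. By \autoref{projcovercompatible}, $P_{0}(V \lact L_{i}) \simeq V \lact P_{0}(L_{i})$, and projective covers commute with finite direct sums. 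Hence $P_{0}(L)$ is a summand of $V \lact P_{0}(L_{i})$, which is in turn a summand of $V \lact P$.

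Finally, set $A \coloneqq \iend{P}$. By \autoref{EGNOreconstruction}, the functor $\hom{P}{-}$ gives a $\cat{D}$-module equivalence $\cat{M} \simeq \modda$, sending $P$ to $A$ regarded as a right module over itself. It remains to check that $A$ is $\cat{D}$-basic in $\modda$, i.e.\ that $\on{top}(A)$ is $\cat{D}$-basic. A $\cat{D}$-module equivalence preserves tops (it preserves the radical and semisimple objects) and preserves the relation $\sim$ (it commutes with $V \lact -$ and with taking direct summands). Therefore $\on{top}(A)$ is the image of $\on{top}(P)$, and it is $\cat{D}$-basic.

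The main point requiring care is the generation step, specifically the compatibility $P_{0}(V\lact X)\simeq V\lact P_{0}(X)$. This is exactly where fusion-ness enters, through \autoref{projcovercompatible}; the remaining steps are formal.
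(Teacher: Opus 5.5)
Your proof is correct and follows essentially the same route as the paper. Both take the projective cover of $\bigoplus_i L_i$ over representatives of $\on{Irr}(\cat{M})/\sim$, show it is a $\cat{D}$-projective generator, and apply \autoref{EGNOreconstruction}. The differences are minor: for generation the paper uses a single $X \in \cat{D}$ and exactness of $X \lact -$ to get $X \lact P_0 \twoheadrightarrow X \lact L$, so the full compatibility of \autoref{projcovercompatible} you cite is not actually needed; and you make explicit why $\iend{P}$ is $\cat{D}$-basic, which the paper leaves implicit.
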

\begin{proof}
   Choose a set of representatives $\{L_1,\ldots, L_r\}$ for $\on{Irr}(\cat{M})$.
   Then $L \coloneqq \bigoplus_{i=1}^r L_i$ is $\cat{D}$-basic by construction, and its projective cover $P_0 \twoheadrightarrow \on{top}(P_0) \simeq L$ is therefore $\cat{D}$-basic by definition.
   Since $\{L_1,\ldots, L_r\}$ is a set of representatives for $\on{Irr}(\cat{M})$, there must exist some $X \in \cat{D}$ such that every simple in $\cat{M}$ appears as a summand of $X\lact L$ (noting that $X\lact L$ is semisimple by \autoref{prop:semisimpleorbitfusion}).
   Since the functor $X \lact -$ is exact on $\cat{M}$ (\autoref{cor:exactonboth}), it follows that $X\lact P_0 \twoheadrightarrow X \lact L$, and so $X\lact P_0$ surjects onto all projective covers of simple objects in $\cat{M}$.
   This shows that $P_0$ is also a $\cat{D}$-projective generator.
   By \autoref{EGNOreconstruction}, we obtain $\cat{M} \simeq \modda$ with $A \coloneqq \iend{P_0}$ a $\cat{D}$-basic algebra, as required.
\end{proof}

\begin{definition}
    Given $S,L \in \mathbb{L}(A)$ with projective covers $P_0(S),P_0(L)$ respectively, we define the object $(J/J^{2})(S,L)$ of $\cat{D}$ as representing the presheaf sending $V \in \cat{D}$ to $(\mathfrak{J}/\mathfrak{J}^{2})_{A}(V\lact P_{0}(S), P_{0}(L))$.

    In particular we have $J/J^{2} = J(A)/J^{2}(A) = (J/J^{2})(\on{top}(A), \on{top}(A))$.
\end{definition}

Clearly, we have the following:
\begin{lemma}
For semisimple objects $S,S',L,L' \in \mathbb{L}(A)$ we have
\[
(J/J^{2})(S\oplus S',L) \simeq (J/J^{2})(S,L) \oplus (J/J^{2})(S',L)\text{ and } 
(J/J^{2})(S,L\oplus L') \simeq (J/J^{2})(S,L) \oplus (J/J^{2})(S,L').
\]

Hence, for a decomposition $\on{top}(A) \simeq \bigoplus_{i=1}^{m} L_{i}$ into simple right $A/J$-modules, we find a direct sum decomposition
\begin{equation}\label{directsumjj2}
    J/J^{2} = J/J^{2}(\on{top}(A), \on{top}(A)) \simeq \bigoplus_{i,j=1}^{m} J/J^{2}(L_{i},L_{j}).
\end{equation}
\end{lemma}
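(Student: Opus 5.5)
The additivity is a consequence of three facts: projective covers commute with direct sums, the quotient $\mathfrak{J}/\mathfrak{J}^{2}$ is biadditive, and representing objects are unique up to isomorphism. I will first check the first isomorphism. By \autoref{topsocle}, $P_{0}$ is a left adjoint, hence additive, so $P_{0}(S\oplus S') \simeq P_{0}(S)\oplus P_{0}(S')$. For every $V\in\cat{D}$ the action $V\lact -$ is additive as well, so $V\lact P_{0}(S\oplus S') \simeq V\lact P_{0}(S) \oplus V\lact P_{0}(S')$.

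Next I will use that $\mathfrak{J}$ is an ideal in the additive category $\projda$, and so is $\mathfrak{J}^{2}$. Any ideal in an additive category is compatible with biproducts. Concretely, a morphism $X\oplus X' \to Y$ lies in the ideal if and only if both of its components, obtained by composing with the canonical injections, do. This holds because the injections and projections are morphisms of the category and the ideal is closed under composition and sums. It gives isomorphisms
\[
(\mathfrak{J}/\mathfrak{J}^{2})(X\oplus X',Y)\simeq (\mathfrak{J}/\mathfrak{J}^{2})(X,Y)\oplus (\mathfrak{J}/\mathfrak{J}^{2})(X',Y),
\]
and the analogous isomorphism in the second variable. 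These isomorphisms are natural in all variables, since they are given by pre- and post-composition with the biproduct structure maps.

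Taking $X=V\lact P_0(S)$, $X' = V\lact P_0(S')$ and $Y=P_0(L)$, the presheaf $V\mapsto (\mathfrak{J}/\mathfrak{J}^{2})(V\lact P_0(S\oplus S'),P_0(L))$ becomes isomorphic to the sum of the two presheaves defining $(J/J^{2})(S,L)$ and $(J/J^{2})(S',L)$. The isomorphism is natural in $V$, because the identification $V\lact(X\oplus X')\simeq V\lact X\oplus V\lact X'$ is natural in $V$. A direct sum of representable presheaves is represented by the direct sum of the representing objects, so the Yoneda lemma gives the first isomorphism. The second isomorphism follows in the same way using $P_0(L\oplus L')\simeq P_0(L)\oplus P_0(L')$.

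Finally, \autoref{directsumjj2} is obtained by iterating the two isomorphisms along $\on{top}(A)\simeq\bigoplus_i L_i$. For this I still need the identification $J/J^{2} \simeq (J/J^{2})(\on{top}(A),\on{top}(A))$. By \eqref{topcoverbijection}, $P_0(\on{top}(A))\simeq A$, because $A$ is projective. By \autoref{radicalfractions} together with \autoref{FusionRadical}, $J/J^{2}$ represents $V\mapsto(\mathfrak{J}/\mathfrak{J}^2)(V\lact A,A)$, so the two objects agree. The only delicate point in the whole argument is tracking naturality in $V$ through these identifications, which is formal.
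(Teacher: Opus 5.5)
Your proposal is correct and is exactly the routine biadditivity-plus-Yoneda argument the paper leaves implicit with ``Clearly'' (it gives no proof), including your justification of $J/J^{2}\simeq (J/J^{2})(\on{top}(A),\on{top}(A))$ via \eqref{topcoverbijection}, \autoref{radicalfractions} and \autoref{FusionRadical}, which the paper only asserts in the preceding definition. You could also skip the naturality bookkeeping in $V$ that you flag by invoking \autoref{unnaturality}: presheaves on the semisimple category $\cat{D}$ are determined up to isomorphism by their values on simple objects.
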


Let $A$ be $\cat{D}$-basic algebra, let $ \on{top}(A) \simeq \bigoplus_{i=1}^{m} L_{i}$ be a decomposition into simple right $A/J$-modules.
Then $A/J \simeq \prod_{i=1}^{m} D_{i}$ is a direct product decomposition of algebras obtained from  \autoref{prop:basicalgebra}, where $D_{i} := \underline{\on{End}}_{A/J}(L_{i})$.
It follows from \autoref{modproduct} that the $A/J$-$A/J$-bimodule $J/J^2$ has a canonical splitting induced by the product algebra decomposition of $A/J \simeq \prod_{i=1}^n D_i$.
We now show that this agrees with the decomposition in \autoref{directsumjj2}.
\begin{proposition} \label{prop:J/J2splitting}
     In the above setting, assume that the morphism $\pi_{0}: A \twoheadrightarrow A/J$ of algebra objects admits a section. Then the $A/J$-$A/J$-bimodule $J/J^{2}$ satisfies
    \[
    D_{i}(J/J^{2})D_{j} \subseteq J/J^{2}(L_{j},L_{i}).
    \]
\end{proposition}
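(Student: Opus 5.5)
Throughout we write $\sigma_{0}: A/J \to A$ for the chosen algebra section and $e_{i}$ for the central idempotents of $A/J \simeq \prod_{i} D_{i}$ coming from \autoref{modproduct}. Via $\sigma_{0}$, these idempotents yield a decomposition of $A$ as a right $A$-module,
\[
A \simeq \bigoplus_{i} \sigma_{0}(D_{i})A .
\]
Set $P_{i} \coloneqq \sigma_{0}(D_{i}) A$, the image of $\varepsilon_{i}$ acting on $A$ from the left.

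\textbf{Step 1: identify $P_{i}$ with $P_{0}(L_{i})$.} The summand $P_{i}$ is projective, being a summand of $A$. Its top is $\on{top}(P_{i}) \simeq e_{i}\on{top}(A) \simeq e_{i}(A/J)$ by \autoref{topaj}, and this is $D_{i}$ regarded as a right $A/J$-module. Under the identification $A/J \simeq \underline{\on{End}}(\on{top}A)$ from \autoref{prop:basicalgebra}, the factor $D_{i}$ corresponds to the summand $L_{i}$. Hence $\on{top}(P_{i}) \simeq L_{i}$, and by \autoref{topcoverbijection} we get $P_{i} \simeq P_{0}(L_{i})$.

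\textbf{Step 2: identify $D_{i}(J/J^{2})D_{j}$ with a space of radical morphisms.} By \autoref{radicalfractions} together with \autoref{FusionRadical}, for every $V \in \cat{D}$,
\[
\on{Hom}_{\cat{D}}(V, J/J^{2}) \simeq (\mathfrak{J}/\mathfrak{J}^{2})(V\lact A, A).
\]
Under the adjunction isomorphism $\on{Hom}_{\cat{D}}(V,A) \simeq \homa(V\lact A, A)$, the two actions translate as follows:
\begin{itemize}
\item left multiplication by $\sigma_{0}(e_{i})$ corresponds to postcomposition with the idempotent endomorphism of $A$ projecting onto $P_{i}$;
\item right multiplication by $\sigma_{0}(e_{j})$ corresponds to precomposition with $V \lact (\text{projection onto } P_{j})$.
\end{itemize}
Both idempotents are $A$-module endomorphisms, and the ideals $\mathfrak{J}$ and $\mathfrak{J}^{2}$ are $\cat{D}$-stable (\autoref{FusionRadical}). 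So the functor represented by $D_{i}(J/J^{2})D_{j}$ is the corresponding idempotent cut of $\mathfrak{J}/\mathfrak{J}^{2}$, namely
\[
V \mapsto (\mathfrak{J}/\mathfrak{J}^{2})(V \lact P_{j}, P_{i}) \simeq (\mathfrak{J}/\mathfrak{J}^{2})(V\lact P_{0}(L_{j}), P_{0}(L_{i})),
\]
using Step 1. This is exactly the presheaf defining $J/J^{2}(L_{j},L_{i})$. Yoneda then gives the required inclusion, in fact an isomorphism compatible with the decomposition \autoref{directsumjj2}.

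\textbf{Main obstacle.} The delicate point is the bookkeeping in Step 2. One has to check that the bimodule actions on $J/J^{2}$, which are induced through $\sigma_{0}$, agree under \autoref{radicalfractions} with pre- and postcomposition by the idempotents $\varepsilon_{j}, \varepsilon_{i}$. One must also get the order of indices right: the left $D_{i}$-action lands in the codomain $P_{i}$, while the right $D_{j}$-action acts on the source $V\lact P_{j}$. I would verify this first for $V=\mathbb{1}$, where it reduces to the ordinary statement $e_{i}\,\homa(A,A)\,e_{j} = \homa(e_{j}A, e_{i}A)$. Naturality in $V$ then extends it to all $V$, using that the idempotents are $\cat{D}$-module maps.
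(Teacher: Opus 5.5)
Your proof is correct, but it reaches the conclusion by a more elementary route than the paper.

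\textbf{How the paper argues.} The paper treats the full action of $D_i$. It uses the Day-convolution description of the bimodule structure on $J$ from \cite{CSZ}. There, a general $g\in\on{Hom}_{\cat{D}}(W,D_i)$ acts on $(\mathfrak{J}/\mathfrak{J}^{2})(V\lact A,A)$ by precomposition with $V\lact\tilde g$. A diagram chase then shows that the image consists of classes of morphisms out of $V\otimes W\lact P_{0}(L_i)$. This gives only the inclusion, and it leaves implicit which summand of $A$ plays the role of $P_0(L_i)$.

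\textbf{How you argue.} You reduce everything to the global elements $\sigma_0(e_i)\colon\mathbb{1}\to A$.
\begin{enumerate}
\item You split $A=\bigoplus_i\sigma_0(e_i)A$ and identify the summands with $P_0(L_i)$ via \autoref{topaj} and \autoref{topcoverbijection}.
\item Under $\on{Hom}_{\cat{D}}(V,A)\simeq\homa(V\lact A,A)$, $f\mapsto\tilde f=\mu\circ(f\otimes A)$, left multiplication by a global element $x$ becomes postcomposition with $L_x=\mu\circ(x\otimes A)$. Right multiplication by $x$ becomes precomposition with $V\lact L_x$. This is plain associativity.
\item Cutting an ideal by split idempotents is standard.
\end{enumerate}
The payoff is that you get the equality $D_i(J/J^{2})D_j=J/J^{2}(L_j,L_i)$ directly. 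This equality is what \autoref{arrowsspecify} actually uses, and your route makes the index conventions visible.

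\textbf{Two small repairs.}
\begin{enumerate}
\item State explicitly that $D_i(J/J^{2})D_j$ is the image of the idempotent $m\mapsto\sigma_0(e_i)\,m\,\sigma_0(e_j)$. This holds because $e_i$ is the unit of the factor $D_i$. Hence the action map out of $D_i\otimes(J/J^{2})\otimes D_j$ both factors through that image and contains it. Your ``idempotent cut'' step silently uses this.
\item Your plan to verify the translation of the actions ``at $V=\mathbb{1}$ and extend by naturality'' does not work as stated. Two natural endomorphisms of $\on{Hom}_{\cat{D}}(-,A)$ that agree at $V=\mathbb{1}$ need not agree at other simples; Yoneda would require checking at $V=A$. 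The step is also unnecessary. The identities $\widetilde{x\cdot f}=L_x\circ\tilde f$ and $\widetilde{f\cdot x}=\tilde f\circ(V\lact L_x)$ follow from associativity of $\mu$ for all $V$ at once.
\end{enumerate}
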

\begin{proof}
    Recall from \cite[Section~2]{CSZ} that the action of $A$ on $J(A)$ can be described in terms of Day convolution in the category of finite-dimensional presheaves on $\cat{D}$. 
    
    Since $\cat{D}$ is semisimple, the restricted Yoneda embedding $\cat{D} \rightarrow [\cat{D}^{\on{op}},\mathbf{vec}]$ is in fact a monoidal equivalence, and the Day convolution $\euler{P}\ast \euler{Q}$ of two presheaves $\euler{P,Q}$ can be described as sending a simple object $U \in \Irr(\cat{D})$ to $\bigoplus_{V,W \in \Irr(\cat{D})} \euler{P}(V) \otimes_{\mathbb{k}} \euler{Q}(W)$.
    
    Thus, as in \cite{CSZ}, the right $A$-action is given by the collection of maps 
    \begin{equation}\label{convolution}
    \Big(\mathfrak{J}(V \lact A, A) \otimes \on{Hom}_{\cat{D}}(W,A) \xiso \mathfrak{J}(V \lact A, A) \otimes \on{Hom}_{-A}(W\lact A,A) \rightarrow \mathfrak{J}(V\otimes W\lact A,A)\Big)_{V,W \in \cat{D}},
    \end{equation}
    where the former map is induced from the canonical isomorphism $\on{Hom}_{\cat{D}}(W,A) \simeq \on{Hom}_{-A}(W\lact A,A)$, and the latter sends $f \otimes g$ to $f \circ V\lact g$ in $\modda$. The left action can be described similarly. 

    The actions of $A$ on $J^{2}$ and $J/J^{2}$ can be described similarly, and the action of $A/J$ on $J$ (and by extension on $J^{2}$ and $J/J^{2}$) obtained by restriction along the morphism $A/J \xrightarrow{\sigma_{0}} A$ is given by precomposing the maps of \autoref{convolution} with the maps
    \[
    \mathfrak{J}(V \lact A, A) \otimes \on{Hom}_{\cat{D}}(W,A/J) \xrightarrow{\on{id} \otimes \on{Hom}_{\cat{D}}(W,\sigma_{0})} 
    \mathfrak{J}(V \lact A, A) \otimes \on{Hom}_{\cat{D}}(W,A).
    \]
    Similarly, the action of $D_{i} \simeq \underline{\on{End}}_{-A}(L_{i})$ is obtained by further precomposing with 
    \[
    \mathfrak{J}(V \lact A, A) \otimes \on{Hom}_{\cat{D}}(W,D_{i}) \xrightarrow{\on{id} \otimes \on{Hom}_{\cat{D}}(W,\iota)} 
    \mathfrak{J}(V \lact A, A) \otimes \on{Hom}_{\cat{D}}(W,A).
    \]
     Again, similar considerations apply to $J^{2}$ and $J/J^{2}$. We thus obtain a commutative diagram
\[\begin{tikzcd}[column sep=small]
	{\mathfrak{J}/\mathfrak{J}^{2}(V\otimes A,A)\otimes \on{Hom}_{-A}(W\lact A, A)} && {\mathfrak{J}/\mathfrak{J}^{2}(V\otimes A,A)\otimes \on{Hom}_{-A}(W\lact P_{0}(L_{i}), P_{0}(L_{i}))} \\
	&& {\mathfrak{J}/\mathfrak{J}^{2}(V\otimes A, A) \otimes (\on{Hom}/\mathfrak{J})_{-A}(W\lact P_{0}(L_{i}),P_{0}(L_{i}))} \\
	{\mathfrak{J}/\mathfrak{J}^{2}(V\otimes W \lact A, A)} && {\mathfrak{J}/\mathfrak{J}^{2}(V\otimes A, A) \otimes \on{Hom}_{\cat{D}}(W,L_{i})}
	\arrow[from=1-1, to=3-1]
	\arrow[from=1-3, to=1-1]
	\arrow[shift left=3, from=1-3, to=2-3]
	\arrow[from=2-3, to=1-3]
	\arrow["\simeq"', from=3-3, to=2-3]
	\arrow[from=3-3, to=3-1]
\end{tikzcd}\]
 The adjacent pair of vertical arrows are given by $\pi_{0}: A \twoheadrightarrow A/J$ and a splitting thereof.
 The lower horizontal arrow describes the right $D_{i}$-action on $J/J^{2}$, and its image describes the object $(J/J^{2})D_{i}$. 
 Since the leftmost vertical map is just composition, this image is clearly contained in ${\mathfrak{J}/\mathfrak{J}^{2}(V\otimes W \lact P_{0}(L_{i}), A)}$, from which it follows that $(J/J^{2})D_{i} \subseteq J/J^{2}(L_{i},\on{top}(A))$. The claim for the left action follows similarly.
\end{proof}

\begin{corollary}\label{arrowsspecify}
  Let $A$ be a $\cat{D}$-basic algebra and let $\on{top}(A) \simeq \bigoplus_{i=1}^{m} L_{i}$ be a decomposition into simple $A/J$-modules. The direct sum decomposition of the $A/J$-$A/J$-bimodule $J/J^{2}$ is given by
  \[
  J/J^2 \simeq \bigoplus_{i,j=1}^{m} J/J^{2}(L_{i},L_{j}),
  \]
  with the $A/J$-$A/J$-bimodule structure on $J/J^{2}(L_{i},L_{j})$ given in \autoref{prop:J/J2splitting}.
  In particular, the associated $D_{i}$-$D_{j}$-bimodule is $J/J^{2}(L_{j},L_{i})$
\end{corollary}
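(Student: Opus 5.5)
The plan is to compare two direct sum decompositions of the $A/J$-$A/J$-bimodule $J/J^{2}$, both indexed by pairs $(i,j)$, and show they agree summand by summand. The first is the canonical splitting from \autoref{modproduct} applied to $A/J \simeq \prod_{i=1}^{m} D_{i}$, which gives $J/J^{2} = \bigoplus_{i,j} D_{i}(J/J^{2})D_{j}$. Concretely, $D_{i}(J/J^{2})D_{j}$ is the image of the idempotent endomorphism $\varepsilon_{i}^{\mathrm{left}}\circ\varepsilon_{j}^{\mathrm{right}}$ built from $e_{i}$ and $e_{j}$. The second is \autoref{directsumjj2}, namely $J/J^{2} \simeq \bigoplus_{i,j} J/J^{2}(L_{i},L_{j})$.

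\textbf{Step 1.} By \autoref{prop:J/J2splitting}, which applies because a section of $\pi_{0}$ is available in the setting where we need it, we have the inclusion $D_{i}(J/J^{2})D_{j} \subseteq J/J^{2}(L_{j},L_{i})$ for every pair $(i,j)$.

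\textbf{Step 2.} Both families are independent decompositions of the same object, so we can argue by comparing inclusions. Each summand $D_{i}(J/J^{2})D_{j}$ lies in the corresponding summand $J/J^{2}(L_{j},L_{i})$, and the family $\{J/J^{2}(L_{j},L_{i})\}_{i,j}$ is independent, summing directly to $J/J^{2}$. Hence
\[
J/J^{2} = \bigoplus_{i,j} D_{i}(J/J^{2})D_{j} \subseteq \bigoplus_{i,j} J/J^{2}(L_{j},L_{i}) = J/J^{2}.
\]
This forces each inclusion to be an equality. To make this precise, I would project onto the $(j,i)$-component of the second decomposition. The left-hand total is all of $J/J^{2}$, so this projection is surjective, yet it kills every $D_{i'}(J/J^{2})D_{j'}$ with $(i',j') \neq (i,j)$. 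Therefore $J/J^{2}(L_{j},L_{i}) = D_{i}(J/J^{2})D_{j}$. Alternatively, one can compare lengths in $\cat{D}$ and use that a subobject of equal length is the whole object.

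\textbf{Step 3.} With this equality in hand, the $D_{i}$-$D_{j}$-bimodule associated to the $(i,j)$ component under \autoref{modproduct} is exactly $J/J^{2}(L_{j},L_{i})$. The bimodule structure is the restriction of the $A/J$-$A/J$-structure along the inclusions $D_{i} \hookrightarrow A/J$. This is precisely the asserted description.

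The main obstacle is hypothesis bookkeeping. \autoref{prop:J/J2splitting} assumes that $\pi_{0}$ splits, and the corollary does not state this. I would handle it as follows. In the fusion case, the $A/J$-bimodule structure on $J/J^{2}$ is defined intrinsically, because $J$ annihilates $J/J^{2}$ on both sides. Consequently, the action maps of \autoref{convolution} factor through $\on{Hom}_{\cat{D}}(W,A/J)$ regardless of any choice. I would then check that the proof of \autoref{prop:J/J2splitting} uses the section only to write down those factored maps, so the inclusion in Step 1 holds without assuming a section. If this check fails, I would add the assumption that $\pi_{0}$ splits, which is automatic in the separable setting used later.
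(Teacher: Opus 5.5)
Your proposal is correct and is essentially the paper's argument. The paper gives no separate proof; it deduces the corollary directly from \autoref{prop:J/J2splitting} by observing that the inclusions $D_{i}(J/J^{2})D_{j} \subseteq J/J^{2}(L_{j},L_{i})$ between corresponding summands of two direct-sum decompositions of $J/J^{2}$ must all be equalities, and your projection (or length) argument makes that step explicit. Your concern about the unstated section hypothesis is legitimate and your fix is sound: the $A$-bimodule structure on $J/J^{2}$ factors canonically through $A/J$, so the section in that proposition only picks lifts whose choice does not matter, and a section exists anyway in the separable setting of \autoref{thm:mainthm} where the corollary is used.
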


\begin{remark}
   Much like in the classical case, the bimodule of arrows from $i$ to $j$ is given by the object $J/J^{2}(L_{j},L_{i})$, which is the internal version of the classical $(\on{Rad}/\on{Rad}^{2})(P(j),P(i))$.
\end{remark}

\subsection{Internal Ext and directed graphs of module categories}
Recall that for finite abelian categories, one can associated to it a directed graph (which is sometimes called the Ext-quiver).
Here, we will introduce a similar variant for $\cat{D}$-module categories.

Let us first recall the classical setting.
Given a vector space $V$, we shall denote its dual space by $\mathbb{D}V$ -- this is to distinguish the $\mathbb{k}$-linear duality from the duality of an abstract rigid monoidal category $\cat{D}$. For a presheaf $\functor{P}: \cat{A} \rightarrow \vect$, we denote by $\mathbb{D}\functor{P}$ the copresheaf sending $X$ to $\mathbb{D}\functor{P}(X)$, and similarly for copresheaves and profunctors.
The following well-known result gives an effective method of computing the directed graph of a finite abelian category:
\begin{proposition}[{\cite[Proposition~2.4.3]{Ben}, \cite[Lemma~III.2.12]{ASS}}]\label{extradicaliso}
  Let $\cat{A}$ be a finite abelian category and let $L,L' \in \mathbb{L}(\cat{A})$ be semisimple objects thereof. Then there is an isomorphism
  \[
   \mathfrak{J}/\mathfrak{J}^{2}(P_{0}(L), P_{0}(L')) \simeq \mathbb{D}\on{Ext}^{1}_{\cat{A}}(L',L),
  \]
  natural in $L,L'$.
\end{proposition}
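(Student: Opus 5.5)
We prove the classical statement $\mathfrak{J}/\mathfrak{J}^{2}(P_{0}(L), P_{0}(L')) \simeq \mathbb{D}\on{Ext}^{1}_{\cat{A}}(L',L)$ by reducing to simple $L,L'$ and then computing $\on{Ext}^1$ from a minimal projective presentation. Both sides are additive in $L$ and $L'$: the left side because $P_{0}$ commutes with direct sums, and the right side clearly. So it suffices to construct a natural isomorphism for simple $L,L'$. Naturality for arbitrary semisimple objects then follows, since morphisms between semisimples are matrices of scalars on isotypic components. Throughout we may identify $\cat{A}\simeq \on{mod}_{\mathbb{k}}(A)$ for a finite-dimensional algebra $A$.

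The key steps, in order, are as follows. Take the minimal projective presentation $P_{1}\xrightarrow{p_{1}} P_{0}(L') \to L'\to 0$. Here $P_{0}(L')\to L'$ is the projective cover, and $P_{1}=P_{0}(\on{rad}P_{0}(L'))$, so that $p_1$ is radical. Apply $\on{Hom}(-,L)$ with $L$ simple. Any morphism $P_{0}(L')\to L$ restricts to zero on $\on{rad}P_{0}(L')$ by minimality, since $L$ is semisimple. Hence the induced maps vanish and
\[
\on{Ext}^{1}(L',L)\simeq \on{Hom}(P_{1},L)\simeq \on{Hom}(\on{top}\,\on{rad}P_{0}(L'),L)\simeq \on{Hom}(\on{rad}P_{0}(L')/\on{rad}^{2}P_{0}(L'),L).
\]
Next, identify $\mathfrak{J}(P_{0}(L),P_{0}(L'))$ with $\on{Hom}(P_{0}(L),\on{rad}P_{0}(L'))$: a map into $P_{0}(L')$ is radical if and only if its image lies in $\on{rad}P_{0}(L')$. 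Inductively, $\mathfrak{J}^{2}(P_{0}(L),P_{0}(L'))$ is identified with $\on{Hom}(P_{0}(L),\on{rad}^{2}P_{0}(L'))$. By projectivity of $P_{0}(L)$, this gives
\[
\mathfrak{J}/\mathfrak{J}^{2}(P_{0}(L),P_{0}(L'))\simeq \on{Hom}(P_{0}(L),\on{rad}P_{0}(L')/\on{rad}^{2}P_{0}(L')).
\]
Write $R=\on{rad}P_{0}(L')/\on{rad}^{2}P_{0}(L')$, which is semisimple. Then $\on{Hom}(P_0(L),R)$ has dimension equal to the multiplicity $[R:L]\cdot\dim\on{End}(L)$, while $\on{Hom}(R,L)$ has the same dimension. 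Since $\mathbb{k}$ is algebraically closed, $\on{End}(L)=\mathbb{k}$. The natural pairing
\[
\on{Hom}(R,L)\otimes\on{Hom}(P_{0}(L),R)\to \on{Hom}(P_{0}(L),L)\simeq\on{End}(L)=\mathbb{k},
\]
given by composition, is then nondegenerate, because $R$ is semisimple. This yields the duality, natural in both variables.

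The main obstacle is the identification $\mathfrak{J}^{2}(P_{0}(L),P_{0}(L'))=\on{Hom}(P_{0}(L),\on{rad}^{2}P_{0}(L'))$. The inclusion $\subseteq$ is clear. For $\supseteq$, I would factor a map landing in $\on{rad}^{2}P_{0}(L')=\on{rad}(\on{rad}P_{0}(L'))$ by lifting it through the projective cover $P_{1}\twoheadrightarrow \on{rad}P_{0}(L')$. This writes it as $p_{1}\circ g$, with $p_1$ radical and $g$ landing in $\on{rad}P_{1}$, hence radical. A lesser technical point is checking that the composite isomorphism is compatible with the Yoneda/functoriality conventions, so that the final isomorphism is natural rather than merely dimension-matching.
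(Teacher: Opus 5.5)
Your proof is correct and is the standard argument behind the cited results; the paper gives no proof of its own, only the references to Benson and Assem--Simson--Skowro\'nski. Both sides reduce to $\on{rad}P_0(L')/\on{rad}^2P_0(L')$: on one side via $\on{Ext}^1(L',L)\simeq\on{Hom}(\on{rad}P_0(L'),L)$, on the other via $\mathfrak{J}/\mathfrak{J}^2(P_0(L),P_0(L'))\simeq\on{Hom}(P_0(L),\on{rad}P_0(L')/\on{rad}^2P_0(L'))$, and naturality is automatic on simples.

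One point should be made explicit. Your ``clear'' inclusion $\mathfrak{J}^2(P_0(L),P_0(L'))\subseteq\on{Hom}(P_0(L),\on{rad}^2P_0(L'))$ needs the middle object of a composite of two radical maps to be projective, i.e.\ $\mathfrak{J}$ must be the radical of $\projectives{\cat{A}}$, which is how the paper uses it via \autoref{FusionRadical}. For the radical of $\cat{A}$ itself the inclusion fails. For example, take $1\xrightarrow{a}2\xrightarrow{b}3$ with $ba=0$: the nonzero map $P(2)\to P(1)$ factors as $P(2)\twoheadrightarrow S_2\hookrightarrow P(1)$, a composite of two radical morphisms of $\cat{A}$, although $\on{rad}^2P(1)=0$.
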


We shall upgrade this to the general fusion category setting.
We first define the notion of an internal Ext:
\begin{definition}\label{internalextdef}
   Let $\cat{D}$ be a fusion category and $\cat{M}$ a finite $\cat{D}$-module category. For $X,Y \in \cat{M}$ and $n \geq 0$, the \emph{internal Ext}, denoted by $\iExt_{\cat{M}}^{n}(X,Y)$, is defined as the object in $\cat{D}$ representing the presheaf
   $
   \on{Ext}_{\mathcal{M}}^{n}(-\triangleright X, Y)
   $. In other words, there are isomorphisms natural in $V$:
    \[
    \on{Hom}_{\mathcal{D}}(V, \underline{\on{Ext}}_{\mathcal{M}}^{n}(X,Y))\simeq \on{Ext}_{\mathcal{M}}^{n}(V\triangleright X, Y).
    \]
   Throughout this document, we shall simply denote $\iExt^1_\cat{M}(-,-)$ by $\iExt_\cat{M}(-,-)$.
\end{definition}

As in \autoref{eqn:internalhomformula}, the internal Ext can be easily computed as follows:
\begin{equation}\label{eqn:internalextformula}
    \iExt^i(X,Y) \simeq \bigoplus_{L \in \on{Irr}(\cat{D})} L^{\oplus e_L}
\end{equation}
where $e_L \coloneqq \dim_{\mathbb{k}} \Ext^i(L\lact X, Y)$.

\begin{remark}
    As will be shown in a sequel, the discrete complex $\underline{\on{Ext}}_{\cat{M}}^{\bullet}(X,Y)$ is in fact the internal $\on{Hom}$ for the derived $\euler{D}(\cat{D})$-module category $\euler{D}(\cat{M})$, and equivalently also the derived functor of $\underline{\on{Hom}}_{\cat{M}}(-,-)$. However, this is not necessarily the case for non-semisimple tensor categories $\cat{C}$, where the internal $\on{Hom}$ in $\euler{D}(\cat{C})$ may have a non-zero differential.
\end{remark}

\begin{proposition}\label{extarrowsextra}
 Let $\cat{D}$ be a fusion category.
 For an algebra $A \in \cat{D}$ and $S,L \in \mathbb{L}(A)$ there is an isomorphism 
 \[
 J/J^{2}(S,L) \simeq \leftdual{\big(\underline{\on{Ext}}^{1}_{-A}(L, S)\big)}.
 \]
\end{proposition}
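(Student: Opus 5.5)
We compare the presheaves on $\cat{D}$ that the two sides represent; by Yoneda, it suffices to produce isomorphisms natural in $V \in \cat{D}$. By definition, $J/J^{2}(S,L)$ represents
\[
V \mapsto (\mathfrak{J}/\mathfrak{J}^{2})_{A}(V\lact P_{0}(S), P_{0}(L)).
\]
By \autoref{projcovercompatible}, $V \lact P_{0}(S) \simeq P_{0}(V\lact S)$, naturally in $V$. We may also use the ordinary radical here, by \autoref{FusionRadical}. Applying \autoref{extradicaliso} to the finite abelian category $\modda$ then gives
\[
(\mathfrak{J}/\mathfrak{J}^{2})(P_{0}(V\lact S),P_{0}(L)) \simeq \mathbb{D}\Ext^{1}_{-A}(L, V\lact S).
\]
This holds because $V \lact S$ is semisimple by \autoref{semisimplestable}.

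Next we rewrite the right-hand side using the adjunction $V\lact - \dashv \rightdual{V}\lact -$, or rather its appropriate version. The functor $V\lact-$ is exact with exact adjoints on both sides, given by acting with duals, so it induces isomorphisms on $\Ext^{1}$. This yields
\[
\Ext^{1}_{-A}(L,V\lact S)\simeq \Ext^{1}_{-A}(V^{*}\lact L, S),
\]
where $V^{*}$ is whichever of $\leftdual{V}$ or $\rightdual{V}$ makes $V^{*}\lact-$ left adjoint to $V\lact-$. By \autoref{internalextdef}, this is $\on{Hom}_{\cat{D}}(V^{*},\iExt_{-A}(L,S))$. Using duality in $\cat{D}$, it is then isomorphic to $\on{Hom}_{\cat{D}}(\leftdual{\iExt(L,S)}, V)$, again with the precise side of the dual to be tracked.

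It remains to dualize over $\mathbb{k}$. In a fusion category over algebraically closed $\mathbb{k}$, semisimplicity gives $\mathbb{D}\on{Hom}_{\cat{D}}(X,V)\simeq \on{Hom}_{\cat{D}}(V,X)$ naturally in $V$. This follows from \autoref{unnaturality} applied to simples together with $\on{Hom}(L,L')\simeq\delta_{LL'}\mathbb{k}$; alternatively, one can note that for semisimple $\cat{D}$, the composition pairing $\on{Hom}(X,V)\otimes\on{Hom}(V,X)\to\on{End}(X)\to\mathbb{k}$, taken via traces on each isotypic block, is nondegenerate. Chaining the isomorphisms gives
\[
\on{Hom}_{\cat{D}}(V,J/J^{2}(S,L))\simeq \on{Hom}_{\cat{D}}\bigl(V,\leftdual{\iExt^{1}_{-A}(L,S)}\bigr),
\]
and Yoneda finishes the argument.

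The main obstacle is naturality in $V$, as the paper's remark about algebraically closed fields hints. Each step must be natural: the identification $P_{0}(V\lact S)\simeq V\lact P_{0}(S)$, the naturality of \autoref{extradicaliso} in its arguments, and the $\mathbb{k}$-duality $\mathbb{D}\on{Hom}(X,V)\simeq\on{Hom}(V,X)$. The last is not canonical in general. To handle it, I would circumvent naturality altogether using \autoref{unnaturality}. Both sides are objects of the semisimple category $\cat{D}$, so it suffices to compare multiplicities, that is, dimensions of $\on{Hom}_{\cat{D}}(U,-)$ for each $U\in\Irr(\cat{D})$. Pointwise isomorphisms of vector spaces suffice for this, and they are given by the chain above for each simple $U$, with the dualities bookkept so that the left dual appears.
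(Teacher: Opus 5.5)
Your proof is correct in substance and uses the same ingredients as the paper: Yoneda on $\cat{D}$, \autoref{projcovercompatible}, \autoref{extradicaliso}, the adjunction $\leftdual{V}\lact - \dashv V\lact -$, the identification $\mathbb{D}\on{Hom}_{\cat{D}}(X,-)\simeq \on{Hom}_{\cat{D}}(-,X)$ coming from semisimplicity, and \autoref{unnaturality} to dispose of naturality. You only run the chain in the opposite direction. The paper starts from $\on{Hom}_{\cat{D}}(V,\iExt^{1}_{-A}(L,S))\simeq \Ext^{1}_{-A}(V\lact L,S)\simeq \mathbb{D}(\mathfrak{J}/\mathfrak{J}^{2})(P_{0}(S),V\lact P_{0}(L))$ and moves $V$ across on radical morphisms, which needs $\cat{D}$-stability of $\mathfrak{J}$. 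You start from $J/J^{2}(S,L)$ and move $V$ across on $\Ext^{1}$, where exactness of the adjoint pair suffices.

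The one step that does not work as you state it is the final bookkeeping of duals.
\begin{itemize}
\item Since $\rightdual{(\leftdual{V})}\simeq V$, one has $\on{Hom}_{\cat{D}}(\leftdual{V},X)\simeq \on{Hom}_{\cat{D}}(\rightdual{X},V)$. So your chain actually yields $J/J^{2}(S,L)\simeq \rightdual{\iExt^{1}_{-A}(L,S)}$, not the left dual.
\item In multiplicities, $[J/J^{2}(S,L):U]=[\iExt^{1}_{-A}(L,S):\leftdual{U}]$, whereas $[\leftdual{\iExt^{1}_{-A}(L,S)}:U]=[\iExt^{1}_{-A}(L,S):\rightdual{U}]$.
\item No bookkeeping along your route turns this into the left dual. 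The paper's ordering gives $\iExt^{1}_{-A}(L,S)\simeq\rightdual{(J/J^{2}(S,L))}$, which is exactly the statement.
\end{itemize}

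To finish, you need the standard fact that in a fusion category $\leftdual{U}\simeq\rightdual{U}$ for every simple $U$. Hence $\leftdual{X}\simeq\rightdual{X}$ for all $X$, non-canonically. Both $\leftdual{U}$ and $\rightdual{U}$ are the unique simple $W$ with $\mathbb{1}$ a summand of $U\otimes W$. This is because $\on{Hom}_{\cat{D}}(\mathbb{1},U\otimes W)\simeq\on{Hom}_{\cat{D}}(\leftdual{U},W)$ and $\on{Hom}_{\cat{D}}(U\otimes W,\mathbb{1})\simeq \on{Hom}_{\cat{D}}(W,\rightdual{U})$, and these have equal dimension by semisimplicity. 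With this remark added, or by reordering the chain as the paper does, your multiplicity-counting argument is complete.
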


\begin{proof}
  We show $\underline{\on{Ext}}^{1}_{-A}(L, S) \simeq \rightdual{(J/J^{2})}$, by showing 
  that the two objects define the same (representable) presheaf. We have  
  \begin{equation}\label{objectiso}
 \begin{aligned}
  &\on{Hom}_{\cat{D}}(-,\underline{\on{Ext}}_{-A}^{1}(L, S))\simeq \on{Ext}_{-A}^{1}(-\triangleright L, S)
  \\
  &\simeq \mathbb{D}\mathfrak{J}/\mathfrak{J}^{2}(P_{0}(S),P_{0}(-\triangleright L))\\
  &\simeq \mathbb{D}\mathfrak{J}/\mathfrak{J}^{2}(P_{0}(S),-\triangleright P_{0}(L))
  \simeq \mathbb{D}\mathfrak{J}/\mathfrak{J}^{2}(\leftdual{(-)}\triangleright P_{0}(S),P_{0}(L))\\
  &\simeq \mathbb{D}\on{Hom}_{\cat{D}}(\leftdual{(-)},(J/J^{2})(S,L))\\
  &\simeq \mathbb{D}\on{Hom}_{\cat{D}}(\rightdual{(J/J^{2})(S,L)},-) \simeq \on{Hom}_{\cat{D}}(-,\rightdual{(J/J^{2})(S,L)}).
 \end{aligned}
\end{equation}
Here, the first isomorphism is \autoref{internalextdef}. The second is the isomorphism of \autoref{extradicaliso}. The third follows from $P_{0}(-\lact L) \simeq -\lact P_{0}(L)$, which is an instance of \autoref{projcovercompatible}. 
The fourth is an instance of the $\on{Hom}$-isomorphism from the adjunction $\leftdual{-} \lact \dashv - \lact$, which preserves radical morphisms since $\mathfrak{J}$ is $\cat{D}$-stable by \autoref{FusionRadical}. The fifth is by \autoref{radicalfractions}, the sixth by rigidity of $\cat{D}$. Finally, the seventh isomorphism comes from that for the Nakayama functor $\nu_{\cat{D}}$ for $\cat{D}$:
\[
 \on{Hom}_{\cat{D}}(-,\nu(P)) \simeq \mathbb{D}\on{Hom}_{\cat{D}}(P,-) \text{ for } P \in \projectives{\cat{D}},
\]
 but since $\cat{D}$ is semisimple, we have $\nu_{\cat{D}} \simeq \on{Id}_{\cat{D}}$.

 Naturality of all of these isomorphisms is automatically obtained from \autoref{unnaturality}.
\end{proof}

We emphasise that the isomorphism in the proposition above is an isomorphism of objects of $\cat{D}$, rather than $\iend{L}$-$\iend{S}$-bimodules. Indeed, in order to equip $\leftdual{\iExt_{\cat{M}}(L,S)}$ with such a bimodule structure, it would be necessary to assume $\cat{D}$ to be pivotal ($\iExt_{\cat{M}}(L,S)$ is naturally a $\iend{S}$-$\iend{L}$-bimodules).
It remains an open question whether every fusion category can be equipped with a pivotal structure \cite[Question 4.8.3]{EGNO}. Nonetheless, observe that \autoref{extarrowsextra} is analogous to the often-used variant of \autoref{extradicaliso}, which identifies only the dimensions of spaces, rather than $\on{End}(L')$-$\on{End}(L)$-bimodules.

We can now associate to every finite $\cat{D}$-module category a directed graph as follows.
\begin{definition} \label{defn:internalExtspecies}
    Let $\cat{M}$ be a finite $\cat{D}$-module category.
    The \emph{directed graph of $\cat{M}$} is defined as the directed graph with set of vertices given by $\on{Irr}(\cat{M})/\sim$, and for each $[L], [L'] \in \on{Irr}(\cat{M})/\sim$, we have an arrow $[L] \to [L']$ if and only if $\iExt_{\cat{M}}(L,L') \neq 0$; this is well-defined irrespective of the representatives $L \in [L]$ and $L' \in [L']$, as we will show in \autoref{directedgraphindependent}.
\end{definition}

\begin{remark}
    If $\cat{D}$ is moreover pivotal, one could define a $\cat{D}$-species for each set of representatives $\{L_1, \ldots, L_n\}$ of $\on{Irr}(\cat{M})/\sim$ as follows, which one might like to call it the \emph{internal Ext-species}. It has underlying directed graph given by the directed graph of $\cat{M}$, where each vertex is labelled by $D_i \coloneqq \iend{L_i}$ and each arrow is labelled by $E_{ij} \coloneqq \leftdual{\iExt_{\cat{M}}(L_i,L_j)}$; the $(\iend{L_i},\iend{L_j})$-bimodule structure on $E_{ij}$ is defined using the pivotal structure.
\end{remark}

\begin{lemma}\label{directedgraphindependent}
  Let $[L_1], [L_2]$ be two equivalence classes in $\on{Irr}(\cat{M})$.
  Then $\iExt_{\cat{M}}(L_1,L_2) \neq 0$ if and only if $\iExt_{\cat{M}}(L'_1,L'_2) \neq 0$ for any representatives $L'_1\in[L_1]$ and $L'_2 \in [L_2]$.
\end{lemma}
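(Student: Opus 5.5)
By symmetry it suffices to show: if $\iExt_{\cat{M}}(L_1,L_2)\neq 0$ and $L_1' \sim L_1$, then $\iExt_{\cat{M}}(L_1',L_2)\neq 0$. The same argument then handles changing the second variable. Combining the two changes gives the claim for arbitrary representatives.

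\emph{Step 1: a relation between internal Exts.} Fix $V\in\cat{D}$, and let $X\in\cat{M}$ be arbitrary. I will use the adjunction $\leftdual{V}\lact - \dashv V\lact -$ on $\cat{M}$. Both functors are exact by \autoref{cor:exactonboth}, and each is adjoint to the other on either side, so the adjunction passes to Ext groups. For every $W\in\cat{D}$ this gives
\[
\on{Hom}_{\cat{D}}(W,\iExt_{\cat{M}}(V\lact X,Y))\simeq \on{Ext}^1_{\cat{M}}((W\otimes V)\lact X,Y)\simeq \on{Hom}_{\cat{D}}(W\otimes V,\iExt_{\cat{M}}(X,Y)).
\]
By rigidity of $\cat{D}$, the right-hand side is $\on{Hom}_{\cat{D}}(W,\iExt_{\cat{M}}(X,Y)\otimes \rightdual{V})$ (or the other dual, depending on conventions). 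Hence
\[
\iExt_{\cat{M}}(V\lact X,Y)\simeq \iExt_{\cat{M}}(X,Y)\otimes \rightdual{V}.
\]
Similarly, moving $V$ to the second argument gives $\iExt_{\cat{M}}(X,V\lact Y)\simeq V\otimes\iExt_{\cat{M}}(X,Y)$, up to duals. Naturality in $W$ holds automatically, by \autoref{unnaturality} or directly from the Yoneda lemma.

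\emph{Step 2: applying the relation.} Suppose $L_1'\sim L_1$. By \autoref{prop:semisimpleorbitfusion} there is $V\in\cat{D}$ such that $L_1'$ is a direct summand of $V\lact L_1$; by symmetry of $\sim$ we may instead choose $V$ with $L_1$ a direct summand of $V\lact L_1'$. Additivity of $\iExt$ then makes $\iExt_{\cat{M}}(L_1,L_2)$ a direct summand of
\[
\iExt_{\cat{M}}(V\lact L_1',L_2)\simeq \iExt_{\cat{M}}(L_1',L_2)\otimes\rightdual{V}.
\]
So if $\iExt_{\cat{M}}(L_1,L_2)\neq 0$, then $\iExt_{\cat{M}}(L_1',L_2)\otimes \rightdual{V}\neq 0$, which forces $\iExt_{\cat{M}}(L_1',L_2)\neq 0$. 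The second variable is handled the same way, using the second isomorphism from Step 1.

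\emph{Main obstacle.} The only real subtlety is Step 1: the adjunction on $\cat{M}$ has to induce isomorphisms on $\on{Ext}^1$, not just on $\on{Hom}$. This holds because adjoint pairs of exact functors between abelian categories preserve extensions. For example, $V\lact-$ sends a short exact sequence representing a class in $\on{Ext}^1_{\cat{M}}(\leftdual{V}\lact X,Y)$ to one in $\on{Ext}^1_{\cat{M}}(X,V\lact Y)$ after pulling back along the unit. Alternatively, one can compute Ext with projective resolutions, which are preserved by exact functors that have exact adjoints.

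A second route avoids Ext-adjunctions altogether. By \autoref{extarrowsextra}, nonvanishing of $\iExt$ is equivalent to nonvanishing of $J/J^2(S,L)$. One can then argue with $\mathfrak{J}/\mathfrak{J}^2(V\lact P_0(S),P_0(L))$, using $\cat{D}$-stability of the radical (\autoref{FusionRadical}) and \autoref{projcovercompatible}. I would use this route as a fallback.
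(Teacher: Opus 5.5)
Your proof is correct and takes essentially the paper's route. Both arguments realise $L_k$ as a summand of $W_k \lact L'_k$, use additivity and associativity of the action, move the second-variable $W_2$ across via the exact adjunction $\leftdual{W_2}\lact - \dashv W_2\lact -$ on $\on{Ext}^1$, and conclude by representability. The only difference is packaging: you record the isomorphisms $\iExt_{\cat{M}}(V\lact X,Y)\simeq \iExt_{\cat{M}}(X,Y)\otimes(\text{a dual of }V)$ and $\iExt_{\cat{M}}(X,V\lact Y)\simeq V\otimes\iExt_{\cat{M}}(X,Y)$ and change one variable at a time, whereas the paper changes both at once inside a single nonzero $\on{Hom}$-space (and, as your own display shows, the first-variable isomorphism needs only associativity of the action, not the adjunction).
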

\begin{proof}
  Suppose $\underline{\on{Ext}}_{\cat{M}}(L_{1},L_{2}) \neq 0$, so there exists $V \in \cat{D}$ such that $\on{Hom}_{\cat{D}}(V, \underline{\on{Ext}}_{\cat{M}}(L_{1},L_{2})) \neq 0$. 
  Since $L_k \sim L'_k$ for all $k$, using \autoref{prop:semisimpleorbitfusion} we get for each $k$ an object $W_{k} \in \cat{D}$ such that $L_{k}$ is a direct summand of $W_{k} \lact L'_{k}$.
  Then
  \[
  \on{Hom}_{\cat{D}}(V, \underline{\on{Ext}}_{\cat{M}}(L_{1},L_{2})) \simeq \on{Ext}_{\cat{M}}(V\lact L_{1},L_{2})
  \]
  is a direct summand of $\on{Ext}_{\cat{M}}((V\otimes W_{1})\lact L'_{1},W_{2}\lact L'_{2})$. The adjunction $\leftdual{W_{2}}\lact - \dashv W_{2}\lact -$ of exact functors yields an isomorphism 
  \[
  \on{Ext}_{\cat{M}}((V\otimes W_{1})\lact L'_{1},W_{2}\lact L'_{2}) \simeq \on{Ext}_{\cat{M}}((\leftdual{W_{2}} \otimes V \otimes W_{1}) \lact L'_{1}, L'_{2}), 
  \]
  and by \autoref{internalextdef}, this is isomorphic to $\on{Hom}_{\cat{D}}(\leftdual{W_{2}} \otimes V \otimes W_{1}, \underline{\on{Ext}}_{\cat{M}}(L'_{1},L'_{2}))$, showing that $\underline{\on{Ext}}_{\cat{M}}(L'_{1},L'_{2})$ is non-zero.
  The other implication direction is immediate.
\end{proof}

When $\cat{M}$ is defined from a $\cat{D}$-species, we have the following result.
\begin{theorem}\label{directedgraphforspecies}
    Let $A$ be a quotient of the path algebra of a $\cat{D}$-species $(I, (D_i)_{i \in I}, (E_{ij})_{i,j\in I})$, and let $\cat{M} \coloneqq \modda$.
    The directed graph of $\cat{M}$ agrees with the directed graph of the $\cat{D}$-species.
\end{theorem}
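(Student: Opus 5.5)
Throughout, I read ``quotient'' as an admissible quotient $A = T_{B}(E)/I$, where $(B,E)$ is the type of the species, so that $B = \prod_{i\in I} D_{i}$ and $I$ is admissible. This is the setting in which the radical is controlled. The plan is to identify the vertices and the arrows separately, using the radical description of $\iExt$ from \autoref{extarrowsextra}.

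\emph{Vertices.} By \autoref{radicalofadmissiblequotient} we have $J(A) = \langle E\rangle/I$ and $A/J \simeq B$. By \autoref{semisimplestable} and \autoref{modproduct}, this gives
\[
\mathbb{L}(\cat{M}) \simeq \modv{B} \simeq \bigoplus_{i} \modv{D_{i}},
\]
and the equivalence is one of $\cat{D}$-module categories. Since $D_{i}$ is a division algebra, $D_{i}$ is a simple object of $\modv{D_{i}}$; I set $L_{i} := D_{i}$. Two facts then identify $\on{Irr}(\cat{M})/\sim$ with $I$ via $[L_i]\mapsto i$.
\begin{itemize}
\item Each $\modv{D_{i}}$ is an indecomposable semisimple module category, because $D_i$ is simple. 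In such a category any two simples are related by \autoref{prop:semisimpleorbitfusion}: the set of simples $L$ with $L\sim L_i$ spans a $\cat{D}$-module subcategory, which by indecomposability must be everything.
\item Simples lying in different blocks are never related, since $V\lact -$ preserves each summand $\modv{D_i}$.
\end{itemize}
In particular $\on{top}(A) \simeq A/J \simeq \bigoplus_i L_{i}$ (\autoref{topaj}) is $\cat{D}$-basic, so $A$ is a $\cat{D}$-basic algebra.

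\emph{Arrows.} By \autoref{radicalsplittingtensor}, $A$ is radical-split: $\pi_0$ has the section $B\hookrightarrow T_B(E)\twoheadrightarrow A$. Since $I \subseteq T_B(E)_{\geq 2}$, that remark also gives $J/J^{2} \simeq \langle E\rangle/\langle E^{\otimes 2}\rangle \simeq E$ as $B$-$B$-bimodules. Because $\pi_0$ splits, \autoref{prop:J/J2splitting} and \autoref{arrowsspecify} apply, and they identify the $D_i$-$D_j$ component:
\[
E_{ij} = D_i E D_j \simeq D_i (J/J^2) D_j = (J/J^2)(L_j,L_i).
\]
Finally, \autoref{extarrowsextra} gives $(J/J^2)(L_j,L_i) \simeq \leftdual{\big(\iExt_{\cat{M}}(L_i,L_j)\big)}$. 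Duality preserves (non)vanishing, so $\iExt_{\cat{M}}(L_i,L_j)\neq 0$ if and only if $E_{ij}\neq 0$. By \autoref{directedgraphindependent}, this non-vanishing is independent of the chosen representatives. Hence the arrows agree: $[L_i]\to[L_j]$ exists exactly when $i\to j$ in the species.

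\emph{Main obstacle.} The step needing care is matching the direction and indexing conventions across \autoref{arrowsspecify} and \autoref{extarrowsextra}. One has to check that $J/J^2(S,L)$ is built from $\mathfrak{J}/\mathfrak{J}^2(V\lact P_0(S),P_0(L))$ with $P_0(L_i)$ a summand of $A$ corresponding to the factor $D_i$. I would verify this directly on the summand $e_iAe_j$-type decomposition obtained from \autoref{modproduct}. If the quotient were not admissible (so $I\not\subseteq T_{\geq 2}$), the identification $J/J^2\simeq E$ would fail, so the admissibility hypothesis is used essentially here.
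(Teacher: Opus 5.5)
Your proposal is correct and takes essentially the paper's route. The paper's one-line proof applies \autoref{extarrowsextra} to representatives $L_i, L_j$, and it implicitly relies on the identifications you make explicit: $A/J \simeq B$, $A$ being $\cat{D}$-basic with $\on{Irr}(\cat{M})/\sim$ matching $I$, and $E_{ij} \simeq J/J^2(L_j,L_i)$ via \autoref{radicalsplittingtensor} and \autoref{arrowsspecify}.

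Your remark that the quotient must be admissible ($I \subseteq T_B(E)_{\geq 2}$) is apt. For instance, with $I=\langle E\rangle$ and $E\neq 0$ the statement fails, and admissibility is exactly the setting in which the paper uses the result in \autoref{noidealpaths}.
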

\begin{proof}
   This follows immediately by applying \autoref{extarrowsextra} to $S = L_{i}$ and $L= L_{j}$ for $i,j=1,\ldots,n$ given a set of representatives $\{L_1, \ldots, L_n\}$ of $\on{Irr}(\cat{M})/\sim$.
\end{proof}

\section{Main theorems}\label{mainresult}
For this section, we follow \cite[Definition 2.5.8 \& Corollary 2.5.9]{DSS} and say that a fusion category $\cat{D}$ is \emph{separable} if its Drinfeld centre $\cat{Z(D)}$ (\cite[Definition 7.13.1]{EGNO}) of $\cat{D}$ is semisimple, i.e.\ $\cat{Z(D)}$ is also a fusion category. 
We remark that this is a considerably mild condition, which holds, for example, when $\mathbb{k}$ has characteristic 0 \cite[Theorem 9.3.2]{EGNO}.
More importantly, it will also be a necessary condition for our main theorems; see \autoref{rmk:SemisimpleCentreNecessary}.
One of the main points is that under this assumption, semisimple algebras and separable algebras are the same, which allows us to deduce that all algebras are radical-split.
Through the directed graph of a finite $\cat{D}$-module category, we also deduce that any hereditary finite $\cat{D}$-module category is equivalent to the category of modules over the path algebra of an acyclic $\cat{D}$-species (the converse always hold).

\subsection{Gabriel's theorem for algebras in fusion categories}

\begin{definition}
    An algebra $A$ in $\cat{D}$ is said to be {\it separable} if the multiplication $\mu: A \otimes A \rightarrow A$ admits a section in $\bimod(A,A)$.
\end{definition}

\begin{remark} \label{rmk:FTCseparable}
    The reason for not considering separable algebras in the general setting of finite tensor categories is that, while the definition can be repeated verbatim in that case, separability is not Morita-invariant in finite tensor categories -- for example, while the unit object $\mathbb{1}$ of a finite tensor category $\cat{C}$ is always a separable algebra, if $\cat{C}$ is not semisimple then the algebra $\underline{\on{Hom}}(P,P)$, for any $P \in \projectives{\cat{C}}$, Morita equivalent to $\mathbb{1}$, is not separable.
\end{remark}

\begin{proposition}[\protect{\cite[Theorem~1.30]{AMS}}] \label{prop:separableimpliessemisimple}
    A separable algebra is semisimple (in the sense of \autoref{defn:semisimplealgebra}).
\end{proposition}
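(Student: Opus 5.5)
The plan is to show that $J(A)=0$, where $A$ is a separable algebra in the fusion category $\cat{D}$ and $\sigma\from A \to A\otimes A$ is a section of $\mu$ in $\bimod(A,A)$. Semisimplicity then follows from \autoref{CSZEO}. By \autoref{cor:reconcilesemisimplealg} it is equivalent to show that every object of $\modda$ is projective, that is, that $\modda$ is semisimple. I will prove this directly with the standard averaging argument.

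\textbf{Step 1: a natural splitting.} For any right $A$-module $M$, the action $\nabla_M\from M\otimes A \to M$ is an epimorphism of right $A$-modules, where $M\otimes A$ carries the free action on the right factor. I will show that $\nabla_M$ splits $A$-linearly. Concretely, the splitting is the composite
\[
M \xiso M\otimes_A A \xrightarrow{M\otimes_A \sigma} M\otimes_A (A\otimes A) \simeq M\otimes A .
\]
Since $\sigma$ is a bimodule map, this composite is a right $A$-module map. Because $\mu\circ\sigma=\on{id}$, composing with $M\otimes_A \mu$, which identifies with $\nabla_M$, returns the identity. One can also write this splitting explicitly, without $\otimes_A$, as $(\nabla_M\otimes A)\circ(M\otimes \sigma)\circ(M\otimes\eta)$ and check the identities using the bimodule property. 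This explicit verification is the only computation required.

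\textbf{Step 2: free modules are projective.} I will check that $M\otimes A = M\lact A$ is projective in $\modda$. Using the free--forgetful adjunction, $\homa(V\otimes A,-)\simeq \on{Hom}_{\cat{D}}(V,-)$, where $V$ is $M$ viewed as an object of $\cat{D}$. This functor is exact because $\cat{D}$ is semisimple.

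\textbf{Conclusion.} By Steps 1 and 2, every $M$ is a direct summand of a projective, hence projective. So $\modda$ is semisimple, and \autoref{cor:reconcilesemisimplealg} (equivalently \autoref{CSZEO}) gives that $A$ is a finite product of simple algebras.

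The main obstacle is Step 1. The difficulty is checking carefully that the splitting is genuinely right $A$-linear. This uses left $A$-linearity of $\sigma$ to absorb the $M$-action and right $A$-linearity of $\sigma$ for compatibility with the free action. Associativity constraints only add bookkeeping, and I would argue in the strict case.
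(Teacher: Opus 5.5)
Your proof is correct, and it takes a different route from the paper only in that the paper gives no argument at all. It simply cites \cite[Theorem~1.30]{AMS}, and remarks that, via \autoref{cor:reconcilesemisimplealg}, the statement also follows from \cite[Proposition~2]{Os}.

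Your Step 1 checks out. Set $s_M=(\nabla_M\otimes A)\circ(M\otimes\sigma)\circ(M\otimes\eta)$. Right $A$-linearity of $\sigma$ together with unitality reduces $(M\otimes\mu)\circ(s_M\otimes A)$ to $(\nabla_M\otimes A)\circ(M\otimes\sigma)$. Associativity of $\nabla_M$ together with left $A$-linearity of $\sigma$ reduces $s_M\circ\nabla_M$ to the same morphism. Finally, $\nabla_M\circ s_M=\on{id}_M$ follows from $\mu\circ\sigma=\on{id}_A$.

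You are in effect making the second cited route self-contained. You show directly that $\modda$ is semisimple: every module is a summand of a free one, and free modules are projective. For the latter you should also note that the forgetful functor $\modda\to\cat{D}$ is exact. You then hand the genuinely nontrivial passage from ``$\modda$ semisimple'' to ``finite product of simple algebras'' to \autoref{CSZEO}. There is no circularity, since \autoref{cor:reconcilesemisimplealg} does not use separability.

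The citation buys brevity and the more general monoidal setting of \cite{AMS}. Your route buys an elementary proof internal to the paper, and it extends to any finite tensor category $\cat{C}$ with one change: read ``projective'' as ``$\cat{C}$-projective'' in Step 2. By \autoref{cprojectiveobjects}, $V\lact A$ is $\cat{C}$-projective, because $P\lact(V\lact A)\simeq(P\otimes V)\lact A$ and $P\otimes V$ is projective for projective $P$. Hence $\modca$ is exact, and \autoref{CSZEO} again gives semisimplicity. This is consistent with \autoref{rmk:FTCseparable}, where $\mathbb{1}$ is separable and simple.
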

\begin{remark}
    Under the equivalences in \autoref{cor:reconcilesemisimplealg}, this result is also shown in \cite[Proposition~2]{Os}.
\end{remark}
The following is a consequence of \cite[Theorem~3.8]{AMS}:
\begin{proposition} \label{prop:separablesection}
 Let $A$ be an algebra in $\cat{D}$ and let $I$ be a nilpotent ideal in $A$ such that $A/I$ is separable. Then there is an algebra morphism $\sigma: A/I \rightarrow A$ such that $\pi \circ \sigma = \on{id}_{A/I}$, where $\pi$ is the projection from $A$ onto $A/I$.
\end{proposition}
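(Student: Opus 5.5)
The plan is to mimic the classical Wedderburn--Malcev argument, replacing elements by the separability idempotent. Since $I$ is nilpotent, I would induct on the nilpotency index $m$ of $I$. If $m=1$ then $I=0$ and there is nothing to prove. For the inductive step, consider the ideal $I^{2}$ and the quotient $A' \coloneqq A/I^{2}$. The image $I/I^{2}$ is a square-zero ideal of $A'$, and $A'/(I/I^{2}) \simeq A/I$ is separable. Meanwhile $I^{2}$ has smaller nilpotency index, by the multiplicativity properties of \autoref{CSZBijection}. Suppose a section $\sigma' \colon A/I \to A/I^{2}$ has been produced. Let $A''$ be the preimage of $\sigma'(A/I)$ under $A \twoheadrightarrow A/I^{2}$, which is a subalgebra of $A$. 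Then $A''/I^{2} \simeq A/I$ is separable, and $I^{2}$ is a nilpotent ideal of $A''$ of smaller index. Induction gives a section $A/I \to A''$, and composing with $A'' \hookrightarrow A$ yields $\sigma$. So everything reduces to the square-zero case $I^{2}=0$.

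In the square-zero case, let $B \coloneqq A/I$ and $\pi \colon A \to B$. First choose any morphism $s \colon B \to A$ in $\cat{D}$ with $\pi s = \on{id}_B$ and $s\eta_B = \eta_A$. This is possible since $\cat{D}$ is semisimple, so $\pi$ splits, and the unit condition can be arranged by adjusting $s$ on the summand $\mathbb{1}$. The failure of multiplicativity, $c \coloneqq \mu_A(s\otimes s) - s\mu_B \colon B\otimes B \to A$, lands in $I$ because $\pi$ is multiplicative. Since $I^{2}=0$, $I$ is a $B$-$B$-bimodule, and $c$ is a Hochschild $2$-cocycle in the internal sense; the cocycle identity follows from associativity of $\mu_A$ and $\mu_B$.

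The goal is then to find $h \colon B \to I$ with $c = \delta h$, where $(\delta h) = \mu(s\otimes h) + \mu(h\otimes s) - h\mu_B$. Then $\sigma \coloneqq s - h$ is an algebra section. This is the internal statement that $\mathrm{HH}^{2}(B, M)=0$ for separable $B$. I would prove it by writing down $h$ explicitly from the bimodule section $\Delta \colon B \to B\otimes B$ of $\mu_B$. Setting $e\coloneqq\Delta\eta_B\colon \mathbb{1}\to B\otimes B$, the candidate is, diagrammatically, $h = \nabla_{I}\circ(c\otimes B)\circ(B\otimes e)$ suitably arranged, i.e.\ the standard contracting homotopy $h(b) = \sum c(b, e')e''$. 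One then verifies $\delta h = c$ using the cocycle identity together with the bimodule property of $\Delta$, equivalently $(b\otimes 1)e = e(1\otimes b)$ internally.

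The main obstacle is this last diagrammatic verification in a non-strict, non-symmetric monoidal category. I would handle it with string diagrams and expect it to be routine but tedious. Alternatively, one could cite the relative-projectivity characterization: $B$ separable means $B$ is projective in $\bimod(B,B)$, so $\Ext^{1}_{\bimod(B,B)}(B, -)$ vanishes. Square-zero extensions of $B$ by $I$ are classified by the extension $0\to I\to \on{Ker}(\cdots)\to \Omega B\to 0$, and vanishing of $\Ext^{1}(\Omega B, I)$ follows from $\Omega B$ being a summand of the free bimodule $B\otimes B$. This classification is the content of \cite[Theorem~3.8]{AMS}, and invoking it directly may be the cleanest route.
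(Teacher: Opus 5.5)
Your proof is correct. It is the hands-on version of what the paper does by citation.

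\textbf{The paper's route.} The paper views $A$ as the limit of the finite tower $\cdots\to A/I^{n+1}\to A/I^{n}\to\cdots\to A/I$. Each map in the tower is split in $\cat{D}$ by semisimplicity and has square-zero kernel $I^{n}/I^{n+1}$, so by \cite[Lemma~3.4]{AMS} it is a Hochschild extension. The paper then applies the lifting property \cite[Theorem~3.8(c)]{AMS}, which $A/I$ satisfies by \cite[Corollary~3.10]{AMS} because it is separable. This is exactly the alternative you mention in your last sentence.

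\textbf{Your route.} Your main argument proves the lifting directly. The induction (reduce modulo $I^{2}$, then recurse on the preimage subalgebra $A''$) replaces the limit formalism. The contracting homotopy $h=\nabla_{I}\circ(c\otimes B)\circ(B\otimes\Delta\eta_B)$ exhibits explicitly the vanishing of the relevant internal Hochschild class.

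\textbf{What each buys.} The citation is short and places the statement in a general framework. Your version is self-contained and shows exactly where semisimplicity of $\cat{D}$ is used: only to split $\pi$ in $\cat{D}$, and to split $\eta_B$ so that $s\eta_B=\eta_A$ can be arranged. It also shows that no braiding is needed. Every identity used in checking $\delta h=c$ is planar: the cocycle identity, $\mu_B\Delta\eta_B=\eta_B$, and, up to coherence isomorphisms, $(\mu_B\otimes B)(B\otimes\Delta\eta_B)=\Delta=(B\otimes\mu_B)(\Delta\eta_B\otimes B)$.

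\textbf{Two small points.}
\begin{enumerate}
\item The drop in nilpotency index follows more simply from $(I^{2})^{k}=I^{2k}$, which is a consequence of exactness of $\otimes$. You do not need \autoref{CSZBijection} for this.
\item You should record that $s\eta_B=\eta_A$ gives $c\circ(\eta_B\otimes B)=0$, hence $h\eta_B=0$. This is what makes $\sigma=s-h$ unital and not merely multiplicative.
\end{enumerate}
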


\begin{proof}
 Since $I$ is nilpotent, the limit of the diagram
\[\begin{tikzcd}[ampersand replacement=\&,sep=small]
	\cdots \& {A/I^{n+1}} \& {A/I^{n}} \& {A/I^{n-1}} \& \cdots \& {A/I}
	\arrow[from=1-1, to=1-2]
	\arrow[from=1-2, to=1-3]
	\arrow[from=1-3, to=1-4]
	\arrow[from=1-4, to=1-5]
	\arrow[from=1-5, to=1-6]
\end{tikzcd}\]
is isomorphic to that of its truncation $A \rightarrow A/I^{d-1} \cdots \rightarrow A/I$, where $d = \min\setj{m \; | \; I^{m} = 0}$. The limit of the truncation is $A$, since the truncation is a diagram with an initial vertex. Since $\cat{D}$ is semisimple, the maps in the above diagram are split. Hence, by 
\cite[Lemma~3.4]{AMS}, the maps in the diagram define Hochschild extensions. 
Since $A/I$ is separable, by \cite[Corollary~3.10]{AMS}, it satisfies the equivalent conditions of \cite[Theorem~3.8]{AMS}. The result follows directly from \cite[Theorem~3.8(c)]{AMS}.
\end{proof}

The following lemma is essentially the same as the proof of \cite[Corollary 9.3.3]{EGNO}.
\begin{lemma} \label{lem:dualfusion}
    Let $\cat{D}$ be a separable fusion category.
    Then the category of bimodules $\bimodd(A,A)$ for any semisimple algebra $A$ is a multifusion category (it is fusion if $\modda$ is indecomposable as a $\cat{D}$-module category).
    In particular, $\bimodd(A,A)$ is semisimple.
\end{lemma}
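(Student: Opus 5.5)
The plan is to identify $\bimodd(A,A)$ with a category of module functors and then use the standard fact that duals of separable fusion categories are multifusion. Let $\cat{M} \coloneqq \modda$. By \cite[Proposition~7.11.1]{EGNO}, or the Eilenberg--Watts type equivalence used in the definition of Morita equivalence, there is a monoidal equivalence
\[
\bimodd(A,A)^{\otimes\text{-op}} \simeq \on{Fun}_{\cat{D}}(\cat{M},\cat{M}), \qquad M \mapsto -\otimes_A M,
\]
so it suffices to show that the dual category $\cat{D}^*_{\cat{M}} \coloneqq \on{Fun}_{\cat{D}}(\cat{M},\cat{M})$ is multifusion. Since $A$ is semisimple, $\cat{M}$ is semisimple by \autoref{cor:reconcilesemisimplealg}, hence exact by \autoref{lem:semisimpleisexact}. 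By \cite[Theorem~7.12.11]{EGNO} (or the corresponding result on duals with respect to exact module categories), $\cat{D}^*_{\cat{M}}$ is then a multitensor category. It is fusion exactly when $\cat{M}$ is indecomposable, because its unit $\on{Id}_{\cat{M}}$ decomposes according to the indecomposable summands of $\cat{M}$. What remains is semisimplicity.

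For semisimplicity I would follow the proof of \cite[Corollary~9.3.3]{EGNO}. First consider indecomposable $\cat{M}$; the general case follows by splitting $\cat{M}$ into its indecomposable summands $\cat{M}_i$. Then $\on{Fun}_{\cat{D}}(\cat{M},\cat{M})$ decomposes as $\bigoplus_{i,j}\on{Fun}_{\cat{D}}(\cat{M}_i,\cat{M}_j)$, and each off-diagonal piece is a bimodule category over the diagonal fusion pieces. The key identification is the monoidal equivalence
\[
\cat{Z}(\cat{D}^*_{\cat{M}}) \simeq \cat{Z}(\cat{D})
\]
of Drinfeld centres of categorically Morita equivalent tensor categories \cite[Theorem~7.16.1]{EGNO}. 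Since $\cat{D}$ is separable, $\cat{Z}(\cat{D}^*_{\cat{M}})$ is semisimple. The forgetful functor $F\colon \cat{Z}(\cat{D}^*_{\cat{M}}) \to \cat{D}^*_{\cat{M}}$ has a right adjoint $I$. As in \cite[Corollary~9.3.3]{EGNO}, the counit $F I(X) \to X$ is split epi for every $X$, so every object of $\cat{D}^*_{\cat{M}}$ is a direct summand of the image of an object of the semisimple centre. Here I would use the description of $FI(X)$ as a coend $\int^{Y} Y \otimes X \otimes \rightdual{Y}$, which contains $X$ as the summand indexed by $Y = \mathbb{1}$. Since $F$ is exact, the image of a semisimple object is a direct sum of images of simples. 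That alone does not yet give semisimplicity, so the argument should be run instead as: $X$ is projective because $I(X)$ is projective in the semisimple centre and $F$ preserves projectives, $F$ having an exact right adjoint. Then $X$ is a summand of the projective $FI(X)$. Every object being projective means the finite abelian category is semisimple.

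The main obstacle is justifying that the counit $FI(X)\to X$ splits without assuming pivotality or characteristic zero; this is exactly where \cite{EGNO} uses separability. As a fallback, I would avoid the splitting entirely and show the unit object $\on{Id}_{\cat{M}}$ is projective. In a multitensor category $\cat{T}$, if $\mathbb{1}$ is projective then $X \simeq X\otimes \mathbb{1}$ is projective for all $X$, since $X\otimes -$ preserves projectives, so $\cat{T}$ is semisimple. Projectivity of $\mathbb{1}_{\cat{T}}$ can be deduced from $\mathbb{1}$ being a direct summand of $F I(\mathbb{1})$, where $I(\mathbb{1})$ is projective in the semisimple $\cat{Z}(\cat{T})$. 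The splitting $\mathbb{1}\to FI(\mathbb{1})$ comes from the summand $Y=\mathbb{1}$ in the coend.
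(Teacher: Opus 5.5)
Your skeleton matches the paper's proof. You identify $\bimodd(A,A)$ with (the monoidal opposite of) $\cat{T}\coloneqq\cat{D}^*_{\cat{M}}$ and use $\cat{Z}(\cat{T})\simeq\cat{Z}(\cat{D})$ to get a semisimple centre. You then transport projectivity along the forgetful functor $F\colon\cat{Z}(\cat{T})\to\cat{T}$, and conclude from projectivity of the unit that every $X\simeq X\otimes\mathbb{1}$ is projective.

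\textbf{The gap: projectivity of the unit.} The problem is how you obtain that $\mathbb{1}_{\cat{T}}$ is projective. Your main line (splitting of the counit $FI(X)\to X$) and your fallback (splitting $\mathbb{1}\to FI(\mathbb{1})$) both treat the (co)end $\int^{Y} Y\otimes X\otimes\rightdual{Y}$ as a direct sum with the $Y=\mathbb{1}$ term as a summand. In a finite tensor category not yet known to be semisimple, a (co)end is not the direct sum of its terms. The $Y=\mathbb{1}$ term only gives a structure morphism between $X$ and $FI(X)$, and nothing you say makes it split. The direct-sum description is available only once $\cat{T}$ is semisimple, which is what you are proving, so the step is circular as written.

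\textbf{The fix.} The missing observation is simpler and makes $I$ unnecessary: $F$ is unital, $F(\mathbb{1}_{\cat{Z}(\cat{T})})=\mathbb{1}_{\cat{T}}$. Since $\cat{Z}(\cat{T})$ is semisimple, its unit is projective. Since $F$ preserves projectives, $\mathbb{1}_{\cat{T}}$ is projective. This is exactly the paper's argument.

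Your splitting $\mathbb{1}_{\cat{T}}\to FI(\mathbb{1}_{\cat{T}})$ does in fact exist in the indecomposable case. The correct reason is that the simple object $\mathbb{1}_{\cat{Z}(\cat{T})}$ has a non-zero map to $I(\mathbb{1}_{\cat{T}})$ (adjoint to the identity), so it is a summand of $I(\mathbb{1}_{\cat{T}})$ in the semisimple centre. Applying $F$ then already uses $F(\mathbb{1}_{\cat{Z}(\cat{T})})=\mathbb{1}_{\cat{T}}$.

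\textbf{The decomposable case.} The same remark repairs your reduction to indecomposable $\cat{M}$. Being a bimodule category over the fusion diagonal pieces does not by itself make $\on{Fun}_{\cat{D}}(\cat{M}_i,\cat{M}_j)$ semisimple; non-semisimple module categories over fusion categories are the subject of this paper. However, once each diagonal unit $\mathbb{1}_i$ is projective, so is $\mathbb{1}=\bigoplus_i\mathbb{1}_i$, and then every $X\simeq X\otimes\mathbb{1}$ in $\cat{T}$ is projective. Alternatively, as in the paper, run the centre argument directly for the multitensor category $\bimodd(A,A)$, whose centre is still $\cat{Z}(\cat{D}^{\otimes op})$.
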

\begin{proof}
    Recall from \autoref{cor:reconcilesemisimplealg} that $A$ being semisimple is equivalent to $\modda$ being semisimple (i.e.\ exact over $\cat{D}$; see \autoref{lem:semisimpleisexact}). By \cite[\S 7.12]{EGNO}, $\bimodd(A,A)$ is a finite multitensor category -- it is the (monoidal-opposite of the) dual tensor category $\cat{D}^*_{\cat{M}}$ of $\cat{D}$ with respect to $\cat{M} = \modda$.
    We just have to argue that it is moreover semisimple.
    
    Note that the forgetful functor $\cat{Z}(\cat{D}^{\otimes op}) \simeq \cat{Z(\bimodd(A,A))} \to \bimodd(A,A)$ sends the monoidal unit to the monoidal unit, and moreover, it sends projectives to projectives.
    If $\cat{Z(D)}$ is semisimple (and so is $\cat{Z}(\cat{D}^{\otimes op})$), then the monoidal unit of $\cat{Z}(\cat{D}^{\otimes op})$ is projective, which implies that the monoidal unit of $\bimodd(A,A)$ is also projective. This shows that $\bimodd(A,A)$ is semisimple, as required.
\end{proof}

The following result is well-known to experts; see e.g.\ \cite[Proposition 2.5.10]{DSS} and \cite[Corollary~5.4]{KZ}. We include a proof for completeness.
\begin{proposition}\label{KongZheng}
Let $\cat{D}$ be a separable fusion category and $A \in \cat{D}$ an algebra.
Then the following are equivalent:
\begin{enumerate}
    \item\label{item:semisimple} $A$ is semisimple;
    \item\label{item:separable} $A$ is a separable algebra.
\end{enumerate}
In particular, in a separable fusion category, an algebra being separable is equivalent to any of the conditions stated in \autoref{cor:reconcilesemisimplealg}.
\end{proposition}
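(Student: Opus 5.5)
The implication \autoref{item:separable} $\implies$ \autoref{item:semisimple} needs no separability of $\cat{D}$: it is exactly \autoref{prop:separableimpliessemisimple}. So the work is in \autoref{item:semisimple} $\implies$ \autoref{item:separable}, and the plan is to use \autoref{lem:dualfusion}.

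Let $A$ be semisimple. By \autoref{lem:dualfusion}, the category $\bimodd(A,A)$ is semisimple, since $\cat{D}$ is separable. The multiplication $\mu\colon A\otimes A\to A$ is a morphism of $A$-$A$-bimodules, where $A\otimes A$ carries the outer actions. It is an epimorphism, because it has the section $A\otimes\eta$ in $\cat{D}$ by the unit axiom.

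The obstacle is that $A\otimes A$ need not lie in $\bimodd(A,A)$, since that is the category of bimodules in $\cat{D}$. In fact it does: $A\otimes A$ is an object of $\cat{D}$ and the outer actions are actions internal to $\cat{D}$. It is the free bimodule on $\mathbb{1}$. Equivalently, $A\otimes A \simeq A\otimes \mathbb{1}\otimes A$ is the image of $\mathbb{1}$ under the free functor $A\otimes - \otimes A\colon \cat{D}\to\bimodd(A,A)$. Hence $\mu$ is an epimorphism in the semisimple abelian category $\bimodd(A,A)$, so it splits by a bimodule map. This says precisely that $A$ is separable.

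One point needs checking: that kernels and cokernels in $\bimodd(A,A)$ are computed in $\cat{D}$. Then "epimorphism in $\cat{D}$" agrees with "epimorphism of bimodules", and the splitting obtained is genuinely a bimodule section. This holds because $-\otimes-$ is exact in $\cat{D}$, so the forgetful functor $\bimodd(A,A)\to\cat{D}$ is exact and faithful. Every short exact sequence in a semisimple abelian category splits, so $\mu$ admits a section in $\bimodd(A,A)$.

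The final sentence of the statement then follows by combining this equivalence with \autoref{cor:reconcilesemisimplealg}. I expect the only real subtlety to be the appeal to \autoref{lem:dualfusion}, which is where the semisimplicity of $\cat{Z}(\cat{D})$ enters. Everything else is formal.
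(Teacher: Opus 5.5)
Your proposal is correct and follows the paper's proof. The paper also gets (ii)$\Rightarrow$(i) from \autoref{prop:separableimpliessemisimple}, and gets (i)$\Rightarrow$(ii) by using \autoref{lem:dualfusion} to conclude that $\bimodd(A,A)$ is semisimple, so the bimodule map $\mu$ splits. Your extra checks (that $\mu$ is an epimorphism of bimodules with $A\otimes A$ the free bimodule, and that the forgetful functor is exact and faithful) are details the paper leaves implicit.
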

\begin{proof}
    The direction \ref{item:separable} $\implies$ \ref{item:semisimple} is \autoref{prop:separableimpliessemisimple} (and does not require semisimplicity of $\cat{Z}(\cat{D})$).
    
    On the other hand, let $A$ be semisimple.
    If the center $\cat{Z}(\cat{D})$ is semisimple, then $\bimod(A,A)$ is also semisimple by \autoref{lem:dualfusion}.
    In particular, the multiplication map $\mu: A\otimes A \to A$, which is a morphism of $(A,A)$-bimodules, has to split.
\end{proof}

\begin{remark}
    In fact, many of the forthcoming results only require that semisimple algebras and separable algebras are the same. While $\cat{D}$ being separable is sufficient (as shown above), we do not know if it is necessary for semisimple algebras to be (Morita equivalent to) separable algebras.
    See \autoref{eg:semisimplecentre} for an example of a semisimple algebra that is not separable (this algebra is in a fusion category that is not separable).
\end{remark}

Recall that given an algebra $A$, we denote its Jacobson radical by $J$.
\begin{corollary}[Wedderburn--Malcev theorem] \label{cor:WedderburnMalvecthm}
Let $A$ be an algebra in a separable fusion category $\cat{D}$. Then the algebra morphism $\pi_{0}: A \twoheadrightarrow A/J$ admits a section in $\mathrm{Alg}({\cat{D}})$.
\end{corollary}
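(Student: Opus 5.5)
The plan is to apply \autoref{prop:separablesection} with $I = J$. That proposition needs two inputs: that $J$ is a nilpotent ideal of $A$, and that $A/J$ is a separable algebra in $\cat{D}$. Once both are checked, it gives an algebra morphism $\sigma_{0}\colon A/J \to A$ with $\pi_{0}\circ\sigma_{0} = \on{id}_{A/J}$, which is exactly the required section in $\Alg{\cat{D}}$.

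Nilpotency of $J$ is immediate from \autoref{CSZEO}, which identifies $J = J(A)$ as the greatest nilpotent ideal of $A$.

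For separability, first invoke \autoref{CSZquotient}: $J(A/J) = 0$. Then \autoref{CSZEO} shows that $A/J$ is a semisimple algebra in the sense of \autoref{defn:semisimplealgebra}; equivalently, by \autoref{cor:reconcilesemisimplealg}, $\modw{A/J}$ is semisimple. Since $\cat{D}$ is separable, \autoref{KongZheng} upgrades semisimplicity of $A/J$ to separability, i.e.\ the multiplication $A/J \otimes A/J \to A/J$ splits as a bimodule map.

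The only step with content is this last one, and it rests entirely on the hypothesis that $\mathcal{Z}(\cat{D})$ is semisimple, through \autoref{lem:dualfusion}. Without it the argument breaks, as the paper's non-separable counterexample shows. Given the earlier results, I expect no real obstacle. The only point to confirm is that \autoref{prop:separablesection} needs no hypothesis beyond nilpotency of $I$ and separability of $A/I$. Its proof uses only semisimplicity of $\cat{D}$, to split the maps $A/I^{n+1}\to A/I^{n}$ in $\cat{D}$, together with results of \cite{AMS}, so it applies directly.
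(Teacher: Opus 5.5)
Your proof is correct and follows the paper's argument: nilpotency of $J$ comes from \autoref{CSZEO}, semisimplicity of $A/J$ from \autoref{CSZEO}, separability of $A/J$ from \autoref{KongZheng}, and then \autoref{prop:separablesection} is applied with $I = J$. You also cite \autoref{CSZquotient} to get $J(A/J)=0$ before applying \autoref{CSZEO} to $A/J$. That step is correct and slightly more careful than the paper, which leaves it implicit.
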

\begin{proof}
    By \autoref{CSZEO}, the algebra $A/J$ is semisimple. Since we assume $\cat{Z}(\cat{D})$ to be semisimple, by \autoref{KongZheng} we find that $A/J$ is separable. Finally, from \autoref{CSZEO} we know that $J$ is nilpotent. 

    Thus $J$ is a nilpotent ideal such that $A/J$ is separable, and thus by \autoref{prop:separablesection}, the projection $\pi_{0}$ admits a section. 
\end{proof}

\begin{proposition}\label{radicalsplitness}
   Any algebra in a separable fusion category $\cat{D}$ is radical-split.
\end{proposition}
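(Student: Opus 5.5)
We have to produce the two splittings in \autoref{def:radicalsplit}. The first is given directly by \autoref{cor:WedderburnMalvecthm}: since $\cat{D}$ is separable, the projection $\pi_{0}\colon A \twoheadrightarrow A/J$ has a section $\sigma_{0}$ in $\Alg{\cat{D}}$. We fix this $\sigma_{0}$ once and for all. Restricting along $\sigma_{0}$ makes $A$, $J$, $J^{2}$ and $J/J^{2}$ into $A/J$-$A/J$-bimodules. The projection $\pi_{1}\colon J \to J/J^{2}$ is a morphism of $A$-$A$-bimodules, so it is in particular a morphism of $A/J$-$A/J$-bimodules for these restricted structures. Here we should note that the $A/J$-bimodule structure on $J/J^{2}$ obtained this way agrees with the canonical one, because $J$ acts trivially on $J/J^{2}$. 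Consequently the structure on $J/J^{2}$ does not depend on the choice of $\sigma_{0}$.

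For the second condition, we need $\pi_{1}$ to split in $\bimodd(A/J,A/J)$. By \autoref{CSZEO}, the algebra $A/J$ is semisimple. Since $\cat{D}$ is separable, \autoref{lem:dualfusion} then says that $\bimodd(A/J,A/J)$ is a multifusion category, and in particular a semisimple abelian category. The map $\pi_{1}$ is an epimorphism in $\cat{D}$, hence an epimorphism of bimodules; its kernel $J^{2}$ is an $A/J$-subbimodule, since it is an $A$-subbimodule. So $0 \to J^{2} \to J \to J/J^{2} \to 0$ is a short exact sequence in the semisimple category $\bimodd(A/J,A/J)$. It therefore splits, which gives a bimodule section $\sigma_{1}$ of $\pi_{1}$. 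Together, $(\sigma_{0},\sigma_{1})$ is a radical-splitting.

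The only point needing care is that $J$ and $J^{2}$ are objects of $\bimodd(A/J,A/J)$ at all, since $A/J$ is not naturally acting on $J$. The bimodule structure exists only after a choice of the algebra section $\sigma_{0}$, and this is exactly why the Wedderburn--Malcev step must come first. Once $\sigma_{0}$ is fixed, the rest is formal: semisimplicity of $\bimodd(A/J,A/J)$, the key input from separability of $\cat{D}$ via \autoref{lem:dualfusion}, makes every short exact sequence of $A/J$-bimodules split.
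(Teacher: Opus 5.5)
Your proposal is correct and follows the paper's proof: you take $\sigma_{0}$ from \autoref{cor:WedderburnMalvecthm}, and you split $\pi_{1}$ using the semisimplicity of $\bimodd(A/J,A/J)$ from \autoref{lem:dualfusion}. You also note explicitly that $J$ and $J^{2}$ become $A/J$-bimodules only after restricting along $\sigma_{0}$, and that this agrees with the canonical structure on $J/J^{2}$; the paper leaves this point implicit.
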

\begin{proof}
    By the corollary above, it remains to show that $J \twoheadrightarrow J/J^{2}$ admits a section in $\bimodd(A/J, A/J)$. This follows from semisimplicity of $\bimodd(A/J, A/J)$ by \autoref{lem:dualfusion}  (see also \cite[Theorem~1.30]{AMS}).
\end{proof}

The following is our first main theorem; cf.\ \autoref{weakGabriel} and \autoref{cor:radicalsplitspecies}.
\begin{theorem}\label{thm:mainthm}
   Let $\cat{D}$ be a separable fusion category and let $A$ be a $\cat{D}$-basic algebra object in $\cat{D}$ with $A/J \simeq \bigoplus_{i\in I} L_i$ in $\modda$.
   
   Then $A$ is isomorphic to an admissible quotient of the path algebra of a $\cat{D}$-species $(I, (D_i)_{i\in I}, (E_{ij})_{i,j\in I})$, where $D_i \coloneq \underline{\on{End}}_{A}{L_i}$ and $E_{ij} \coloneq J/J^2(L_j,L_i)$.
\end{theorem}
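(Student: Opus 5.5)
The plan is to combine the weak Gabriel theorem with the fusion-category results on radical-splitting and on the decomposition of $J/J^{2}$.

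First, by \autoref{radicalsplitness}, $A$ admits a radical-splitting $(\sigma_{0},\sigma_{1})$, since $\cat{D}$ is separable. Applying \autoref{weakGabriel}, the morphism $\Phi(\sigma_{0},\sigma_{1})\colon T_{A/J}(J/J^{2}) \to A$ is a surjective algebra morphism with admissible kernel. Hence $A \simeq T_{A/J}(J/J^{2})/I$ for an admissible ideal $I$. Unlike \autoref{cor:radicalsplitspecies}, no Morita equivalence is needed here, because $A$ is assumed $\cat{D}$-basic. It remains to identify the type $(A/J, J/J^{2})$ with the type of the stated species.

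Second, the vertices. By \autoref{prop:basicalgebra}, $A/J \simeq \prod_{i\in I} D_i$ as algebras, with $D_i = \ienda{L_i}$, and each $D_i$ is a division algebra by \autoref{prop:divisionalgs}. By \autoref{semisimplestable} and \autoref{enrichedhom}, we may freely identify $\ienda{L_i}$ with $\underline{\on{End}}_{A/J}(L_i)$. In particular each $D_i$ is simple, so $(I,(D_i),(E_{ij}))$ is a genuine $\cat{D}$-species and not merely a quasi-species.

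Third, the arrows. Using \autoref{modproduct}, the $A/J$-$A/J$-bimodule $J/J^{2}$ splits canonically as $\bigoplus_{i,j} D_i (J/J^{2}) D_j$. By \autoref{prop:J/J2splitting}, which applies because $\pi_0$ has a section, we have $D_i(J/J^{2})D_j \subseteq J/J^{2}(L_j,L_i)$. Both families of subobjects give direct sum decompositions of $J/J^{2}$: one via \autoref{modproduct}, the other via \eqref{directsumjj2}. An inclusion of summands indexed by the same set, whose sums both equal $J/J^{2}$, forces equality summandwise; equivalently, use that the idempotents sum to the identity. This is exactly \autoref{arrowsspecify}. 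So $E_{ij} = J/J^{2}(L_j,L_i)$ as a $D_i$-$D_j$-bimodule, and $J/J^{2} \simeq \bigoplus E_{ij}$ as $B$-$B$-bimodules with $B=\prod D_i$.

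Finally, transporting along the algebra isomorphism $A/J\simeq B$ identifies $T_{A/J}(J/J^{2})$ with $T_B(E)$, the path algebra of the species. This identification preserves the grading, so admissibility of $I$ carries over.

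The main obstacle is the bookkeeping in the third step. One must check that the splitting from \autoref{prop:J/J2splitting} is compatible with the bimodule structure obtained through $\sigma_0$, and that the isomorphism in \autoref{prop:basicalgebra} matches the one used to decompose $J/J^{2}$, rather than differing by a permutation or twist. If a twist by an automorphism of $D_i$ did appear, I would absorb it by relabelling, since the species is only specified up to isomorphism of bimodules.
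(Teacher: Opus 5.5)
Your proposal is correct and follows the paper's proof: \autoref{radicalsplitness} together with \autoref{weakGabriel} realises $A$ as an admissible quotient of $T_{A/J}(J/J^{2})$; \autoref{prop:basicalgebra} makes the type a genuine $\cat{D}$-species rather than a quasi-species; and \autoref{arrowsspecify} identifies $E_{ij}$ with $J/J^{2}(L_j,L_i)$. Your summandwise argument, which upgrades the inclusion of \autoref{prop:J/J2splitting} to an equality by comparing two direct-sum decompositions indexed by the same set (a modular-law argument), spells out a step the paper leaves implicit in \autoref{arrowsspecify}.
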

\begin{proof}
    This first statement is an immediate consequence of \autoref{radicalsplitness} and \autoref{weakGabriel}, where by \autoref{prop:basicalgebra} the $\cat{D}$-basic assumption on $A$ ensures that $A/J$ is a finite product of division algebras $\iend{L_i}$, so that we obtain a $\cat{D}$-species (and not just a quasi-species). The equation $E_{ij} = J/J^2(L_j,L_i)$ is a direct consequence of \autoref{arrowsspecify}.
\end{proof}

For general finite $\cat{D}$-module category $\cat{M}$, we have the following.
\begin{corollary}\label{extspecies}
   Let $\cat{D}$ be a separable fusion category and $\cat{M}$ be a finite $\cat{D}$-module category.
   Choose a set of representatives $\{L_1,\ldots, L_r\}$ for $\on{Irr}(\cat{M})/\sim$.
   Then $\cat{M}$ is equivalent to $\modda$ for $A$ an admissible quotient of the path algebra of a $\cat{D}$-species given by $(I, (D_i)_{1\leq i \leq r}, (E_{ij})_{1\leq i,j \leq r})$, with $D_i \coloneqq \iend{L_i}$ and $E_{ij} \simeq \leftdual{\iExt(L_i,L_j)}$ as objects in $\cat{D}$ for all $i,j \in \{1,\ldots,r\}$.
\end{corollary}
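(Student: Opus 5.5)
The plan is to reduce to \autoref{thm:mainthm} via \autoref{prop:Mtobasicalg}, and then identify the arrow labels using \autoref{extarrowsextra}. The one point needing care is that the reduction must be carried out with the chosen representatives $L_1,\ldots,L_r$.

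\textbf{Step 1: build the basic algebra.} Following the proof of \autoref{prop:Mtobasicalg}, set $L \coloneqq \bigoplus_{i=1}^r L_i$. By construction $L$ is $\cat{D}$-basic, so its projective cover $P_0 \coloneqq P_0(L)$ is a $\cat{D}$-basic $\cat{D}$-projective generator of $\cat{M}$. Then \autoref{EGNOreconstruction} gives an equivalence $\hom{P_0}{-}\colon \cat{M} \xiso \modda$ with $A \coloneqq \iend{P_0}$, and $A$ is $\cat{D}$-basic.

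\textbf{Step 2: identify the top of $A$.} The equivalence sends $P_0$ to $A$ and preserves tops, since equivalences preserve simples and radicals. Hence $\on{top}(A) \simeq A/J$ (by \autoref{topaj}) decomposes as $\bigoplus_i L_i'$, where $L_i'$ denotes the image of $L_i$. \autoref{thm:mainthm} then presents $A$ as an admissible quotient of the path algebra of the $\cat{D}$-species with $D_i = \underline{\on{End}}_A(L_i')$ and $E_{ij} = J/J^2(L_j',L_i')$.

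\textbf{Step 3: transport the vertex labels.} The equivalence is a $\cat{D}$-module equivalence, hence fully faithful. By \autoref{enrichedhom} and \autoref{EnrichmentsKock} this gives algebra isomorphisms $\underline{\on{End}}_A(L_i') \simeq \iend{L_i}$. Likewise, a $\cat{D}$-module equivalence preserves $\on{Ext}^1(V\lact -,-)$ naturally in $V$, so it preserves internal Ext:
\[
\iExt_{\modda}(L_i',L_j') \simeq \iExt_{\cat{M}}(L_i,L_j).
\]

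\textbf{Step 4: identify the arrow labels.} Apply \autoref{extarrowsextra} with $S = L_j'$ and $L = L_i'$. This gives $J/J^2(L_j',L_i') \simeq \leftdual{\big(\iExt_{-A}(L_i',L_j')\big)}$ as objects of $\cat{D}$. Combined with Step 3, this yields $E_{ij} \simeq \leftdual{\iExt(L_i,L_j)}$, which is the required isomorphism in $\cat{D}$. Only an isomorphism of objects is claimed, so no pivotal structure is needed.

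The main obstacle is bookkeeping: checking that the index conventions in \autoref{extarrowsextra} (the order $S,L$ and which dual appears) match $E_{ij} = J/J^2(L_j,L_i)$ from \autoref{arrowsspecify}. A secondary check is that the identification of tops and internal Ext across the equivalence $\hom{P_0}{-}$ is compatible with the species data produced in \autoref{thm:mainthm}. Both reduce to the naturality statements already recorded in \autoref{unnaturality} and \autoref{EnrichmentsKock}.
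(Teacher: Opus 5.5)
Your proposal is correct and follows the paper's proof. You build the $\cat{D}$-basic algebra $A = \iend{P_0(\bigoplus_i L_i)}$ as in \autoref{prop:Mtobasicalg}, apply \autoref{thm:mainthm}, and read off $E_{ij} = J/J^2(L_j,L_i) \simeq \leftdual{\iExt(L_i,L_j)}$ from \autoref{extarrowsextra} with $S = L_j$, $L = L_i$; your index bookkeeping there is right. Your Steps 2--3, which carry tops, internal endomorphism algebras and internal Ext across the $\cat{D}$-module equivalence $\hom{P_0}{-}$, simply spell out the identification $\cat{M} \simeq \modda$ that the paper uses without comment.
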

\begin{proof}
  Let $L \coloneqq \bigoplus_{i=1}^{r} L_{i}$, $P_0$ be its projective cover and $A \coloneqq \underline{\on{End}}_{\cat{M}}(P_0)$.
  As described in the proof of \autoref{prop:Mtobasicalg}, we have $\cat{M} \simeq \modda$ with $A$ a $\cat{D}$-basic algebra.
  By \autoref{thm:mainthm}, $A$ is isomorphic to an admissible quotient of the path algebra of a $\cat{D}$-species.
  That $E_{ij} \simeq \leftdual{\iExt(L_i,L_j)}$ follows from \autoref{extarrowsextra}.
\end{proof}

\begin{remark}\label{rmk:SemisimpleCentreNecessary}
    The assumption that the fusion category $\cat{D}$ is separable is necessary for both the theorem and the corollary above, as the following example shows. We thank Kevin Coulembier for pointing this out to us.
\end{remark}

\begin{example}[Necessity of a semisimple Drinfeld centre] \label{eg:semisimplecentre}
    Let $\cat{D} \coloneqq \vect_{\mathbb{Z}/2\mathbb{Z}}$ and consider the commutative algebra  $A \coloneqq \mathbb{k}[x,t]/\langle t^2, x^2-1-t \rangle \in \vect_{\mathbb{Z}/2\mathbb{Z}}$, with $t$ in degree $0$ and $x$ in degree $1$.
    The Jacobson radical of $A$ is given by $J = \langle t \rangle$, with $A/J \simeq \mathbb{k}[x]/\langle x^2 - 1 \rangle$.
    Observe that the action of $\mathbb{Z}/2\mathbb{Z}$ on $A$ and $A/J$ are both trivial, so $\modda$ has exactly one simple module $A/J$ (up to isomorphism) and one indecomposable projective module which is $A$ itself (as an $A$-module, $A/J$ also has infinite projective dimension).
    
    If $\mathbb{k}$ has characteristic 2, the Drinfeld centre $\cat{Z(D)})$ is not semisimple.
    One way to see this is to use \autoref{lem:dualfusion} and note that $\bimodd(A/J,A/J)^{\otimes op} \simeq \rep(\mathbb{Z}/2\mathbb{Z})$, which is not semisimple due to the characteristic of $\mathbb{k}$.
    Importantly, one can verify that $A \twoheadrightarrow A/J$ has no section that is a graded algebra homomorphism.
    As such, $\mathbb{k}[x]/\langle x^2 - 1 \rangle$ is also a semisimple algebra that is not separable; cf.\ \autoref{prop:separablesection}.
    
    Observe that the $\cat{D}$-module category $\modda$ has only one choice for a $\cat{D}$-basic $\cat{D}$-projective generator, i.e.\ $A$ itself.
    For an equivalence of $\cat{D}$-module categories $\Phi: \modv{B} \to \modda$, where $B$ is $\cat{D}$-basic, we find that $\Phi(B)$ must be a $\cat{D}$-basic $\cat{D}$-projective generator, and hence, again combining \autoref{EnrichmentsKock} and \autoref{enrichedhom}, we obtain isomorphisms $B \simeq \underline{\on{End}}_{B}(B) \simeq \underline{\on{End}}_{A}(\Phi(B))$ of algebra objects in $\cat{D}$. In the case of our chosen algebra $A$, the uniqueness of the $\cat{D}$-projective $\cat{D}$-basic generator for $\modda$ thus entails that 
    $A$ cannot be Morita equivalent to any algebra which is a quotient of the path algebra of a $\cat{D}$-species.
\end{example}

Before we state the next corollary, we need a (most likely well-known) result on categorical rigidity of the regular $\mathbb{Z}_+$modules.
Recall that given a finite $\cat{D}$-module category, its Grothendieck group $\gr{\cat{D}}$ is a $\mathbb{Z}_+$-module over the fusion ring $\gr{\cat{D}}$ of $\cat{D}$ (see \cite[Chapter 3]{EGNO} for the relevant definitions).
We follow the convention that two $\mathbb{Z}_+$-modules over $\gr{\cat{D}}$ are \emph{equivalent} if there is an isomorphism of $\gr{\cat{D}}$-modules between them which is moreover a bijection on the Grothendieck classes of simples (see e.g.\ \cite[Definition 1]{Os}).
\begin{lemma} \label{lem:rigidityofregularmodule}
    Let $\cat{M}$ be an indecomposable semisimple module category over $\cat{D}$.
    Suppose $\gr{\cat{M}}$ is equivalent to $\gr{\cat{D}}$ as $\mathbb{Z}_+$-modules over $\gr{\cat{D}}$.
    Then $M$ is equivalent to $\cat{D}$ as module categories over $\cat{D}$.
\end{lemma}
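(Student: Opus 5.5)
The plan is to locate, inside $\cat{M}$, a simple object $M$ whose internal endomorphism algebra is the trivial algebra $\mathbb{1}$, and then to invoke the reconstruction theorem.

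Fix an equivalence of $\mathbb{Z}_+$-modules $\phi\colon \gr{\cat{D}} \to \gr{\cat{M}}$. Since $\phi$ is a bijection on classes of simples, there is a simple $M \in \cat{M}$ with $[M] = \phi([\mathbb{1}])$. Because $\phi$ is $\gr{\cat{D}}$-linear, for every $V \in \cat{D}$ we get
\[
[V \lact M] = \phi([V]).
\]
In particular, for $V$ simple, $V \lact M$ is a simple object of $\cat{M}$. This uses that $\cat{M}$ is semisimple, so classes determine objects up to isomorphism, together with the bijectivity on simples. For simple $V, W \in \cat{D}$ we then have $V \lact M \simeq W \lact M$ if and only if $\phi([V]) = \phi([W])$, which holds if and only if $V \simeq W$.

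Next, compute $\iend{M}$ using \autoref{eqn:internalhomformula}. The multiplicity of a simple $L \in \on{Irr}(\cat{D})$ in $\iend{M}$ is $\dim \on{Hom}_{\cat{M}}(L \lact M, M)$. By the previous paragraph this is $1$ if $L \simeq \mathbb{1}$ and $0$ otherwise, since $L \lact M$ and $M$ are non-isomorphic simples unless $L \simeq \mathbb{1}$. Hence $\iend{M} \simeq \mathbb{1}$ as an object.

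The unit $\eta\colon \mathbb{1} \to \iend{M}$ is nonzero, because it corresponds to $\on{id}_M$ under adjunction. A nonzero map between copies of the simple object $\mathbb{1}$ is an isomorphism. An algebra structure on $\mathbb{1}$ with invertible unit is forced to be the trivial algebra, since unitality gives $\mu = \eta^{-1}\otimes\eta^{-1}$ up to the unitors. Therefore $\iend{M} \simeq \mathbb{1}$ as algebras.

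Finally, $\cat{M}$ is semisimple, hence exact (\autoref{lem:semisimpleisexact}), so every object is $\cat{D}$-projective. Since $\cat{M}$ is indecomposable and $M$ is simple, the set $\{M\}$ represents the single class in $\on{Irr}(\cat{M})/\sim$: the objects $V \lact M$ already exhaust all simples, because $\phi$ is surjective on simples. By \autoref{basicsemisimple}, $M$ is therefore a $\cat{D}$-projective generator. \autoref{EGNOreconstruction} then gives
\[
\cat{M} \simeq \modw[\cat{D}]{\iend{M}} \simeq \modw[\cat{D}]{\mathbb{1}} \simeq \cat{D}
\]
as $\cat{D}$-module categories.

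The main point needing care is the passage from an isomorphism of objects $\iend{M}\simeq\mathbb{1}$ to an isomorphism of algebras; the unit argument above handles it. The rest is bookkeeping with the formula \autoref{eqn:internalhomformula}.
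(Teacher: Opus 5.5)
Your proof is correct and follows the same route as the paper's. You take the simple $M$ corresponding to $[\mathbb{1}]$, deduce that the $X \lact M$ for $X$ simple are simple and pairwise non-isomorphic, read off $\iend{M} \simeq \mathbb{1}$ from the internal Hom formula, and conclude by reconstruction. You are in fact more careful than the paper, which does not say why $\iend{M} \simeq \mathbb{1}$ holds as algebras or why $M$ is a $\cat{D}$-projective generator. Your formula for $\mu$ is slightly garbled; it is cleaner to say that the unit $\eta$ is always an algebra morphism from the trivial algebra, hence an algebra isomorphism once it is invertible.
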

\begin{proof}
    Let $\on{Irr}(\cat{D}) = \{X_1,X_2,\ldots,X_n\}$ with $X_1 = \mathbb{1}$ and $\on{Irr}(\cat{M}) = \{L_1,L_2,\ldots, L_k\}$.
    Under the assumption $\gr{\cat{D}}$ and $\gr{\cat{M}}$ are equivalent as $\mathbb{Z}_+$-modules over $\gr{\cat{D}}$, we know that $n = k$, and by permuting the indices we may assume that the isomorphism sends $\gr{X_i} \mapsto \gr{L_i}$.
    In particular, $\gr{X_i}\cdot \gr{L_1} = \gr{L_i}$ for all $i$, which implies that $X_i \lact L_1 \simeq L_i$ for all $i$.
    In turn, we get $\underline{\on{End}}_{\cat{M}}(L_1) \simeq \mathbb{1}$ and the result follows from the chain of $\cat{D}$-module equivalences: 
    $
    \cat{M} \simeq \modc{\underline{\on{End}}_{\cat{M}}(L_1)} \simeq \modc{\mathbb{1}} \simeq \cat{D}.
    $
\end{proof}
\begin{remark}
    The above rigidity result on $\mathbb{Z}_+$-modules is specific to the regular $\mathbb{Z}_+$-module. Namely, two module categories $\cat{M}_1$ and $\cat{M}_2$ with equivalent Grothendieck groups (as $\mathbb{Z}_+$-modules) are not necessarily equivalent. See \autoref{eg:Z2divisionalg}.
\end{remark}

\begin{corollary}\label{quiversviaGr}
        Let $\cat{D}$ be a separable fusion category and $\cat{M}$ be a finite $\cat{D}$-module category.
    The following are equivalent:
    \begin{enumerate}
        \item \label{item:Misquiver}
        $\cat{M}$ is equivalent to the category of modules over an admissible quotient of the path algebra of a {\it $\cat{D}$-quiver}.
        \item \label{item:Misregularmodcat}
        $\mathbb{L}(\cat{M})$ is equivalent to $\cat{D}^{\oplus n}$ as $\cat{D}$-module categories.
        \item \label{item:MisregularZ+mod}
        $\gr{\cat{M}} = \gr{\mathbb{L}(\cat{M})}$ is equivalent to $\gr{\cat{D}}^{\oplus n}$ as $\mathbb{Z}_+$-modules over $\gr{\cat{D}}$ for some $n \geq 1$.
        \item \label{item:simplerep}
        There exists $\{L_{1},\ldots,L_{n}\}$ a representative for $\on{\Irr}(\cat{M})/\sim$ such that 
            \begin{itemize}
                \item for all $X\in \on{Irr}(\cat{D})$, $X \lact L_i$ is simple for all $L_i$; and
                \item every simple $L\in \Irr(\cat{M})$ is given by $X \lact L_{i}$ for a unique $X\in \on{Irr}(\cat{D})$ and a unique $L_i$.
            \end{itemize}
    \end{enumerate} 
\end{corollary}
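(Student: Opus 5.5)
I will prove the cycle (ii)$\Rightarrow$(iii)$\Rightarrow$(iv)$\Rightarrow$(ii), and then show (i)$\Leftrightarrow$(ii).

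For (ii)$\Rightarrow$(iii), note first that $\gr{\cat{M}} = \gr{\mathbb{L}(\cat{M})}$ as $\mathbb{Z}_+$-modules, since every object has finite length. By \autoref{semisimplestable}, $\mathbb{L}(\cat{M})$ is a $\cat{D}$-module subcategory, so a $\cat{D}$-module equivalence $\mathbb{L}(\cat{M}) \simeq \cat{D}^{\oplus n}$ induces the required equivalence of $\mathbb{Z}_+$-modules.

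For (iii)$\Rightarrow$(iv), let $L_i$ be the simple object corresponding to the class of $\mathbb{1}$ in the $i$-th copy of $\gr{\cat{D}}$. Then $\gr{X}\cdot\gr{L_i}$ is the class of a single simple for every $X \in \Irr(\cat{D})$, so $X \lact L_i$ is simple. The basis-bijection property gives uniqueness of the pair $(X,L_i)$. The $L_i$ lie in distinct $\sim$-classes, because the $\cat{D}$-orbit of $L_i$ is exactly the $i$-th summand.

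For (iv)$\Rightarrow$(ii), let $\mathbb{L}_i$ be the full subcategory of $\mathbb{L}(\cat{M})$ whose simples are the $X\lact L_i$. These are $\cat{D}$-module subcategories by \autoref{prop:semisimpleorbitfusion}, and $\mathbb{L}(\cat{M}) = \bigoplus_i \mathbb{L}_i$. Each $\mathbb{L}_i$ is indecomposable and semisimple, and its Grothendieck group is the regular $\mathbb{Z}_+$-module. \autoref{lem:rigidityofregularmodule} then gives $\mathbb{L}_i \simeq \cat{D}$. Its proof actually shows $\iend{L_i} \simeq \mathbb{1}$, and I will reuse this below.

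For (i)$\Rightarrow$(ii), let $A = T_Q/I$ with $Q$ a $\cat{D}$-quiver and $I$ admissible. By \autoref{radicalofadmissiblequotient}, $A/J \simeq \prod_i \mathbb{1}$. By \autoref{CSZquotient} together with \autoref{semisimplestable}, $\mathbb{L}(\cat{M}) \simeq \modw{A/J}$. Finally, \autoref{modproduct} gives $\modw{A/J} \simeq \cat{D}^{\oplus n}$.

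For (ii)$\Rightarrow$(i), I use (iv), which is now available. Choose the representatives $L_i$ from (iv) and apply \autoref{extspecies} with this choice. The resulting species has vertex algebras $D_i = \iend{L_i}$. Each $D_i$ is computed by \autoref{eqn:internalhomformula}: since $X\lact L_i$ is simple and ranges over distinct simples as $X$ varies, $\on{Hom}(X\lact L_i, L_i)$ is nonzero only for $X=\mathbb{1}$. Hence $D_i\simeq\mathbb{1}$, and the species is a $\cat{D}$-quiver.

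The main obstacle is the mismatch in (iii)$\Rightarrow$(iv) between isomorphism in the Grothendieck group and honest simplicity in $\cat{M}$. This step needs the basis-preserving convention for equivalence of $\mathbb{Z}_+$-modules, so that a class equal to a single basis element forces the object to be simple. It also needs care that the $\sim$-classes match the summands of $\gr{\cat{D}}^{\oplus n}$; this follows from \autoref{prop:semisimpleorbitfusion}.
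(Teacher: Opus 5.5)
Your proposal is correct and uses the same ingredients as the paper's proof: the identification $\mathbb{L}(\cat{M})\simeq\modw{A/J}$ with $A/J$ a product of copies of $\mathbb{1}$, the rigidity result \autoref{lem:rigidityofregularmodule}, and \autoref{extspecies} with $\iend{L_i}\simeq\mathbb{1}$. You only reorder the implications, doing the paper's ``direct extension'' (iii)$\Rightarrow$(ii) as (iv)$\Rightarrow$(ii) with the orbit decomposition made explicit, and its (iv)$\Rightarrow$(i) as (ii)$\Rightarrow$(i) via (iv). The one point you leave implicit is that $\iend{L_i}\simeq\mathbb{1}$ as objects upgrades to an algebra isomorphism, which holds because the unit $\mathbb{1}\to\iend{L_i}$ is an algebra map and a nonzero map between copies of the simple object $\mathbb{1}$, hence invertible.
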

\begin{proof}
    The implication \ref{item:Misquiver} $\implies$ \ref{item:Misregularmodcat} is immediate since $\mathbb{L}(\cat{M})$ will be equivalent to the category of modules over a product of trivial algebras $\mathbb{1}$.
    The equivalence between \ref{item:Misregularmodcat} and \ref{item:MisregularZ+mod} is a direct extension of \autoref{lem:rigidityofregularmodule}.
    The implication \ref{item:MisregularZ+mod} $\implies$ \ref{item:simplerep} is immediate from the definition of equivalent $\mathbb{Z}_+$-modules.
    Finally, \ref{item:simplerep} $\implies$ \ref{item:Misquiver} follows from using the given set of  representatives of $\on{\Irr}(\cat{M})/\sim$ and apply \autoref{extspecies}, where the assumptions ensure that each $\iend{L_i} \simeq \mathbb{1}$.
\end{proof}

\subsection{Hereditary if and only if ideal vanishes}\label{hereditary}

\begin{definition}
 A finite abelian category $\mathcal{A}$ is said to be {\it hereditary} if any subobject of any projective object of $\mathcal{A}$ is itself projective. In other words, the global projective dimension of $\mathcal{A}$ is at most one.
 Accordingly, we say that an algebra in $\cat{D}$ is \emph{hereditary} if its category of modules is such.
\end{definition}

\begin{proposition}\label{hereditarypath}
    Let $(I,(D_{i})_{i \in I}, (E_{ij})_{i,j\in I})$ be an acyclic $\cat{D}$-species. Its path algebra is hereditary.
\end{proposition}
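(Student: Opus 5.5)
We prove that every right $T$-module, where $T \coloneqq T_{B}(E)$ with $B = \prod_{i\in I} D_{i}$ and $E = \bigoplus_{i,j} E_{ij}$, has a projective resolution of length at most one. This is the standard resolution for tensor algebras, carried out in $\cat{D}$. By acyclicity, $T$ is a compact object, so $T \in \Alg{\cat{D}}$ and $\modw{T}$ is a finite $\cat{D}$-module category.

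For $M \in \modw{T}$ we use the sequence of right $T$-modules
\[
0 \to M \otimes_{B} E \otimes_{B} T \xrightarrow{\;\delta\;} M \otimes_{B} T \xrightarrow{\;\nabla\;} M \to 0,
\]
with $\delta(m \otimes e \otimes t) = me \otimes t - m \otimes et$. Here $\nabla$ is the action map and $me$ uses the $T$-module structure, which by \cite[Proposition~3.5]{EKW} amounts to a $B$-module structure on $M$ together with a map $M\otimes_B E \to M$.

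\textbf{Exactness.} Surjectivity of $\nabla$ and $\nabla\circ\delta = 0$ are clear. For injectivity of $\delta$ and exactness in the middle, we use the grading $T = \bigoplus_{m} E^{\otimes_B m}$. Since $-\otimes_B T$ is an infinite direct sum, both terms decompose by degree. The component of $\delta$ landing in $M\otimes_B E^{\otimes_B (m+1)}$, namely $-m\otimes et$, is an isomorphism $M\otimes_B E\otimes_B E^{\otimes_B m} \xiso M\otimes_B E^{\otimes_B(m+1)}$. A triangularity argument on the degree filtration then gives injectivity of $\delta$ and identifies its cokernel with the degree-zero part $M \otimes_B B \simeq M$, via $\nabla$. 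Each step here is a statement about morphisms in the abelian category $\cat{D}$ (or $\on{Ind}(\cat{D})$) between coequalizers, and exactness of $\otimes_B$ is available because $B$ is semisimple, hence separable by \autoref{KongZheng}. Tensor products over a separable algebra are split coequalizers, so $-\otimes_B-$ is exact in each variable.

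\textbf{Projectivity.} It remains to show that $N \otimes_B T$ is projective in $\modw{T}$ for every right $B$-module $N$. The functor $-\otimes_B T = \on{ind}$ along $B \hookrightarrow T$ is left adjoint to the exact restriction functor, so it preserves projectives. Since $\modw{B}$ is semisimple by \autoref{cor:reconcilesemisimplealg}, every $B$-module is projective. Hence both $M\otimes_B T$ and $M\otimes_B E\otimes_B T$ are projective, every $T$-module has projective dimension at most $1$, and $T$ is hereditary.

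\textbf{Main obstacle.} The delicate point is the exactness argument, especially the injectivity of $\delta$. In a general monoidal category one cannot argue elementwise, so the triangular argument has to be phrased through the degree components and the filtration $T_{\geq r}$. Concretely, one shows that the restriction of $\delta$ to $\bigoplus_{m\le r}$ composed with the projection onto degrees $1,\dots,r+1$ is an isomorphism for each $r$, and then passes to the colimit. Acyclicity makes the sums finite and avoids passing to a colimit at all.
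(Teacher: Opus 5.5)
\textbf{Verdict: correct, by a different route.} Your proof is correct, but it does not follow the paper's argument.

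\textbf{The paper's route.} The paper never works homologically inside $\cat{D}$. It asserts that the unfolding construction of \cite[Theorem~3.5]{EH} extends verbatim from $\cat{D}$-quivers to $\cat{D}$-species. This yields an equivalence of abelian categories between $\modv{T_B(E)}$ and the representation category of an ordinary finite acyclic quiver. The paper then quotes the classical fact that such a category is hereditary, leaving the details to the reader.

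\textbf{Your route.} You use the standard two-term resolution $0\to M\otimes_B E\otimes_B T\to M\otimes_B T\to M\to 0$ for modules over a tensor algebra. For exactness, acyclicity makes the degree decomposition finite. Then $\delta$ composed with the projection onto degrees $\geq 1$ is triangular with $-\mathrm{id}$ on the diagonal, hence invertible. So $\delta$ is a split monomorphism in $\cat{D}$, and its cokernel is the degree-zero summand $M\otimes_B B\simeq M$. For projectivity, you use semisimplicity of $\modv{B}$ together with the fact that induction along $B\hookrightarrow T$ preserves projectives. Both steps are sound.

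\textbf{Comparison.} Your argument is self-contained and entirely internal to $\cat{D}$. It is also visibly more general: it uses only that $B$ is semisimple, so it covers quasi-species as well, and it uses acyclicity only to make $T$ a compact object. The unfolding route gives more structure, namely an explicit identification with representations of a classical quiver, which is what connects to finite-type results as in \cite{EH}. The price is importing and adapting an external construction whose extension to species the paper only sketches.

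\textbf{One correction.} You justify exactness of $\otimes_B$ via \autoref{KongZheng}, which requires $\cat{D}$ to be separable. The proposition does not assume this; the paper states that this direction holds for every fusion category. The appeal is also unnecessary, for two reasons:
\begin{enumerate}
\item Your triangularity argument uses only additivity of $\otimes_B$ and its associativity. Associativity holds because $\otimes$ is exact in the fusion category $\cat{D}$.
\item $\modv{B}$ is semisimple by \autoref{cor:reconcilesemisimplealg}, and so is the category of left $B$-modules (apply the same corollary in the monoidal opposite of $\cat{D}$). Hence every short exact sequence of one-sided $B$-modules splits, and $\otimes_B$ is exact in each variable without any separability hypothesis.
\end{enumerate}
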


\begin{proof}
    The {\it unfolding} procedure of \cite[Theorem~3.5]{EH} generalises from $\cat{D}$-quivers to $\cat{D}$-species verbatim, showing that the category of modules over its path algebra is equivalent to the representation category of an ordinary (finite) acyclic quiver, which is hereditary.
    We leave the details to the reader.
\end{proof}

\begin{proposition} \label{prop:hereditaryacyclic}
    Let $\cat{M}$ be a finite $\cat{D}$-module category. If $\cat{M}$ is hereditary, then its directed graph, defined in \autoref{defn:internalExtspecies}, is acyclic.
\end{proposition}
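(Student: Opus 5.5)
We argue by contradiction: suppose the directed graph of $\cat{M}$ has a cycle $[L_1]\to[L_2]\to\cdots\to[L_k]\to[L_1]$. The strategy is to convert this cycle of nonzero internal Exts into a cycle of nonzero ordinary $\on{Ext}^1$ between genuine simple objects. From such a cycle we then build arbitrarily long nonzero compositions of radical morphisms between indecomposable projectives. Finally, in a finite hereditary category this contradicts the usual argument that there are no oriented cycles in the Ext-quiver.

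The first step is to lift the cycle to simple objects. By definition, $\iExt_{\cat{M}}(L_i,L_{i+1})\neq 0$ means there is a simple $V\in\on{Irr}(\cat{D})$ with $\on{Ext}^1_{\cat{M}}(V\lact L_i,L_{i+1})\neq0$. By \autoref{semisimplestable}, $V\lact L_i$ is semisimple, so some simple summand $L_i'\sim L_i$ satisfies $\on{Ext}^1(L_i',L_{i+1})\neq 0$. By \autoref{directedgraphindependent}, nonvanishing is independent of representatives, so we may freely change representatives within each class. The classical Ext-quiver of $\cat{M}$ therefore has arrows between the finitely many simples whose classes lie on the cycle, and each class on the cycle has an outgoing arrow to some simple in the next class. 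Since there are finitely many simples, iterating these arrows eventually revisits a simple. This yields an oriented cycle in the ordinary Ext-quiver of $\cat{M}$ through simples $S_1\to S_2\to\cdots\to S_m\to S_1$, with $\on{Ext}^1(S_j,S_{j+1})\neq0$.

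The second step is the classical fact that a finite hereditary abelian category (equivalently $\on{mod}_{\mathbb{k}}(R)$ with $R$ a finite-dimensional hereditary algebra) has an acyclic Ext-quiver. For hereditary $R$, a nonzero radical map between indecomposable projectives $P(S_{j+1})\to P(S_j)$, coming from $\mathfrak{J}/\mathfrak{J}^2\neq0$ via \autoref{extradicaliso}, is injective. This holds because its image is projective, so the map splits onto its image, and by indecomposability it is mono. Composing around the cycle produces an injective, non-invertible radical endomorphism of $P(S_1)$. It is non-invertible because it lies in the radical, but an injective endomorphism of an object of finite length is an isomorphism. This gives the desired contradiction.

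I expect the main obstacle to be the first step, namely ensuring that the lift to simples genuinely closes into a cycle rather than wandering among representatives. I would handle it by the pigeonhole argument above, which works because every simple in a class on the cycle has an outgoing ordinary Ext-arrow into the next class. This holds since $\iExt$ nonvanishing is representative-independent by \autoref{directedgraphindependent}, so for any $L\in[L_i]$ there is a simple $S'$ with $\on{Ext}^1(S', L_{i+1}')\neq0$. To get arrows out of a chosen simple, I would use the symmetric form of the argument, applying the adjunction $\leftdual{V}\lact-\dashv V\lact-$ to move $V$ to the second variable.
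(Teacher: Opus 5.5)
Your proof is correct, but it closes the lifted cycle differently from the paper.

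\textbf{The paper's route.} It fixes, inside $[L_1]$, a simple $L_1'$ whose projective cover has minimal $\FPdim$. It then transports the cycle around exactly once, using the same transport step you describe: $\on{Ext}^1_{\cat{M}}(V\lact L,L')\simeq\on{Ext}^1_{\cat{M}}(L,\rightdual{V}\lact L')$, followed by a simple summand of the semisimple object $\rightdual{V}\lact L'$. This lands at some $L_1''\sim L_1'$, which need not equal $L_1'$. Finally it uses the hereditary fact that $\on{Ext}^1(S,S')\neq 0$ gives a strict monomorphism $P(S')\hookrightarrow P(S)$, so $\FPdim P(L_1'')<\FPdim P(L_1')$, contradicting minimality.

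\textbf{Your route.} You keep iterating the transport step until a simple repeats, by pigeonhole on $\on{Irr}(\cat{M})$. This produces a genuine oriented cycle in the ordinary Ext-quiver. You then run the classical argument: nonzero maps between indecomposable projectives are monic, so going around the cycle gives an injective radical endomorphism, which is impossible in finite length.

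\textbf{Comparison.} Your lifting step uses no heredity at all. Together with the trivial converse (take $V=\mathbb{1}$), it shows that the directed graph of $\cat{M}$ has a cycle if and only if the ordinary Ext-quiver of the underlying abelian category does. The proposition then reduces verbatim to the classical statement for finite-dimensional hereditary algebras, with no Frobenius--Perron dimension needed. The paper's route instead makes a single pass around the cycle and closes it with an extremal choice. The price is a strictly monotone invariant; the length of the projective cover would serve as well as $\FPdim$.

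\textbf{Minor remark.} To move $V$ out of the first variable, the adjunction to use is $V\lact-\dashv\rightdual{V}\lact-$.
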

\begin{proof}
   In this proof, given $L \in \Irr(\cat{M})$, we will use $[L]$ to denote the equivalence class of $L$ under the equivalence relation $\sim$. 
   Let $\setj{L_{1},\ldots,L_{r}}$ be a set of representatives for $\on{Irr}(\cat{M})/\sim$.
   
   Suppose for contradiction that there is a cycle in the directed graph of $\cat{M}$. 
   In other words, for some $n \leq r+1$ there is a sequence $(i_{1},\ldots,i_{n}) \in \setj{1,\ldots,r}^{n}$ such that $\underline{\on{Ext}}_{\cat{M}}(L_{i_{k}},L_{i_{k+1}}) \neq 0$ for $k=1,\ldots,n$ and $\underline{\on{Ext}}_{\cat{M}}(L_{i_{n}},L_{i_{1}}) \neq 0$. 
   We may further assume that this cycle is minimal, so $L_{i_{j}} \neq L_{i_{j'}}$ for $j\neq j'$. 
   Moreover, by relabelling the indices we can even assume that the cycle is given by $(1,\ldots,n)$.

   We begin by choosing $L'_{1}$ to be a (not necessarily unique) simple such that $L'_1 \sim L_1$ and the projective cover of $L'_1$ is of minimial $\FPdim$ among the projective covers of elements of $[L_{1}]$ -- here, by $\FPdim$ of a module we just mean its $\FPdim$ viewed as an object in $\cat{D}$; see \cite[\S 3.3 and Proposition 4.9.1]{EGNO} for definition and properties of $\FPdim$. 
   In what follows, we shall describe a procedure of choosing simple objects $L'_i$ for each $2 \leq i \leq n$ that satisfy:
   \begin{itemize}
       \item $L'_i \sim L_i$; and
       \item $\on{Ext}_{\cat{M}}(L'_{j},L'_{j+1}) \neq 0$ for all $1 \leq j \leq n-1$.
   \end{itemize}
   Finally, our procedure will also choose another simple $L''_1$ such that $L''_1 \sim L'_1$ ($\sim L_1$) and $\on{Ext}_{\cat{M}}(L'_{n},L''_{1}) \neq 0$.
   
   The procedure is as follows.
   Since $\underline{\on{Ext}}_{\cat{M}}(L_{1},L_{2}) \neq 0$, we have $\underline{\on{Ext}}_{\cat{M}}(L'_{1},L_{2}) \neq 0$ by \autoref{directedgraphindependent}. Thus there is $V \in \cat{D}$ such that $\on{Hom}_{\cat{M}}(V, \underline{\on{Ext}}_{\cat{M}}(L'_{1},L_{2}))\neq 0$, so
   $\on{Ext}_{\cat{M}}(V\lact L'_{1},L_{2}) \simeq \on{Ext}_{\cat{M}}(L'_{1},\rightdual{V}\lact L_{2}) \neq 0$.
   Since $\rightdual{V} \lact L_{2}$ is semisimple, it must contains a simple summand $L'_{2}$ such that $\on{Ext}_{\cat{M}}(L'_{1},L'_{2}) \neq 0$. 
   Note that, by construction, $L'_{2} \sim L_{2}$. 
   We iteratively continue to choose $L'_j$ with the properties $L'_{j} \sim L_{j}$ and $\on{Ext}_{\cat{M}}(L'_{j},L'_{j+1}) \neq 0$ for $j = 1,\ldots,n-1$. 
   Now finally, $\underline{\on{Ext}}_{\cat{M}}(L_{n},L_{1}) \neq 0$ also implies $\underline{\on{Ext}}_{\cat{M}}(L'_{n},L'_{1}) \neq 0$.
   Thus, as before, we can choose some $L''_1$ satisfying $L''_{1} \sim L'_{1}$ ($\sim L_{1}$) and $\on{Ext}_{\cat{M}}(L'_{n},L''_{1}) \neq 0$. 
   
   We are now ready to derive our contradiction using the following general result about hereditary categories: if $\cat{M}$ is hereditary $S$ and $S'$ are two simples in $\cat{M}$ with the property $\Ext_\cat{M}(S,S') \neq 0$, there exists a \emph{strict monomorphism} (i.e.\ it has non-trivial cokernel) of the their projective covers $P' \hookrightarrow P$ (with $P'$ covering $S'$ and $P$ covering $S$); see e.g.\ \cite[Lemma~4.2.2]{Ben}.
   For $i=1,\ldots,n$, let $P'_{i}$ be the projective cover of $L'_{i}$, and let $P''_{1}$ be the projective cover of $L''_{1}$. 
   Our choice of $L'_i$ and $L''_i$ therefore gives us the following chain of strict monomorphisms:
    \begin{equation}\label{strictchain}
        P''_{1} 
        \xhookrightarrow{\not\simeq} P'_{n}
        \xhookrightarrow{\not\simeq} P'_{n-1}
        \xhookrightarrow{\not\simeq} \cdots
        \xhookrightarrow{\not\simeq} P'_{2} 
        \xhookrightarrow{\not\simeq} P'_{1}.
    \end{equation}
    However, this implies that $P''_{1}$ has strictly smaller $\FPdim$ than $P'_{1}$, which contradicts our initial choice of $P'_{1}$ having the minimal $\FPdim$ amongst all other projective covers of simples in $[L_1]$. 
\end{proof}

\begin{lemma}\label{radicaloftensor}
    Let $T_{B}(E)$ be an acyclic species. Then $J(T_{B}(E))$ is the ideal $\left\langle E\right\rangle$ generated by $E$, given by $E \oplus E \otimes_{B} E \oplus \cdots$.
\end{lemma}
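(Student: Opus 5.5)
This is the special case $I=0$ of \autoref{radicalofadmissiblequotient}, so I will reduce to that result. The only gap is that \autoref{radicalofadmissiblequotient} is stated for an admissible ideal $I$, so I must check that $I=0$ qualifies and that $T_{B}(E)$ is a genuine algebra in $\cat{D}$ rather than in $\on{Ind}(\cat{D})$.

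First, since the species is acyclic, the proposition preceding \autoref{def:radicalsplit} shows that $T_{B}(E)$ is a compact object, so $T_{B}(E)\in \Alg{\cat{D}}$. Its proof gives more precisely that $T_{B}(E)_{n}=0$ for all $n>|I|$ (here $I$ is the vertex set). Hence $\left\langle E^{\otimes_B m}\right\rangle = T_{B}(E)_{\geq m}=0$ for $m=|I|+1$. The zero ideal is trivially contained in $T_{B}(E)_{\geq 2}$, and it contains $T_{B}(E)_{\geq m}=0$, so it is admissible in the sense of \autoref{admissibleideal}.

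Second, $B=\prod_i B_i$ is a product of division algebras, hence of simple algebras, so $B$ is semisimple in the sense of \autoref{defn:semisimplealgebra}. Applying \autoref{radicalofadmissiblequotient} with the admissible ideal $0$ then gives $J(T_{B}(E)) = \left\langle E\right\rangle$. By the grading, this ideal equals $T_{B}(E)_{\geq 1}=\bigoplus_{n\geq1}E^{\otimes_B n}$, a finite sum.

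If one prefers not to rely on that proposition verbatim, the two inclusions can be checked directly. For nilpotence, \autoref{notorsion} gives $\left\langle E\right\rangle^{n} = \left\langle E^{\otimes_B n}\right\rangle$, which vanishes for $n>|I|$, so \autoref{CSZEO} yields $\left\langle E\right\rangle\subseteq J$. For the reverse inclusion, $T_{B}(E)/\left\langle E\right\rangle\simeq B$ is semisimple, so \autoref{radicalcontainment} yields $J\subseteq \left\langle E\right\rangle$. The only point needing care is that all these results are applied in $\cat{D}$ rather than $\on{Ind}(\cat{D})$, and that is exactly what compactness secures.
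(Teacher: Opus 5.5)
Your proposal is correct and follows the paper's route: the paper proves this lemma in one line as an immediate consequence of \autoref{radicalofadmissiblequotient}, applied with the zero ideal. Your extra checks are exactly the hypotheses the paper leaves implicit: acyclicity makes $T_{B}(E)$ compact with $T_{B}(E)_{\geq m}=0$ for large $m$, so the zero ideal is admissible, and $B$ is semisimple.
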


\begin{proof}
   This is an immediate consequence of \autoref{radicalofadmissiblequotient}.
\end{proof}

\begin{remark}\label{splittingacyclic}
   In view of \autoref{radicaloftensor}, the radical-splitting of \autoref{radicalsplittingtensor} is particularly simple in the case of an acyclic species: it is given by the morphisms $B \hookrightarrow T_{B}(E)$ and $E \hookrightarrow \left\langle E\right\rangle$.
\end{remark}

\begin{theorem}\label{noidealpaths}
    Let $\cat{D}$ be a separable fusion category.
    Then a finite $\cat{D}$-module category $\cat{M}$ is hereditary if and only if it is equivalent to the category of modules over the path algebra of an acyclic $\cat{D}$-species.
\end{theorem}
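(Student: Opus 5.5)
The reverse implication is \autoref{hereditarypath}. If $\cat{M}$ is equivalent to $\modda$ for $A$ the path algebra of an acyclic $\cat{D}$-species, then $\cat{M}$ is hereditary, and nothing more is needed.

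For the forward implication, let $\cat{M}$ be hereditary. Choose representatives $\{L_1,\ldots,L_r\}$ for $\on{Irr}(\cat{M})/\sim$ and, as in \autoref{extspecies}, write $\cat{M}\simeq\modda$ with $A=\iend{P_0}$ $\cat{D}$-basic. By \autoref{thm:mainthm} we have $A\simeq T_B(E)/I$, where $B=A/J$, $E=J/J^2$ and $I$ is an admissible ideal. The species here has $E_{ij}=J/J^2(L_j,L_i)$, so by \autoref{extarrowsextra} its underlying directed graph is the directed graph of $\cat{M}$ (\autoref{directedgraphforspecies}, with \autoref{directedgraphindependent}). By \autoref{prop:hereditaryacyclic} this graph is acyclic. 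Hence $T_B(E)$ is a compact object, i.e.\ an algebra in $\cat{D}$, and it is hereditary by \autoref{hereditarypath}. It remains to show $I=0$.

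To prove $I=0$, I would adapt the classical argument that a hereditary admissible quotient of a path algebra has trivial ideal. Work in $\modw{T}$ with $T:=T_B(E)$, which is $\cat{D}$-basic with the same tops, and compare minimal projective resolutions of the semisimple $A/J\simeq B$ over $A$ and over $T$. Over $T$ we have the short exact sequence $0\to E\otimes_B T\to T\to B\to 0$, and $E\otimes_B T$ is projective; this is the $\cat{D}$-internal version of the standard resolution. Over $A$ the radical is $J=\langle E\rangle/I$, with $J/J^2\simeq E$ as $B$-bimodules. Since $A$ is hereditary, $J$ is a projective right $A$-module. By \autoref{projcovercompatible} and \autoref{topaj}, its projective cover is $P_0(\on{top}J)=P_0(J/J^2)\simeq E\otimes_B A$, where $\on{top}(J)=J/J^2$ as right modules and $E\otimes_B -$ commutes with projective covers in the $\cat{D}$-linear sense. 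So the surjection $E\otimes_B A\twoheadrightarrow J$ induced by multiplication must be an isomorphism. Iterating this shows that the multiplication maps $E^{\otimes_B n}\otimes_B A\to J^n$ are isomorphisms, and in particular $E^{\otimes_B n}\to J^n/J^{n+1}$ is an isomorphism for all $n$. Then $\Phi$ restricts to isomorphisms on each graded piece $T_n\to J^n/J^{n+1}$ of the associated graded of $A$. Since both objects have finite length and $\Phi$ is surjective, comparing lengths (or $\FPdim$) in $\cat{D}$ forces $\Phi$ to be injective, so $I=0$.

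The main obstacle is the iteration step. From $E\otimes_B A\cong J$ as right $A$-modules, one must deduce $E\otimes_B J\cong J^2$. This follows by tensoring the first isomorphism over $A$ with $J$, if one knows that $(E\otimes_B A)\otimes_A J\to J\cdot J$ is identified with multiplication; here the identification $JJ=J^2$ from \autoref{idealtimesbimodule} is needed, together with exactness of $E\otimes_B-$, which holds because $\bimodd(B,B)$ is semisimple by \autoref{lem:dualfusion}. I would handle this by checking the identification at the level of the presheaves $\on{Hom}_{\cat{D}}(V,-)$ via \autoref{radicalfractions}. An alternative fallback is to use projectivity of $J^n$, which is a submodule of a projective, to show inductively that $J^n/J^{n+1}\otimes_B A\cong J^n$.
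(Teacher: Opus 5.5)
Your proof is correct, but its final step takes a different route from the paper's. Both arguments reach $\cat{M}\simeq\modda$ with $A\simeq T_B(E)/I$ the same way, and both get acyclicity the same way.

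The paper then moves the vanishing of $I$ to ordinary algebras. Using \autoref{CSZBijection} and \autoref{FusionRadical}, it views $I$ as a $\cat{D}$-stable ideal $\mathbb{R}(I)\subseteq\mathfrak{J}^2$ of $\on{proj}_{\cat{D}}(T_B(E))$. Evaluating at a projective generator $P$ gives an ideal of $\on{End}(P)$ inside the square of its radical, with hereditary quotient. This ideal vanishes by the classical \cite[Lemma~4.2.1]{Ben}.

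You instead prove the internal analogue of that classical lemma directly in $\cat{D}$. Projectivity of $J$ and the radical-splitting give a $B$-$A$-bimodule isomorphism $E\otimes_B A\simeq J$, hence $J^n\simeq E^{\otimes_B n}\otimes_B A$. So the $J$-adic associated graded of $A$ is $T_B(E)$, and a length count forces $I=0$.

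The paper's route is shorter because it outsources the key step to the classical lemma. Yours is self-contained and avoids both the ideal correspondence and the passage to $\on{End}(P)$. It also reproves acyclicity: from $J^m=0$ you get $E^{\otimes_B m}=0$, which by \autoref{notorsion} rules out cycles, so \autoref{prop:hereditaryacyclic} becomes unnecessary.

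Two points need tightening when you write it up:
\begin{itemize}
\item Do not justify $P_0(J/J^2)\simeq E\otimes_B A$ by \autoref{projcovercompatible} and \autoref{topaj}. Instead, note that $\on{rad}(X)=XJ$ in $\modda$ (by \autoref{semisimplestable}). Then observe that induction along $\sigma_0$ sends a semisimple $L$ to a projective with top $L\otimes_B B\simeq L$.
\item For the iteration, observe that the isomorphism $E\otimes_B A\to J$ carries $\on{rad}(E\otimes_B A)=E\otimes_B J$ onto $\on{rad}(J)=J^2$. Exactness of $E\otimes_B-$ holds simply because $E$ is projective in the semisimple category $\on{mod}_{\cat{D}}(B)$.
\end{itemize}
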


\begin{proof}
   One of the directions of the equivalence is given by \autoref{hereditarypath}. It thus suffices to show that if $\cat{M}$ is hereditary, then it is equivalent to the category of modules over the path algebra of an acyclic species in $\cat{D}$.
   By \autoref{extspecies}, we have $\mathcal{M} \simeq \on{mod}_{\cat{D}}\big(T_{B}(E)/I\big)$, where $T_B(E)$ is the path algebra of a $\cat{D}$-species of type $(B,E)$ and $I \subseteq T_{B}(E)_{\geq 2}$.
   Moreover, this $\cat{D}$-species is acyclic by \autoref{directedgraphforspecies} and \autoref{prop:hereditaryacyclic}, so we are left to show that $I$ is trivial.
   
   Using \autoref{radicaloftensor}, it is easy to verify that $J(T_{B}(E))^{2} = T_{B}(E)_{\geq 2}$. Thus, by \autoref{FusionRadical} the ideal $\mathbb{R}(I)$ in $\on{proj}_{\cat{D}}(T_{B}(E))$ that $I$ corresponds to under \autoref{CSZBijection} is contained in $\mathfrak{J}^{2}(\on{proj}_{\cat{D}}(A))$. Similarly to the proof of \autoref{FusionRadical}, we have $\on{proj}_{\cat{D}}(T_{B}(E)/I) \simeq \on{proj}_{\cat{D}}(T_{B}(E))/\mathbb{R}(I)$. Let $P$ be a projective generator for $\on{mod}_{\cat{D}}(T_{B}(E))$ -- not only a projective $\cat{D}$-generator. Then there is an equivalence of abelian categories $\on{mod}_{\cat{D}}(T_{B}(E)) \simeq \on{mod}_{\mathbb{k}}(\on{End}_{T_{B}(E)}(P))$ 
   and
   $\mathbb{R}(I)(P,P)$ is an ideal in $\on{End}_{T_{B}(E)}(P)$, contained in the second power of the Jacobson radical of the latter, such that the quotient is hereditary. This shows that $\mathbb{R}(I)(P,P)$ is zero, by \cite[Lemma~4.2.1]{Ben}, and further shows that $\mathbb{R}(I)$ is zero, and hence so is $I$.
\end{proof}

\begin{corollary} \label{cor:fusionquivercondition}
    Let $\cat{D}$ be a separable fusion category.
    A finite $\cat{D}$-module category $\cat{M}$ is equivalent to the category of modules over the path algebra of an acyclic $\cat{D}$-quiver if and only if $\cat{M}$ is hereditary and any of the equivalent statements in \autoref{quiversviaGr} holds.
\end{corollary}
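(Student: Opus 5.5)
The statement combines \autoref{noidealpaths} with \autoref{quiversviaGr}, so I will derive it from those two results together with the construction in \autoref{extspecies}.

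\emph{Forward direction.} Suppose $\cat{M} \simeq \modda$ with $A = T_Q$ the path algebra of an acyclic $\cat{D}$-quiver $Q$. Then $\cat{M}$ is hereditary by \autoref{hereditarypath}. Moreover, $T_Q$ is an admissible quotient of itself by the zero ideal, which is admissible because acyclicity forces $T_Q$ to be concentrated in bounded degrees. Hence condition \ref{item:Misquiver} of \autoref{quiversviaGr} holds, and so do all the equivalent conditions listed there.

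\emph{Converse.} Suppose $\cat{M}$ is hereditary and satisfies the equivalent conditions of \autoref{quiversviaGr}. I would use condition \ref{item:simplerep} to fix representatives $\{L_1,\ldots,L_n\}$ of $\on{Irr}(\cat{M})/\sim$ such that every simple object of $\cat{M}$ is uniquely of the form $X \lact L_i$ with $X \in \Irr(\cat{D})$.

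First, I would show that $\iend{L_i} \simeq \mathbb{1}$. By \autoref{eqn:internalhomformula}, the multiplicity of $X$ in $\iend{L_i}$ is $\dim \on{Hom}_{\cat{M}}(X \lact L_i, L_i)$. Since $X \lact L_i$ is simple and determines $X$ uniquely, this is $1$ for $X = \mathbb{1}$ and $0$ otherwise.

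Next, I would run the proof of \autoref{extspecies} with exactly these representatives. It gives $\cat{M} \simeq \modda$, where $A$ is an admissible quotient $T_Q/I$ of the path algebra of a $\cat{D}$-species $Q$ whose vertex algebras are $\iend{L_i} \simeq \mathbb{1}$. Thus $Q$ is a $\cat{D}$-quiver.

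Finally, I would redo the argument of \autoref{noidealpaths} on this presentation:
\begin{itemize}
\item hereditarity makes the directed graph of $\cat{M}$ acyclic, by \autoref{prop:hereditaryacyclic};
\item by \autoref{directedgraphforspecies}, this directed graph coincides with that of $Q$, so $Q$ is acyclic;
\item the radical argument in the proof of \autoref{noidealpaths}, using \autoref{radicaloftensor}, \autoref{FusionRadical} and \cite[Lemma~4.2.1]{Ben}, then forces $I = 0$.
\end{itemize}
Hence $\cat{M} \simeq \modw{T_Q}$ with $Q$ an acyclic $\cat{D}$-quiver.

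The main point to check is that \autoref{noidealpaths} is not applied as a black box, since it only produces some acyclic species. Its proof works with whatever presentation \autoref{extspecies} provides for the chosen representatives. So the quiver property, which depends on the choice of representatives, and the vanishing of $I$ are established for the same presentation, and nothing beyond the cited results is needed.
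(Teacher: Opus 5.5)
Your proposal is correct and follows the paper's proof, which simply combines \autoref{quiversviaGr} with \autoref{noidealpaths}. Your extra observation is that the proof of \autoref{noidealpaths} forces $I=0$ for whatever representatives are fed into \autoref{extspecies}, so it applies to the presentation with $\iend{L_i}\simeq\mathbb{1}$ coming from \ref{item:simplerep}; this is precisely the compatibility that the paper's one-line proof leaves implicit.
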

\begin{proof}
 This is an immediate consequence of \autoref{quiversviaGr} and \autoref{noidealpaths}.
\end{proof}

\begin{example}
    For simplicity, we work over $\mathbb{k}=\mathbb{C}$.
    Consider the following $D_4$ quiver 
    \[
    Q \coloneqq 
    \begin{tikzcd}[row sep=tiny]
        0 \ar[rd] \\
         &  2 \ar[r] & 3 \\
        1 \ar[ru]
    \end{tikzcd}
    \]
    We can define an action of $G \coloneqq \mathbb{Z}/2\mathbb{Z} = \{0,1\}$ on $Q$, where the generator $1 \in G$ acts on vertices by swapping $0$ and $1$, and leaving both $2$ and $3$ fixed.
    This induces an (strong) action of $G$ on $\cat{M} \coloneqq \rep(Q)$; equivalently, $\cat{M}$ is a module category over $\cat{D} \coloneqq \vect_G$.
    By \autoref{noidealpaths}, we know that $\cat{M} \simeq \modda$ for $A$ the path algebra of a $\cat{D}$-species (without further quotient).
    Using \autoref{extspecies}, we can compute the $\cat{D}$-species directly as follows.
    
    We have $\on{Irr}(\cat{M}) = \{S_i \mid 0\leq i \leq 3\}$, one for each vertex $i$.
    Since $S_0$ and $S_1$ are in the same $G$-orbit, and we choose $\on{Irr}(\cat{M})/\sim$ to be represented by $\{S_1, S_2, S_3\}$.
    A direct computation shows that (use e.g.\ \autoref{eqn:internalhomformula} and \autoref{eqn:internalextformula}):
    \begin{equation}
    \iend{S_i} \simeq 
        \begin{cases}
            \mathbb{1}, &\text{if }i=1\\
            D, &\text{if } i=2,3;
        \end{cases} \qquad
    \iExt(S_i,S_j) \simeq 
        \begin{cases}
            \mathbb{k}_0\oplus \mathbb{k}_1, &\text{if } i=1, j=2;\\
            \mathbb{k}_0\oplus \mathbb{k}_1, &\text{if } i=2, j=3; \\
            0, &\text{otherwise},
        \end{cases}
    \end{equation}
    where $D \coloneqq \mathbb{k}[G]$ is the division algebra from \autoref{eg:Z2divisionalg} given by the (untwisted) $G$-graded group algebra.
    By \autoref{extspecies}, the $\cat{D}$-species is: 
    \[
    \begin{tikzcd}
    \mathbb{k}_0 \ar[r, "D"] & D \ar[r,"D"] & D.
    \end{tikzcd}
    \]
    Technically, we have only worked out the underlying objects that label the arrows, not as bimodules. But here it is clear that the bimodules are the obvious ones (note also that objects in $\vect_G$ are self dual).
    Note that this $\cat{D}$-species is exactly \autoref{item:stdZ2action} from \autoref{eg:Z2species}.
    One could define a $\psi$-twisted action of $G$ on the path algebra $\mathbb{k}(Q)$ of the quiver (see \cite{RR_skew}) to obtain a $\vect_G$-module category which is equivalent to the category of modules over the path algebra of the $\cat{D}$-species given in \autoref{item:twistedZ2action} instead.
    Note that while these two module categories are equivalent as abelian categories (both are $\rep(Q)$), they are not equivalent as $\vect_G$-module categories.
    
    We also note that despite the appearance of the $\cat{D}$-quiver in \autoref{item:twoD4copies}, it is substantially different from the other two discussed above. Namely, as abelian category, the category of modules over the path algebra of this $\cat{D}$-quiver is equivalent to a direct sum of two copies of the $D_4$ quiver.

    We also encourage the reader to show that the path algebras of the $\cat{D}$-quiver and the $\cat{D}$-species from \autoref{eg:pentagonCoxquiver} are Morita equivalent by obtaining the $\cat{D}$-quiver from the $\cat{D}$-species.
\end{example}


\begin{thebibliography}{99999999999}
\bibitem[AMS]{AMS} A.~Ardizzoni, C.~Menini, D.~Ştefan, Hochschild cohomology and ``smoothness'' in monoidal categories, J. Pure Appl. Algebra 208 (2007), 297–330.
\bibitem[ASS]{ASS}  I.~Assem, D.~Simson, A.~Skowro{\' n}ski. Elements of the representation theory of associative algebras. Vol. 1. Techniques of representation theory. London Mathematical Society Student Texts, 65. Cambridge University Press, Cambridge, 2006.
\bibitem[Ben]{Ben} D.~J.~Benson, Representations and cohomology, II: Cohomology of groups and modules. Cambridge University Press, 1991
\bibitem[Ber]{Ber} C.~F.~Berg. Structure theorems for basic algebras. arXiv: 1102.1100
\bibitem[Bet]{Bet} A.~Betz. Actions of Fusion Categories on Path Algebras. arXiv:2503.18630
\bibitem[CSZ]{CSZ} K.~Coulembier, M.~Stroi{\' n}ski, T.~Zorman. Simple Algebras and Exact Module Categories. arXiv:2501.06629
\bibitem[DR]{DR} V.~Dlab and C.M.~Ringel. Representations of graphs and algebras, Carleton Mathematical Lecture Notes, No. 8, Carleton University, Department of Mathematics, Ottawa, ON, 1974. iii+86 pp.
\bibitem[DSS]{DSS} C.~L.~Douglas, C.~ Schommer-Pries, Christopher, N.~Snyder, Noah. Dualizable tensor categories,
Mem. Amer. Math. Soc. 268 (2020), no. 1308, vii+88 pp.
\bibitem[EGNO]{EGNO} P.~Etingof, S.~Gelaki, D.~Nikshych, V.~Ostrik. Tensor categories. Mathematical Surveys and Monographs {\bf 205}. American Mathematical Society, Providence, RI, 2015.
\bibitem[EH]{EH} B.~Elias, E.~Heng. Classification of finite type fusion quivers. arXiv:2404.09714
\bibitem[EKW]{EKW} P.~Etingof, R.~Kinser, and C.~Walton. Tensor algebras in finite tensor categories. IMRN 2021 (2021), no. 24, 18529–18572.
\bibitem[EK]{EK} P.~Etingof and M.~Khovanov. Representations of tensor categories and Dynkin diagrams. IMRN 1995 (1995), no. 5, 1073-7928,1687-0247.
\bibitem[EO]{EO} P.~Etingof, V.~Ostrik. Finite tensor categories. Moscow Math. J. {\bf 4}(2004), p.627–654.
\bibitem[Ga1]{Ga1} P.~Gabriel. Indecomposable representations II, Symposia Mathematica, Vol. XI, Academic Press, (1973), 81-104.
\bibitem[Ga2]{Ga2} P.~Gabriel. Unzerlegbare Darstellungen I, Manuscripta Mathematica 6 (1972), pp. 71–103.
\bibitem[Heng]{HengCox} E.~Heng. Coxeter quiver representations in fusion categories and Gabriel's theorem. Selecta Mathematica (N.S.). 30, 67 (2024)
\bibitem[Kel]{Kel} G.~M.~Kelly. Doctrinal adjunction. Category Sem., Proc., Sydney 1972/1973, Lect. Notes Math. 420, 257-280 (1974). 1974.
\bibitem[Ko]{Kock} A.~Kock. Strong functors and monoidal monads. Arch. Math. (Basel) 23 (1972), 113–120.
\bibitem[LMS]{LMS} V.~Linchenko, S.~Montgomery, L.~W.~Small. Stable Jaconson Radical and Semiprime Smash Products, Bulletin of the London Mathematical Society (2005); 37(6):860-872.
\bibitem[KW]{KW} J.~Kesten, C.~Walton. Division algebras in monoidal categories. arXiv:2502.02532v2
\bibitem[KZ]{KZ} L.~Kong, H.~Zheng.  Semisimple and separable algebras in multi-fusion categories. arXiv:1706.06904v2
\bibitem[MM]{MM} V.~Mazorchuk, V.~Miemietz. Transitive 2-representations of finitary 2-categories. Trans. Amer. Math. Soc. 368 (2016), no. 11, 7623--7644.
\bibitem[Os]{Os} V.~Ostrik. Module categories, weak Hopf algebras and modular invariants. Transform. Groups {\bf 8} (2003), no. 2, 177--206.
\bibitem[QZ]{QZ_fusionstable} Y.~Qiu, X.~Zhang. Fusion-stable structures on triangulated categories. Sel.\ Math.\ New Ser.\ 31, 50 (2025). \url{https://doi.org/10.1007/s00029-025-01037-6}
\bibitem[RR]{RR_skew} I.~Reiten, C.~Riedtmann. Skew group algebras in the representation theory of Artin algebras. J. Algebra 92 (1985), no. 1, 224–282.
\bibitem[Sa]{Sa} S.~Sanford. Fusion categories over non-algebraically closed fields. J. Algebra 663 (2025), pp. 316–351.
\bibitem[Sch]{Sch} G.~Schaumann. Fusion Quivers. Published online in Communications in Contemporary Mathematics. \url{https://doi.org/10.1142/S021919972550066X}
\bibitem[St]{Street} R.~Street. Enriched categories and cohomology. Quaestiones Mathematicae, 6:265–283, 1983. Republished as: Reprints in Theory and Applications of Categories, 14 (2005).
\bibitem[SZ]{SZ} M.~Stroi{\' n}ski, T.~Zorman. Reconstruction of module categories in the infinite and non-rigid settings. arXiv:2409.00793
\end{thebibliography}
\end{document}